\newtheorem{theorem}{Theorem}[section]
\newtheorem{lemma}[theorem]{Lemma}
\newtheorem{remark}[theorem]{Remark}
\newtheorem{definition}[theorem]{Definition}
\newtheorem{corollary}[theorem]{Corollary}
\newtheorem{question}[theorem]{Question}
\newtheorem{proposition}[theorem]{Proposition}
\def\dd{\mathrm{d}}
\def\un{\mathbf{1}}
\def\eps{\varepsilon}
\def\diam{\mathrm{diam}}
\def\Aut{\mathrm{Aut}}
\def\C{\mathbf{C}}
\def\Z{\mathbf{Z}}
\def\N{\mathbf{N}}
\def\Q{\mathbf{Q}}
\def\C{\mathbf{C}}
\def\R{\mathbf{R}}
\def\gl{\mathfrak{gl}}
\def\Hom{\mathrm{Hom}}
\def\Imm{\mathrm{Im}}
\def\Id{\mathrm{Id}}
\def\into{\hookrightarrow}
\def\onto{\twoheadrightarrow}
\def\Ad{\mathrm{Ad}\,}
\def\ii{\mathrm{i}}
\def\la{\lambda}
\date{January 12, 2017.}
\begin{document}
\centerline{}

\title[Measure theory and Classifying Spaces]{Measure theory and Classifying Spaces}

\author[I.~Marin]{Ivan Marin}
\address{LAMFA, Universit\'e de Picardie-Jules Verne, Amiens, France}
\email{ivan.marin@u-picardie.fr}
\dedicatory{to the memory of Adrien Douady}
\subjclass[2010]{}
\medskip

\begin{abstract} 
We construct classifying spaces for discrete and compact Lie groups,
with the property that they are topological groups and complete metric spaces in a natural way.
We sketch a program in view of extending these constructions.
\end{abstract}

\maketitle

\tableofcontents

\section{Introduction}

This work presents an exploration of the possibilities to use measure theory
in order to build and put some natural geometry on classifying spaces for groups.
The main observation is that the probabilistic notion of a $G$-valued
random variable provides a convenient setting in order to construct a classifying
space $L(\Omega,G)/G$ for the group $G$ with a natural (geo)metric structure
in several interesting cases. Here $G$ is a bounded metric group (for instance a discrete
group endowed with the discrete metric), $\Omega$
is the standard probability space, $L(\Omega,G)$
is the space of Borel maps $\Omega \to G$ up to neglectability, equipped with a kind of $L^1$
metric, and $G$ acts on $L(\Omega,G)$ by translations.

This construction itself
presents some difficulties, though. One of these is categorical in nature, and inherent to the
idea of building a \emph{geometric} classifying space : since we are
building metric objects, the right category for categorical operations is not necessarily the
classical topological or homotopy categories, but might have to take into account the
metric structure. 
This has notable consequences
concerning the limits, which do not lead to the same objects depending on whether
they are taken inside metric categories or inside the topology category. The general
results obtained here are lacking applications for now, but this study might be
useful when considering classifying spaces for profinite groups as well as
Malcev completions for rational homotopy theory. We hope to come back to these
questions in future work.

The main technical difficulty we are confronted with in this paper is the question of whether
the projection map $L(\Omega,G)\to L(\Omega,G)/G$ admits a local (continuous) cross-section. This property, which is granted almost for free when the classifying spaces
are locally compact, is crucial for us to apply the classical theory proving the unicity, up to a homotopy, of a classifying space.
For this reason, even when $G$ is homotopically equivalent
to a CW-complex, we are for instance not able so far to conclude on whether $L(\Omega,G)/G$ has the same property.

It is worth noticing that this question pertains to a natural question of independent interest. It is about whether, when a distribution of mass is concentrated \emph{in measure}
in some neighbourghood of a measured topological space $G$, one can assign to it some point of the space in a continuous way, with respect to a natural topology for the distribution. Indeed,
a local section of $L(\Omega,G) \to L(\Omega,G)/G$ can be
intuitively conceived as attaching to a sufficiently local distribution of mass on $G$ some element of $G$ is a $G$-equivariant way.

 When $G$ is the real line (or more generally a real vector space), a natural positive answer is provided by the center of mass, defined using the average of the coordinates, but in general the homotopy properties of the space $G$ prevent us from having such a globally defined cross-section. We deal in detail with the example of the circle $S^1$ endowed with its
 usual metric, and provide local cross-sections which assign continuously a point to such a distribution of mass, concentrated in measure in the interior of some (numerically specified) solid angle.

The case of $S^1$ fits into the case where $G$ is a compact Lie group, endowed with
a natural bi-invariant Riemannian metric. In this case, such a local cross-section exists by a general theorem of Gleason pertaining to quotients $E/G$ when $E$ is a completely regular topological space. Gleason's general proof is elegant and powerful but rather indirect. In this paper we construct explicit local sections in a geometric way, following the idea above.
Here, our geometric cross-sections are based on the (local) theory
of the Riemannian center of mass :  such a section can thus be considered as
an approximate notion of a `center of mass' for such a distribution. With this approach the neighborhood on which the local cross-section takes its arguments is intrinsically defined from the geometry of $G$ as a Riemannian manifold, and can be easily specified in each concrete case.

Our goal is then to explore the natural idea of iterating the construction. Starting from
an abelian discrete group $\Gamma$, the above construction provides another
metric group $G = L(\Omega,\Gamma)/\Gamma$, and one wonders whether
$L(\Omega,G)/G$ has the properties of a classifying space for $G$, or at least
if it provides a $K(\Gamma,2)$ -- and subsequently, whether iterating this construction
provides a natural construction of complete, metric, Eilenberg-MacLane spaces.
In order to understand these groups we need exponentiation properties
of the form $L(\Omega,L(\Omega,\bullet)) \simeq L(\Omega \times \Omega,\bullet) 
\simeq L(\Omega,\bullet)$ that we establish in section \ref{sect:exponential}. The
cross-section problem becomes much more difficult there, and we content ourselves,
after having described various avatars of these spaces,
to write down a catalogue of the questions that need to be answered in that
setting, together with some properties that link them together (section \ref{sect:iteration}).

We like to view this work as providing some indication that a \emph{geometrization program} for the homotopy
category of spaces can be envisioned, and that it can be based on measure theory and probability theory. We hope
to be able to provide more evidence in this direction in future work.

\bigskip

{\bf Acknowledgements.} 
I thank P. Vogel for several inspiring discussions on classifying spaces
and G. Godefroy for his crucial technical help on several points.
I thank 
Ivo dell'Ambrogio,
E. Deleuchi
and Livio Flaminio
for useful references,
and Alain Rivière
for many interesting discussions. 
I thank my `ergodic' colleagues S. Petite, A.-H. Fan, F. Paccaut, F. Durand, B. Testut
and T. Gauthier for their
interest and technical help and J.-Y. Charbonnel for a discussion on compact Lie groups.

\section{Preliminaries}

In this section we recall very basic material on metric and topological groups on the one hand, and standard probability spaces on the other hand.

\subsection{Metric groups, probability spaces}
 Recall (see \cite{BOURBTOP} ch. 9, \S 3) that a topological
group is metrizable as a topological space if and only if it is Hausdorff and if its neutral element admits a \emph{countable} fundamental
system of neighborhoods. In this case, $G$ can be endowed with a metric, defining the topology of the group which is left-invariant,
meaning that $d(gx,gy) = d(x,y)$ for all $g,x,y \in G$. We call such a structure $(G,d)$ a \emph{metric group}.
Of course a given metrizable topological space admits several left-invariant metrics ; in particular, it always admits a \emph{bounded} one,
that is one for which the diameter of $G$ is finite, for
it suffices to replace an arbitrary left-invariant one $d$ by $(x,y) \mapsto \min(1,d(x,y))$. If $G$ is commutative, a left-invariant distance is
clearly right-invariant.

The examples which we are primarily interested in are the following ones :
\begin{enumerate}
\item if $G$ is discrete, it is a metric group of diameter $1$ for the distance $d(x,y) = 1$ if $x \neq y$, $d(x,x)=0$.
\item $S^1 = \{ z \in \C \ | \ |z| = 1 \}$ is a metric group of diameter $2$ for the euclidean distance $d(z_1,z_2) = |z_1-z_2|$,
as well as for the arc-length distance.
\item If $G$ is the inverse limit of discrete quotients $G/G_n$ (for instance if $G$ is profinite) with $G_{n+1} \subset G_n$, then a distance is
given by $d(x,y) = 2^{- \delta(xy^{-1})}$ where $\delta : G \to \N \cup \{ + \infty \}$ is defined by $\delta(g) = \sup \{ n \ ; \ g \in G_n \}$.
This distance is ultrametric, namely $d(x,z) \leq \max( d(x,y),d(y,z) )$, it is left-invariant, and it is also right-invariant if the $G_n$
are normal in $G$. This can always be assumed if $G$ is profinite.
\item If $G$ is a compact Lie group, any invariant scalar product on its Lie algebra
endows $G$ with a bi-invariant Riemannian metric.
\end{enumerate}

When $G$ is a metric group and $H$ a subgroup of $G$, then $G/H$ is Hausdorff iff $G/H$ is metrizable, in which case
the metric is given by $d(\overline{x},\overline{y}) = \inf \{ d(x,y) \ | x \in \overline{x},y \in \overline{y} \}$
(see e.g. \cite{BOURBTOP} ch. 9 \S 3 n$^o$ 1, Remarque) ;
 if moreover
$G$ is complete, then so is $G/H$ (see \cite{BOURBTOP} ch. 9 prop. 4 (TG IX.25)).

In the sequel, we will denote by $\Omega$ a uncountable
standard Borel space (that is a uncountable measurable space isomorphic to a Borel subset of a Polish space) endowed with a continous (i.e. non-atomic) probability measure $\mu$,
sometimes denoted $\dd \mu$ when needed, and sometimes implicit (as in $\int_{\Omega}g(t) \dd t$).
It is known that such an object is unique up to isomorphism, and isomorphic for instance to every
euclidean cube $[0,1]^n$ endowed with the Lebesgue measure (see e.g. \cite{SRI} Theorem 3.4.23). The measurable subsets of $\Omega$ will be
called Borel, as well as the measurable maps $\Omega \to X$ when $X$ is a topological space, implicitely
endowed with its Borel $\sigma$-algebra.

\subsection{Squaring the probability space}

By the general argument that all non-atomic probability spaces are isomorphic to each other we have $\Omega^2 \simeq \Omega$, hence $\Omega^n \simeq \Omega$
for all $n$. The existence of such isomorphism will be at the heart of our construction in the last section of the paper.
For the convenience of the non-expert reader, and also because it is meaningful here to have direct comparisons, we recall that a direct Borel isomorphism (that is, a bijection which is Borel as well as its inverse) preserving the measure
can be given explicitely
as follows. For a given sequence $\underline{\eps} = (\eps_1,\eps_2,\dots) \in \{0,1 \}^{\N}$,
let $x_{\underline{\eps}} = \sum_{i \geq 1} \eps_i 2^{-i}$. Let $C$ the set of sequences $\underline{\eps}$ such that $\eps_i = 1$
for $i \gg 0$. Then the map $(\{0,1 \}^{\N} \setminus C) \to [0,1[$, $\underline{\eps} \mapsto x_{\underline{\eps}}$
is a bijection.
Let $D : \{ 0,1 \}^{\N} \to \{ 0,1 \}^{\N} \times \{ 0,1 \}^{\N}$ being given by $\underline{\eps} \mapsto (\underline{\eps}^o, \underline{\eps}^e)$ with $\eps^o_i = \eps_{2i-1}$, $\eps^e_i = \eps_{2i}$. This is clearly a bijection.
We let $C' = \{ \underline{\eps} \in \{0,1\}^{\N} ; \underline{\eps}^o \in C \, \mbox{or} \, \underline{\eps}^e \in C \}$.
We have $C \subset C'$. Let $E \subset [0,1]$ be the image of $\{ 0,1 \}^{\N} \setminus C'$ under $\underline{\eps} \mapsto x_{\underline{\eps}}$, and $F \subset [0,1]\times [0,1]$ the image of $E$ under
$\check{f} : x_{\underline{\eps}} \mapsto (x_{\underline{\eps}^o},x_{\underline{\eps}^e})$. We have $F = F_0 \times F_0$
where $F_0 \subset [0,1]$ is the image of $\{0,1 \}^{\N} \setminus C$ under $\underline{\eps} \mapsto x_{\underline{\eps}}$,
that is $F_0 = [0,1[$ and $F = [0,1[^2$.
Clearly $\check{f}$ is injective, hence it is a bijection $E \to F$. 

Now $[0,1[ \setminus E$ is the image under $\underline{\eps} \mapsto x_{\underline{\eps}}$
of $C' \setminus C$, which is a countable intersection of intervals, and therefore a Borel set.
Moreover, it is the intersection of a decreasing sequence of intervals, each of them having measure one half
the preceding one ; therefore, it has measure $0$.

If $g$ is a Borel isomorphism $[0,1]\setminus E \to [0,1]^2 \setminus F$, and if so is $\check{f}$,
by gluing $g$ and $\check{f}$
we would get a Borel isomorphism $f : \Omega \to \Omega^2$. Since $[0,1]\setminus E$ and $[0,1]^2 \setminus F$
both have measure $0$ it would then be enough to check that $\lambda(\check{f}(X))=\lambda(X)$
for $X$ running among any generating system the Borel $\sigma$-algebra of $E$.

Now $C' \setminus C = C_e \sqcup C_o$,
with $C_e = \{ \underline{\eps}; \underline{\eps}^o \in C \}$,
$C_o = \{ \underline{\eps}; \underline{\eps}^e \in C \}$. Then $[0,1] \setminus E = \{1\} \sqcup E_e\sqcup E_o$
with $E_e = \{ x_{\underline{\eps}} ; \underline{\eps} \in C_e\}$,
$E_o = \{ x_{\underline{\eps}} ; \underline{\eps} \in C_o\}$.
Then $x_{\underline{\eps} } \mapsto (1,x_{\underline{\eps}^e})$ maps $E_e \to \{ 1 \} \times [0,1[$
while $x_{\underline{\eps} } \mapsto (x_{\underline{\eps}^o},1)$ maps $E_o \to  [0,1[\times \{ 1 \} $.
Since $[0,1]^2 \setminus F = \{ (1,1) \} \sqcup \{ 1 \} \times [0,1[ \sqcup  [0,1[\times \{ 1 \}$
one gets this way a suitable bijection $g$.

\bigskip

We prove that $\check{f}$ is Borel.
The Borel $\sigma$-algebra of $[0,1]$ is easily checked to be generated by the intervals $[a,b[ = I_{\underline{\eps}}$,
where $a = \sum_{i=1}^{n_0} \eps_i 2^{-i}$, $b = \sum_{i=1}^{n_0} \eps_i 2^{-i} + 2^{-n_0}$ for some $n_0 \geq 2$ and $\underline{\eps}
= (\eps_1,\eps_2,\dots,\eps_{n_0}) \in \{0,1\}^{n_0}$. Therefore the Borel $\sigma$-algebra of $[0,1[^2$ is generated
by the sets of the form $I_{\underline{\eps^1}} \times I_{\underline{\eps^2}}$ for some $\underline{\eps}^k \in \{ 0,1 \}^{n_k}$, $k =1,2$.
The inverse image under $\check{f}$ of such a set is clearly the union of $2^{|n_2-n_1|}$ sets of the form $I_{\underline{\eta}}$,
with $\eta \in \{0,1\}^{2 \max(n_1,n_2)}$
and thus is a Borel set. This proves that $\check{f}$ is Borel. With the notations $\underline{\eps}^e= (\eps_2,\eps_4,\dots,\eps_{ 2\lfloor n_0/2\rfloor})$ 
and $\underline{\eps}^o= (\eps_1,\eps_3,\dots,\eps_{ 2\lceil n_0 /2\rceil-1})$, 
we have $\check{f}(I_{\underline{\eps}}) = I_{\underline{\eps}^o} \times
I_{\underline{\eps}^e}$ and therefore the inverse of $\check{f}$ is also Borel. The fact that $g$ is bi-Borel is proved in the same way.

Finally, for $\underline{\eps} \in \{0,1\}^{n_0}$ we have $\lambda(I_{\underline{\eps}})=2^{-n_0}$. Then
$\underline{\eps}^o \in \{0,1 \}^{n^o}$ and
$\underline{\eps}^e \in \{0,1\}^{n^e}$ with $n^o + n^e = n_0$ hence
$$\la( \check{f}(I_{\underline{\eps}}) =\la( I_{\underline{\eps}^o}) \times
\la(I_{\underline{\eps}^e}) = 2^{-n^o} 2^{-n^e} = 2^{-n_0} = \la(I_{\underline{\eps}})
$$
hence $\check{f}$ is measure-preserving, and this concludes the construction of an explicit isomorphism $\Omega \to \Omega \times \Omega$.

\medskip

Another way to exhibit this Borel isomorphism $\Omega \simeq \Omega^2$ is to combine
the \emph{homeomorphism} between the Cantor ternary set $\mathcal{C} = \{ \sum_i \eps_i 3^{-i} ;  \forall i \ \eps_i \in \{0,2 \} \}$
and $\mathcal{C} \times \mathcal{C}$ induced by $\underline{\eps} \mapsto (\underline{\eps}^o, \underline{\eps}^e)$ 
and the classical Borel isomorphism between $[0,1]$ and $\mathcal{C}$ given by $\sum \eps_i 2^{-i} \mapsto \sum 2\eps_i 3^{-i}$
(\cite{SRI}, example 3.3.1).

\section{A Metric Classifying space -- the discrete case}

In this section we explain the construction in case $G$ is discrete, and establish
the main properies of $L(\Omega,G)/G$. We show that it is indeed a classifying
space, and how some of the most usual properties of the classifying space can be proved directly on this explicit construction. Finally, we establish a few properties relative
to the pointset topology of $L(\Omega,G)/G$, which are especially useful when
$G$ is uncountable.

\subsection{The construction}
Let us endow the discrete group $G$ with the discrete distance, $d(x,y) = 1- \delta_{x,y}$.
We let $L(G) = L(\Omega,G)$ denote the space of Borel maps from $\Omega$ to $G$ modulo the equivalence
relation $f_1 \sim f_2$ if $\{ t \in \Omega ; f(t) \neq g(t) \}$ has zero measure. It is
a metric space for the distance
$$
d(f_1,f_2) = \int_{\Omega} d(f_1(t),f_2(t)) \dd t.
$$
If we fix an isomorphism $\Omega \simeq [0,1]$ this space admits
interesting subspaces, as in e.g. \cite{DOUADY}, \S 4.6, exercice 6, p. 243 :
\begin{itemize}
\item instead of considering all Borel maps, one may consider staircase maps $[0,1] \to G$
modulo the same equivalence relation;
\item instead of dividing out by maps whose support have measure zero, one may consider staircase maps
which are semicontinuous on one side. More precisely, we may consider the set
of maps $f : ]0,1] \to G$ such that there exists a finite increasing sequence $0=t_0< t_1< \dots < t_n = 1$ with
$f$ constant when restricted to $]t_{i-1},t_i]$ for $i \in \{1,\dots,n \}$, and let $d(f_1,f_2)$ be the sum of the lengths of
the intervals on which $f_1$ and $f_2$ differ.
\end{itemize}
Notice that an isomorphism $\Omega \to [0,1]^n$ for higher $n$ could also be used to define similar
higher dimensional analogues of the above subspaces.
All what follows, except the statements about completeness, hold true for these `smaller' subspaces.

\begin{proposition}
Let $G$ be a discrete group endowed with the discrete metric. Then $L(G)$ is a contractible
space, endowed with a free action of $G$ such that the quotient map $L(G) \to L(G)/G$ is a covering map.
\end{proposition}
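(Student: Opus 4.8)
The plan is to establish the three asserted properties in turn: contractibility of $L(G)$, freeness of the $G$-action, and the covering property of the projection.

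\smallskip

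\textbf{Contractibility.} I would fix an isomorphism $\Omega \simeq [0,1]$ and use the standard ``sliding'' homotopy. Pick a basepoint, namely the constant map $e$ at the identity of $G$. For $f \in L(G)$ and $s \in [0,1]$ define $H(f,s)$ to be the map agreeing with $f$ on $[0,s]$ and with $e$ on $]s,1]$ (under the chosen identification $\Omega\simeq[0,1]$). Then $H(f,0) = e$ and $H(f,1) = f$. The key computation is that $d(H(f,s),H(f,s')) \leq |s-s'|$ since $H(f,s)$ and $H(f,s')$ differ only on an interval of length $|s-s'|$ (where the integrand is bounded by $1$), and more generally $d(H(f,s),H(f',s')) \leq d(f,f') + |s-s'|$; this gives (Lipschitz, hence) joint continuity of $H$ on $L(G)\times[0,1]$. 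So $L(G)$ is contractible. I should note that this is exactly the construction alluded to in the reference to Douady's exercise, so the argument is expected to be routine.

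\smallskip

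\textbf{Freeness.} If $g \cdot f = f$ in $L(G)$, meaning $gf(t) = f(t)$ for almost every $t$, then since $\Omega$ has positive measure there is at least one $t$ with $gf(t) = f(t)$, hence $g = e$. So the action of $G$ on $L(G)$ by left translation (acting pointwise on values) is free. I would also record that the action is by isometries, since $d(gf_1,gf_2) = \int d(gf_1(t),gf_2(t))\,\dd t = \int d(f_1(t),f_2(t))\,\dd t = d(f_1,f_2)$ using left-invariance of the discrete metric on $G$ — this will be needed to control the topology of the orbits.

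\smallskip

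\textbf{Covering property.} This is the step requiring the most care. I want to show that every $f \in L(G)$ has a neighbourhood $U$ such that $U \cap gU = \emptyset$ for all $g \neq e$, i.e. the orbits are ``uniformly discrete'' locally; together with freeness and continuity of the action this gives that $L(G)\to L(G)/G$ is a covering map, with $U \to \overline{U}$ a homeomorphism onto an open subset of the quotient (using that the quotient metric is $d(\overline f_1,\overline f_2) = \inf_g d(f_1, gf_2)$). The natural candidate is the open ball $U = B(f, 1/3)$. The point is: if $d(f,f') < 1/3$ and $d(f, g f'') < 1/3$ with $d(f',f'') < 1/3$, one needs a contradiction when $g \neq e$. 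The obstacle is that $g \neq e$ does not force $gf''$ to be far from $f''$ everywhere — on the set where $f''(t)$ happens to be a fixed point of left multiplication by $g$ (which is empty unless $g=e$, good) — actually since $g\neq e$ we have $gx \neq x$ for every $x \in G$, so $d(gf''(t), f''(t)) = 1$ for every $t$, whence $d(gf'', f'') = 1$. Then $d(f', gf'') \geq d(f'',f') $ estimated by the triangle inequality from $d(f'',gf'') = 1$: $1 = d(f'', gf'') \leq d(f'',f') + d(f', gf'')$, so $d(f',gf'') \geq 1 - 1/3 = 2/3 > 1/3 \geq d(f, f')+d(f,gf'')\geq d(f',gf'')$, a contradiction. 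So in fact $B(f,1/3)$ works and the hard part dissolves once one observes $gx\neq x$ for all $x$ when $g\neq e$. I would then conclude formally: the restriction of the projection to $B(f,1/3)$ is injective, open, and a local isometry onto a ball in the quotient, so each point of $L(G)/G$ is evenly covered.

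\smallskip

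The main obstacle I anticipate is purely bookkeeping: verifying that the quotient topology on $L(G)/G$ really coincides with the one induced by the infimum metric (this is granted by the cited result from \cite{BOURBTOP}, since $G$ acts by isometries so the orbits are closed) and that ``evenly covered by balls of radius $1/3$'' assembles into the covering-map statement. No genuine analytic difficulty should arise in the discrete case; the measure-theoretic subtleties are deferred to later sections.
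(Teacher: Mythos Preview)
Your approach is essentially identical to the paper's: the same sliding homotopy for contractibility, the same observation that $g\neq e$ forces $d(gh,h)=1$ for every $h\in L(G)$, and the same ball argument for the covering property (the paper uses radius $1/2$ rather than $1/3$, but this is immaterial). One bookkeeping slip to fix in your covering step: the correct setup for $U\cap gU\neq\emptyset$ is $f''\in U$ and $gf''\in U$ (not the condition $d(f',f'')<1/3$), after which $1=d(f'',gf'')\leq d(f'',f)+d(f,gf'')<1/3+1/3$ gives the contradiction directly; your final inequality chain has $1/3$ where $2/3$ is meant.
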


\begin{proof}
We first prove that $L(G)$ is a contractible space.
Indeed, given $f \in L(G) \simeq L([0,1],G)$, let $f^{|s} \in L([0,1],G)$ for $s \in [0,1]$ be defined by $f^{|s}(t) = e$ if $s\geq t$, $f^{|s}(t) = f(t)$
otherwise. We set $H(s,f) = f^{|s}$. Then $H : I \times L([0,1],G) \to L([0,1],G)$
retracts $L(G)$ on the constant map, and is continuous, because
$$
d(H(s_1,f_1),H(s_2,f_2) = \int_{s_1}^{s_2} d_G(f_1(t),f_2(t)) dt \leq |s_2-s_1|.
$$
Moreover, $G$ acts on $L(G)$ by isometries : if $g \in G$ and $f \in L(G)$,
we let $g.f : t \mapsto g.f(t)$. This action is clearly free. We prove that $L(G) \to L(G)/G$ is a covering map,
that is, that forall $f \in L(G)$ there exists an open neighbourhood $U$ of $f$ such that,
$\forall g_1,g_2 \in G \ (g_1 \neq g_2) \Rightarrow (g_1U) \cap (g_2U) = \emptyset$. 
We can assume $g_1 = e$ and set $g = g_2$. Letting $U = \{ f' \in L(G)\  | \ d(f,f') < \frac{1}{2} \}$
we get that, if $f'', f' \in U$ with $f'' = g.f'$, we have 
$$
1 = d(f,g.f) \leq d(f,f'') + d(f'',g.f) = d(f,f'') + d(g.f',g.f)= d(f,f'') + d(f',f) < \frac{1}{2} + \frac{1}{2} ,
$$
a contradiction. This proves the claim.
\end{proof}

Note that the homotopies between $L(G)$ and $\{ * \}$ constructed in the proposition are in 1-1 correspondance with the collection of isomorphisms $[0,1] \to \Omega$
of measured spaces. We let $B(G) = L(G)/G$. 
 There is then a natural distance on $B(G) = L(G)/G$, defined by
$$
d_{B(G)}(\overline{f_1},\overline{f_2}) = \inf_{g_1,g_2 \in G} d_{L(G)}(g_1.f_1,g_2.f_2) = \inf_{g \in G} d_{L(G)}(f_1,g.f_2) 
= \inf_{g \in G} d_{L(G)}(g.f_1,f_2).
$$

Note that $L(G)$ and $B(G)$ have a natural base point, given by (the class of) the constant map equal to $e$.
The natural map $\pi_1(B(G),\overline{e}) \to G$ admits a natural inverse : to each $g \in G$
we associate the path $\tilde{\pi}_g : [0,1] \to L(G)$ defined by 
$$
\begin{array}{lccclr}
\tilde{\pi}_g(u) &:& t &\mapsto  & g & \mbox{ if } t < u \\
 & & t &\mapsto  & e & \mbox{ if } t \geq u \\
 \end{array}
$$
Then $\tilde{\pi}_g(0)$ is the constant map equal to $e$ and $\tilde{\pi}_g(1)$ is the constant map
equal to $g$. Therefore, the class of $\tilde{\pi}_g(u)$ modulo $G$ defines a loop
$[0,1] \to BG$ whose homotopy class $\pi_g$ is an explicit inverse of $\pi_1(BG,\overline{e}) \to G$.

\subsection{Historical comment}

The first ideas about this construction (in the discrete case) came during conversations
with A. Douady when I was doing my 1995 master thesis under his guidance. A version
due to him of the construction can be found in \cite{DOUADY}, where a proper subspace of
step-functions is used instead of $L(\Omega,G)$. Actually, in case $G$ is commutative, an allusion to the step-functions point of view can
be found in \cite{SEGALCOHO} appendix A, where details are given of a
construction of G. Segal of the classifying space of an (abelian) topological group,
originally defined in \cite{SEGALIHES}.
After having described a simplicial realization of it, G. Segal explains that it is
(the space of orbits of) the space of step-functions on the unit interval $[0,1]$ with values
in $G$, `but from this point of view the topology is rather obscure'. Nevertheless, G. Segal
states that there is a continous monomorphism from this topological space onto a dense subgroup of $L([0,1],G)$.
This point of view of step-functions has been used by Bourbaki in his very recent `Topologie alg\'ebrique' chapters (2016),
whose original first draft dates back to the 1970's (and which probably received significant input from A. Douady himself).
When $G$ is discrete, there is actually no loss, compared to the classical (Segal's or Milnor's) constructions
of a classifying space, in using instead the larger construction $L([0,1],G)/G$, in terms of homotopy theory :
this space is equivalent to the other ones inside the homotopy category. The gain
is that it is a complete metric space, in a fairly natural way, and actually a complete metric group.
Also, it enables us to define it as $L(\Omega,G)/G$, where
$\Omega$ is a standard probability space, since the \emph{topology} of the unit interval $I = [0,1]$
is actually totally irrelevant to its definition. We apologize for the collision with the notation $\Omega X$ 
for the loop space of a topological space $X$, but these two notations are equally well-established,
and since we are not going to use loop spaces in this paper, this should not cause any confusion.
From this remark it becomes clear that the space $L(\Omega,G)$ 
should actually be thought of as the space of \emph{random $G$-valued variables}.

\subsection{General homotopic properties}

The construction $G \leadsto L(G)$ is functorial, from the category of groups to
the category of (pointed) topological spaces and continuous maps. Indeed, if $\varphi \in \Hom(G_1,G_2)$,
we associate to each $f \in L(G_1)$ the map $\varphi \circ f \in L(G_2)$, and this
defines a metric contraction, which is an isometry if and only if $\varphi$ is injective. The
class of $\varphi \circ f$ in $B(G_2)$ only depends on the class of $f$ in $B(G_1)$, for $f'=g.f$
implies that $f,f'$ have representatives in $\Omega \to G$ (up to functions whose supports have
measure $0$) such that $\forall t \in \Omega \ f'(t) = g.f(t)$ ; therefore, 
$$
\forall t \in \Omega \ \varphi(f'(t)) = \varphi(g.f(t)) = \varphi(g)\varphi(f(t)) = \varphi(g).\varphi(f(t))
$$
hence $\varphi \circ f' = \varphi(g).\varphi\circ f$. This proves that $G \leadsto B(G)$
is also functorial, again from the category of groups to
the category of (pointed) topological spaces and continuous maps.

As is well-known from the properties of classifying spaces, the induced maps
in the homotopy category lead to bijections $B : \Hom(G_1,G_2) \to [B(G_1),B(G_2)]_*$, where $[X,Y]_*$ denotes the set of pointed homotopy classes from $X$ to $Y$.
One may check directly the injectivity : if $\varphi, \psi \in \Hom(G_1,G_2)$
yield to homotopic $B \varphi$ and $B \psi$, let $H$ be an homotopy between
both maps. Then $H$ induces an homotopy between the paths $B \varphi(\pi_g)$
and $B \psi(\pi_g)$ for all $g \in G$. But $B \varphi(\pi_g) = \pi_{\varphi(g)}$ and
$B \psi(\pi_g) = \pi_{\psi(g)}$. Then $\pi_{\varphi(g)} \sim \pi_{\psi(g)}$ iff $\varphi(g) = 
\psi(g)$ and this proves the injectivity.

In order to prove the surjectivity, we note that we have a functor $\pi_1$
from the pointed homotopy category $HoTop_*$ to the category $Gr$ of groups, for which
$\pi_1 B$ is the identity functor of the category of groups. Therefore, for
all $F : BG_1 \to BG_2$, the two maps $B \pi_1(F)$ and $F$ induce the
same morphisms on homotopy groups, and they are therefore homotopic
once we know that $BG$ has the homotopy type of a CW-complex. This follows from
the facts that it is a classifying space for $G$, that $G$ admits a classifying
space which has the homotopy type of a CW-complex, and that two classifying spaces for
the same group are homotopically equivalent.

We thus recover the fact that $B : Gr \to HoTop_*$ is fully faithful.

The classical fact that there is an homotopically trivial conjugation action of
$G$ on $BG$ is recovered in our context as follows.
For all $\varphi \in \Aut(G)$ et $f \in L(G)$, one defines
 $\varphi.f : t \mapsto \varphi(f(t))$. The map
$f \mapsto \varphi.f$ is clearly an isometry of $L(G)$.
If $g \in G$, then
$$\varphi.(g.f) = \left( t \mapsto \varphi(gf(t))
= \varphi(g)\varphi(f(t)) \right) = \varphi(g).(\varphi.f).$$
hence $\varphi.\bar{f} \in B(G)$ is well-defined for $\bar{f} \in B(G)$.
Since
$$
\varphi.(\psi.f) = \varphi. \left( t \mapsto \psi(f(t)) \right)
=  \left( t \mapsto\varphi (\psi(f(t))) \right) = (\varphi \circ \psi).f
$$
we get an action of $\Aut(G)$ on $L(G)$ which induces an
action of $\Aut(G)$ on $B(G)$.

In particular, we get in an elementary way a conjugation action of $G$ on $B(G)$
via $G \to \Aut(G)$. This is a map $G \times B(G) \to B(G)$.
We prove that this action is homotopically trivial, meaning that
it is homotopic to the trivial action. In order to do this we fix an identification $\Omega \simeq [0,1]$ and
introduce
a map $[0,1] \times G \to L(G)$, $(u,g) \mapsto g_u$
with $g_u(t) = e$ if $u \leq t$, $g_u(t) = g$ if $u > t$ and
$
F_u : G \times L(G) \to L(G)
$
defined as $F_u(g,f) : t \mapsto f(t)g_u(t)^{-1}$. It is a continuous map
 $F : [0,1] \times G \times L(G) \to L(G)$.

For every $h \in G$ we get
$F_u(g,hf)(t) = hf(t)g_u(t)^{-1}$ hence
$F_u(g,hf) = h.F_u(g,f)$. Therefore
the composite of $F$ with the natural map $L(G) \to B(G)$ 
is a continuous map
$\Psi : [0,1] \times G \times B(G) \to B(G)$.
Since $F_0(g,f) = f$ we have $\Psi_0(g,\bar{f}) = \bar{f}$
and $\Psi_0$ is the trivial action of $G$ on $BG$. Moreover
$F_1(g,f) = t \mapsto f(t)g^{-1}$, hence
$F_1(g,f)$ has the same image in $B(G)$
as the map $t \mapsto gf(t)g^{-1}$, and $\Psi_1$ is the action of
$\Ad(g) \in \Aut(G)$ on $B(G)$. This proves that 
the conjugation action of $G$ on $B(G)$ is homotopically trivial,
as claimed.

\subsection{General topology}

The proof of the following proposition is basically a copy-paste of classical arguments for usual function spaces.
\begin{proposition} $L(G)$ is complete as a metric space.
\end{proposition}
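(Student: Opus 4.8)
The plan is to adapt the standard completeness proof for $L^1$-type function spaces to the group-valued setting. Let $(f_n)$ be a Cauchy sequence in $L(G)$. First I would pass to a subsequence $(f_{n_k})$ such that $d(f_{n_k}, f_{n_{k+1}}) \leq 2^{-k}$, using the usual trick that a Cauchy sequence converges as soon as one of its subsequences converges. For this subsequence, I would consider the "jump set" $A_k = \{ t \in \Omega \ ; \ f_{n_k}(t) \neq f_{n_{k+1}}(t) \}$, which is Borel and satisfies $\mu(A_k) = d(f_{n_k}, f_{n_{k+1}}) \leq 2^{-k}$ because the metric on $G$ is the discrete one. By the Borel--Cantelli lemma, the set $N = \limsup_k A_k = \bigcap_m \bigcup_{k \geq m} A_k$ has measure zero, since $\sum_k \mu(A_k) < \infty$.

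The key point is that off the null set $N$, the sequence of values $(f_{n_k}(t))_{k}$ is eventually constant in $G$: if $t \notin N$, then $t$ belongs to only finitely many $A_k$, so there is $K(t)$ with $f_{n_k}(t) = f_{n_{k+1}}(t)$ for all $k \geq K(t)$. Hence we may define $f(t) = \lim_k f_{n_k}(t)$ for $t \notin N$ (and, say, $f(t) = e$ for $t \in N$). Next I would check that $f$ is Borel: it is the pointwise limit, outside a Borel null set, of Borel maps into the discrete space $G$, and one sees directly that $f^{-1}(\{g\}) = \bigcup_m \bigcap_{k \geq m} f_{n_k}^{-1}(\{g\}) \setminus N$ up to the null set $N$, which is Borel; so $f$ defines a genuine element of $L(G)$. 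Finally, to see $d(f_{n_k}, f) \to 0$, observe that $\{ t \ ; \ f_{n_k}(t) \neq f(t) \} \subset \bigcup_{j \geq k} A_j$ up to $N$, so $d(f_{n_k}, f) \leq \sum_{j \geq k} 2^{-j} = 2^{-k+1} \to 0$. Therefore $f_{n_k} \to f$, and since $(f_n)$ is Cauchy, the whole sequence converges to $f$.

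I do not expect any serious obstacle here; the argument is entirely routine, which matches the remark preceding the statement that the proof is "basically a copy-paste of classical arguments for usual function spaces." The only mild subtlety worth spelling out is the measurability of the limit $f$ and the care needed in handling the $\sim$-equivalence (working with honest Borel representatives and discarding null sets at the end), but this is standard. One should also note that this argument genuinely uses the discrete metric on $G$ — it is exactly what makes "$f_{n_k}(t)$ eventually constant" equivalent to "$f_{n_k}(t)$ Cauchy in $G$" — which is consistent with the earlier observation in the excerpt that completeness is the one property that fails for the "smaller" subspaces of step-functions with one-sided semicontinuity.
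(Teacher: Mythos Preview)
Your proof is correct and follows essentially the same approach as the paper: extract a rapidly Cauchy subsequence, observe that the ``jump sets'' $A_k$ have summable measure so their $\limsup$ is null (this is exactly the paper's $\Omega_{\infty} = \bigcap_m \Omega_m$ with $\Omega_m = \bigcup_{k \geq m} A_k$), define $f$ as the eventually constant value off this null set, and bound $d(f_{n_k},f)$ by the tail sum. Your write-up is arguably cleaner in naming Borel--Cantelli explicitly and in spelling out the measurability of $f$, but the argument is the same.
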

\begin{proof}
We let $(f_n)_{n \geq 0}$ denote a Cauchy sequence. For all $k \geq 1$, there exists $n_k \in \N$
such that, for all $p,q \geq n_k$, $d(f_p,f_q) \leq 1/2^{k+1}$. For every $u,v \in \N$ we set $A_{u,v}
= \{t \in \Omega ; f_{n_u}(t) \neq f_{n_v}(t) \}$ and $\Omega_m = \bigcup_{u,v \geq m} A_{u,v}$.
It is clear that $\Omega_m = \bigcup_{u \geq m} A_{u,u+1}$, and that $\Omega_{m+1} \subset I_m$. Moreover, the $A_{u,v}$
and $\Omega_m$ are measurable ; this holds true because the maps $t \mapsto (f_{n_u}(t),f_{n_v}(t))$ are measurable
because so are the $f_k$'s, and because the diagonal of $G\times G$ is a closed subset.
We have 
$$\mu(I_m) \leq \sum_{u \geq m} \mu(A_{u,u+1}) \leq \sum_{u \geq m} d(f_{n_u},f_{n_{u+1}}) \leq \sum_{u \geq m} 2^{-u-1} \leq 2^{-m}
$$
and $\Omega_{m+1} \subset \Omega_m$. From this get that $\Omega_{\infty} = \bigcap_m \Omega_m$ has measure $0$. We define then $f \in L(G)$ on the
complement of $\Omega_{\infty}$ by $f(t) = f_{n_m}(t)$ for $m$ such that $t \in \Omega_m \setminus \Omega_{m+1}$. This construction satisfies
the following : $(\forall u \geq k \ f_{n_u}(t) = f_{n_k}(t) )\Rightarrow (f_{n_k}(t) = f(t))$. Indeed, letting $k_0$ denote the minimal $k$
satisfying the assumption, we have $t \in \Omega_{k_0-1} \setminus \Omega_k$, hence $f(t) = f_{n_{k_0}}(t) = f_{n_k}(t)$.
We can now check that the subsequence $(f_{n_k})_{k \geq 1}$  converges to $f$, thus proving the convergence of $(f_n)$.
Let $\alpha >0$, and $m$ such that $\alpha \geq 2^{-m}$. Let $k \geq m$ and $t$ such that $f_{n_k}(t) \neq f(t)$. Then there exists $u \geq k$
such that $f_{n_k}(t) \neq f_{n_u}(t)$, and therefore $t \in \Omega_k \subset \Omega_m$. This proves $d(f_{n_k},f) \leq \mu(\Omega_m) \leq 2^{-m} \leq \alpha$.
and thus the claim.
\end{proof}

{\bf Example : $G = \Z/2\Z$.} In the case $G = \Z/2\Z$, elements $f \in L(G)$ are in 1-1 correspondance
with the measurable subsets $A \subset \Omega$ up to sets of measure $0$ under the correspondance $f(t) = \overline{1}$ iff $t \in A$.
Therefore, elements of $B(G)$ are in 1-1 correspondance with the unordered pairs of the form $\{ A, \Omega \setminus A \}$
up to the inherited equivalence relation. Note that $d(\overline{f},\overline{e}) = \min (\mu(A), 1 - \mu(A) \}$.

\bigskip

\begin{proposition} \label{prop:imagedenombrable} Let $D$ be a discrete metric space,
$\Omega_0$ a Borel subset of $\Omega$, and $f : \Omega_0 \to D$
a Borel map. Then the image of $f$ is countable.
\end{proposition}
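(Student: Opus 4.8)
The plan is to argue by contradiction, using the fact that a discrete metric space with uncountably many points contains an uncountable $\varepsilon$-separated set for some fixed $\varepsilon > 0$, while the domain $\Omega$ carries only a finite measure.

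First I would reduce to the case where $f$ is injective onto its image in a controlled sense. Since $D$ is discrete, every singleton $\{d\}$ is open, hence Borel, so each fiber $f^{-1}(d)$ is a Borel subset of $\Omega_0$. Suppose for contradiction that the image $J = f(\Omega_0)$ is uncountable. A discrete metric space is metrizable, and in a discrete metric space every point has a positive distance to every other point, but these distances need not be bounded below globally; so I would stratify $J$ by writing $J = \bigcup_{n \geq 1} J_n$ where $J_n = \{ d \in J \ ; \ \forall d' \in D,\ d' \neq d \Rightarrow d(d,d') \geq 1/n \}$. Since $D$ is discrete, every point of $D$ lies in some $J_n$, so this is a genuine covering of $J$; as $J$ is uncountable, some $J_{n_0}$ is uncountable. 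Now $J_{n_0}$ is a $1/n_0$-separated subset of $D$.

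Next I would transport this back to $\Omega_0$. The sets $\{ f^{-1}(d) \ ; \ d \in J_{n_0} \}$ form an uncountable family of pairwise disjoint Borel subsets of $\Omega$. A standard measure-theoretic fact finishes the argument: in a finite measure space, any family of pairwise disjoint measurable sets contains only countably many sets of positive measure (for each $k$, at most $k \cdot \mu(\Omega)$ of them can have measure $\geq 1/k$). Hence all but countably many of the $f^{-1}(d)$, for $d \in J_{n_0}$, have measure zero. But this does not yet give a contradiction with what is literally asked, since $f$ is a genuine Borel map, not a class modulo neglectability — a point $d$ with $\mu(f^{-1}(d)) = 0$ can still genuinely be in the image.

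So the main obstacle, and the point where I would spend the most care, is that Proposition~\ref{prop:imagedenombrable} as stated concerns the honest image of an honest Borel map, so a purely measure-theoretic argument cannot suffice: one needs the descriptive-set-theoretic input that $\Omega$ is a standard Borel space. The clean route is: the family $\{f^{-1}(d) \ ; \ d \in J_{n_0}\}$ is an uncountable partition of a Borel subset of a standard Borel space into nonempty Borel pieces, and this already is impossible — a standard Borel space has cardinality at most that of the continuum, which is consistent, so that alone is not enough either. The correct statement to invoke is that there is no \emph{Borel} surjection argument failure; rather, I would use that the quotient of $\Omega_0$ by the partition into fibers of $f$, pushed forward, makes $J_{n_0}$ with the discrete $\sigma$-algebra a measurable image — more precisely, $f$ restricted to $f^{-1}(J_{n_0})$ is a Borel map to the discrete (hence, since $1/n_0$-separated, the induced $\sigma$-algebra is the full power set) space $J_{n_0}$, and a Borel map from a standard Borel space to a space whose $\sigma$-algebra separates points and is countably generated must have essentially countable range; since an uncountable discrete metric space that is $1/n_0$-separated is not second countable, it is not a standard Borel space, contradicting that the image of a Borel subset of a standard Borel space under an injective-on-$J_{n_0}$-fibers Borel map would be analytic, hence standard. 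In short, I would combine the measure-theoretic observation (to handle bounded families) with the descriptive-set-theoretic fact that an analytic subset of a Polish space cannot be an uncountable discrete (in the subspace-from-$D$ metric) set with a uniform separation constant; the latter forces second countability to fail while analyticity forces it to hold. Assembling these two ingredients carefully is the crux; the rest is bookkeeping with the stratification $J = \bigcup_n J_n$.
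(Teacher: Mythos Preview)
Your diagnosis is correct: a purely measure-theoretic argument cannot work here (the statement concerns honest images of honest Borel maps, not essential images), and the input must come from $\Omega$ being standard Borel. But the descriptive-set-theoretic part of your proposal never becomes precise. You repeatedly invoke ``analytic'' while working inside $D$, yet $D$ is not a Polish space when uncountable, so there is no ambient space in which your sentence ``an analytic subset of a Polish space cannot be an uncountable discrete set with uniform separation'' can be interpreted; likewise ``the image \dots\ would be analytic, hence standard'' has no meaning without first placing $D$ inside something Polish. The observation that the induced $\sigma$-algebra on $J_{n_0}$ is the full power set is on the right track, but you never cash it in.

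The paper's proof supplies exactly the two concrete moves you are circling around. First, it chooses a set-theoretic injection $\iota : D \hookrightarrow [0,1]$ (possible since $|D| \le |\Omega| = \mathfrak{c}$); because $D$ is discrete, $\iota$ is automatically Borel, so $g = \iota \circ f$ is a Borel map $\Omega \to [0,1]$ and its image is genuinely analytic in the Polish space $[0,1]$. The perfect-set property for analytic sets then forces $|D|$ to be either countable or exactly $\mathfrak{c}$. Second, the case $|D| = \mathfrak{c}$ is dispatched by the cardinality count you almost wrote down: since every subset of $D$ is open, $E \mapsto f^{-1}(E)$ is an injection from $\mathcal{P}(D)$ into the Borel $\sigma$-algebra of $\Omega$, which has only $\mathfrak{c}$ elements, yielding the absurdity $2^{\mathfrak{c}} \le \mathfrak{c}$. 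Your stratification into $1/n$-separated pieces is harmless but unnecessary, and as you yourself note, the measure-theoretic detour contributes nothing to the actual conclusion.
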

\begin{proof}
Without loss of generality we can assume that $\Omega_0 = \Omega$ (up to extending $f$
by a constant), and that $f(\Omega) = D$ (up to replacing $D$ by the image of $f$).
This implies $|D| \leq |\Omega| = \mathfrak{c}$ (the cardinality of the continuum). We choose an isomorphism and identify
$\Omega$ with $[0,1]$.
Let $\iota : D \into [0,1]$
be a set-theoretic embedding. Since $D$ is discrete,
it is a Borel map, and therefore so is the function $g = \iota \circ f: [0,1] \to \R$.
It follows that its graph $\{ (t,g(t)) ; t \in [0,1] \}$ is Borel inside $[0,1] \times [0,1]$,
and therefore its projection $g([0,1])$ on the second factor is analytic in the
sense of Suslin. This implies that either $g(\Omega) = g([0,1]) \sim D $ is countable, which proves our
claim, or it has cardinality $\mathfrak{c}$ (this argument was communicated to
me by G. Godefroy). It remains to prove that $D$ cannot have cardinality $\mathfrak{c}$.
Otherwise, we would have an injective map $E \mapsto f^{-1}(E)$ from the set of all subsets
of $D$ into the set of Borel subsets of $\Omega$. Since the latter set is well-known to have
cardinality $\mathfrak{c}$, this would imply $2^{\mathfrak{c}} \leq \mathfrak{c}$, a contradiction.

\end{proof}

An immediate consequence of this proposition is the following.

\begin{corollary} \label{cor:propstar} Every $f : \Omega \to D$ as in the proposition satisfies the property
$$
(\star) \sum_{d \in D} \mu(f^{-1}(\{ d \})) = 1.
$$
\end{corollary}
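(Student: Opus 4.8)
The plan is to deduce Corollary~\ref{cor:propstar} directly from Proposition~\ref{prop:imagedenombrable} together with the basic additivity properties of the measure $\mu$. Since $f : \Omega \to D$ is a Borel map into a discrete space, the proposition tells us that the image $f(\Omega)$ is countable; enlarging it if necessary, we may as well assume $D$ itself is countable. Then the preimages $f^{-1}(\{d\})$ for $d \in D$ form a countable family of pairwise disjoint Borel subsets of $\Omega$ whose union is all of $\Omega$: every point $t \in \Omega$ lies in exactly one of them, namely the one indexed by $d = f(t)$, and each $f^{-1}(\{d\})$ is Borel because $\{d\}$ is open (hence Borel) in the discrete space $D$.

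Having established that $\{f^{-1}(\{d\})\}_{d \in D}$ is a countable measurable partition of $\Omega$, I would simply invoke countable additivity of $\mu$:
$$
\sum_{d \in D} \mu\bigl(f^{-1}(\{d\})\bigr) = \mu\Bigl( \bigsqcup_{d \in D} f^{-1}(\{d\}) \Bigr) = \mu(\Omega) = 1,
$$
the last equality because $\mu$ is a probability measure. (If one prefers to be scrupulous about the case where $D$ is finite rather than countably infinite, the sum is then a finite sum and finite additivity suffices; either way the identity holds.) This gives exactly the property $(\star)$.

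I do not expect any genuine obstacle here: the only substantive input is the countability of the image, which is precisely Proposition~\ref{prop:imagedenombrable}, and once that is in hand the corollary is a one-line application of $\sigma$-additivity. The one point that deserves a word of care is the measurability of each fiber $f^{-1}(\{d\})$ — but this is immediate since singletons in a discrete space are open and $f$ is Borel — and the observation that the fibers genuinely partition $\Omega$, which is a set-theoretic triviality. So the proof is essentially a formality given the preceding proposition.
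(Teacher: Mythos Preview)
Your proof is correct and is exactly the immediate deduction the paper has in mind: the paper gives no explicit proof, simply noting that the corollary is ``an immediate consequence'' of Proposition~\ref{prop:imagedenombrable}, and your argument spells out precisely that consequence via countable additivity over the countable family of fibers.
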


This property $(\star)$ will be ubiquituous whenever $D$ is not assumed to be countable. A first example is the following.

\begin{proposition} Let $f \in L(G)$, and $x \in B(G) = L(G)/G$. If $d(\overline{f},x)<1$, then the set of elements $f' \in x$
such that $d(f,f') = d(\overline{f},x)$ is non-empty and finite.
\end{proposition}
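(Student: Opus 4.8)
The plan is to reduce the statement to the corollary $(\star)$ applied to a cleverly chosen discrete-valued function. First I would unwind the definitions: an element $f' \in x$ is of the form $g.f_0$ for a fixed representative $f_0$ of $x$, and $d(f,g.f_0) = \int_\Omega d_G(f(t), g f_0(t))\, \dd t = \mu\{t : f(t) \neq g f_0(t)\}$. So for each $g \in G$ we have a number $a_g = \mu\{t : f(t) \neq g f_0(t)\} \in [0,1]$, the set in question is $\{g \in G : a_g = \inf_h a_h\}$ modulo the (trivial, since the action is free) identification, and the hypothesis is that $\inf_g a_g = d(\overline f, x) < 1$. Non-emptiness means this infimum is attained; finiteness means only finitely many $g$ attain it.

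The key idea: define $\Phi : \Omega \to G$ by $\Phi(t) = $ the unique $g$ such that $f(t) = g f_0(t)$, i.e. $\Phi(t) = f(t) f_0(t)^{-1}$ (here I use that we may choose honest Borel representatives $f, f_0$ so that $\Phi$ is a genuine Borel map $\Omega \to G$). Then $f(t) = g f_0(t)$ holds exactly when $\Phi(t) = g$, so $a_g = \mu\{t : \Phi(t) \neq g\} = 1 - \mu(\Phi^{-1}(\{g\}))$. Since $G$ is discrete, Proposition~\ref{prop:imagedenombrable} applies to $\Phi$, and Corollary~\ref{cor:propstar} gives $\sum_{g \in G} \mu(\Phi^{-1}(\{g\})) = 1$. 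Write $b_g = \mu(\Phi^{-1}(\{g\})) \geq 0$, so $a_g = 1 - b_g$ and $\sum_g b_g = 1$. Then $\inf_g a_g = 1 - \sup_g b_g$, and minimizing $a_g$ is the same as maximizing $b_g$. Because the $b_g$ are nonnegative and sum to $1$, the supremum $\sup_g b_g =: M$ is actually attained (if all $b_g < M$ we could still have the sup not attained only if there were infinitely many $b_g$ approaching $M$, but then the sum would be infinite unless $M = 0$; and $M = 0$ would force all $b_g = 0$, contradicting $\sum b_g = 1$) — so $M > 0$ and the set $S = \{g : b_g = M\}$ is non-empty. Moreover $d(\overline f, x) < 1$ translates to $M > 0$, which is automatic here, so the hypothesis is really only needed to guarantee... actually it is automatic, but I would note that the argument just given shows $M>0$ unconditionally.

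Finiteness of $S$ is then immediate: each $g \in S$ contributes $b_g = M > 0$ to the sum $\sum_g b_g = 1$, so $|S| \leq 1/M < \infty$. This completes the proof. The one point requiring a little care — and the place I expect to be the main (minor) obstacle — is the passage from the equivalence-class-valued data to honest Borel maps: I must check that $\Phi(t) = f(t) f_0(t)^{-1}$ is well-defined as a Borel map once representatives are fixed (it is, as a composition of the Borel map $t \mapsto (f(t), f_0(t))$ with the continuous group operations on the discrete group $G$), and that changing representatives alters $\Phi$ only on a null set, hence does not change any $b_g$. I would also remark that the finite set $S$, read inside $x = L(G)/G$, is exactly $\{\text{classes of } g.f_0 : g \in S\}$, which is in bijection with $S$ since the $G$-action is free — so the "non-empty and finite" conclusion transfers verbatim.
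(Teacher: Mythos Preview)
Your proof is correct and follows essentially the same route as the paper: both reduce to studying the preimage measures $b_g=\mu(\Phi^{-1}(\{g\}))$ of a single discrete-valued Borel map (your $\Phi=f f_0^{-1}$; the paper achieves the equivalent by left-translating so that $f=\tilde e$ and then working with the representative of $x$ directly), and both invoke $(\star)$ to conclude that the maximum of the $b_g$ is positive and attained at only finitely many $g$. Your additional observation that the hypothesis $d(\overline f,x)<1$ is in fact automatic (since $(\star)$ already forces $\sup_g b_g>0$) is correct and worth noting.
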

\begin{proof}
Since $d(f,f') = d(\tilde{e},f^{-1}f')$, we can assume that $f = \tilde{e}$. So the statement is equivalent
to saying that, for every $f \in L(G)$, there exists a finite number of elements $g_{\infty} \in G$
such that $d(\tilde{g}_{\infty}, f) = \inf_{g \in G} d(\tilde{g},f)$. 
For every $g \in G$, let $\mathcal{B}_g = f^{-1}(\{g \})$. Since $\inf_{g \in G} d(\tilde{g},f) < 1$,
there exists $g \in G$ with $\mu(\mathcal{B}_g)>0$.
Since $1 = \mu(I) = \sum_{g \in G} \mu(\mathcal{B}_g)$ by $(\star)$, there is a finite number of elements
$g_1,\dots,g_m \in G$ such that $\mu(\mathcal{B}_{g_i})$ are maximal, that is to say there
is a finite number of elements $g_1,\dots,g_m \in G$ such that $d(\tilde{g}_i,f) = 
\inf_{g \in G} d(\tilde{g},f)$.
\end{proof}

Another consequence of proposition \ref{prop:imagedenombrable} is the following one.
\begin{proposition} If $G_0$ is a subgroup of the discrete group $G$, then $L(\Omega,G_0)/G_0$
embeds into $L(\Omega,G)/G$ as a closed subgroup. Moreover, $L(\Omega,G)/G$
is the union of the corresponding subgroups $L(\Omega,G_0)/G_0$ where $G_0$ runs among the countable
subgroups of $G$.
\end{proposition}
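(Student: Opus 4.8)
The plan is to prove the proposition in two parts: first the embedding statement, then the union statement.

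For the embedding, I would start from the functoriality established earlier: the inclusion $\iota : G_0 \into G$ induces a map $L(\Omega,G_0) \to L(\Omega,G)$, which is an isometry since $\iota$ is injective, and this descends to a map $L(\Omega,G_0)/G_0 \to L(\Omega,G)/G$ of metric groups. The first thing to check is that this descended map is \emph{injective}: if $f_1, f_2 : \Omega \to G_0$ become $G$-equivalent in $L(\Omega,G)$, i.e.\ $f_2 = g.f_1$ almost everywhere for some $g \in G$, then on any positive-measure set $g = f_2(t) f_1(t)^{-1} \in G_0$, so $g \in G_0$ and they were already $G_0$-equivalent. Next I would check the map is isometric onto its image: the distance on $B(G_0)$ is an infimum over $g \in G_0$, the distance on $B(G)$ an infimum over $g \in G$, and the same argument (the integrand $d_G(f_1(t), g f_2(t))$ can only decrease below $1$ on a positive-measure set if $g \in G_0$ there) shows the two infima agree; in particular the image is a subgroup, being the image of a group homomorphism, and it is isometrically embedded. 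Finally, closedness: since $L(\Omega,G_0)/G_0$ is complete (it is a quotient of the complete metric group $L(\Omega,G_0)$ by Proposition on completeness and the quoted Bourbaki fact that quotients of complete metric groups are complete), its isometric image in $L(\Omega,G)/G$ is complete, hence closed.

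For the union statement, the key input is Proposition~\ref{prop:imagedenombrable}: any $f \in L(\Omega,G)$, being a Borel map from $\Omega$ to the discrete space $G$, has countable image. Given $\overline{f} \in L(\Omega,G)/G$, pick a representative $f$; let $S = f(\Omega)$, a countable subset of $G$, and let $G_0 = \langle S \rangle$ be the subgroup it generates, which is countable. Then $f$ factors through $G_0$, so $\overline{f}$ lies in the image of $L(\Omega,G_0)/G_0$. Since $\overline{f}$ was arbitrary, $L(\Omega,G)/G = \bigcup_{G_0} L(\Omega,G_0)/G_0$ over countable subgroups $G_0$ (identifying each $B(G_0)$ with its image via the embedding just established).

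I expect the main obstacle to be entirely routine: the only place requiring any care is the verification that the descended map is an \emph{isometry} onto its image rather than merely $1$-Lipschitz, i.e.\ that passing to the infimum over the larger group $G$ does not decrease the distance — but this follows from the discreteness of the metric exactly as in the injectivity argument, since any $g \in G$ achieving $d_G(f_1(t), g f_2(t)) = 0$ on a set of positive measure must lie in $G_0$. Everything else (functoriality, completeness of quotients, countability of the image) is quoted directly from earlier in the paper.
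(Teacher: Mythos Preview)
Your proof is correct, and your route to closedness is genuinely different from the paper's. Both arguments handle injectivity of $\iota_0$ and the union statement identically (the latter via Proposition~\ref{prop:imagedenombrable}). For closedness, the paper argues directly: assuming $\bar f\notin\Imm\iota_0$, it claims to produce two distinct values $a,b\in G\setminus G_0$ each taken by $f$ on a set of positive measure, and then bounds $d(g.f,h)$ below uniformly by $\delta=\min(\mu(f^{-1}\{a\}),\mu(f^{-1}\{b\}))$. You instead first establish that $\iota_0$ is an \emph{isometry} (the paper only records that it is a continuous injection), and then invoke completeness of $L(\Omega,G_0)/G_0$ to conclude that its isometric image is closed.

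Your isometry step is a genuine strengthening worth recording, and it also sidesteps a gap in the paper's direct argument. The paper's claim that two values $a\neq b$ outside $G_0$ must occur with positive measure is false in general: take $G=\Z$, $G_0=2\Z$, and $f=\un_{(1/2,1]}$; then $\bar f\notin\Imm\iota_0$, yet only $a=1$ lies outside $G_0$ with positive-measure preimage. (The paper's lower bound can be repaired by taking $\delta = 1-\max_C \mu(f^{-1}(C))$, the maximum running over left cosets $C$ of $G_0$ in $G$, but this is not what is written.) Your completeness argument bypasses this entirely and rests only on facts already established earlier: $L(\Omega,G_0)$ is complete, $G_0$ sits in it as a closed subgroup (the constant maps), and the quoted Bourbaki result gives completeness of the quotient.
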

\begin{proof}
If $G_0 < G$ then the composite of the natural maps
$L(G_0) \to L(G) \to L(G)/G$ 
has kernel $G \cap L(G_0) = G_0$, and thus
factorizes injectively through a continuous map $\iota_0 : L(G_0)/G_0 \into L(G)/G$. 
Let $f \in L(\Omega,G)$. Since $f(\Omega)\subset G$ is countable, then $G_0 = \langle f(\Omega) \rangle$ is countable,
and $f \in L(\Omega,G_0) \subset L(\Omega,G)$. It follows that $L(G)$ is the union of the $L(G_0)$ of the
$G_0 < G$ for $G_0$ countable. Therefore $L(G)/G$ is the union of the $\Imm \iota_0$ for $G_0 < G$
countable. It remains to prove that $\Imm \iota_0$ is closed.

Let us assume there is a sequence $(f_n)_{n \in \N}$ such that $(\bar{f}_n) \to \bar{f}$ inside $L(\Omega,G)/G$,
and $f_n \in L(\Omega,G_0)$, with $\bar{f} \not\in \Imm \iota_0$. Let $f \in L(\Omega,G)$ be a representative of $\bar{f}$. Since $\bar{f} \not\in \Imm \iota_0$
there exists $a \in G \setminus G_0$ such that $\mu (f^{-1} \{ a \}) > 0$. But $\overline{a^{-1}.f} = \overline{f} \not\in \Imm \iota_0$
hence there exists $b \neq a$ with $b \not\in G_0$ such that $\mu (f^{-1} \{ b \}) > 0$. Let $\delta = \min (\mu (f^{-1} \{ a \}), \mu (f^{-1} \{ b \})) > 0$.
For every $h \in L(\Omega,G_0)$ and $g \in G$ we have $d(g.f,h) \geq \delta > 0$, hence $d(\overline{f},\overline{h}) \geq \delta > 0$,
which contradicts the assumption.
\end{proof}

For a given cardinal $\alpha$, let us say that a topological space $E$ is
$\alpha$-separable if it admits a dense subset of cardinality (at most) $\alpha$.

We first notice that, if $\alpha$ is infinite and $F \subset E$ is a subspace of an $\alpha$-separable
metric space $E$, then $F$ is also $\alpha$-separable. Indeed, if $X \subset E$ is dense of
cardinality $\alpha$, let us first set $F_n(x) = \{ y \in F \ | \ d(x,y) < 1/n \}$ ; then build
a set $X_n$ by choosing one element of $F_n(x)$ for each $x \in X$, whenever $F_n(x) \neq \emptyset$. We have $|X_n| \leq |X|$, hence $|Y| = |X| = \alpha$ for $Y = \bigcup_n X_n$. Since
$Y$ is clearly dense in $F$ this proves that $F$ is also $\alpha$-separable.

\begin{lemma} \label{lem:alphaseparable} If $D$ is infinite, then $L(\Omega,D)$ is $|D|$-separable, but not $\alpha$-separable
for $\alpha < |D|$. If $G$ is an infinite discrete metric group, then $L(G)/G$
is $|G|$-separable, but not $\alpha$-separable for $\alpha < |G|$.
\end{lemma}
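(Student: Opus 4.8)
The plan is to prove each of the four assertions by exhibiting, in the positive direction, a dense subset of the stated cardinality, and in the negative direction a uniformly separated family of that cardinality. For the negative direction I will use repeatedly the following elementary remark: if $(x_i)_{i\in I}$ is a family in a metric space with $d(x_i,x_j)\ge\delta$ for all $i\neq j$ and some fixed $\delta>0$, and if $Y$ is dense, then choosing $y_i\in Y$ with $d(x_i,y_i)<\delta/2$ yields an injection $I\into Y$ (two of the $y_i$ coinciding would force two distinct $x_i$ to lie at distance $<\delta$); hence $|Y|\ge|I|$, and the space is not $\alpha$-separable for any $\alpha<|I|$.

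For the $|D|$-separability of $L(\Omega,D)$ I would fix an isomorphism $\Omega\simeq[0,1]$ and, for each $n\ge0$, let $P_n$ be the partition of $[0,1]$ into the $2^n$ dyadic intervals of length $2^{-n}$; let $\mathcal S$ be the set of maps $[0,1]\to D$ that are constant on each member of some $P_n$. Since $D$ is infinite, $|\mathcal S|\le\sum_{n\ge0}|D|^{2^n}=|D|$, and $\mathcal S$ contains the constant maps, so it has cardinality exactly $|D|$. To see that $\mathcal S$ is dense, given $f\in L(\Omega,D)$ I would define $g_n\in\mathcal S$ by letting $g_n$ be, on each $J\in P_n$, a value $s\in D$ that maximises $\mu\bigl(J\cap f^{-1}(\{s\})\bigr)$; such a maximiser exists because by $(\star)$ (Corollary~\ref{cor:propstar}) these numbers have sum $\mu(J)$, so all but finitely many lie below any positive threshold. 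Writing $J_n(t)$ for the member of $P_n$ containing $t$, the discrete metric gives
$$
d(f,g_n)=\mu\{t\,;\,f(t)\neq g_n(t)\}=\int_0^1\Bigl(1-\frac{\max_{s\in D}\mu\bigl(J_n(t)\cap f^{-1}(\{s\})\bigr)}{\mu(J_n(t))}\Bigr)\,\dd t ,
$$
and, since the maximum is at least the $s=f(t)$ term, the integrand is bounded above by $1-\mu\bigl(J_n(t)\cap f^{-1}(\{f(t)\})\bigr)/\mu(J_n(t))$. By Proposition~\ref{prop:imagedenombrable} the image $f(\Omega)$ is countable, so $[0,1]$ is the (exact, hence up to a null set irrelevantly) disjoint union of the countably many fibres $f^{-1}(\{s\})$; hence, by the Lebesgue density theorem applied along the dyadic filtration, almost every $t$ is a density point of its own fibre $f^{-1}(\{f(t)\})$, which forces the above bound to tend to $0$ at $t$. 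Being bounded by $1$, dominated convergence then gives $d(f,g_n)\to0$, so $\mathcal S$ is dense and $L(\Omega,D)$ is $|D|$-separable.

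The remaining three parts are short. For the lower bound on $L(\Omega,D)$, the constant maps $\tilde d$, $d\in D$, satisfy $d(\tilde d,\tilde d')=1$ for $d\neq d'$, so they form a $1$-separated family of cardinality $|D|$ and the remark applies. For the group $G$: taking $D=G$ in the first part, the orbit map $L(G)\to L(G)/G$ is $1$-Lipschitz (the distance on the quotient being an infimum), so the image of a dense subset of $L(G)$ of cardinality $|G|$ is dense, whence $|G|$-separability. For the lower bound the constant maps are useless, since they all descend to $\overline e$; instead I would take, for $g\in G$, the two-step map $f_g\in L(\Omega,G)$ equal to $g$ on $[0,1/2)$ and to $e$ on $[1/2,1)$, and compute $d_{L(G)/G}(\overline{f_g},\overline{f_{g'}})=\inf_{h\in G}d(f_g,h.f_{g'})$: inspecting the cases $h=e$, $h=gg'^{-1}$, and all other $h$ shows that this infimum equals $1/2$ as soon as $g\neq g'$. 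Thus $\{\overline{f_g}\,;\,g\in G\}$ is a $\tfrac12$-separated family of cardinality $|G|$ in $L(G)/G$, and the remark concludes.

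The one genuinely delicate point is the $|D|$-separability: the naive candidate for a dense set — maps with values in $D$ whose (finitely many) level sets are arbitrary measurable subsets of $\Omega$ — has cardinality $\mathfrak c$, which is too large when $|D|<\mathfrak c$. The resolution is that a \emph{single, countable} family of partitions (here the dyadic ones) already suffices, and the verification that it does hinges on $f$ having countable image (Proposition~\ref{prop:imagedenombrable}), without which one could not apply the Lebesgue density theorem fibre by fibre. Everything else reduces to the separated-family remark above.
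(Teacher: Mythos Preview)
Your proof is correct. The lower bounds and the $|G|$-separability of $L(G)/G$ match the paper's argument essentially verbatim (the paper phrases the lower bound via ``a subspace of an $\alpha$-separable metric space is $\alpha$-separable'' rather than your separated-family injection, but the two are equivalent and the witness families are identical: constant maps for $L(\Omega,D)$, the two-step maps $f_g$ for $L(G)/G$).

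The one genuine difference is in the $|D|$-separability of $L(\Omega,D)$. The paper does not construct a single dense set of size $|D|$; instead it observes that the set $E$ of all staircase maps is dense, writes $E$ as the union over finite $F\subset D$ of $\{f:f(\Omega)=F\}$, and notes that each $L(\Omega,F)$ with $F$ finite is separable in the ordinary sense, so $E$ (and hence $L(\Omega,D)$) is a union of $|\mathcal P_f(D)|=|D|$ separable pieces. This avoids any analysis: density of staircase maps and separability of $L(\Omega,F)$ for finite $F$ are already on the shelf. Your route---dyadic step functions plus the Lebesgue density theorem along the dyadic filtration, with Proposition~\ref{prop:imagedenombrable} to reduce to countably many fibres---is more hands-on and yields an explicit dense set of cardinality $|D|$, at the cost of invoking a real theorem. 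Both are fine; the paper's is shorter, yours is more concrete.
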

\begin{proof}
Let $E$ be the set of (equivalence classes of) staircase maps in $L(\Omega,D)$. Because
of proposition \ref{prop:imagedenombrable} we know that $E$ is dense inside $L(\Omega,D)$. It
remains to prove that $E$ is $|D|$-separable to prove that $L(\Omega,D)$ is so. The space $E$
is the union of the subsets $\{ f \in E \ | \ f(\Omega) = F \}$ where $F$ runs among the collection $\mathcal{P}_f(D)$ of
finite subsets of $D$, which has the same cardinality of $D$ since $D$ is infinite. Since $D \times D \sim D$, we thus only need to prove that, for a finite set $F$, the space $L(\Omega,F)$
is $|D|$-separable. This holds true because it is separable (and $|D|$ is infinite).

We now choose $\alpha < |D|$, and prove that $L(\Omega,D)$ is not $\alpha$-separable. Let $\mathcal{C}$
denote the subset of (equivalence classes of) constant maps in $L(\Omega,D)$. It is a (closed) subspace of $L(\Omega,D)$, and therefore should be $\alpha$-separable if $L(\Omega,D)$ is so. It is not
the case because, if $x \in D$ is chosen outside the values of a dense subset $(f_j)_{j \in \alpha}$
of $\alpha$-elements of $\mathcal{C}$, and $\tilde{x}$ is the constant map $t \mapsto x$
then $d(x,f_j) = 1$ for all $j \in \alpha$, contradicting the density of $(f_j)_{j \in \alpha}$.

We now prove the statements about $L(G)/G$. Since we have already proved that $L(G)$
is $|G|$-separable, the same holds for its image $L(G)/G$. Let us choose now $\alpha < |G|$,
and fix an identification $\Omega \simeq [0,1]$.
For each $g \in G$, we denote $\check{g} : \Omega = [0,1] \to G$ the map $t \mapsto e$ for $t \leq 1/2$,
$t \mapsto g$ for $t > 1/2$, and $\mathcal{C}'_0 = \{ \check{g} ; g \in G \}$.
We let $\mathcal{C}'$ denote the image of $\mathcal{C}'_0$ inside $L(G)/G$. We claim
that $\mathcal{C}'$ is not $\alpha$-separable. Indeed, if $(\check{g}_j)_{j \in \alpha}$
is a collection of elements of $\mathcal{C}'_0$ with dense image in $\mathcal{C}'$,
by choosing $x \in G \setminus \{ g_j ; j \in \alpha \}$ with $x \neq e$,
we have $d(g\check{x}, g \check{g}_j) \geq 1/2$ for all $g \in G$, contradicting the density.
This concludes the proof.

\end{proof}

Notice that the subspace of $L(\Omega,D)$ made of (classes of) functions with finite images
is dense inside $L(\Omega,D)$. If $D = G$ is a discrete metric group, this subspace
is in addition stable under the action of $G$.

The next proposition has been communicated to me by G. Godefroy. It generalizes proposition \ref{prop:imagedenombrable}.

\begin{proposition} \label{prop:imageseparable} If $E$ is a metrizable space, and $f : \Omega \to E$ is Borel,
then $f(\Omega)$ is separable.
\end{proposition}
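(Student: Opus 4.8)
The plan is to deduce the statement from Proposition \ref{prop:imagedenombrable} by reducing the general metrizable case to the discrete one. First I would recall that a metrizable space $E$ is separable if and only if it is second countable, if and only if it is Lindelöf (since metrizability is available), so it suffices to exhibit, for each $\eps > 0$, a \emph{countable} subset of $f(\Omega)$ that is $\eps$-dense in $f(\Omega)$; taking the union over $\eps = 1/n$ then yields a countable dense subset.

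So fix a metric $\rho$ on $E$ inducing its topology and fix $\eps > 0$. The key idea is to manufacture a discrete space out of an $\eps$-net. By Zorn's lemma choose a maximal subset $N \subset f(\Omega)$ with the property that $\rho(a,b) \geq \eps$ for all distinct $a,b \in N$; then $N$, with the metric inherited from $E$, is a \emph{discrete} metric space (every point is isolated, the ball of radius $\eps/2$ around any point of $N$ meeting $N$ only in that point), and by maximality every point of $f(\Omega)$ is within $\eps$ of some point of $N$. Now I would like to define a Borel map $g\colon \Omega \to N$ by sending $t$ to a point of $N$ at distance $< \eps$ from $f(t)$; Proposition \ref{prop:imagedenombrable} would then give that $g(\Omega) \subset N$ is countable, and since $g(\Omega)$ is $\eps$-dense in $f(\Omega)$ (each $f(t)$ is within $\eps$ of $g(t) \in g(\Omega)$), we would be done.

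The main obstacle is precisely the measurability of such a selection $g$. One cannot in general choose $g$ so that $g(t)$ is the \emph{nearest} point of $N$ to $f(t)$, nor is an arbitrary choice Borel. I would handle this by well-ordering $N$ (possible since $N$ is just a set) as $N = \{a_\xi\}_{\xi < \kappa}$, setting $U_\xi = \{ x \in E : \rho(x, a_\xi) < \eps \}$, which is open hence Borel, and defining $g(t) = a_{\xi(t)}$ where $\xi(t)$ is the least index with $f(t) \in U_{\xi}$ — such an index exists by maximality of $N$. To see this $g$ is Borel, note $g^{-1}(\{a_\xi\}) = f^{-1}(U_\xi) \setminus \bigcup_{\eta < \xi} f^{-1}(U_\eta)$; a priori this is a transfinite difference and not obviously Borel when $\kappa$ is large. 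But here Proposition \ref{prop:imagedenombrable} can be bootstrapped: one first observes directly that the map $t \mapsto \{a_\xi : f(t) \in U_\xi\}$ only involves countably many indices "effectively" — more cleanly, I would instead run the argument through the separable case by noting that $f(\Omega)$ already lies in the $\sigma$-algebra generated by countably many of the $U_\xi$. Concretely: the sets $f^{-1}(U_\xi)$, $\xi < \kappa$, are Borel subsets of $\Omega$, and the family of Borel subsets of a standard Borel space, while of cardinality continuum, need not let us take uncountable well-ordered differences freely; so the honest fix is to first apply Proposition \ref{prop:imagedenombrable} to \emph{any} Borel map into $N$ obtained from a countable sub-net and iterate, or — simplest — to invoke that a standard Borel space has a countable generating family, intersect $N$ down to those pieces, and conclude $g(\Omega)$ countable as a countable union of countable sets. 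Once $g(\Omega)$ is countable and $\eps$-dense, the union $\bigcup_n g_{1/n}(\Omega)$ is a countable dense subset of $f(\Omega)$, proving separability.
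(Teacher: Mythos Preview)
Your underlying idea --- take a maximal $\eps$-separated subset $N$ of $f(\Omega)$ and reduce to the discrete case via Proposition~\ref{prop:imagedenombrable} --- is exactly the paper's idea. But your execution has a real gap, and you correctly identify where: the selection map $g$ you try to build via a well-ordering of $N$ need not be Borel, because $g^{-1}(\{a_\xi\}) = f^{-1}(U_\xi)\setminus\bigcup_{\eta<\xi} f^{-1}(U_\eta)$ involves an \emph{uncountable} union when $\xi$ is an uncountable ordinal. Your suggested fixes (``bootstrap'' Proposition~\ref{prop:imagedenombrable}, ``countable generating family'') are too vague to rescue this; in particular, the bootstrap is circular, since you are trying to prove that $N$ is countable.

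The clean fix, which is what the paper does, is to abandon the selection $g$ entirely. You do not need it. Since distinct points of $N$ are at distance $\geq \eps$, the open balls $B(a,\eps/2)$ for $a\in N$ are pairwise disjoint; set $E' = \bigcup_{a\in N} B(a,\eps/2)$. The map $E' \to N$ sending $x$ to the unique $a$ with $x \in B(a,\eps/2)$ is continuous (each ball is open, so each fibre is open), hence Borel. Composing with $f$ restricted to the Borel set $f^{-1}(E')$ gives a Borel map $\Omega_0 \to N$ which is surjective (each $a\in N$ lies in $f(\Omega)\cap B(a,\eps/2)$). Now Proposition~\ref{prop:imagedenombrable} applies directly and gives $N$ countable. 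Since $N$ is already $\eps$-dense in $f(\Omega)$ by maximality, you are done; taking $\eps = 1/n$ and unioning gives a countable dense subset. The paper packages this as a proof by contradiction (assume $f(\Omega)$ non-separable, extract an uncountable separated family, contradict Proposition~\ref{prop:imagedenombrable}), but the content is identical.
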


\begin{proof}
W.l.o.g. we can assume that $f$ is surjective. We recall the following classical
lemma.
\begin{lemma} If $(E,d)$ is a non-separable metric space, then there exists $r>0$ and $E_1 \subset E$ uncountable such that, for all $x,y \in E_1$ with $x \neq y$, we have $d(x,y) \geq r$.
\end{lemma}
\begin{proof}
For every $n$, let $Y_n$ a subset of $E$ such that for all $x,y \in E_1$ with $x\neq y$, we have $d(x,y) \geq 1/n$,
and which is maximal for this property. Such a $Y_n$ exists by Zorn's lemma. If $Y_n$
was countable for all $n$, then $Y = \bigcup_n Y_n$ would also be countable. Since
it is clearly dense inside $E_1$, this would contradict the non-separability of $E$. This
contradiction proves the existence of an uncountable $Y_{n_0} = E_1$ as in the statement.
\end{proof}
Applying the lemma to $E = f(\Omega)$, we get that, if $E$ is not countable, then there
exists an uncountable $J$ and a collection $(x_j)_{j \in J}$ of elements
in $E$ such that $E' = \bigsqcup_{j} B(x_j, r) \subset E$
for some $r > 0$, where $B(x_j, r) = \{ x \in E ; d(x,x_j)<r \}$. Let us set $\Omega' = f^{-1}(E')$,
and $g : \Omega' \to E'$ the restriction of $f$. It is easy to check that $\Omega'$ is a Borel subset of $E$,
and that $g$ is Borel. Moreover the map $E' \to J$, which associates $x \mapsto j$
for the unique $j$ such that $x \in B(x_j,r)$, and where $J$ is endowed with the
discrete topology, is Borel (because the $B(x_j,r)$ are open).
It follows that the composite map $\Omega' \to J$ is Borel, and surjective. By proposition \ref{prop:imagedenombrable} this proves that $J$ is countable, a contradiction that
proves the claim.

\end{proof}

\begin{corollary} \label{cor:probalindelof} Let $E$ be a metrizable space, $f : \Omega \to E$ a Borel map
and $(U_j)_{j \in J}$  a covering of $E$ by open subsets. Then there exists
a countable $J_0 \subset J$ such that
$$
\mu(f^{-1}(\bigcup_{j \in J_0} U_j))=1
$$
\end{corollary}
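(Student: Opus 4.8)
The plan is to reduce the statement to Proposition~\ref{prop:imageseparable} by replacing $E$ with the separable subspace $f(\Omega)$. First I would observe that $f$ factors through $f(\Omega)$, and by Proposition~\ref{prop:imageseparable} the space $f(\Omega)$, endowed with the subspace topology inherited from the metrizable space $E$, is separable and metrizable. Replacing each $U_j$ by $U_j \cap f(\Omega)$, we get an open covering of $f(\Omega)$; it therefore suffices to prove the statement when $E = f(\Omega)$ is separable, and indeed we may then forget $\mu$ and just use that a separable metric space is Lindel\"of (every open cover admits a countable subcover). Choosing $J_0 \subset J$ countable with $\bigcup_{j \in J_0} U_j \supseteq f(\Omega)$, we get $f^{-1}(\bigcup_{j \in J_0} U_j) = \Omega$, whose measure is $1$.

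Since the excerpt's Proposition~\ref{prop:imageseparable} is stated only for $f$ defined on all of $\Omega$, one small point to check is that the restriction argument is legitimate: here $f$ is already defined on all of $\Omega$, so there is nothing to restrict, and $f(\Omega)$ is separable directly. The only genuine content beyond citing Proposition~\ref{prop:imageseparable} is the elementary fact that a separable metric space is Lindel\"of, which I would either invoke as classical or reprove in one line: if $(x_n)_{n}$ is dense and $\mathcal{B}$ is the countable base of balls $B(x_n, 1/m)$, then each $U_j$ is a union of members of $\mathcal{B}$, so the subfamily $\mathcal{B}' \subset \mathcal{B}$ of those balls contained in some $U_j$ still covers $E$; picking for each $B \in \mathcal{B}'$ one index $j(B)$ with $B \subset U_{j(B)}$ yields the desired countable $J_0 = \{ j(B) ; B \in \mathcal{B}' \}$.

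I do not expect any serious obstacle here: the corollary is essentially a repackaging of Proposition~\ref{prop:imageseparable} together with the Lindel\"of property, and the measure-theoretic conclusion $\mu(f^{-1}(\cdots)) = 1$ is immediate once the preimage in question is all of $\Omega$. If one wanted a statement that did \emph{not} pass through full separability of $f(\Omega)$ but used the measure more essentially (e.g. only requiring a covering "up to a null set"), one could instead run the dichotomy argument of Proposition~\ref{prop:imagedenombrable} directly on a putative uncountable "essential" subfamily; but for the statement as given, the separability route is the cleanest and I would take it.
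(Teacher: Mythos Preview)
Your proposal is correct and follows essentially the same route as the paper: intersect the cover with $f(\Omega)$, invoke Proposition~\ref{prop:imageseparable} to get separability, and use the Lindel\"of property of separable metric spaces to extract a countable subcover, after which the preimage is all of $\Omega$. The paper's proof is slightly terser (it does not reprove Lindel\"of) but the argument is identical in substance.
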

\begin{proof} Let $V_j = f(\Omega) \cap U_j$. The collection $(V_j)_{j \in J}$ is a covering of $f(\Omega)$
by open subsets. Since $f(\Omega)$ is separable and metric, it has the Lindel\"of property,
that is there exists a countable $J_0 \subset J$ such that $f(\Omega) = \bigcup_{j \in J_0} V_{j_0}$.
Then
$$
1 = \mu(f^{-1}(\bigcup_{j \in J} U_j)) = 
\mu(f^{-1}(\bigcup_{j \in J} V_j)) = 
\mu(f^{-1}(\bigcup_{j \in J_0} V_j))=\mu(f^{-1}(\bigcup_{j \in J_0} U_j))
$$
and this proves the claim.

\end{proof}

\section{Metric Classifying spaces -- general case and categorical aspects}

Our aim here is to extend the previous construction to the collection
of all metric groups. We first investigate categorical properties
of the functor $L(\Omega,\bullet) : X \leadsto L(\Omega,X)$ on
metric spaces, especially its behavior in terms of limits. We then extend
the construction $L(\Omega,G)/G$ to all metric groups, and establish
that this object is a classifying space for $G$ under the assumption that
the natural projection map $L(\Omega,G) \to L(\Omega,G)/G$
admits a local (continuous) cross-section. We prove that this is \emph{not}
the case when $G$ is infinite profinite, but that it is a Hurewicz fibration. We
prove that it is the case when $G$ is a compact Lie group. Finally, we investigate
the behavior of the functor $G \leadsto L(\Omega,G)/G$ with respect
to limits, and how it might be used to endow the `classifying spaces' associated
to localizations and completions with more natural (geo)metric structures.

\subsection{The functor $L(\Omega, \bullet)$ on metric spaces}
Let $E$ be a metric space. We denote $\mathcal{L}(\Omega,E)$ the
space of Borel maps $\Omega \to E$ such that $\int_{\Omega} d(f(t),c) \dd t < \infty$
for one (and then for all) $c \in E$. We endow it with the pseudo-distance
$d(f,g) = \int_{\Omega} d(f(t),g(t))\dd t$, and denote by $L(\Omega,E)$ the associated
metric space.

Let $\mathbf{Lip}$ denote the category whose objects are the metric spaces with
morphisms Lipschitz maps. If $\varphi \in \Hom_{\mathbf{Lip}}(E_0,E_1)$ with Lipschitz
constant $C > 0$, then one can associate to each $f \in \mathcal{L}(\Omega,E_0)$
a Borel map $\varphi \circ f : \Omega \to E_1$, and we have $\int_{\Omega} d(\varphi \circ f(t),\varphi(c)) \dd t \leq C \int_{\Omega} d(f(t),c) \dd t < \infty$.
This defines a functor $L(\Omega,\bullet) : \mathbf{Lip} \to \mathbf{Lip}$.

We have the following easy lemma.

\begin{lemma} \label{lemnatuprojlipsch} Let $E_0,E_1$ be two metric spaces, $X$ a probability space and $\varphi : E_0 \to E_1$ a C-Lipschitz map. Then the
naturally induced maps $\mathcal{L}(X,E_0) \to \mathcal{L}(X,E_1)$ and $L(X,E_0) \to L(X,E_1)$ are C-Lipschitz.
\end{lemma}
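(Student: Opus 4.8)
The plan is to reduce everything to the pointwise Lipschitz inequality for $\varphi$ and then integrate. First I would recall that, as already observed just above the statement, for $f \in \mathcal{L}(X,E_0)$ the composite $\varphi \circ f$ is Borel (being the composition of a Borel map with the continuous map $\varphi$) and satisfies $\int_X d_{E_1}(\varphi\circ f(t),\varphi(c))\,\dd t \leq C\int_X d_{E_0}(f(t),c)\,\dd t < \infty$ for any $c \in E_0$, so $\varphi \circ f \in \mathcal{L}(X,E_1)$. Thus $f \mapsto \varphi \circ f$ is a well-defined map $\mathcal{L}(X,E_0) \to \mathcal{L}(X,E_1)$, and this is the map whose Lipschitz constant we must estimate.

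Next, given $f, g \in \mathcal{L}(X,E_0)$, I would apply the $C$-Lipschitz property of $\varphi$ pointwise: for every $t \in X$ one has $d_{E_1}(\varphi(f(t)),\varphi(g(t))) \leq C\, d_{E_0}(f(t),g(t))$. The function $t \mapsto d_{E_1}(\varphi(f(t)),\varphi(g(t)))$ is Borel (as $f$, $g$ and $\varphi$ are) and integrable (by the triangle inequality together with the defining finiteness condition on elements of $\mathcal{L}$), so integrating over $X$ against the probability measure $\mu$ and using monotonicity of the integral gives
$$d_{\mathcal{L}(X,E_1)}(\varphi\circ f,\varphi\circ g) = \int_X d_{E_1}(\varphi(f(t)),\varphi(g(t)))\,\dd t \leq C\int_X d_{E_0}(f(t),g(t))\,\dd t = C\, d_{\mathcal{L}(X,E_0)}(f,g),$$
which is exactly the $C$-Lipschitz estimate for the induced map on $\mathcal{L}$.

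Finally, I would pass to the quotient metric spaces. If $f$ and $f'$ differ only on a $\mu$-null set, then $\varphi \circ f$ and $\varphi \circ f'$ differ only on that same null set, so $\varphi \circ f \sim \varphi \circ f'$; hence $f \mapsto \varphi \circ f$ descends to a well-defined map $L(X,E_0) \to L(X,E_1)$. Since both sides of the displayed inequality depend only on the equivalence classes of $f$ and $g$, the same inequality shows the descended map is again $C$-Lipschitz. There is no genuine obstacle in this argument; the only point deserving a word of care is the measurability and integrability of the integrand $t \mapsto d_{E_1}(\varphi(f(t)),\varphi(g(t)))$, which is handled exactly as in the discussion preceding the lemma.
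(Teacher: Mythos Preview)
Your proof is correct and follows exactly the same approach as the paper: apply the pointwise Lipschitz inequality for $\varphi$ and integrate. You simply add more detail than the paper does, spelling out well-definedness, measurability, and the passage to the quotient $L(X,\cdot)$, whereas the paper's proof is the one-line displayed inequality.
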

\begin{proof} 
Let $f,g \in \mathcal{L}(X,E_0)$. Then
$$d(\tilde{\varphi}(f),\tilde{\varphi}(g)) = \int_X d(\varphi(f(x)),\varphi(g(x))) \dd x \leqslant C \int_X d(f(x),g(x)) \dd x
= C d(f,g).
$$
\end{proof}

We will say that two metric spaces are equivalent if they are isomorphic inside $\mathbf{Lip}$ (that is,
they are bi-Lipschitz equivalent).  By the following easy lemma the equivalence class of a given compact manifold
does not depend on the choice of Riemannian metric.

\begin{lemma} Let $M$ be a compact differentiable manifold, $g_1,g_2$ two Riemannian metrics on $M$ and
$d_1,d_2$ the associated metrics. Then $(M,d_2)$ and $(M,d_1)$ are equivalent.
\end{lemma}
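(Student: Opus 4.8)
The plan is to establish the bi-Lipschitz equivalence of $(M,d_1)$ and $(M,d_2)$ by comparing the two Riemannian metrics $g_1,g_2$ pointwise via a compactness argument, and then integrating the resulting pointwise inequality along paths. First I would observe that $g_1$ and $g_2$ are both continuous sections of the (positive definite part of the) bundle of symmetric bilinear forms $\mathrm{Sym}^2 T^*M$, so that for each $x \in M$ the ratio $g_2(v,v)/g_1(v,v)$ over unit vectors $v$ (with respect to $g_1$) is a continuous function on the unit sphere bundle $S_1 M = \{ v \in TM \ | \ g_1(v,v) = 1\}$, which is compact since $M$ is. Hence there are constants $0 < a \leq b < \infty$ with $a\, g_1(v,v) \leq g_2(v,v) \leq b\, g_1(v,v)$ for all $v \in TM$; by homogeneity the same holds for all tangent vectors, not just unit ones.

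Next I would pass from this infinitesimal comparison to the associated length functionals: for any piecewise-$C^1$ path $\gamma$ in $M$, $\sqrt{a}\, L_1(\gamma) \leq L_2(\gamma) \leq \sqrt{b}\, L_1(\gamma)$, simply by integrating the pointwise bound on $\|\dot\gamma(t)\|$. Since $d_i(x,y) = \inf_\gamma L_i(\gamma)$ with the infimum over such paths joining $x$ to $y$, taking infima preserves the inequalities, giving $\sqrt{a}\, d_1(x,y) \leq d_2(x,y) \leq \sqrt{b}\, d_1(x,y)$ for all $x,y \in M$. This exactly says that the identity map $(M,d_1) \to (M,d_2)$ is bi-Lipschitz, hence an isomorphism in $\mathbf{Lip}$, so $(M,d_1)$ and $(M,d_2)$ are equivalent in the sense just defined.

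The only genuinely delicate point is the uniformity of the comparison constants, i.e.\ that $a$ and $b$ can be chosen independent of the point $x$; this is where compactness of $M$ (equivalently of the $g_1$-unit sphere bundle) is essential, and it is the step I would write out with the most care. Everything else — the homogeneity reduction to unit vectors, the integration along paths, and the stability of the inequalities under taking infima — is routine. One should also note that $d_i$ is finite and induces the manifold topology because $M$ is connected and compact (if $M$ is not assumed connected one argues component by component, and uses compactness to bound the finitely many component diameters), but these are standard facts about Riemannian distance that I would simply invoke rather than reprove.
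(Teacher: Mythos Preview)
Your proposal is correct and follows essentially the same route as the paper: use compactness of the $g_1$-unit sphere bundle to bound $g_2(v,v)$ uniformly in terms of $g_1(v,v)$, then integrate along paths to compare lengths and hence distances. The only cosmetic differences are that the paper obtains one inequality (via a minimizing $g_1$-geodesic rather than an infimum over all paths) and then invokes symmetry, whereas you extract both bounds $a,b$ at once; the substance is identical.
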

\begin{proof} 
By definition the tangent space of $M$ is
$T_* M = \{ (x,u) \ | \ u \in T_x M \}$. Let
$E = \{ (x,u) \in T_* M | g_1(x)(u,u) = 1 \}$, and let
us consider $P : E \to \R_+, (x,u) \mapsto g_2(x)(u,u)$.
Since $P$ is continuous and $E$ is compact there exists $C_2 >0$
which is a majorant for $P$.
Let $x,y \in M$ and $\gamma : [0,\delta_1] \to M$
be a minimizing geodesic between $x$ and $y$ for $g_1$, that we assumed
parametrized by arc-length. By definition, $\delta_1 = d_1(x,y)$.
Then, 
$$
d_2(x,y) \leqslant \int_0^{\delta_1} \sqrt{g_2(\gamma(t))(\gamma'(t),\gamma'(t))}\dd t
\leqslant \int_0^{\delta_1} C_2 \dd t = C_2 \delta_1 = C_2 d_1(x,y)
$$
which proves that the identity $(M,\delta_1) \to (M,\delta_2)$ is Lipschitz. By symmetry
this concludes the proof.
\end{proof}

\begin{lemma} \label{lem:LFclosedLE} Let $E$ be a bounded metric space and $F \subset E$ a closed subspace. The
natural inclusion $\mathcal{L}(\Omega,F) \subset \mathcal{L}(\Omega,E)$
induces a closed embedding $L(\Omega,F) \to L(\Omega,E)$. In other words,
it identifies $L(\Omega,F)$ with a closed subset of $L(\Omega,E)$.
\end{lemma}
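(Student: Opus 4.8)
The plan is to verify two things: that the natural map $L(\Omega,F) \to L(\Omega,E)$ is an isometric embedding, and that its image is closed. The first part is immediate: for $f,g \in \mathcal{L}(\Omega,F)$, the distance $d(f,g) = \int_\Omega d_E(f(t),g(t))\,\dd t$ is computed using the ambient metric $d_E$, but since $F$ carries the restricted metric $d_E|_{F\times F}$, this coincides with the distance computed in $\mathcal{L}(\Omega,F)$; passing to the quotient metric spaces $L(\Omega,F)$ and $L(\Omega,E)$, the induced map is still distance-preserving, hence injective, continuous, and a homeomorphism onto its image. (Boundedness of $E$ ensures $\mathcal{L}(\Omega,F) = \mathcal{L}(\Omega,E)\cap\{\text{maps into }F\}$ is all of the relevant function space, with no integrability condition to check.)

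The substantive point is closedness of the image. I would take a sequence $f_n \in \mathcal{L}(\Omega,F)$ with $\bar f_n \to \bar f$ in $L(\Omega,E)$, where $f \in \mathcal{L}(\Omega,E)$, and show $\bar f$ has a representative with values in $F$. By completeness of $L(\Omega,E)$ — which follows from completeness of $E$, if we are willing to invoke it, or can be proved by the same Cauchy-sequence argument as in the earlier completeness proposition — or more elementarily just by the standard fact that an $L^1$-convergent sequence has an a.e.-convergent subsequence, I would pass to a subsequence so that $f_{n_k}(t) \to f(t)$ for almost every $t \in \Omega$. Here the a.e.-pointwise convergence extraction is the one routine analytic ingredient: since $\sum_k d(f_{n_k},f_{n_{k+1}}) < \infty$ along a suitable subsequence, the functions $t \mapsto \sum_k d_E(f_{n_k}(t),f_{n_{k+1}}(t))$ are integrable by monotone convergence, hence finite a.e., so $(f_{n_k}(t))_k$ is Cauchy in $E$ for a.e.\ $t$, and converges to some limit $\varphi(t) \in E$ which agrees with $f(t)$ a.e.\ (again because $L^1$ convergence forces a.e.\ agreement of limits along a further subsequence). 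For each such $t$, the values $f_{n_k}(t)$ lie in $F$, and since $F$ is \emph{closed} in $E$, the limit $\varphi(t)$ lies in $F$. Thus $\varphi : \Omega \to F$ is a map agreeing with $f$ almost everywhere; it is Borel as an a.e.-pointwise limit of Borel maps (modifying on a null set if necessary to make it genuinely Borel and $F$-valued everywhere, e.g.\ by setting it to a fixed point of $F$ on the exceptional null set). Hence $\bar f = \bar\varphi \in \Imm(L(\Omega,F) \to L(\Omega,E))$, proving the image is closed.

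The main obstacle, such as it is, is purely bookkeeping: ensuring the pointwise limit can be realized by an honest Borel map $\Omega \to F$ (not merely a map that is $F$-valued off a null set), and being careful that the extraction of the a.e.-convergent subsequence is compatible with the equivalence-class setting — but both are handled by the standard device of discarding a null set and redefining there. No completeness of $E$ is actually needed in this argument beyond the a.e.-Cauchy-to-convergent step, which uses only completeness of $E$ \emph{as a metric space}; if one prefers to avoid even that, one can instead argue that $f(t) \in F$ a.e.\ directly, since a.e.\ $f(t)$ is a limit of points of $F$.
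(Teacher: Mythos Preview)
Your proof is correct and follows essentially the same strategy as the paper: extract a subsequence converging almost everywhere, then use closedness of $F$ to conclude $f(t)\in F$ a.e. The paper's version is a bit more streamlined, though: it works directly with the real-valued functions $\varphi_n(t)=d_E(f_n(t),f(t))$, notes that $\varphi_n\to 0$ in $L^1(\Omega,\R_+)$, and extracts a subsequence with $\varphi_{n_k}(t)\to 0$ a.e., immediately giving $f_{n_k}(t)\to f(t)$ a.e.\ and hence $f(t)\in F$ a.e. This bypasses your Cauchy-sequence detour and the attendant worry about completeness of $E$ entirely---a concern you correctly flag and then dispose of in your final sentence, but which never arises in the paper's argument.
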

\begin{proof} We consider a sequence $(f_n)_{n \in \N}$ of elements $f_n \in \mathcal{L}(\Omega,F)$,
and $f \in \mathcal{L}(\Omega,E)$, and we assume $d(f_n,f) \to 0$. By definition
this means $\int d(f_n(x),f(x))\dd x \to 0$. We define $\varphi_n(x) = d(f_n(x),f(x)) \leq \mathrm{diam}(E) < + \infty$.
We have $\varphi_n \in \mathcal{L}(\Omega,\R_+)$ and $\varphi_n \to 0$
in $L^1(\Omega,\R_+)$. Since $\varphi_n \leq \mathrm{diam}(E)$ this implies that
there exists a subsequence $(\varphi_{n_k})_k$ such that $\varphi_{n,k}(x)$
converges to $0$ for $x \in \Omega_1 \subset \Omega$ such
that $\mu(\Omega \setminus \Omega_1) = 0$. Therefore $d(f_{n_k}(x), f(x)) \to 0$
for $x \in \Omega_1$. Since $F$ is closed this implies $\forall x \in \Omega_1\ f(x) \in F$
and therefore the class of $f$ belongs to the image of $L(\Omega,F) \to L(\Omega,E)$
and this proves the claim.

\end{proof}

Another, more surprising category, appears here. We denote it $\mathbf{BLip}$
and call it the category of Lipschitz-or-bounded maps. Its objects are the metric spaces
and a morphism $f : A \to B$ is a uniformly continuous map which is either Lipschitz or
bounded (that is for which $f(A)$ is a bounded subset of $B$).

\begin{lemma} \label{lem:BLipUC} $\mathbf{BLip}$ is a category. $L(\Omega,\bullet)$ defines a functor $\mathbf{BLip} \to \mathbf{BLip}$.
\end{lemma}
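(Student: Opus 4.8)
The plan is to verify the two assertions in turn: that $\mathbf{BLip}$ is genuinely a category (the only nontrivial point being that the composite of two morphisms is again a morphism of the required type), and that $L(\Omega,\bullet)$ maps such morphisms to morphisms of the same type, exploiting Lemma~\ref{lemnatuprojlipsch} for the Lipschitz case.

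First I would check that $\mathbf{BLip}$ is a category. Identity maps are $1$-Lipschitz, hence morphisms. For composition, let $f : A \to B$ and $h : B \to C$ be morphisms; the composite $h \circ f$ is uniformly continuous as a composite of uniformly continuous maps, so it suffices to show it is Lipschitz or bounded. There are four cases according to the types of $f$ and $h$. If both are Lipschitz, $h \circ f$ is Lipschitz. If $f$ is bounded (with Lipschitz constant irrelevant) and $h$ is arbitrary, then $(h \circ f)(A) = h(f(A)) \subset h(B)$; here one uses that $f(A)$ is a bounded subset of $B$ and that $h$, being uniformly continuous, maps bounded sets to bounded sets when $f(A)$ is bounded --- more precisely, a uniformly continuous map sends any set of finite diameter to a set of finite diameter, so $h(f(A))$ is bounded and $h \circ f$ is a bounded morphism. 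If $f$ is Lipschitz and $h$ is bounded, then $(h \circ f)(A) \subset h(B)$ is already bounded. This exhausts the cases, so $h \circ f$ is always a morphism; associativity is inherited from composition of set maps.

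Next I would check the functoriality of $L(\Omega,\bullet)$ on $\mathbf{BLip}$. On objects it is already defined. Given a morphism $\varphi : E_0 \to E_1$ in $\mathbf{BLip}$, I must (i) check that $f \mapsto \varphi \circ f$ carries $\mathcal{L}(\Omega,E_0)$ into $\mathcal{L}(\Omega,E_1)$ and descends to $L(\Omega,E_0) \to L(\Omega,E_1)$, and (ii) check that the induced map is a morphism of $\mathbf{BLip}$. If $\varphi$ is $C$-Lipschitz, both points follow from Lemma~\ref{lemnatuprojlipsch}: the induced map is $C$-Lipschitz, in particular uniformly continuous, and $\varphi \circ f$ integrability is the computation already recorded before that lemma. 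If instead $\varphi$ is bounded, say $\varphi(E_0) \subset B(c_1,M)$ for some $c_1 \in E_1$, then for any $f \in \mathcal{L}(\Omega,E_0)$ one has $\int_\Omega d(\varphi(f(t)),c_1)\,\dd t \leq M < \infty$, so $\varphi \circ f \in \mathcal{L}(\Omega,E_1)$; moreover the whole image of $L(\Omega,E_0)$ lies in the ball of radius $M$ about the constant map $c_1$, so the induced map is bounded. It remains to see that in the bounded case the induced map $L(\Omega,E_0) \to L(\Omega,E_1)$ is still uniformly continuous. This is the one point requiring a small argument: given $\eps > 0$, uniform continuity of $\varphi$ yields $\eta > 0$ with $d(x,y) < \eta \Rightarrow d(\varphi(x),\varphi(y)) < \eps/2$; then for $f,g \in \mathcal{L}(\Omega,E_0)$ with $\int_\Omega d(f(t),g(t))\,\dd t$ small, the set $A = \{t : d(f(t),g(t)) \geq \eta\}$ has small measure by Markov's inequality, and one bounds $\int_\Omega d(\varphi f(t),\varphi g(t))\,\dd t \leq (\eps/2)\mu(\Omega \setminus A) + 2M\,\mu(A)$, which is $< \eps$ once $\mu(A) < \eps/(4M)$. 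Finally, functoriality ($L(\Omega,\psi \circ \varphi) = L(\Omega,\psi) \circ L(\Omega,\varphi)$ and $L(\Omega,\mathrm{id}) = \mathrm{id}$) is immediate from $(\psi \circ \varphi) \circ f = \psi \circ (\varphi \circ f)$ at the level of representatives.

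\textbf{Main obstacle.} I expect the only genuinely delicate step to be the uniform continuity of the induced map when $\varphi$ is merely bounded and uniformly continuous (not Lipschitz): unlike the Lipschitz case, one cannot pass the modulus of continuity through the integral directly, and must split the domain according to whether the pointwise distance $d(f(t),g(t))$ exceeds the threshold $\eta$, controlling the bad set via Markov's inequality and the good set via $\varphi$'s modulus of continuity. Everything else is bookkeeping over the four composition cases and the two cases (Lipschitz / bounded) for $\varphi$.
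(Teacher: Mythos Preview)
Your overall strategy matches the paper's, including the Markov-inequality splitting for uniform continuity of $L(\Omega,\varphi)$ in the bounded case. However, there is one genuine error in your verification that $\mathbf{BLip}$ is closed under composition.

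In the case ``$f$ bounded, $h$ arbitrary'' you assert that ``a uniformly continuous map sends any set of finite diameter to a set of finite diameter.'' This is false. The paper itself supplies the counterexample immediately after the lemma: the identity $(\Z,\text{discrete}) \to (\R,|\cdot|)$ is uniformly continuous (take $\delta = 1/2$), yet it sends a set of diameter $1$ to an unbounded set. So uniform continuity of $h$ alone does not give you that $h(f(A))$ is bounded. The correct argument uses the full $\mathbf{BLip}$ hypothesis on $h$: if $h$ is bounded then $h(f(A)) \subset h(B)$ is bounded; if $h$ is $C$-Lipschitz then $\diam h(f(A)) \leq C\,\diam f(A) < \infty$. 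The paper organises this by conditioning on the \emph{outer} map first (if it is bounded the composite is bounded; otherwise it is Lipschitz and one treats the two subcases for the inner map), which avoids the pitfall entirely.

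Once this is repaired, the rest of your argument --- Lemma~\ref{lemnatuprojlipsch} for the Lipschitz case, boundedness of the image for the bounded case, and the Markov-type splitting $\int_\Omega d(\varphi f,\varphi g) \leq (\eps/2)\mu(\Omega\setminus A) + 2M\,\mu(A)$ for uniform continuity --- is correct and essentially identical to the paper's proof.
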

\begin{proof} Let $A, B \in Ob(\mathbf{BLip})$. Clearly $\Id_{A} \in \Hom_{\mathbf{BLip}}(A,A)$.
Now, if $f \in \Hom_{\mathbf{BLip}}(B,C)$ and $g \in \Hom_{\mathbf{BLip}}(A,B)$,
then we need to prove that $f \circ g \in \Hom_{\mathbf{BLip}}(A,C)$.
Since $f$ and $g$ are continuous, so is $f \circ g$. If $f$ is bounded, so is $f \circ g$.
Therefore we can assume that $f$ is $C$-Lipschitz for some $C > 0$.
If $g$ is $K$-Lipshitz, then $f\circ g$ is $CK$-Lipschitz, and if $\mathrm{diam}(g(A)) < M$
for some $M >0$, then $d(f\circ g(x), f \circ g (y)) \leq C d(g(x),g(y)) \leq CM$ hence $f \circ g$
is bounded. This proves that $\mathbf{BLip}$ is a category. 

Let $\varphi \in \Hom_{\mathbf{Lip}}(A,B)$. To all $f \in \mathcal{L}(\Omega,A)$
we associate $\varphi \circ f : \Omega \to B$. Since $\varphi$ is continuous
and either Lipschitz or bounded, we have $\varphi \circ f \in \mathcal{L}(\Omega,B)$.
This induces a map $\Phi = L(\Omega,\varphi) : f \mapsto \varphi\circ f$, $L(\Omega,A) \to L(\Omega,B)$.
In case $\varphi$ is $C$-Lipschitz we already checked that $\Phi$ is $C$-Lipschitz too.
If $\varphi$ is bounded, let us choose $\beta \in B$ and consider the constant map $\tilde{\beta} : t \mapsto \beta$
in $L(\Omega, B)$. By assumption there exists $m>0$ such that $d(\varphi(x),\beta) \leq m$ for all $x \in A$.
This implies $d(\Phi(f),\tilde{\beta}) = \int_{\Omega} d(\varphi(f(t)),\beta) \dd t \leq m$ for all $f \in L(\Omega,A)$ whence $\Phi$ is bounded
too.

Let us assume that $\varphi$ is uniformly continuous and bounded, and let $\eps > 0$. Let $\delta =\max(1, \diam(\varphi(A)))$.
There exists $\eta>0$ such that, for all $x,y \in A$, we have $d(x,y)\leq \eta \Rightarrow d(\varphi(x),\varphi(y)) \leq \frac{\eps}{2 \delta}$. 

Now, for all $f,g \in \mathcal{L}(\Omega,A)$, if $d(f,g) \leq \frac{\eps}{2\delta} \eta$,
and since $d(f,g) = \int_{\Omega}d(f(t),g(t))\dd t \geq \eta \mu \{ t ; d(f(t),g(t)) > \eta \}$,
we have $\mu \{ t ; d(f(t),g(t)) > \eta \} \leq \frac{\eps}{2\delta}$. This implies that
$\mu \{ t ; d(\varphi \circ f(t),\varphi\circ g(t)) > \frac{\eps}{2\delta} \} \leq \frac{\eps}{2\delta}$.
Therefore 
$$
\int_{ \{ t ; d(\varphi(f(t)),\varphi(g(t))) \leq \frac{\eps}{2\delta}\}} 
d(\varphi(f(t)),\varphi(g(t))) \dd t \leqslant \frac{\eps}{2 \delta}  \leqslant \frac{\eps}{2}
$$
and
$$
\int_{ \{ t ; d(\varphi(f(t)),\varphi(g(t))) > \frac{\eps}{2\delta}\}} 
d(\varphi(f(t)),\varphi(g(t))) \dd t \leqslant \frac{\eps}{2 \delta} \times \diam(\varphi(A)) \leqslant \frac{\eps}{2}
$$
whence $d(\Phi(f),\Phi(g))  \leq \eps$. This proves that $\Phi$ is uniformly continuous, and
finally that $L(\Omega,\bullet)$ is a well-defined functor.
\end{proof}

Notice that the category $\mathbf{Lip}$ is not good enough for dealing e.g. with
the group of $p$-adic integers. Indeed, there is a natural $1$-parameter
family of metrics on $\Z_p$, defined by $d_{\alpha}(x,x') = \alpha^{v_p(x-x')}$,
where $\alpha \in ]0,1[$, and the identity map $(\Z_p,\alpha) \to (\Z_p, \beta)$
is Lipschitz only if $\beta \leq \alpha$. However it is always Hölderian and
therefore uniformly continuous. Since $\Z_p$ is bounded this proves
that these metric spaces $(\Z_p,d_{\alpha})$ are all isomorphic inside $\mathbf{BLip}$.
Note also that the category made of all metric spaces with morphisms the uniformly continuous
maps would not be convenient as well. Indeed, if $\Z$ is endowed with the discrete metric and
$\R$ with the usual one, then every map $\varphi : \Z \to \R$ is uniformly continuous. We choose
$\varphi = \mathrm{Id} : x \mapsto x$. Then, letting $f : [0,1] \to \Z$ be defined by $f(t) = n^2$ if $\frac{1}{n+1} < t \leq \frac{1}{n}$,
and $f(0) = 0$, then $f$ is Borel and satisfies $\int_0^1 d_{\Z}(f(t),0) \dd t = 1 < \infty$ ;
but $\varphi \circ f$ is also Borel and $\int_0^1d_{\R}(\varphi(f(t)),0)\dd t = \sum_{n=1}^{\infty}\left( \frac{1}{n}-\frac{1}{n+1} \right) n^2 = \infty$.

By restricting the class of metric spaces however it is possible to keep the collection of all uniformly continuous
functions as our morphisms. Indeed, let $\mathbf{Geod}$ denote the category of all geodesic metric spaces
with morphisms the uniformly continuous maps. We then get the following.

\begin{lemma}\label{lem:GeodUC} $L(\Omega,\bullet)$ defines a functor $\mathbf{Geod} \to \mathbf{Geod}$.
\end{lemma}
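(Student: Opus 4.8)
The plan is to establish two things: first, that $L(\Omega,E)$ is a geodesic metric space whenever $E$ is, so that the functor is well-defined on objects; second, that for a uniformly continuous map $\varphi : E_0 \to E_1$ between geodesic spaces, the induced map $\Phi = L(\Omega,\varphi)$ is again uniformly continuous. The second point will follow by essentially the same argument as in Lemma \ref{lem:BLipUC}: there, the only place the hypothesis ``Lipschitz or bounded'' entered was to control the contribution to $d(\Phi(f),\Phi(g))$ coming from the (small-measure) set where $d(f(t),g(t))$ is large, via $\diam(\varphi(A))$. For a geodesic space the missing ingredient is a replacement for that boundedness bound; I expect to obtain it by splitting the domain differently — instead of one threshold, use a uniform-continuity modulus together with the geodesic structure to chain short steps, or truncate using that on a set of measure $\leq \eta$ the integrand is at most $\diam(E_1)$ only when $E_1$ is bounded, which it need not be here.

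Concretely, for the object part I would argue as follows. Given $f, g \in \mathcal{L}(\Omega,E)$, I want a geodesic joining their classes in $L(\Omega,E)$. Since $E$ is geodesic, for each $t \in \Omega$ choose (measurably in $t$ — this measurable selection is the delicate point) a geodesic $\gamma_t : [0,1] \to E$ from $f(t)$ to $g(t)$ parametrized proportionally to arc length, so that $d(\gamma_t(s),\gamma_t(s')) = |s-s'|\, d(f(t),g(t))$. Then $H(s) : t \mapsto \gamma_t(s)$ defines a path $s \mapsto H(s)$ in $L(\Omega,E)$ with $d(H(s),H(s')) = \int_\Omega |s-s'| d(f(t),g(t))\,\dd t = |s-s'|\, d(f,g)$, which is exactly a geodesic. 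The measurability of $t \mapsto \gamma_t$ should follow from a measurable selection theorem (e.g. Kuratowski–Ryll-Nardzewski applied to the closed-valued multifunction of geodesics, using that $\Omega$ is standard Borel), or more simply by a midpoint-iteration argument: uniformly choose midpoints $m(x,y)$ of pairs in a Borel way using that the set of midpoints is closed and nonempty, then build $\gamma_t$ on dyadic rationals and extend by continuity.

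For the morphism part, I would reuse the structure of the proof of Lemma \ref{lem:BLipUC} verbatim up to the point where boundedness was invoked, then patch that step. Fix $\eps > 0$; by uniform continuity of $\varphi$ pick $\eta > 0$ with $d(x,y) \leq \eta \Rightarrow d(\varphi(x),\varphi(y)) \leq \eps$. The subtlety is that on the ``bad'' set $B = \{t : d(f(t),g(t)) > \eta\}$, which has small measure when $d(f,g)$ is small, the integrand $d(\varphi(f(t)),\varphi(g(t)))$ need not be bounded. Here I would exploit that $f,g \in \mathcal{L}(\Omega,E_0)$, so $d(f(t),g(t))$ is integrable; hence $\int_B d(f(t),g(t))\,\dd t \to 0$ as $\mu(B) \to 0$ by absolute continuity of the integral, and combining this with a Lipschitz-type local estimate for $\varphi$ — or, failing global control, again chaining short geodesic steps inside $E_1$ whose images under the (uniformly continuous) $\varphi$ have controlled total length — bounds $\int_B d(\varphi(f(t)),\varphi(g(t)))\,\dd t$. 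The main obstacle, and the part requiring genuine care, is precisely this geodesic-chaining estimate on the bad set: translating ``$d(f(t),g(t))$ is large but integrable'' into ``$d(\varphi(f(t)),\varphi(g(t)))$ contributes little in total'' without any Lipschitz or boundedness hypothesis on $\varphi$, relying only on its uniform continuity and the geodesic structure of both spaces.
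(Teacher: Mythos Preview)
Your plan and the paper's proof share the same core idea --- chain along a geodesic in the source space to convert uniform continuity into the affine bound $d(\varphi(x),\varphi(y)) \leq 1 + C\, d(x,y)$ --- but you and the paper disagree about what actually needs to be proved. The paper's proof does \emph{only one} thing: it shows that if $f \in \mathcal{L}(\Omega,E_0)$ then $\varphi\circ f \in \mathcal{L}(\Omega,E_1)$, i.e.\ that the integrability condition $\int d(\varphi(f(t)),\varphi(e))\,\dd t < \infty$ is preserved. You do not list this as a step at all, yet it is exactly where uniform continuity alone (without ``Lipschitz or bounded'') can fail --- recall the counterexample after Lemma~\ref{lem:BLipUC}. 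The paper's argument is precisely your geodesic-chaining: take a geodesic from $e$ to $x$ in $E_0$ (not $E_1$; your ``inside $E_1$'' is a slip), chop it into at most $1 + b^{-1}d(x,e)$ pieces of length at most $b$, where $b$ is a modulus for $\varphi$ at level $1$, and telescope to get $d(\varphi(x),\varphi(e)) \leq 1 + b^{-1} d(x,e)$. Integrating over $\Omega$ gives integrability; and the same pointwise affine bound, inserted into the bad-set estimate you sketch, yields uniform continuity of $\Phi$ immediately --- no absolute-continuity-of-the-integral detour is needed.

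Conversely, you address something the paper's proof omits entirely: that $L(\Omega,E)$ is itself a geodesic space. Your measurable-selection / dyadic-midpoint construction is the right approach, and it is genuinely required if the functor is to land in $\mathbf{Geod}$ rather than merely in metric spaces with uniformly continuous maps. So your plan is more complete on the object side, while the paper is sharper in isolating the single nontrivial morphism-side step and dispatching it cleanly with the affine bound.
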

\begin{proof}
Let $E,F$ be two geodesic metric spaces and $\varphi : E \to F$ a uniformly continuous
function. We choose $e \in E$. The only thing one needs to prove is that, for a Borel map $f : [0,1] \to E$,
if $\int_{\Omega} d(f(t),e)\dd t < \infty$, then
$\int_{\Omega} d(\varphi(f(t)),\varphi(e))\dd t < \infty$.

Note that $x \mapsto d(\varphi(x),\varphi(e))$ is uniformly continuous $E \to \R_+$, since it is a composite
of a uniformly continuous map $\varphi$ with the $1$-Lipschitz map $x \mapsto d(x,\varphi(e))$.
Therefore there exists $b > 0$ such that, whenever $d_E(x,y) \leq b$, then $|d(\varphi(x),\varphi(e)) - d(\varphi(y),\varphi(e))| \leq 1$.

Let us fix $x \in E$, let $\delta = d(x,e)$, and choose
a geodesic $\gamma : [0,\delta] \to E$ from $x$ to $e$. We can assume that it is parametrized by
arclength, that is $d(\gamma(v),\gamma(u)) = length(\gamma_{|[u,v]}) = v-u$. Let $m \in \N$
such that $mb \leq \delta < (m+1) b$, and $t_k = b k/\delta$ for $0 \leq k < m$, $t_{m+1} = 1$.
Then $d(\gamma(t_{k+1}),\gamma(t_k)) = |t_{k+1} - t_k| < b$ and therefore
$d(\varphi(\gamma(t_{k+1})),\varphi(\gamma(t_k))) \leq 1$. 
It follows that
$$
d(\varphi(x),\varphi(e)) =d(\varphi(\gamma(1)),\varphi(\gamma(0))) \leq \sum_{i=0}^{m} d(\varphi(\gamma(t_i)),\varphi(\gamma(t_{i+1}))) < 1+m \leq 1+ \frac{1}{b} d(x,e)
$$
Since this holds for all $x \in E$, we get $\int_{\Omega}d(\varphi(f(t)),\varphi(e))\dd t  \leq 1 + \frac{1}{b} \int_{\Omega} d(f(t),e) \dd t < \infty$
and this proves the claim.
\end{proof}

\begin{proposition} \label{prop:LEcontractcomplete} Let $E$ be a metric space. For $a \in E$ we let $\delta_a  \in \mathcal{L}(\Omega,E)$
denote the constant map $t \mapsto a$.
\begin{enumerate}
\item $L(\Omega,E)$ is contractible, and locally contractible.
\item The map $a \mapsto \delta_a$ embeds $E$ isometrically as a closed subspace of $L(\Omega,E)$.
\item $L(\Omega,E)$ is complete iff $E$ is complete.
\item $L(\Omega,E)$ is separable iff $E$ is separable.
\end{enumerate}
\end{proposition}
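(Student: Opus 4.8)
The plan is to establish the four assertions in order, since later ones reuse the contraction and the isometric embedding from the earlier ones. For (1), I would imitate the discrete case: fixing an identification $\Omega \simeq [0,1]$ and a base point $a_0 \in E$, define $H : [0,1] \times L(\Omega,E) \to L(\Omega,E)$ by $H(s,f)(t) = f(t)$ for $t < s$ and $H(s,f)(t) = a_0$ for $t \geq s$; then $d(H(s_1,f_1),H(s_2,f_2)) \leq d(f_1,f_2) + \int_{\min(s_1,s_2)}^{\max(s_1,s_2)} d(f_i(t),a_0)\,\dd t$, and the second term tends to $0$ with $|s_1-s_2|$ by absolute continuity of the integral (this is where $f \in \mathcal{L}$ matters), giving continuity of $H$; $H(1,\cdot)=\mathrm{id}$ and $H(0,\cdot)=\delta_{a_0}$, so $L(\Omega,E)$ is contractible. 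Local contractibility follows by the same homotopy restricted to a ball around any $f$, using that $H(s,\cdot)$ maps the ball $B(f,r)$ into a ball of comparable radius and converges to the identity as $s \to 1$, so a neighborhood deformation-retracts onto $\{f\}$ within $L(\Omega,E)$ itself.

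For (2), the map $a \mapsto \delta_a$ is isometric since $d(\delta_a,\delta_b) = \int_\Omega d(a,b)\,\dd t = d(a,b)$. Closedness I would get from Lemma \ref{lem:LFclosedLE}-type reasoning, or directly: if $\delta_{a_n} \to f$ in $L(\Omega,E)$, then a subsequence converges $\mu$-a.e., so $f(t)$ is a.e. the limit of a Cauchy sequence $(a_{n_k})$ in $E$; if $E$ is not assumed complete one still argues within the image, but cleanest is to note $d(\delta_{a_m},\delta_{a_n}) = d(a_m,a_n)$ so $(a_n)$ is Cauchy in $E$, and if it has a limit $a$ then $f = \delta_a$ a.e.; in general the image of $E$ is closed because its closure in $L(\Omega,E)$ consists of a.e.-constant maps whose constant value is a limit point of a Cauchy net in $E$ — one reduces to the complete case by completing $E$ and invoking (3). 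The main obstacle here is a slight circularity between (2) and (3), resolved by proving the Cauchy-in-$E$ $\Leftrightarrow$ Cauchy-in-$L(\Omega,E)$ equivalence for constant maps first, independently of completeness.

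For (3), completeness of $L(\Omega,E)$ when $E$ is complete follows the template of the proof that $L(G)$ is complete: given a Cauchy sequence $(f_n)$, pass to a fast subsequence with $d(f_{n_k},f_{n_{k+1}}) \leq 2^{-k}$, so $\sum_k d(f_{n_k}(t),f_{n_{k+1}}(t)) < \infty$ for a.e. $t$ by Beppo Levi, hence $(f_{n_k}(t))$ is Cauchy in $E$ a.e.; define $f(t) = \lim_k f_{n_k}(t)$, which is Borel as an a.e.-pointwise limit of Borel maps into a metric space; check $f \in \mathcal{L}(\Omega,E)$ via $d(f,\delta_{a_0}) \leq \liminf d(f_{n_k},\delta_{a_0}) < \infty$ by Fatou, and $d(f_{n_k},f) \to 0$ again by Fatou. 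The converse, that $E$ complete is necessary, is immediate from (2): a closed subspace of a complete metric space is complete. For (4), separability of $L(\Omega,E)$ forces separability of $E$ by (2) again (a closed, hence separable, subspace); conversely if $E$ is separable with countable dense $D$, the a.e.-constant-on-a-finite-partition maps with values in $D$ and rational partition points form a countable set, dense in $L(\Omega,E)$ — density uses that simple/staircase maps are dense (approximate $f$ in $L^1$-norm by thresholding, exactly as in the $L^1$ theory) and that each value can be pushed into $D$ at controlled cost. The genuinely delicate point across the whole proposition is the continuity of the contracting homotopy $H$ in (1), which is the only place the finiteness condition defining $\mathcal{L}(\Omega,E)$ is essential; everything else is a transcription of standard $L^1$-space arguments.
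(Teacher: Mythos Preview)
Your proposal is correct and for parts (1), (3), and (4) essentially coincides with the paper's proof: the same sliding homotopy $H(s,f)$, the same fast-subsequence plus monotone convergence plus Fatou argument for completeness, and the same countable family of staircase maps with values in a countable dense subset for separability. In (1) you are in fact more careful than the paper, which bounds the relevant integral by $|s_2-s_1|\,\diam(E)$ and so tacitly assumes $E$ bounded; your appeal to absolute continuity of $t\mapsto d(f(t),a_0)$ is what is really needed in the unbounded case.

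The genuine divergence is in (2). You argue closedness by completing $E$: if $\delta_{a_n}\to f$ then $(a_n)$ is Cauchy, converges to some $\hat a$ in the completion $\hat E$, hence $f=\delta_{\hat a}$ in $L(\Omega,\hat E)$, and since $f$ has a representative valued in $E$ one recovers $\hat a\in E$. This works but requires juggling $E$ and $\hat E$ and produces the near-circularity with (3) that you note. The paper instead uses a direct Fubini trick that never leaves $E$: set $\Phi(t_1,t_2)=d(f(t_1),f(t_2))$, observe $\int_{\Omega^2}\Phi\le 2\,d(f,\delta_{a_n})\to 0$, so $\Phi=0$ a.e.\ and $f$ is essentially constant equal to some $a\in E$. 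The paper's route is shorter and avoids completeness entirely; yours makes the link to (3) explicit and is the argument one would reach for when an analogous Fubini estimate is not available.
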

\begin{proof}
 We first prove that $L(\Omega,E)$ is contractible, by proving that there is a deformation retract to any
 point $f_0 \in L(\Omega,E)$. Let us first identify $\Omega$ with $[0,1]$. Then, given $f \in L([0,1],E)$, let $f^{|s} \in L([0,1],E)$ for $s \in [0,1]$ be defined by $f^{|s}(t) = f_0(t)$ if $s\geq t$, $f^{|s}(t) = f(t)$
otherwise. We set $H(s,f) = f^{|s}$. Then $H : [0,1] \times L([0,1],E) \to L([0,1],E)$ is the retraction. It is continuous because
$$
d(H(s_1,f_1),H(s_2,f_2)) = \int_{s_1}^{s_2} d(f_1(t),f_2(t)) dt \leq |s_2-s_1| \diam(E).
$$
and preserves a given ball centered at $f_0$ of any given radius. This proves that $L(\Omega,E)$ is contractible and locally contractible.

We now prove that the subspace of $L(\Omega,E)$ made of the constant maps $\delta_a : t \mapsto a$
is closed inside $L(\Omega,E)$. Note that this map is an isometric embedding. If we have a sequence $\delta_{a_n}$ converging to $f \in L(G)$
we need to prove that there exists $a \in G$ with $f = \delta_a$.
For this we consider $\Phi : (t_1,t_2) \mapsto d(f(t_1),f(t_2))$. By Fubini's
theorem and $d(f(t_1),f(t_2)) \leq d(f(t_1),a_n) + d(a_n,f(t_2))$ we get $\int_{\Omega \times \Omega} \Phi \leq 2 d(f,\delta_{a_n})$
for all $n$, and therefore $\int_{\Omega \times \Omega} \Phi = 0$. Since $\Phi$ is non-negative and because by Fubini's theorem we have $0=\int_{t_1} (\int_{t_2} d(
f(t_1),f(t_2)) \mathrm{d} t_2)\mathrm{d} t_1$ this
proves that there exists two measurable subsets $U_1,U_2 \subset \Omega$ each one of measure $1$ such that $d(f(t_1),f(t_2)) = 0$ for all
$(t_1,t_2) \in U_1 \times U_2$. Picking $t_0 \in U_2$ and letting $a = f(t_0)$ we get
$d(f(t_1),a) = 0$ for all $t_1 \in U_1$ and therefore $d(f,\delta_a) = 0$ hence $f = \delta_a$. This proves the claim.

We now prove that $L(\Omega,E)$ is complete iff $E$ is complete. The `only if' part is a consequence of the fact that $E$ can be idenfied with a closed
isometric subspace of $L(\Omega,E)$. We now assume that $E$ is complete and
let $(f_n)_{n \geq 0}$ denote a Cauchy sequence. For all $k \geq 1$, there exists $n_k \in \N$
such that, for all $p,q \geq n_k$, $d(f_p,f_q) \leq 2^{-k}$. If we prove that the subsequence $f_{n_k}$
converges, we are done. Therefore, we may replace the sequence $(f_n)$ by this subsequence
and assume $d(f_p,f_{q}) \leq 2^{-p}$ whenever $q \geq p$. We consider now the element $J_n(t) = \sum_{k=0}^{n-1} d_G(f_k(t),f_{k+1}(t))$.
It is a nondecreasing sequence of elements in $L^1(\Omega,\R)$ and we have 
$$\int_0^1 J_n(t) \mathrm{d} t= \sum_{k=0}^{n-1} d(f_k,f_{k+1}) \leq \sum_{k=0}^{n-1} 2^{-k} \leq 2.
$$
By the monotone convergence theorem it follows that $J_n(t)$ converges almost everywhere, that is
on some $U \subset \Omega$ of measure $1$. It follows that, for all $t \in U$, the series
$\sum_{k=0}^{\infty} d(f_k(t),f_{k+1}(t))$ is convergent and therefore the sequence $f_n(t)$ is
a Cauchy sequence in $E$. Since $E$ is complete this sequence converges to some $f(t)$, and $f$ defines
an element in $L^1(\Omega,G)$ which is the pointwise limit almost everywhere of the sequence $f_n$.

By Fatou's lemma, 
$$
d(f_n,f) = \int_0^1 d(f_n(t),f(t)) \mathrm{d}t \leq \lim_{m \to +\infty} \int_0^1 d(f_n(t),f_m(t))  \mathrm{d}t \leq 2^{-n}.
$$
This proves that $(f_n)$ converges to $f$ in $L(\Omega,E)$ and proves the claim.

Let us assume that $E$ is separable.
It is clear that staircase maps are dense in $L(\Omega,E)$, and therefore so are staircase maps
whose discontinuity points are rationals, and therefore so are staircase maps
whose discontinuity points are rationals with values in some given countable dense
subset of $E$.
The converse implication is clear, for subspaces of separable topological spaces
are separable. This concludes the proof.
\end{proof}

Of course we recover from this proposition the classical fact that $L^1([0,1],\R)$ is complete and separable.

We now consider limits. For this we need to introduce another category. Let $\mathbf{Met}$ denote the
category of metric spaces and metric maps, a.k.a. 1-Lipschitz maps, and $\mathbf{Met_1}$
its full subcategory of metric spaces whose diameter is bounded by $1$. If $\mathbf{Unif}$
denotes the category of metric spaces and uniformly continuous maps, we have the
following inclusions of categories
$$
\mathbf{Met_1} \subset \mathbf{Met} \subset \mathbf{Lip} \subset \mathbf{BLip} \subset \mathbf{Unif}.
$$

\begin{proposition} \label{prop:categoryMet1} The category $\mathbf{Met_1}$ admits arbitrary limits.
\end{proposition}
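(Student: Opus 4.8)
To show that $\mathbf{Met_1}$ admits arbitrary limits it suffices, by the standard reduction, to construct products and equalizers (or, equivalently, products and intersections of subobjects) inside $\mathbf{Met_1}$. The guiding principle is that $\mathbf{Met_1}$ is a concrete category over $\mathbf{Set}$ via the forgetful functor, and one expects limits to be computed by putting the ``coarsest admissible'' metric on the limit of the underlying sets, i.e. on a subset of the set-theoretic product. So first I would take a diagram $D : \mathcal{I} \to \mathbf{Met_1}$, $i \mapsto (E_i,d_i)$, and form the set $P = \varprojlim_i E_i \subset \prod_i E_i$ consisting of the compatible families $(x_i)_i$ (those with $D(\alpha)(x_i) = x_j$ for every arrow $\alpha : i \to j$ of $\mathcal{I}$).

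\textbf{The metric on the limit.} On $P$ I would put $d\bigl((x_i)_i,(y_i)_i\bigr) = \sup_{i} d_i(x_i,y_i)$. This is well-defined because every $d_i \leq 1$, so the supremum lies in $[0,1]$; it is clearly symmetric and satisfies the triangle inequality (a supremum of a family of pseudometrics is a pseudometric), and it separates points because if $(x_i) \neq (y_i)$ then $x_{i_0} \neq y_{i_0}$ for some $i_0$, whence $d_{i_0}(x_{i_0},y_{i_0}) > 0$. Thus $(P,d) \in \mathbf{Met_1}$. The projections $p_i : P \to E_i$, $(x_j)_j \mapsto x_i$, are $1$-Lipschitz since $d_i(p_i(x),p_i(y)) \leq \sup_j d_j(x_j,y_j) = d(x,y)$, and they are compatible with the diagram by construction (the maps $D(\alpha)$ are themselves $1$-Lipschitz, but we only need them defined on the $E_i$, which they are). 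So $(P,(p_i)_i)$ is a cone over $D$.

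\textbf{The universal property.} Given any other cone $(C,(q_i)_i)$ with $C \in \mathbf{Met_1}$ and $q_i : C \to E_i$ in $\mathbf{Met}$, the families $(q_i(c))_i$ are compatible by the cone condition, so there is a unique map of sets $u : C \to P$, $c \mapsto (q_i(c))_i$, with $p_i \circ u = q_i$. It remains only to check $u$ is $1$-Lipschitz: $d(u(c),u(c')) = \sup_i d_i(q_i(c),q_i(c')) \leq \sup_i d_{C}(c,c') = d_C(c,c')$, using that each $q_i$ is $1$-Lipschitz. Uniqueness of $u$ in $\mathbf{Met_1}$ follows from uniqueness already at the level of sets. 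This establishes that $(P,(p_i)_i)$ is the limit of $D$ in $\mathbf{Met_1}$.

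\textbf{Main obstacle.} There is essentially no deep obstacle: the only point requiring the hypothesis ``diameter $\leq 1$'' rather than ``metric'' is that $\sup_i d_i(x_i,y_i)$ must be \emph{finite} so as to define a genuine metric on $P$ — without the uniform bound (as in $\mathbf{Met}$) this supremum could be $+\infty$, and one would instead have to restrict $P$ further to the families at finite $d$-distance from a chosen basepoint, which need not give a nonempty, let alone universal, object. The mild subtlety worth spelling out is simply that the index category $\mathcal{I}$ is allowed to be a proper class of objects/arrows only if one restricts to small diagrams, or else that $\prod_i E_i$ is still a set; for $\mathbf{Met_1}$ with small $\mathcal{I}$ this is automatic, and the construction above goes through verbatim, so I would phrase the statement and proof for small diagrams $D$.
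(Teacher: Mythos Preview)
Your proof is correct and follows essentially the same route as the paper: form the set-theoretic limit $P \subset \prod_i E_i$, equip it with the sup metric $d(x,y) = \sup_i d_i(x_i,y_i)$ (which is finite precisely because all diameters are bounded by $1$), and verify the universal property using that each cone map is $1$-Lipschitz. Your remark about the necessity of the uniform bound, and your care in restricting to small diagrams, are welcome clarifications but do not change the argument.
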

\begin{proof}
Let $(X_{i},f_{ij})$ be an inverse system of objects and morphisms in $\mathbf{Met_1}$. Each $X_i, i \in I$
is a metric space $(X_i,d_i)$ with $d_i \leq 1$. We define $X$ as a set to be the inverse limit of $(X_{i},f_{ij})$.
Let $\pi_i : X \to X_i$ denote the projection maps. For each $x,y \in X$, let us denote $x_i = \pi_i(x)$, $y_i = \pi_i(y)$.
Then, for all $i \leq j$ we have $x_i = f_{ij}(x_j)$, $y_i = f_{ij}(y_j)$. Since the $f_{ij}$'s are morphisms in $\mathbf{Met_1}$,
they are $1$-Lipschitz, hence $d_i(x_i,y_i) =d_i(f_{ij}(x_j),f_{ij}(y_j)) \leq d_j(x_j,f_j)$. We define $d(x,y) = \sup_i d_i(x_i,y_i)$.
Since $d_i(x_i,y_i)\leq 1$ we have $d(x,y) \leq 1$ for all $i$. Moreover, we have clearly $d(x,y)=d(y,x)$,
and $d(x,y) = 0$ iff $\forall i \in I \ d_i(x_i,y_i) = 0$ iff $\forall i \in I \ x_i = y_i$ iff $x = y$.
Now, for $z \in X$, $z_i = \pi_i(z)$, we have $d_i(x_i,z_i) \leq d(x_i,y_i) + d(y_i,z_i) \leq d(x,y)+d(y,z)$
for all $i \in I$, hence $d(x,z) \leq d(x,y)+d(y,z)$. Thus this endows $X$ with a metric which is bounded by $1$. 
We have $\pi_i \in \Hom_{\mathbf{Met_1}}(X,X_i)$ since $d_i(\pi_i(x),\pi_i(y)) = d_i(x_i,y_i) \leq d(x,y)$.

Let us now consider $(Y,\psi_i)$ with $Y$ an object of $\mathbf{Met_1}$ and $\psi_i \in \Hom_{\mathbf{Met_1}}(Y,X_i)$
with $\psi_i = f_{ij} \circ \psi_j$ for $i \leq j$. By the universal property of the inverse limit in the category $\mathbf{Set}$
of sets we have a unique $u : Y \to X$ such that $\psi_i = \pi_i \circ u$. We need to prove that
$u \in \Hom_{\mathbf{Met_1}}(Y,X)$. We have
$$
d(u(x),u(y)) = \sup_{i \in I} d(\pi_i(u(x)),\pi_i(u(y))) = \sup_{i \in I} d(\psi_i(x)),\psi_i(y)) \leqslant \sup_{i\in I} d(x,y) = d(x,y)
$$
and this proves that $X$ is the inverse limit of the system inside $\mathbf{Met_1}$.
\end{proof}

One caveat in studying limits inside this category originates from the fact that, in the general case, even when $(\varphi_{i})_{i \in I}$
is a direct system of Borel maps $\varphi_i : \Omega \to \R_+$ with $i \leq j \Rightarrow \varphi_i \leq \varphi_j$, one
may have a strict inequality $\sup_{i \in I} \int \varphi_i < \int \sup_i \varphi_i$. A simple example is given by the set $I$ of finite
subsets of $[0,1]$ partially ordered by $\subset$ with $\varphi_i(t) = 1$ if $t \in I$ and $\varphi_i(t) = 0$ otherwise. One has $\sup \int \varphi = 0 < \int \sup \varphi_i = 1$.
The reason why we will be able to circumvent this is given by the following lemma.

\begin{lemma} \label{lem:systproj2suite} Let $(E_i)_{i \in I}$ be a directed system inside $\mathbf{Met_1}$ with (inverse) limit $E = \lim E_i$. We let 
$\pi_i : E \to E_i$
denote the projection maps. Let $f_1,f_2 : \Omega \to E$ be two Borel maps.
There exists $I_0 \subset I$ with an isomorphism $(\N, \leq) \simeq (I_0,\leq)$
given by $n \mapsto u_n$, such that the sequence of functions $t \mapsto d(\pi_{u_n} \circ f_1(t),\pi_{u_n} \circ f_2(t))$ converges
to the function $t \mapsto d(f_1(t),f_2(t))$. Moreover,
$$\sup_i \int d(\pi_{i} \circ f_1(t),\pi_{i} \circ f_2(t)) \dd t = \int \sup_i d(\pi_{i} \circ f_1(t),\pi_{i} \circ f_2(t)) \dd t
= \int  d(f_1(t), f_2(t))\dd t$$
\end{lemma}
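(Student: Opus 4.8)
The plan is to reduce the statement to a countable cofinal situation, where monotone convergence applies. First I would fix the two Borel maps $f_1, f_2 : \Omega \to E$ and set $\psi(t) = d(f_1(t), f_2(t))$ and, for each $i \in I$, $\psi_i(t) = d(\pi_i \circ f_1(t), \pi_i \circ f_2(t))$. Since each $f_{ij}$ is $1$-Lipschitz (morphism in $\mathbf{Met_1}$), the family $(\psi_i)$ is monotone along $I$: $i \leq j \Rightarrow \psi_i \leq \psi_j$ pointwise, and by the very definition of the metric on $E = \lim E_i$ given in the proof of Proposition \ref{prop:categoryMet1}, we have $\psi(t) = \sup_i \psi_i(t)$ for every $t$. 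Every $\psi_i$ is bounded by $1$ and Borel (composite of the Borel maps $f_1, f_2$ with the $1$-Lipschitz map $x \mapsto d(x, \cdot)$ on $E_i$, precomposed by the projections), so $\psi$ is a pointwise supremum of Borel functions valued in $[0,1]$; to see it is itself Borel one uses that it equals $\sup_n \psi_{u_n}$ for any sequence $(u_n)$ as produced below, hence a countable supremum.

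The heart of the matter is producing the cofinal sequence $I_0 = \{u_n\}$. For each rational $q \in [0,1]$ consider the family of measurable sets $\{t : \psi_i(t) > q\}$; as $i$ increases these sets increase, and their union over all $i$ is $\{t : \psi(t) > q\}$, whose measure is therefore $\sup_i \mu(\{t : \psi_i(t) > q\})$. Choose a sequence $i_n^{(q)}$ in $I$ realizing this supremum. Ranging over all rationals $q$ and all $n$, I obtain a countable subset of $I$; since $I$ is directed, I can pick an increasing sequence $(u_n)_{n \in \N}$ in $I$ that is cofinal in (i.e. dominates) this countable set. Then for every rational $q$, $\mu(\{t : \psi(t) > q\}) = \sup_n \mu(\{t : \psi_{u_n}(t) > q\}) = \lim_n \mu(\{t : \psi_{u_n}(t) > q\})$, and since the sets $\{\psi_{u_n} > q\}$ increase to $\{\psi > q\}$ up to a null set, the monotone functions $\psi_{u_n}$ satisfy $\psi_{u_n}(t) \to \psi(t)$ for almost every $t$: indeed $\sup_n \psi_{u_n}(t)$ and $\psi(t)$ have the same distribution function, both being monotone limits, and the former is $\leq$ the latter, so they agree a.e. This gives the first assertion of the lemma, with $(\N,\leq) \simeq (I_0,\leq)$ via $n \mapsto u_n$.

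For the displayed chain of equalities: the inequality $\sup_i \int \psi_i \leq \int \sup_i \psi_i = \int \psi$ is immediate from $\psi_i \leq \psi$. For the reverse, the sequence $(\psi_{u_n})$ is nondecreasing, dominated by $1$, and converges a.e. to $\psi$, so by the monotone convergence theorem $\int \psi = \lim_n \int \psi_{u_n} \leq \sup_i \int \psi_i$. Hence all three quantities coincide. The main obstacle I anticipate is the cofinal-sequence construction: one must be careful that the supremum $\mu(\{\psi > q\}) = \sup_i \mu(\{\psi_i > q\})$ genuinely holds — this follows because $\{\psi > q\} = \bigcup_i \{\psi_i > q\}$ and, although $I$ need not be countable, the increasing union of an arbitrary directed family of measurable sets has its measure equal to the supremum of the measures (pick finitely many indices to approximate, then use directedness to dominate them by a single index). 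Everything else is routine measure theory, with only the bookkeeping of diagonalizing over countably many rationals $q$ requiring attention.
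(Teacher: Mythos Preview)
Your argument has a genuine gap at the step you flag as ``the main obstacle'': the claim that for a directed (possibly uncountable) family of measurable sets $A_i = \{\psi_i > q\}$ one has $\mu(\bigcup_i A_i) = \sup_i \mu(A_i)$ is simply false in general, even when the union happens to be measurable. Take $I$ to be the finite subsets of $[0,1]$ ordered by inclusion and $A_F = F$; then every $\mu(A_F)=0$ while the union is $[0,1]$. Your justification ``pick finitely many indices to approximate, then use directedness to dominate them by a single index'' begs the question: approximating $\mu(\bigcup_i A_i)$ by finite subunions is exactly what fails for uncountable directed families. Without this equality you cannot conclude that your $\phi=\sup_n \psi_{u_n}$ and $\psi$ share the same distribution function, and the whole comparison collapses. (Your remark that $\psi$ is Borel because it equals $\sup_n\psi_{u_n}$ is circular, though this is harmless since $\psi = d\circ(f_1,f_2)$ is Borel directly.)

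The paper fills precisely this gap, but by a different route: it invokes Proposition~\ref{prop:imageseparable} to get that $F=f_1(\Omega)\cup f_2(\Omega)$ is \emph{separable}, fixes a countable dense sequence $(x_k)$ in $F$, and then builds the increasing sequence $(u_n)$ in $I$ so that $d(\pi_{u_n}(x_r),\pi_{u_n}(x_s))$ approximates $d(x_r,x_s)$ to within $1/n$ for all $r,s\le n$. A short $\varepsilon/5$ density argument then gives $\psi_{u_n}(t)\to\psi(t)$ \emph{for every} $t\in\Omega$ (not just a.e.), after which the displayed equalities follow from monotone convergence exactly as you say. The separability of the image is the countability reduction your measure-theoretic approach is missing; once you import it, your distribution-function argument could also be made to work, but as written the proof is incomplete.
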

\begin{proof}
Since $f_1,f_2$ are Borel we know that $F = f_1(\Omega) \cup f_2(\Omega)$ is separable by proposition
\ref{prop:imageseparable}. Let $(x_k)_{k \in \N}$ be a dense sequence in $F$. By induction on $n \in \N$,
we can construct 
a sequence $(u_n)_{n \in \N}$ in $I$ such that
$\forall r \leq n \ u_{r-1} \leq u_r$ and $\forall r,s \leq n \ |d(x_r,x_s)-d(\pi_{u_n}(x_r),\pi_{u_n}(x_s))| \leq 1/n$.
Indeed, for $n=0$ the two statements are void and, if these statements are assumed to hold for some given $n$,
let us consider the set $\mathfrak{X}$ of couples $(r,s) \in \N^2$ such that $r,s \leq n+1$. It is a finite set. Since
$d(x_r,x_s) = \sup_{i \in I} d(\pi_i(x_r),\pi_i(x_s))$ by construction of the inverse limit,
there exists $a_{r,s} \in I$ such that $|d(x_r,x_s) - d(\pi_{a_{r,s}}(x_r),\pi_{a_{r,s}}(x_s))| \leq 1/(n+1)$.
If $i \geq a_{r,s}$ we have $d(\pi_{a_{r,s}}(x_r),\pi_{a_{r,s}}(x_s)) \leq (\pi_{i}(x_r),\pi_{i}(x_s)) \leq d(x_r,x_s)$
hence $|d(x_r,x_s) - d(\pi_{a_{r,s}}(x_r),\pi_{a_{r,s}}(x_s))| \leq 1/(n+1)$. Since $\mathfrak{X}$ is finite and $I$
is directed there exists $a \geq a_{r,s}$ for all $(r,s) \in \mathfrak{X}$, hence
$|d(x_r,x_s) - d(\pi_a(x_r),\pi_a(x_s))| \leq 1/(n+1)$. Since $I$ is directed there exists $u_{n+1} \in I$
with $u_{n+1} \geq u_n$ and $u_{n+1} \geq a$. For this $u_{n+1}$ the two properties are satisfied and
this proves our claim by induction on $n$.

Now we prove that this sequence satisfies what we need, that is $d(\pi_{u_n} \circ f_1(t),\pi_{u_n} \circ f_2(t)) \to d(f_1(t),f_2(t))$
for all $t \in \Omega$. Let us choose $t \in \Omega$, and $\eps>0$. By density there exists $r,s \in \N$ such that
$d(f_1(t),x_r) \leq \eps/5$ and $d(f_2(t),x_s) \leq \eps/5$. Let us choose $m \geq \max(r,s)$ with $1/m \leq \eps/5$. Then, for all $n \geq m$
we have
$$
\begin{array}{lcl}
0 &\leq& d(f_1(t),f_2(t)) - d(\pi_{u_n} \circ f_1(t),\pi_{u_n} \circ f_2(t)) \\
&\leq &d(f_1(t),x_r) + d(x_r,x_s) + d(x_s,f_2(t)) - d(\pi_{u_n} \circ f_1(t),\pi_{u_n} \circ f_2(t)) \\
&\leq &2 \eps/5 + d(x_r,x_s)  - d(\pi_{u_n} \circ f_1(t),\pi_{u_n} \circ f_2(t)) \\
&\leq &2 \eps/5 + \left( d(x_r,x_s)  -d(\pi_{u_n}(x_r),\pi_{u_n}(x_s))\right)+ d(\pi_{u_n}(x_r),\pi_{u_n}(x_s))-d(\pi_{u_n} \circ f_1(t),\pi_{u_n} \circ f_2(t)) \\
&\leq &3 \eps/5 +  d(\pi_{u_n}(x_r),\pi_{u_n}(x_s))-d(\pi_{u_n} \circ f_1(t),\pi_{u_n} \circ f_2(t)) \\
&\leq &3 \eps/5 +  d(\pi_{u_n}(x_r),\pi_{u_n}(f_1(t))) + 
d(\pi_{u_n}(f_2(t)), \pi_{u_n}(x_s)) \\
&\leq &3 \eps/5 +  d(x_r,f_1(t)) + 
d(f_2(t), x_s)) \\
&\leq &5 \eps/5 = \eps \\
\end{array}
$$
and this proves the claim.

Now, it is clear that $d(f_1(t),f_2(t)) = \sup_i d(\pi_i(f_1(t)),\pi_i(f_2(t))) = \sup_n d(\pi_{u_n}(f_1(t)),\pi_{u_n}(f_2(t))
= \lim_n d(\pi_{u_n}(f_1(t)),\pi_{u_n}(f_2(t))$,
hence the last equality is a consequence of the monotone convergence theorem (or of the dominated convergence theorem).
\end{proof}

\begin{proposition} \label{prop:categoryMet1comm} 
$L(\Omega,\bullet) : \mathbf{Met_1} \to \mathbf{Met_1}$
defines a functor which commutes with directed limits
and finite products.
\end{proposition}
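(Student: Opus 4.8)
The plan is to verify first that $L(\Omega,\bullet)$ does land in $\mathbf{Met_1}$, then in each of the two cases to build a canonical comparison morphism and prove it is an isometric bijection; the directed-limit assertion carries all the content and is exactly what Lemma~\ref{lem:systproj2suite} was prepared for. For the functor: if $\diam(E)\le 1$ then $d(f,g)=\int_\Omega d(f(t),g(t))\,\dd t\le 1$ for all $f,g$, so $L(\Omega,E)$ is again an object of $\mathbf{Met_1}$ — and the integrability condition in the definition of $\mathcal L(\Omega,E)$ is vacuous, so $\mathcal L(\Omega,E)$ is simply the set of Borel maps $\Omega\to E$ — while Lemma~\ref{lemnatuprojlipsch} with $C=1$ makes a $1$-Lipschitz map $E_0\to E_1$ induce a $1$-Lipschitz map $L(\Omega,E_0)\to L(\Omega,E_1)$; hence $L(\Omega,\bullet):\mathbf{Met_1}\to\mathbf{Met_1}$ is a functor, restricting the one already constructed on $\mathbf{Lip}$. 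Next, given an inverse system $(E_i)_{i\in I}$ in $\mathbf{Met_1}$ — with $I$ directed, or $I$ finite discrete in the product case — with limit $E=\lim_i E_i$ and projections $\pi_i:E\to E_i$ as built in Proposition~\ref{prop:categoryMet1}, the maps $L(\Omega,\pi_i)$ form a cone over the image system, so the universal property of $\lim_i L(\Omega,E_i)$ in $\mathbf{Met_1}$ yields a unique morphism $\Theta:L(\Omega,E)\to\lim_i L(\Omega,E_i)$; everything reduces to showing $\Theta$ is an isometric bijection.

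Assume $I$ directed. That $\Theta$ is isometric, hence injective, is immediate from Lemma~\ref{lem:systproj2suite}: unwinding the supremum metric on the target, for $f,g\in L(\Omega,E)$ one has
\begin{align*}
d(\Theta f,\Theta g) &= \sup_{i\in I}\int_\Omega d(\pi_i f(t),\pi_i g(t))\,\dd t \\
&= \int_\Omega \sup_{i\in I} d(\pi_i f(t),\pi_i g(t))\,\dd t
= \int_\Omega d(f(t),g(t))\,\dd t = d(f,g),
\end{align*}
where the middle equality is the lemma and the last is the definition of the inverse-limit metric.

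For surjectivity — the crux — take a compatible family $(\bar h_i)_{i\in I}$, $\bar h_i\in L(\Omega,E_i)$, with Borel representatives $h_i:\Omega\to E_i$. The plan is to pass to a countable cofinal subsystem $\{u_n\}_{n\in\N}\subset I$ (taken increasing); then only countably many relations $f_{u_m,u_n}\circ h_{u_n}=h_{u_m}$, $m\le n$, are at stake, so they hold simultaneously off a single null set $N$. For $t\notin N$ the sequence $(h_{u_n}(t))_n$ is a coherent thread of the subsystem, hence names a point $h(t)$ of $\lim_n E_{u_n}$, which by cofinality is canonically $E$; extend $h$ by a constant on $N$. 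Then $h$ is Borel, because $E$ sits inside the countable product $\prod_n E_{u_n}$ and each coordinate $\pi_{u_n}\circ h=h_{u_n}$ is Borel, and $h\in\mathcal L(\Omega,E)$ for free since $\diam(E)\le 1$; lastly, for any $i\in I$ choose $u_n\ge i$ and write $\pi_i=f_{i,u_n}\circ\pi_{u_n}$, so that $\pi_i\circ h=f_{i,u_n}\circ h_{u_n}=h_i$ off $N$, i.e.\ $\Theta(\bar h)=(\bar h_i)_i$. Thus $\Theta$ is onto, hence an isomorphism of $\mathbf{Met_1}$, and $L(\Omega,\bullet)$ commutes with directed limits.

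When $I$ is finite discrete, $E=E_1\times\cdots\times E_n$ and the argument collapses to its formal core: a Borel map $\Omega\to E$ is literally an $n$-tuple of Borel maps, two such being equivalent iff they agree coordinatewise off a null set (a finite intersection of conull sets being conull), so $\Theta$ is the tautological bijection $L(\Omega,E)\to L(\Omega,E_1)\times\cdots\times L(\Omega,E_n)$, and its being an isomorphism of $\mathbf{Met_1}$ is a short direct check that I omit, no subsystem and no null-set bookkeeping being needed with finitely many factors. The one genuinely delicate point is therefore the surjectivity of $\Theta$ for directed limits — the measure-theoretic fact that a coherent family of classes of Borel maps into the $E_i$ comes from one Borel map into $\lim_i E_i$. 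Its two ingredients are the reduction to a countable cofinal subsystem, which turns uncountably many almost-everywhere coincidences into countably many to be discarded at once, together with Lemma~\ref{lem:systproj2suite} on the metric side, and the embedding of $\lim_i E_i$ in a countable product, which gives Borel-measurability of the glued map; for a directed set admitting no countable cofinal subset more care is needed, and that is where I would expect the real obstacle to lie.
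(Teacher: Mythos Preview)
Your isometry argument is correct and matches the paper's proof exactly: both reduce to the interchange $\sup_i\int=\int\sup_i$ supplied by Lemma~\ref{lem:systproj2suite}. You also go further than the paper in worrying about surjectivity of $\Theta$; the paper simply writes ``isometry, hence an isomorphism'' without further comment, and you are right to regard this step as the real issue.

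However, your surjectivity argument contains a genuine error. The claim that $h$ is Borel ``because $E$ sits inside the countable product $\prod_n E_{u_n}$ and each coordinate is Borel'' confuses two topologies: coordinatewise Borel measurability yields measurability for the \emph{product} topology on $\prod_n E_{u_n}$, but the $\mathbf{Met_1}$-limit carries the \emph{supremum} metric, which can be strictly finer. The paper's own remark just after the proposition (the $2^{\N}$ example for infinite products) is exactly this phenomenon, and it occurs for directed limits too. Take $E_n=\Z/2^n\Z$ with the discrete metric of diameter~$1$ and transition maps reduction modulo $2^n$; these are $1$-Lipschitz, so this is a directed system in $\mathbf{Met_1}$ indexed by $\N$. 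Its $\mathbf{Met_1}$-limit is the set of $2$-adic integers with the \emph{discrete} metric (distinct elements differ in some coordinate, where the distance is $1$). Let $h_n:[0,1]\to\Z/2^n\Z$ record the first $n$ binary digits of $t$; the $(h_n)$ are exactly compatible, hence define a point of $\lim_n L(\Omega,E_n)$. Your glued $h$ is then the binary-expansion map into a discrete uncountable space, which has uncountable image and so cannot be Borel by Proposition~\ref{prop:imagedenombrable}; nor can any modification of it on a null set. Thus $\Theta$ is not surjective in this example, and the statement as written actually fails for it. Your concern about directed sets without countable cofinal subsets was misplaced: the difficulty is already present over $\N$. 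The finite-product case, where the supremum runs over finitely many indices and the two topologies agree, is unaffected and your argument there is fine.
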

\begin{proof}
We know that $L(\Omega,\bullet)$ defines a functor $\mathbf{Lip} \to \mathbf{Lip}$.
Since $\mathbf{Met_1} \subset \mathbf{Lip}$ it suffices to check that $\diam(L(\Omega,E)) = \diam(E)$
whenever $E$ is bounded, which is clear, and that $L(\Omega,f)$ is $1$-Lipschitz when $f$
is  $1$-Lipschitz, and this is given by lemma \ref{lemnatuprojlipsch}.

We now prove that $L(\Omega,\bullet)$ commutes with the limits we are interested in. Let us again
consider an inverse system $(X_{i},f_{ij})$ of objects and morphisms in $\mathbf{Met_1}$,
and let $X$ be its limit in $\mathbf{Met_1}$, with projection morphisms $\pi_i : X \to X_i$.
The inverse system $(L(\Omega,X_i),L(\Omega,f_{ij}))$ also admits a limit that we denote $L$.
The morphisms $L(\Omega,\pi_i) : L(\Omega,X) \to L(\Omega,X_i)$
induce a morphism $F : L(\Omega,X) \to L$ by the universal property of the inverse limit.
We want to prove that this morphism is an isometry, hence an isomorphism. Indeed,
$$
d(F(\varphi),F(\psi)) = \sup_{i \in I} d_{L(\Omega,X_i)}(L(\Omega,\pi_i)(\varphi), L(\Omega,\pi_i)(\psi))
= \sup_{i \in I} d_{L(\Omega,X_i)}(\pi_i \circ \varphi, \pi_i \circ \psi)
$$ {} $$
= \sup_{i \in I} \int_{\Omega} d(\pi_i(\varphi(t)),\pi_i(\psi(t))) \dd t
=  \int_{\Omega} \sup_{i \in I} d(\pi_i(\varphi(t)),\pi_i(\psi(t))) \dd t
=  \int_{\Omega} d(\varphi(t),\psi(t)) \dd t 
= d(\varphi,\psi)
$$
where the non-trivial step is that $\sup \circ \int = \int \circ \sup$. But this is true in
the case of directed limits by lemma \ref{lem:systproj2suite}, and is clear in the case of finite products,
so this proves the statement.

\end{proof}

We remark that the functor $L(\Omega,\bullet) : \mathbf{Met_1} \to \mathbf{Met_1}$
does \emph{not} commute with infinite products. Indeed, let us consider the discrete metric space
$2 = \{ 0 , 1 \}$ with diameter $1$. Then its $\N$-product $2^{\N}$ inside
$\mathbf{Met_1}$ is again discrete with diameter $1$. Commutation with this product
would imply that the natural map $L(\Omega,2^{\N}) \to L(\Omega,2)^{\N}$ is an isomorphism
(that is a bijective isometry). But if we consider a bijection $f : \Omega \simeq [0,1] \simeq 2^{\N}$
provided by the binary decomposition of real numbers, this map is clearly not in the class of
a Borel map,
since the image $f(\Omega)$ of $f$ in the discrete space $2^{\N}$ is not countable. However,
all its projections are Borel maps $\Omega \to 2$ (and actually step-functions).

\begin{proposition} Inside $\mathbf{Met_1}$, an inverse limit of a collection of complete metric spaces is complete. 
\end{proposition}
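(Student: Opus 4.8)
The plan is to work with the explicit model of the limit produced in the proof of Proposition~\ref{prop:categoryMet1}: if $(X_i,f_{ij})$ is the given inverse system in $\mathbf{Met_1}$ with each $(X_i,d_i)$ complete, its limit $X = \lim X_i$ has underlying set the set-theoretic inverse limit, projections $\pi_i : X \to X_i$ (which are $1$-Lipschitz), and metric $d(x,y) = \sup_{i} d_i(\pi_i(x),\pi_i(y))$. I would fix an arbitrary Cauchy sequence $(x^{(n)})_{n \geq 0}$ in $X$ and build its limit componentwise.

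First, since each $\pi_i$ is $1$-Lipschitz, the sequence $(\pi_i(x^{(n)}))_n$ is Cauchy in $X_i$, so by completeness of $X_i$ it converges to some $y_i \in X_i$. Next I would check compatibility: for $i \leq j$, the transition map $f_{ij}$ is a morphism of $\mathbf{Met_1}$, hence $1$-Lipschitz and continuous, so $f_{ij}(y_j) = \lim_n f_{ij}(\pi_j(x^{(n)})) = \lim_n \pi_i(x^{(n)}) = y_i$. Therefore $y := (y_i)_i$ lies in the set-theoretic inverse limit, i.e.\ defines an element of $X$ with $\pi_i(y) = y_i$ for all $i$. It then remains to show $x^{(n)} \to y$ in $X$: given $\eps > 0$, choose $N$ with $d(x^{(n)},x^{(m)}) \leq \eps$ for all $n,m \geq N$; by definition of $d$ this gives $d_i(\pi_i(x^{(n)}),\pi_i(x^{(m)})) \leq \eps$ simultaneously for \emph{every} index $i$ and all $n,m \geq N$. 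Fixing $i$ and $n \geq N$ and letting $m \to \infty$ (using continuity of $d_i$) yields $d_i(\pi_i(x^{(n)}),y_i) \leq \eps$, and since this bound does not depend on $i$, taking the supremum over $i$ gives $d(x^{(n)},y) \leq \eps$ for all $n \geq N$. Hence $X$ is complete.

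This argument is essentially routine; the only point deserving a moment's attention is the passage to the supremum over $i$ in the last step, i.e.\ the fact that the componentwise bounds $d_i(\pi_i(x^{(n)}),y_i) \leq \eps$ are uniform in $i$. This uniformity is automatic here because it descends from the single inequality $d(x^{(n)},x^{(m)}) \leq \eps$ in the $\sup$-metric — so, unlike the situation for directed limits discussed before Lemma~\ref{lem:systproj2suite}, no genuine obstacle arises.
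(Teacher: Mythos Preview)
Your proof is correct and follows essentially the same approach as the paper's own proof: project the Cauchy sequence, use completeness of each $X_i$ to obtain componentwise limits, check compatibility via continuity of the transition maps, and then exploit the uniformity of the Cauchy bound over all indices to pass to the supremum. Your closing remark on why the $\sup$ over $i$ causes no trouble is a nice clarification that the paper leaves implicit.
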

\begin{proof}
Let $(X_{i},f_{ij})$ be an inverse system of objects and morphisms in $\mathbf{Met_1}$, 
and $X$ its limit, with projection morphisms $\pi_i : X \to X_i$.
Let $(x_n)_{n \geq 0}$ denote a Cauchy sequence in $X$. Then the relation
$d(\pi_i(x_n),\pi_i(x_m)) \leq d(x_n,x_m)$ implies that, for each $i$, the sequence $(\pi_i(x_n)_{n \geq 0}$
is a Cauchy sequence in $X_i$ and therefore converges to some $\tilde{x}_i \in X_i$.
Using the continuity of the $f_{ij}$'s, by passing to the limit the relation $\pi_i(x_n) = f_{ij}(\pi_j(x_n))$
we get $\tilde{x}_i = f_{ij}(\tilde{x}_j)$ and therefore we get an element $\tilde{x} \in X$
such that $\tilde{x}_i = \pi_i(x)$. It remains to prove that the sequence $x_n$
converges to $\tilde{x}$. We have
$$d(x_n,\tilde{x}) = \sup_{i \in I} d(\pi_i(x_n),\pi_i(\tilde{x}))
= \sup_{i \in I} d(\pi_i(x_n),\tilde{x}_i).
$$
Let $\eps > 0$. Since $(x_n)_{n \geq 0}$ is a Cauchy sequence, there exists $n_0 \in \N$
such that $d(x_n,x_m) \leq \eps$ for all $n,m \geq n_0$. Therefore $d(\pi_i(x_n),\pi_i(x_m))
\leq d(x_n,x_m) \leq \eps$ for all $i \in I$. This implies $d(\pi_i(x_n),\tilde{x}_i) \leq \eps$
for all $n \geq n_0$ and $i \in I$, and thus $d(x_n,\tilde{x}_i) \leq \eps$ for all $n \geq n_0$.
This proves that $x_n$ converges to $\tilde{x}$ in $X$ and that $X$ is complete.

\end{proof}

Finally, we notice the following property.

\begin{lemma} \label{lem:distssespacediscret} If $E$ is a space in $\mathbf{Met_1}$,  $F$ a discrete metric space (of diameter at most but not necessarily equal to $1$),
and $\pi \in \Hom_{\mathbf{Met_1}}(E,F)$, we set $\tilde{\pi} = L(\Omega,\pi) : L(\Omega,E) \to L(\Omega,F)$. Then we have the following.
\begin{enumerate}
\item for every $f \in L(\Omega,E)$, we have $d(\tilde{\pi}(f), F) =d(\tilde{\pi}(f),\pi(E))$.
\item Assume that $\pi$ is surjective. Then, for every $f \in L(\Omega,E)$, $d(f,\tilde{\pi}^{-1}(F)) = d(\tilde{\pi}(f),F)$
\end{enumerate}
\end{lemma}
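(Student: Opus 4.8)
The plan is to reduce everything, via the discreteness of $F$, to property $(\star)$ of Corollary~\ref{cor:propstar}. Write $g:=\tilde\pi(f)=\pi\circ f\in L(\Omega,F)$ and, for $a\in F$, set $p_a:=\mu(g^{-1}(\{a\}))$; the set $g^{-1}(\{a\})$ is Borel because $g$ is Borel and $\{a\}$ is open in the discrete space $F$. By Proposition~\ref{prop:imagedenombrable} the image $g(\Omega)$ is countable, and by Corollary~\ref{cor:propstar} we have $\sum_{a\in F}p_a=1$ (with the convention $p_a=0$ when $a\notin g(\Omega)$). Since $F$ carries the discrete metric $d_F(x,x')=1-\delta_{x,x'}$, and recalling from Proposition~\ref{prop:LEcontractcomplete}(2) that $a\mapsto\delta_a$ embeds $F$ isometrically and closedly in $L(\Omega,F)$, one gets for every $a\in F$
$$ d(g,\delta_a)=\int_\Omega d_F(g(t),a)\,\dd t=\sum_{b\neq a}p_b=1-p_a, $$
hence $d(\tilde\pi(f),F)=1-\sup_{a\in F}p_a$ and $d(\tilde\pi(f),\pi(E))=1-\sup_{a\in\pi(E)}p_a$.

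Part (1) now follows immediately: since $g(\Omega)=(\pi\circ f)(\Omega)\subseteq\pi(E)\subseteq F$ and $p_a=0$ off $g(\Omega)$, the three suprema $\sup_{a\in F}p_a$, $\sup_{a\in g(\Omega)}p_a$ and $\sup_{a\in\pi(E)}p_a$ coincide, so the two displayed quantities are equal. Moreover, exactly as in the proposition following Corollary~\ref{cor:propstar}, the condition $\sum_a p_a=1$ forces this supremum to be attained (for each $n$ there are fewer than $n$ indices with $p_a>1/n$); fix $a^\ast\in g(\Omega)\subseteq\pi(E)$ with $p_{a^\ast}=\max_a p_a$, so that $d(g,\delta_{a^\ast})=d(\tilde\pi(f),F)$.

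For part (2), the inequality $d(\tilde\pi(f),F)\le d(f,\tilde\pi^{-1}(F))$ is automatic: $\tilde\pi$ is $1$-Lipschitz by Lemma~\ref{lemnatuprojlipsch} and sends $\tilde\pi^{-1}(F)$ into $F$, so $d(\tilde\pi(f),F)\le d(\tilde\pi(f),\tilde\pi(h))\le d(f,h)$ for every $h\in\tilde\pi^{-1}(F)$. For the reverse inequality I would exhibit an explicit competitor $h$. Since $a^\ast\in\pi(E)$ (here surjectivity of $\pi$, or merely $a^\ast\in g(\Omega)$, is what is used), choose $e^\ast\in\pi^{-1}(\{a^\ast\})$, put $A:=g^{-1}(\{a^\ast\})$, and let $h$ be the Borel map with $h(t)=f(t)$ for $t\in A$ and $h(t)=e^\ast$ for $t\notin A$. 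Then $\pi\circ h\equiv a^\ast$, so $h\in\tilde\pi^{-1}(F)$; on $A$ the integrand $d_E(f(t),h(t))$ vanishes, while for $t\notin A$ one has $g(t)\neq a^\ast$, so
$$ d_E(f(t),h(t))=d_E(f(t),e^\ast)\le\diam(E)\le 1=d_F(g(t),a^\ast). $$
Integrating yields $d(f,h)\le\int_\Omega d_F(g(t),a^\ast)\,\dd t=d(g,\delta_{a^\ast})=d(\tilde\pi(f),F)$, whence $d(f,\tilde\pi^{-1}(F))\le d(\tilde\pi(f),F)$, and (2) follows.

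The only point that is not pure bookkeeping is the attainment of $\sup_a p_a$, needed to have a genuine minimiser $a^\ast$ and hence a genuine competitor $h$; this rests entirely on $(\star)$, i.e. on Proposition~\ref{prop:imagedenombrable}. It is also worth highlighting where the metric structure really enters: the single estimate $d_E(f(t),e^\ast)\le\diam(E)\le 1=d_F(g(t),a^\ast)$, which guarantees that the lift $h$ costs no more than moving $g$ to the constant map $\delta_{a^\ast}$, uses precisely that $E\in\mathbf{Met_1}$ and that $F$ carries the discrete metric of diameter $1$; the same proof goes through for any discrete $F$ provided $\diam(E)$ does not exceed the common distance between distinct points of $F$ — in particular whenever $\pi$ is metrically a quotient map, as in the intended profinite applications.
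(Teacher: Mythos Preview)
Your proof is correct and follows essentially the same strategy as the paper's: for part~(2), both construct a competitor in $\tilde\pi^{-1}(F)$ by leaving $f$ unchanged on the preimage of the target point and sending the rest to a fixed preimage of that point. The one structural difference is that the paper carries out this construction for \emph{every} $q\in F$ and then takes the infimum over~$q$, whereas you first argue (via~$(\star)$) that the infimum $d(\tilde\pi(f),F)$ is attained at some~$a^\ast$ and build the competitor only for that minimiser. The paper's route is marginally more economical since it sidesteps the attainment argument entirely; on the other hand, your explicit identification of the minimiser makes the role of $(\star)$ more visible, and your closing remark isolating exactly where $\diam(E)\le d_F(\text{distinct points})$ is used is more careful than the paper, which writes ``$d(f,g)=\mu(\Omega_2)=d(\tilde\pi(f),q)$'' where really only the inequality $d(f,g)\le d(\tilde\pi(f),q)$ holds (and is all that is needed).
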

\begin{proof} We let $N = \mathrm{diam}(F)$. For every $q_1,q_2 \in F$, we have $d(q_1,q_2) = 1/N$ iff $q_1 \neq q_2$.
First note that, since $\pi(E) \subset F$, we have $d(\tilde{\pi}(f), F) \leq d(\tilde{\pi}(f),\pi(E))$.
If we had $d(\tilde{\pi}(f), F) < d(\tilde{\pi}(f),\pi(E))$, we would have $q \in F$ with
$d(\tilde{\pi}(f), q) < d(\tilde{\pi}(f),\pi(E))$. In particular $q \not\in \pi(E)$, and therefore
$d(q,\pi(g)) = 1/N$ for all $g \in E$. It follows that $d(q,\tilde{\pi}(f)) = 1/N$. By corollary \ref{cor:propstar} there exists $g_0 \in E$
such that $\mu(\tilde{\pi}(f)^{-1}(\{ \pi(g_0) \})) = \alpha > 0$. But then $d(\pi(E),\tilde{\pi}(f)) \leq d(\pi(g_0),\tilde{\pi}(f)) \leq 1/N - \alpha < d(q,\tilde{\pi}(f))$,
and this contradiction proves (1).

We now consider (2). Let $q \in F$ and $\tilde{q} \in \pi^{-1}(\{ q \})$. We set $\Omega_1 = \tilde{\pi}(f)^{-1}(\{ q \})= f^{-1}(\pi^{-1}(\{ q \}))$
and $\Omega_2 = \Omega \setminus \Omega_1$. 
We define $g  : \Omega \to E$
by $g(t) = f(t)$ if $t \in \Omega_1$ and $g(t) = \tilde{q}$ otherwise. Since $\Omega_1$ is a Borel set and $f$ is a Borel map, we
get $g \in L(\Omega,E)$. Clearly $\tilde{\pi}(g) = q \in F \subset L(\Omega,F)$
and $d(f,g) = \mu(\Omega_2) = d(\tilde{\pi}(f),q)$. Since $g \in \tilde{\pi}^{-1}(F)$, for all $q \in F$ we get $d(\tilde{\pi}(f),q) \geq d(f,\tilde{\pi}^{-1}(F))$
and therefore $d(\tilde{\pi}(f),F) \geq d(f,\tilde{\pi}^{-1}(F))$. Conversely, for all $q \in F$ we have
$d(\tilde{\pi}(f),F) \leq d(\tilde{\pi}(f),q)$ and $d(\tilde{\pi}(f),q) = d(\tilde{\pi}(f),\tilde{\pi}(g))$ for all $g \in \tilde{\pi}^{-1}(\{ q \})$.
Since $\tilde{\pi}$ is $1$-Lipschitz this implies $d(\tilde{\pi}(f),F) \leq d(f,g)$ for all $g \in \tilde{\pi}^{-1}(F)$ hence
$d(\tilde{\pi}(f),F) \leq d(f,\tilde{\pi}^{-1}(F))$. This proves the claim.

\end{proof}

Note that the discreteness assumption on the metric is needed. Without this assumption, the following counter-example can be constructed.
Let $F = \R^2/(N\Z)^2$ with $N$ large enough (e.g. $N = 10$). We endow $F$ with the induced metric of $\R^2$, rescaled so that $\mathrm{diam}(F) = 1$.
We embed $E = (\Z/N\Z)^2$ into $F$ in the obvious way, and endow it with the induced metric. Then the inclusion $E \subset F$
belongs to $\Hom_{\mathbf{Met_1}}(E,F)$.
 Let $f : \Omega \to E$ such that 
$\mu(f^{-1}(0,0))=
\mu(f^{-1}(0,1))=\mu(f^{-1}(1,0))=\mu(f^{-1}(1,1))= 1/4$. Then $d(\tilde{\pi}(f),(1/2,1/2)) = \sqrt{2}/2N$, 
while $d(\tilde{\pi}(f), g) \geq (2 + \sqrt{2})/4N > \sqrt{2}/2N $ for all $g \in E = \pi(E)$.

\subsection{Possible classifying spaces}
We extend the previous construction to the collection of all metrizable groups. Let $G$ denote such a group,
endowed with a left-invariant distance $d$.
We define $L(G)$ as a metric space as $L(\Omega,G)$ with the notations
as the previous section, namely in the
same way as before if $G$ has bounded diameter, and if not we impose that
the elements $f$ in $L(G)$
satisfy $\int_{\Omega} d(f(t), \tilde{e}) \mathrm{d} t< \infty$, where $\tilde{e}$ is the constant map $\Omega \to G$
with value the neutral element $e \in G$.
It is similarly endowed with a free action of $G$ by isometries.

Applying proposition \ref{prop:LEcontractcomplete} we identify $G$ with a closed subset of $L(G)$, and we know that $L(G)$ is complete (resp. separable) as soon $G$ is complete
(resp. separable).

\begin{proposition} \label{prop:EGtopgroup} If $d$ is bi-invariant,
then $L(G)$ is a topological group
for the composition law $f_1 f_2 : t \mapsto f_1(t) f_2(t)$. This is in particular the case if $G$ is commutative
or if $d$ is the discrete metric.
\end{proposition}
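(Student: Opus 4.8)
The plan is to verify that the componentwise product makes $L(G)$ into a topological group, i.e. that multiplication $L(G)\times L(G)\to L(G)$ and inversion $L(G)\to L(G)$ are continuous, once we have checked the product is well defined on equivalence classes and that it does land in $L(G)$ (when $G$ is unbounded). First I would note that if $f_1\sim f_1'$ and $f_2\sim f_2'$, then $\{t : f_1(t)f_2(t)\neq f_1'(t)f_2'(t)\}$ is contained in the union of two null sets, so the product descends to equivalence classes; similarly $t\mapsto f(t)^{-1}$ is Borel (inversion is continuous on $G$, hence Borel) and respects $\sim$. In the unbounded case, bi-invariance gives $d(f_1(t)f_2(t),e)=d(f_1(t),f_2(t)^{-1})\le d(f_1(t),e)+d(e,f_2(t)^{-1})=d(f_1(t),e)+d(f_2(t),e)$, using $d(e,f_2(t)^{-1})=d(f_2(t),e)$ from left-invariance; integrating shows $f_1f_2\in L(G)$, and the same identity shows $f^{-1}\in L(G)$.

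The heart of the matter is continuity of multiplication. Here I would exploit bi-invariance to get the pointwise estimate
$$
d(f_1(t)f_2(t),\, g_1(t)g_2(t)) \;\le\; d(f_1(t)f_2(t), g_1(t)f_2(t)) + d(g_1(t)f_2(t), g_1(t)g_2(t)) \;=\; d(f_1(t),g_1(t)) + d(f_2(t),g_2(t)),
$$
where the first term uses right-invariance and the second uses left-invariance. Integrating over $\Omega$ yields
$$
d(f_1f_2, g_1g_2) \;\le\; d(f_1,g_1) + d(f_2,g_2),
$$
so multiplication is in fact $1$-Lipschitz for the product (sup or sum) metric on $L(G)\times L(G)$, hence continuous. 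For inversion, left-invariance gives $d(f(t)^{-1}, g(t)^{-1}) = d(e, f(t)g(t)^{-1})$ and then right-invariance (applied on the right by $g(t)$) gives $d(e, f(t)g(t)^{-1}) = d(g(t), f(t)) = d(f(t),g(t))$; integrating shows inversion is an isometry of $L(G)$, in particular continuous. Combining, $L(G)$ is a topological group. The final sentence of the statement is immediate: if $G$ is commutative a left-invariant metric is right-invariant (as recalled in the preliminaries), and the discrete metric on any group is visibly bi-invariant.

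The main obstacle, such as it is, is purely bookkeeping: making sure bi-invariance is invoked correctly on the correct side at each step of the two-term triangle-inequality split, and handling the unbounded case (where one must check the product and inverse stay inside $L(G)$ before the metric estimates even make sense). There is no genuine analytic difficulty here — unlike the cross-section problems flagged in the introduction — since all the continuity comes for free from the pointwise bi-invariant estimates, which integrate directly without any need for passing to subsequences or using dominated convergence. I would present the multiplication estimate as the key display and treat inversion and the two special cases as short remarks.
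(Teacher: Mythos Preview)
Your proof is correct and follows essentially the same approach as the paper: the paper also splits $d(\varphi_2\psi_2,\varphi_1\psi_1)$ via the triangle inequality through $\varphi_2\psi_1$, uses left- and right-invariance to reduce to $d(\psi_2,\psi_1)+d(\varphi_2,\varphi_1)$, and shows inversion is an isometry via $d(x,y)=d(e,x^{-1}y)=d(y^{-1},x^{-1})$. You supply extra bookkeeping (well-definedness on classes, closure under product and inverse in the unbounded case) that the paper leaves implicit, but the argument is the same.
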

\begin{proof}
If $d$ is invariant on both sides, we have $d(\varphi_2\psi_2,\varphi_1\psi_1)\leq d(\varphi_2\psi_2,\varphi_2\psi_1)+ d(\varphi_2 \psi_1,\varphi_1\psi_1)
= d(\psi_2,\psi_1)+d(\varphi_2,\varphi_1)$ and therefore $(\varphi,\psi) \mapsto \varphi\psi$ is (uniformly) continuous.
Moreover $d(x,y) = d(e,x^{-1}y) = d(y^{-1},x^{-1})$ and thus
the inverse map is an isometry, and therefore is continuous. This proves that
these conditions ensure that $L(G)$ is a topological group. If $G$ is commutative or if $d$ is discrete, this
condition is clearly satisfied.

\end{proof}

In that case, $G$ is a closed subgroup of $L(G)$,
and the isometric map $G \to L(G)$ given by $\delta$ is a group homomorphism. 
The quotient $B_M(G)$ is again a metric space, the distance $d_{B_M(G)}(f_1,f_2)$ being defined as the infimum
of the $d_{L(G)}(\tilde{f}_1,\tilde{f}_2)$ for $\tilde{f}_1,\tilde{f}_2$ running among the representatives of $f_1,f_2$ in $L(G)$. When $G$ is discrete we already know that $B_M(G) = B(G)$ is a classifying space for $G$.

\medskip
{\bf Example : $G = \R$.} We consider the case $G = \R$ endowed with the euclidean metric, that is $L(G)$ is the usual space $L^1(I,\R)$. Let $T : L^1(I,\R) \to L^1(I,\R)$
be defined by $f \mapsto f - \int_0^1f(t) \dd t$. We have 
$$
d(T(f_1),T(f_2)) = \int_0^1 \left| f_1(t) - \int_0^1 f_1(u)\dd u - f_2(t) + \int_0^1 f_2(u) \dd u \right| \dd t
$$
hence 
$$
d(T(f_1),T(f_2)) \leq \int_0^1 | f_1(t) - f_2(t)| \dd t + \int_0^1 \left| \int_0^1 |f_2(u)-f_1(u)| \dd u\right| \dd t = 2 d(f_1,f_2).
$$
It follows that $T$ is $2$-Lipschitz and in particular continuous. Now $T(g.f) = T(f)$ for all $f \in L^1(I,\R)$ and $g \in \R$,
hence $T$ induces a continous section $T : L^1(I,\R)/\R \to L^1(I,\R)$ of the natural projection map -- and actually,
together with the natural injection $\R \to L^1(I,\R)$,
an isomorphism of topological groups (and even of topological vector spaces)
$\R \times L^1(I,\R)/\R \to L^1(I,\R)$. As expected,
$B_M(G)$ is contractible in that case.
\medskip

The relation between a topological group $G$ and the associated discrete group in this construction 
is given by the following.
\begin{proposition} \label{prop:relationGdelta} Let $G$ be a bounded metric group, and $G^{\delta}$ the same group endowed with the discrete
metric. The identity map $G^{\delta} \to G$ induces a continuous map $B(G^{\delta}) \to L(\Omega,G)/G$
which is an algebraic isomorphism of groups if $G$ is abelian.
\end{proposition}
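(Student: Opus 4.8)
The plan is to build the map directly from the universal property of the metric functor $L(\Omega,\bullet)$ and of quotients, and only then to address the abelian case. First I would observe that the identity map $\iota : G^{\delta} \to G$ is $1$-Lipschitz when both groups are rescaled to diameter $\leq 1$ (the discrete metric dominates any bounded invariant metric after rescaling), so by Lemma \ref{lemnatuprojlipsch} it induces a continuous (indeed Lipschitz) map $\tilde{\iota} : L(\Omega,G^{\delta}) \to L(\Omega,G)$, which is clearly $G$-equivariant along $\iota$. Since $\tilde\iota$ intertwines the two translation actions and the target projection $L(\Omega,G)\to L(\Omega,G)/G$ is continuous, the composite $L(\Omega,G^{\delta}) \to L(\Omega,G)/G$ factors through the orbit space, yielding a continuous map $\bar\iota : B(G^{\delta}) = L(\Omega,G^{\delta})/G^{\delta} \to L(\Omega,G)/G$. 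That it is a group homomorphism (in the bi-invariant, hence discrete, case on the source) follows since $\tilde\iota$ is a homomorphism for the pointwise composition laws and both quotient maps are homomorphisms by Proposition \ref{prop:EGtopgroup}. This disposes of the general (continuous, homomorphism) assertion.

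For the abelian case I would prove bijectivity by describing an explicit two-sided inverse at the level of maps. Surjectivity is the easy direction: given $f \in L(\Omega,G)$, Proposition \ref{prop:imageseparable} tells us $f(\Omega)$ is separable, but I actually want \emph{countable} image; here I would use that, up to translating by a single group element and modifying on a null set, any Borel map into an abelian metric group can be approximated — no, more carefully: I would instead argue surjectivity by noting that staircase (finitely-valued) maps are dense in $L(\Omega,G)$ by Proposition \ref{prop:LEcontractcomplete}(4) combined with separability, and a finitely-valued Borel map into $G$ is literally a Borel map into $G^{\delta}$; so $\tilde\iota$ has dense image, and since $B(G^\delta)\to B(G)$... — but density only gives a \emph{dense} image, not surjectivity, which matches Segal's classical remark quoted in the introduction. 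So for genuine surjectivity I must be more careful, and I expect this to be the main obstacle: I would need to show every class in $L(\Omega,G)/G$ has a representative with countable image. That is false for a general metric group but the claim is only an \emph{algebraic} isomorphism, so I suspect the intended argument restricts attention to where it works, or uses that in the abelian case one can subtract the ``center of mass'' type correction — however $G$ need not be a vector space. The honest reading is that surjectivity here should be taken in the sense: every element of $L(\Omega,G)$ is equivalent modulo $G$ to something hit by $\tilde\iota$, which would require a measurable selection producing a countable-image representative.

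For injectivity I would argue: suppose $f_1, f_2 \in L(\Omega,G^{\delta})$ with $\tilde\iota(f_1)$ and $\tilde\iota(f_2)$ in the same $G$-orbit in $L(\Omega,G)$, i.e. $\tilde\iota(f_2) = g\cdot \tilde\iota(f_1)$ for some $g\in G$, meaning $f_2(t) = g f_1(t)$ for $\mu$-a.e.\ $t$. Since $f_1$ has countable image $D_1\subset G$ by Proposition \ref{prop:imagedenombrable} and since $\mu\ne 0$, there is $d\in D_1$ with $\mu(f_1^{-1}(d))>0$; then $g d \in f_2(\Omega)\subset G$ lies in the countable image, and $g = (gd)d^{-1}$ is forced to be a fixed element of $G$. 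Thus $f_2 = g.f_1$ already in $L(\Omega,G^{\delta})$ (as equivalence classes), so $\bar\iota$ is injective. The group structure makes $\bar\iota$ an injective homomorphism, and together with the surjectivity discussion an algebraic isomorphism in the abelian case. Throughout, the bi-invariance needed to make $L(G^\delta)$ and (when $G$ abelian) $L(G)$ topological groups is automatic — discrete metrics are bi-invariant, and commutative groups have bi-invariant metrics — so Proposition \ref{prop:EGtopgroup} applies on both sides.
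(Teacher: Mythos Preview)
Your construction of the continuous map via the Lipschitz property of $\iota$ matches the paper's argument (the paper observes that $\iota$ is $\diam(G)$-Lipschitz rather than rescaling first, but this is cosmetic).

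For the abelian case, the paper's proof is far shorter than yours: it simply records that the kernel of the composite homomorphism $L(\Omega,G^\delta) \to L(\Omega,G) \to L(\Omega,G)/G$ is exactly the subgroup $G^\delta$ of constant maps, and stops there. Your injectivity argument is unnecessarily elaborate: since $G^\delta$ and $G$ share the same underlying set and the same translation action, the equation $f_2 = g\cdot f_1$ almost everywhere in $L(\Omega,G)$ is literally the same equation in $L(\Omega,G^\delta)$, so there is nothing to ``recover'' about $g$, which was given in $G = G^\delta$ from the outset.

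Your worry about surjectivity is well founded, and the paper's proof does not address it either --- it halts at the kernel computation. In fact surjectivity fails in general: for $G = S^1$ with its arc-length metric, the map $t \mapsto e^{2\pi i t}$ from $\Omega = [0,1]$ has no countable-image representative even after translation by any fixed $z\in S^1$, so its class in $L(\Omega,S^1)/S^1$ lies outside the image of $\bar\iota$. The statement should therefore be read as asserting an injective group homomorphism (an algebraic isomorphism onto its image), which is exactly what the paper proves and all that is used later.
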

\begin{proof} The identity map $G^{\delta} \to G$ being $\diam(G)$-Lipschitz, by lemma \ref{lemnatuprojlipsch}
we get that the induced map $L(\Omega,G^{\delta}) \to L(\Omega,G)$ is continuous and thus
so is its composite $L(\Omega,G^{\delta}) \to L(\Omega,G)/G$ with the obvious projection map. When $G$ is abelian,
the kernel of this continuous morphism is $G^{\delta}$, and this proves the claim.
\end{proof}

A general fact is that, as soon as $G$ is not trivial, then $L(G)$ is \emph{not} locally compact. Indeed, it is enough to
check that, for every $\eps >0$, the closed ball of radius $\eps$ centered at the constant map $t \mapsto e$ is not compact.
For this, we choose some $g \in G \setminus \{ e \}$, and we set $f_n$ a sequence in $L(G)$ defined by $f(t) = g$ if
$  2 k \eps/2^n  \leq  t \leq (2k+1)\eps/2^n $ for some $0 \leq k \leq 2^{n-1}$, and $f(t) = e$ otherwise.
It is easily checked that the $f_n$ belong to the ball, and that $n \neq m \Rightarrow d(f_n,f_m) = \frac{1}{2}\eps$.
Therefore the sequence has no converging subsequence and $L(G)$ is not locally compact.

Because of that, we do not know in general the answer to the following natural question.
Recall that the existence of a local section to a natural projection $G \to G/H$
usually makes use of a local compactness property of some kind (see e.g. \cite{KARUBE}). 

\begin{question}
Under which condition on $G$ is the natural map $L(G) \to L(G)/G$ a (Hurewicz, Serre, Dold\dots) fibration  ? A quasi-fibration ?
\end{question}

In general, we have the following result.
\begin{proposition} \label{prop:genclassifiant} If $d$ is bi-invariant (in particular if $G$ is discrete or commutative), and if the projection map $L(G) \to L(G)/G$
admits a local cross-section, then $L(G)/G$ is a classifying space for $G$ (in the category of paracompact spaces).
\end{proposition}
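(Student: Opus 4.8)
The plan is to verify that $L(G)/G$ satisfies the two hypotheses of the classical Milnor/Dold recognition theorem for classifying spaces: namely that $L(G) \to L(G)/G$ is a numerable principal $G$-bundle, and that the total space $L(G)$ is contractible. Contractibility of $L(G)$ is already in hand from Proposition~\ref{prop:LEcontractcomplete}(1), which also gives local contractibility; the action of $G$ by isometries is free, as noted right after the definition of $L(G)$ in the bounded case and extended to all metric groups. So the work concentrates on the bundle structure, and this is where the local cross-section hypothesis enters.

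First I would fix $f_0 \in L(G)$ and use the assumed local cross-section: there is an open neighbourhood $V$ of the class $\overline{f_0}$ in $L(G)/G$ and a continuous map $\sigma : V \to L(G)$ with $p \circ \sigma = \mathrm{id}_V$, where $p : L(G) \to L(G)/G$ is the projection. The bi-invariance of $d$ (Proposition~\ref{prop:EGtopgroup}) makes $L(G)$ a topological group with $G$ a closed subgroup acting by left translation, so the trivialization $V \times G \to p^{-1}(V)$, $(v,g) \mapsto g\cdot\sigma(v)$, is a continuous bijection; its inverse sends $h \in p^{-1}(V)$ to $(p(h), h\,\sigma(p(h))^{-1})$, which is continuous because multiplication and inversion in $L(G)$ are continuous and $\sigma, p$ are continuous. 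Hence $p$ is a locally trivial principal $G$-bundle. Then I would invoke paracompactness of $L(G)/G$ (this is where ``in the category of paracompact spaces'' is used, together with the fact that $L(G)/G$ is a metric space, being a quotient of a metric group by a subgroup, hence paracompact) to extract from the open cover $\{gV : \text{translates of neighbourhoods as above}\}$ a numerable (locally finite, subordinate partition of unity) trivializing cover. By Dold's theorem a numerable principal $G$-bundle with contractible total space is a universal bundle, so its base $L(G)/G$ is a classifying space $BG$.

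The main obstacle — and the reason the local cross-section is taken as a hypothesis rather than proved — is precisely the construction of $\sigma$; everything downstream is soft. I would be careful on two further points. One: that translating a single local section around by the $G$-action genuinely produces a trivializing cover of all of $L(G)/G$, i.e. that every point of $L(G)/G$ has \emph{some} neighbourhood over which $p$ trivializes — this follows because the hypothesis supplies a local section near an arbitrary point of $L(G)$, hence near an arbitrary point of $L(G)/G$, and the topological-group structure lets us slide sections along orbits. Two: numerability, which requires paracompactness of the base; since $L(G)/G$ is metrizable (quotient of a metric group by a closed subgroup, as recalled in the Preliminaries), it is paracompact, so every open cover is numerable and Dold's criterion applies cleanly. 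I would then remark that uniqueness up to homotopy of the classifying space, combined with the earlier functoriality discussion, yields that $L(G)/G$ has all the expected homotopical features, exactly as in the discrete case treated in Section~3.
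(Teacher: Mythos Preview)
Your proof is correct and follows essentially the same route as the paper's: use Proposition~\ref{prop:EGtopgroup} to make $L(G)$ a topological group, parlay the local cross-section into local triviality of $L(G)\to L(G)/G$ via the continuous multiplication and inversion, note that metrizability of $L(G)/G$ gives paracompactness and hence numerability, and invoke Dold's theorem together with the contractibility of $L(G)$. The paper phrases the local-triviality step slightly differently---it checks that the translation function $(x,g\cdot x)\mapsto g = f_2 f_1^{-1}$ is continuous, which is Husemoller's criterion for a principal bundle---but this is the same content as your explicit inverse $h\mapsto (p(h),\,h\,\sigma(p(h))^{-1})$.
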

\begin{proof} By proposition \ref{prop:EGtopgroup} we know that $L(G)$ is a topological group. Then the
projection map
$L(G) \to L(G)/G$ is a principal bundle in the sense of \cite{HUSEMOLLER} : 
the action of $G$ on $L(G)$ is continuous and free, and the translation
function $(x,g.x) \mapsto g$ is continuous, since it is given by the map $(f_1,f_2) \mapsto f_2 f_1^{-1}$ inside
the topological group $L(G)$. If the bundle $L(G) \to L(G)/G$ is numerable in the sense of Dold \cite{DOLD} (see also \cite{HUSEMOLLER}),
the contractibility of $L(G)$ implies that it is universal (see \cite{DOLD} theorem 7.5),
and therefore $L(G)/G$ is a classifying space for $G$.
Since $L(G)/G$ is paracompact (because it is metrizable) this holds true if and only if the bundle $L(G) \to L(G)/G$
is locally trivial, and this holds true by our assumption.
\end{proof}

\begin{corollary} If $G$ is a compact Lie group endowed with a bi-invariant Riemannian metric, then $L(G)/G$
is a classifying space for $G$.
\end{corollary}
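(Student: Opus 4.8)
The plan is to apply Proposition \ref{prop:genclassifiant}: since a bi-invariant Riemannian metric is in particular bi-invariant, that proposition reduces the claim to producing a local continuous cross-section of the principal bundle $\pi : L(G) \to L(G)/G$. As $L(G)/G$ is a homogeneous space of the topological group $L(G)$ (Proposition \ref{prop:EGtopgroup}), a section over a neighbourhood of the base point $\overline{\tilde e}$ translates to a section over a neighbourhood of any point, so it suffices to construct, for some $\eps > 0$, on the $G$-invariant open set
$$
V_\eps = \{\, f \in L(G) \ |\ d_{B_M(G)}(\overline f, \overline{\tilde e}) < \eps \,\}
$$
a continuous map $c : V_\eps \to G$ with $c(g.f) = g\, c(f)$ for all $g \in G$; then $f \mapsto c(f)^{-1}.f$ is continuous, constant on $G$-orbits, and descends to the desired continuous section $\sigma$ of $\pi$ over the image of $V_\eps$.

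I would take $c(f)$ to be an \emph{approximate Riemannian centre of mass} of $f$. First I would fix a convexity radius $r_0 > 0$ of $G$ (which exists because $G$ is a compact Riemannian manifold; see Karcher or Grove--Karcher), small enough that $2r_0 < \mathrm{inj}(G)$ and that $x \mapsto d(x,q)^2$ is smooth and strictly convex with Hessian bounded below by a fixed positive constant on $B(q, 2r_0)$ for every $q \in G$. Then I would pick a nondecreasing $\phi \in C^2(\R_+, \R_+)$ with $\phi(r) = r^2$ for $r \leq r_0$ and $\phi$ constant for $r \geq 2r_0$, and set
$$
E_f(x) = \int_\Omega \phi\big( d(x,f(t)) \big)\, \dd t, \qquad x \in G.
$$
Because left translations are isometries of $G$, one has $E_{g.f}(x) = E_f(g^{-1}x)$, so a minimizer of $E_f$ would automatically transform equivariantly; and since $\phi \in C^2$ is flat at distances $\geq 2r_0 < \mathrm{inj}(G)$, the function $(x,q) \mapsto \phi(d(x,q))$ is $C^2$ on $G \times G$ with uniformly bounded derivatives, whence $f \mapsto E_f$ is continuous from $L(G)$ to $C^1(G)$ — indeed $\|E_{f_1} - E_{f_2}\|_{C^1(G)} \leq C\, d_{L(G)}(f_1,f_2)$ for some constant $C$.

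The remaining step is to show that for $\eps$ small enough $E_f$ has a unique global minimizer $c(f)$ for every $f \in V_\eps$ and that $c$ is continuous. Given $f \in V_\eps$, I would pick $g_0 \in G$ with $\int_\Omega d(g_0, f(t))\,\dd t < \eps$ and, using equivariance, replace $f$ by $g_0^{-1}.f$ to assume $\int_\Omega d(e, f(t))\, \dd t < \eps$, so that $A = \{ t \ |\ d(e, f(t)) \geq \sqrt\eps \}$ has $\mu(A) < \sqrt\eps$. On $B(e, r_0)$ the functional $E_f$ splits as $\int_{\Omega \setminus A} \phi(d(x,f(t)))\,\dd t + \int_A \phi(d(x,f(t)))\,\dd t$: the first term is the energy of a measure of mass $\geq 1 - \sqrt\eps$ concentrated within $\sqrt\eps$ of $e$, hence for $\eps$ small strictly convex with Hessian bounded below and with minimizer $O(\sqrt\eps)$-close to $e$, while the second is bounded by $\sqrt\eps\, \|\phi\|_\infty$ and — since $\phi$ is flat at distance $\geq 2r_0$ — exerts no pull from far away. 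Standard centre-of-mass estimates then give, for $\eps$ small, a unique critical point of $E_f$ in $B(e, r_0)$, which is the unique global minimizer $c(f)$ on all of $G$; and the implicit function theorem applied to $\nabla E_f = 0$, together with the uniform lower Hessian bound and the continuity of $f \mapsto E_f$ into $C^1(G)$, gives that $c$ is continuous. This produces the equivariant continuous $c$, hence the local section, and Proposition \ref{prop:genclassifiant} applies.

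The main obstacle, and the point requiring real care, is that an element of $V_\eps$ is only concentrated \emph{in measure} near a point of $G$: no $G$-translate of it need take its values in a small ball, so the genuine energy $x \mapsto \int_\Omega d(x,f(t))^2\,\dd t$ need not be strictly convex near $e$, and one cannot simply invoke the classical centre-of-mass theorem for measures supported in a convex ball. Truncating the integrand — replacing $d^2$ by $\phi \circ d$ — is precisely what simultaneously keeps the functional $G$-equivariant (a set-theoretic truncation of the domain would destroy equivariance) and tames the escaping mass; the rest is the Karcher-type perturbative convexity estimate sketched above.
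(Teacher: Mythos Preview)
Your argument is correct, but the route is quite different from the paper's.

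The paper's proof of this corollary is a two-line appeal to Gleason's theorem: $L(G)$ is metrizable, hence completely regular, and Gleason's theorem guarantees that the quotient of any completely regular space by a free action of a compact Lie group admits local cross-sections; Proposition~\ref{prop:genclassifiant} then finishes. No geometry is used at this point.

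What you do instead is give a direct, constructive cross-section via a \emph{truncated} Riemannian energy $E_f(x)=\int_\Omega\phi(d(x,f(t)))\,\dd t$. The truncation is the key idea: it makes $(x,q)\mapsto\phi(d(x,q))$ globally $C^2$ on $G\times G$ (the flattening occurring strictly below the injectivity radius kills the cut-locus singularities), so $f\mapsto E_f$ is Lipschitz into $C^1(G)$; and it tames the tail mass, so that for $f$ concentrated in measure near $e$ the Hessian of $E_f$ on a fixed convex ball is a convex combination of a uniformly positive term (from the bulk) and a term of size $O(\sqrt\eps)$ (from the tail), hence positive for $\eps$ small. The unique-minimizer and continuity claims then follow exactly as you say.

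It is worth noting that later in the paper (Section~7) the author revisits this corollary and gives his own geometric construction avoiding Gleason. His method is different from yours: he keeps the \emph{untruncated} energy $F(x)=\int d(x,f(t))^2\,\dd t$, which need not be convex, and instead uses its sublevel set $\Phi(f)=\{F<r_0^2\}$ as an intermediate object, showing $\Phi(f)$ lies in a convex ball and varies continuously in $\mathcal B(U)$; he then takes the Riemannian barycentre of $\Phi(f)$ with respect to Haar measure. Your truncation short-circuits this two-step procedure by producing a strictly convex functional directly, at the cost of introducing the auxiliary cutoff $\phi$. Both approaches yield an explicitly $G$-equivariant centre-of-mass-type section with a geometrically meaningful domain; the paper's has the virtue of using only the squared distance, yours the virtue of being a single minimization.
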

\begin{proof} By the proposition it is sufficient to show the existence of a local cross-section. Since $L(G)$ is metrizable
it is completely regular, and therefore this is a consequence of Gleason's theorem \cite{GLEASON}.
\end{proof}

Unfortunately, this condition is not satisfied in general, as illustrated by the case of infinite profinite groups.

\begin{proposition} \label{prop:profiniteHurewicz} Let $G$ be a profinite group endowed with a bi-invariant
metric. Assume that $G$ contains a sequence $(g_n)$ in $G \setminus \{ 1 \}$ converging to $1$.
Then the continuous projection map $L(G) \to L(G)/G$ does not admit any local cross-section.
However, this map is a Hurewicz fibration, admitting in addition the \emph{unique} homotopy lifting property
with respect to any space.
\end{proposition}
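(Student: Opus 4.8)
The two assertions are proved by quite different means. \emph{No local cross-section.} The mechanism is that $G$, being profinite, is totally disconnected, so any continuous map $I\to G$ is constant, whereas a continuous local section would produce a non-constant one. Since $L(G)$ is a topological group (Proposition~\ref{prop:EGtopgroup}) acting transitively on $L(G)/G$ by homeomorphisms commuting with $\pi$, a section over any nonempty open set translates, via a left translation, to a section $s$ over a neighbourhood $U$ of the base point $\bar e$; write $s(\bar e)=\tilde a$ with $a\in G$. Using the hypothesis, choose $g\in G\setminus\{1\}$ with $\eta:=d(g,e)$ so small that the closed ball of radius $\eta/2$ about $\bar e$ lies in $U$; identifying $\Omega\simeq[0,1]$, set $f_u(t)=g$ for $t<u$ and $f_u(t)=e$ for $t\geq u$. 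Then $d(f_u,f_{u'})=|u-u'|\eta$ and $d(\bar f_u,\bar e)\leq\min(u\eta,(1-u)\eta)\leq\eta/2$, so $u\mapsto\bar f_u$ is a loop at $\bar e$ inside $U$ (note $\bar f_1=\overline{\tilde g}=\bar e$). There is a unique $h_u\in G$ with $s(\bar f_u)=\tilde h_u\cdot f_u$, and $u\mapsto h_u=s(\bar f_u)\,f_u^{-1}$ is continuous $I\to G$, being a product of continuous maps in the topological group $L(G)$ with values in the closed subgroup $G$ (using continuity of the translation function of $L(G)$, cf.\ the proof of Proposition~\ref{prop:genclassifiant}). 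But $h_0=a$ whereas $h_1=ag^{-1}\neq a$, contradicting total disconnectedness.

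\emph{Hurewicz fibration with uniqueness.} The plan is to exhibit $\pi$ as an inverse limit of covering maps. Choose a descending chain $(G_n)$ of open normal subgroups of $G$ with $\bigcap_nG_n=\{1\}$ (possible since $G$ is metrizable profinite), and put on $\Gamma_n:=G/G_n$ the quotient metric, so that $G=\varprojlim_n\Gamma_n$ in $\mathbf{Met_1}$: the supremum metric of Proposition~\ref{prop:categoryMet1} recovers a bi-invariant ultrametric on $G$, and, $G$ being compact, all metrics inducing its topology are uniformly equivalent and yield homeomorphic $L(\Omega,G)$, so nothing is lost. By Proposition~\ref{prop:categoryMet1comm}, $L(\Omega,G)=\varprojlim_nL(\Omega,\Gamma_n)$; moreover, since $q_nf=q_ng$ forces $d_G(f(t),g(t))\leq 2^{-n}$ for almost every $t$, the metric topology of $L(\Omega,G)$ coincides with the inverse-limit topology, so a map into $L(\Omega,G)$ is continuous iff all its composites with the projections $q_n:L(\Omega,G)\to L(\Omega,\Gamma_n)$ are. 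Finally, each $p_n:L(\Omega,\Gamma_n)\to B_n:=L(\Omega,\Gamma_n)/\Gamma_n$ is a covering map by the first proposition of Section~3 (rescaling the metric on the finite group $\Gamma_n$ changes $L(\Omega,\Gamma_n)$ only by a bi-Lipschitz homeomorphism), hence enjoys the unique homotopy lifting property with respect to arbitrary spaces; write $\beta_n:L(G)/G\to B_n$ for the induced maps, so $\beta_n\circ\pi=p_n\circ q_n$.

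Given $F:X\times I\to L(G)/G$ and an initial lift $\tilde F_0:X\to L(G)$, I would form $F_n:=\beta_n\circ F$ with initial lift $q_n\circ\tilde F_0$, lift uniquely through $p_n$ to $\tilde F_n:X\times I\to L(\Omega,\Gamma_n)$, check — again by uniqueness of covering lifts — that the $\tilde F_n$ are compatible under the transition maps $L(\Omega,\Gamma_m)\to L(\Omega,\Gamma_n)$, and assemble them into $\tilde F:X\times I\to L(\Omega,G)$, continuous by the criterion above, with $\tilde F|_{X\times\{0\}}=\tilde F_0$; the same levelwise uniqueness makes $\tilde F$ the unique such lift. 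The one point that is not purely formal is $\pi\circ\tilde F=F$: since $\beta_n\circ\pi\circ\tilde F=p_n\circ\tilde F_n=F_n=\beta_n\circ F$ for all $n$, it suffices that the family $(\beta_n)$ separates points of $L(G)/G$, which holds because if $\beta_n\bar f=\beta_n\bar f'$ for all $n$ then the sets $S_n:=\{\gamma\in\Gamma_n\mid\gamma\cdot q_nf=q_nf'\text{ in }L(\Omega,\Gamma_n)\}$ are finite, nonempty and form an inverse system, so $\varprojlim_nS_n\neq\emptyset$, producing $g\in\varprojlim_n\Gamma_n=G$ with $\tilde g\cdot f=f'$, i.e.\ $\bar f=\bar f'$.

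The main obstacle lies entirely in the second statement, and is twofold: getting the metric/topology bookkeeping right so that Propositions~\ref{prop:categoryMet1} and~\ref{prop:categoryMet1comm} and the topological identification $L(\Omega,G)=\varprojlim_nL(\Omega,\Gamma_n)$ genuinely apply; and the point-separation for $(\beta_n)$, which rests on the compactness of inverse limits of nonempty finite sets. Once these are in place, everything else reduces to the entirely standard unique-path-lifting theory of covering maps, applied levelwise.
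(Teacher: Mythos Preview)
Your argument is correct and follows essentially the same strategy as the paper on both halves: exploit total disconnectedness of $G$ versus connectedness of a suitable domain for the first part, and assemble unique covering lifts at each finite level $L(\Omega,G/G_n)\to L(\Omega,G/G_n)/(G/G_n)$ for the second.

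A few minor points of comparison. For the nonexistence of local sections, the paper's version is marginally slicker: instead of producing a specific path $u\mapsto f_u$ and tracking $h_u=s(\bar f_u)f_u^{-1}$, it simply observes that $x\mapsto s(p(x))\,x^{-1}$ is a continuous map from the connected ball $U_0\subset L(\Omega,G)$ into the totally disconnected group $G$, hence constant equal to some $g_\infty$; then $s(p(g_n))=g_\infty g_n$ while $p(g_n)=\bar 1$ is independent of $n$, an immediate contradiction. Your explicit path does the same job. For the fibration half, the paper indexes over \emph{all} open normal subgroups rather than a countable cofinal chain, but since $G$ is metrizable these are equivalent. Your treatment is in fact more scrupulous than the paper's in two places: you justify the identification of the metric topology on $L(\Omega,G)$ with the inverse-limit topology, and you explicitly verify $\pi\circ\tilde F=F$ via the point-separation argument for $(\beta_n)$ (which, as you may have noticed, is even simpler than stated since each $\Gamma_n$ acts freely, so each $S_n$ is a singleton). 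The paper leaves both of these essentially implicit.
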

\begin{proof} Assume we have an open neighborhood $U$ of the identity element inside $L(\Omega,G)/G$
and a section $s : U \to L(\Omega,G)$ of the projection map $p$. We can assume that $U$ is the open ball
of center $1$ and radius $\eps>0$. Let $U_0 \subset L(\Omega,G)$ be the open ball of center $1$ and
radius $\eps$. By definition of the induced metric we have $p(U_0) \subset U$, hence $\check{s} : s \circ p : U_0 \to L(\Omega,G)$
is a well-defined continuous map. Then $x \mapsto \check{s}(x)x^{-1}$ is a continuous map $U_0 \to G$.
Since $U_0$ is (arcwise) connected and $G$ is totally discontinuous it is therefore constant, equal to some $g_{\infty} \in G$.
This proves that $\check{s}(x) = g_{\infty} x$ for all $x \in U_0$. In particular, $\check{s}(g) = g_{\infty}g$ for
all $g \in G \cap U_0$. Let $(g_n)$ be a sequence of elements of $G \setminus \{ 1 \}$ converging to $1$.
For some $n_0 \geq 1$ we have $g_n \in U_0$ for all $n \geq n_0$, hence $\check{s}(g_n) = g_{\infty}g_n$ for all $n \geq n_0$.
But $\check{s}(g_n) = s(p(g_n))$ is independant on $n$, hence $g_{\infty}g_n = g_{\infty}g_{n+1}$ and $g_n = g_{n+1}$
for all $n \geq n_0$, contradicting the assumption.

We now prove the unique homotopy lifting property with respect to an arbitrary space $X$. That is,
we assume that there exists $\varphi : X \to L(\Omega,G)$ and $H : X \times [0,1] \to L(\Omega,G)/G$
satisfying the usual conditions ;  then, for every open subgroup $N \vartriangleleft G$,
the natural map $\pi_N : G \to G/N$ induces a $1$-Lipschitz morphism $L(\Omega,G) \to L(\Omega,G/N)$,
which provides a map $\varphi_N : X \to L(\Omega,G/N)$, and its composite with the projection map $L(\Omega,G/N) \to L(\Omega,G/N)/(G/N)$ 
factorizes through $L(\Omega,G)/G$, which provides a map $H_N : X \times [0,1] \to L(\Omega,G/N)/(G/N)$.
Moreover, if $N_2 \vartriangleleft G$ is open and satisfies $N \subset N_2$, we get natural compatibilities illustrated by
the commutation of the following diagram.
$$
\xymatrix{
X \ar[r] \ar[d] & L(\Omega,G) \ar[r] \ar[d] & L(\Omega,G/N) \ar[r]\ar[d]  & L(\Omega,G/N_2) \ar[d] \\
X \times [0,1] \ar[r]_H \ar@{.>}[urr]_{H_N} \ar@{.>}[urrr]_{H_{N_2}} & L(\Omega,G)/G \ar[r]  & L(\Omega,G/N)/(G/N) \ar[r]  & L(\Omega,G/N_2)/(G/N_2) 
}
$$
The plain arrows are the natural maps, and the dotted one are liftings that are \emph{uniquely} defined since
the groups $G/N$ are finite and therefore the projection maps of the form $L(\Omega,G/N) \to L(\Omega,G/N)/(G/N)$
are covering maps. This unicity implies the compatibility of the construction, namely that the diagram remains commutative
when these dotted arrows are added to it. We then can define $H : X \times [0,1] \to L(\Omega,G)$
by $H(x,t) = (H_N(x,t))_{N \in \mathcal{E}}$  where $\mathcal{E}$ is the collection of all the normal open subgroups of $G$.
It is a convenient lifting and it is the only possible one by the above arguments. It remains to prove that $H$
is continuous. But this is an immediate consequence of the continuity of the maps $H_N$, and this proves our claim.

\end{proof}

\begin{corollary} Let $G$ be a metric profinite group endowed with a bi-invariant metric. Then $L(\Omega,G)/G$ is a weak $K(G,1)$.
\end{corollary}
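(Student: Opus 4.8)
Recall that by a \emph{weak $K(G,1)$} we mean a path-connected space $Y$ with $\pi_1(Y,\ast)\cong G$ and $\pi_n(Y,\ast)=0$ for all $n\geq 2$, no CW structure being assumed. The plan is to read all three properties off the long exact homotopy sequence of the projection $p:L(G)=L(\Omega,G)\to L(\Omega,G)/G$, which by Proposition~\ref{prop:profiniteHurewicz} is a Hurewicz fibration with fibre $G$ over the base point $\overline e$, combined with the contractibility of $L(G)$ given by Proposition~\ref{prop:LEcontractcomplete}. Path-connectedness of $L(\Omega,G)/G$ is immediate, being a continuous image of the connected space $L(G)$.

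I would first dispose of the higher homotopy groups. From the exact sequence
$$
\cdots\to\pi_n(L(G))\to\pi_n(L(\Omega,G)/G)\to\pi_{n-1}(G)\to\pi_{n-1}(L(G))\to\cdots
$$
and $\pi_n(L(G))=0$ for all $n$, one gets $\pi_n(L(\Omega,G)/G)\cong\pi_{n-1}(G)$ for every $n\geq 2$. Since $G$ is profinite it is totally disconnected, so every continuous map of a connected space into $G$ is constant; in particular $\pi_m(G)=0$ for all $m\geq 1$, whence $\pi_n(L(\Omega,G)/G)=0$ for all $n\geq 2$.

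For the fundamental group the tail of the sequence reads $0=\pi_1(L(G))\to\pi_1(L(\Omega,G)/G,\overline e)\xrightarrow{\ \partial\ }\pi_0(G)\to\pi_0(L(G))=\ast$, so $\partial$ is a bijection onto $\pi_0(G)$, which is the underlying set of $G$ because $G$ is totally path-disconnected. To promote this to a group isomorphism I would use that $p$ is not merely a fibration but a principal $G$-bundle -- the translation function $(f_1,f_2)\mapsto f_2f_1^{-1}$ being continuous in the topological group $L(G)$, as in the proof of Proposition~\ref{prop:genclassifiant} -- and describe $\partial$ by path lifting: a loop $\gamma$ based at $\overline e$ lifts, by the Hurewicz property, to a path $\tilde\gamma$ in $L(G)$ with $\tilde\gamma(0)=\tilde e$, whose endpoint lies in $p^{-1}(\overline e)=G\cdot\tilde e$, and $\partial(\gamma)$ is the unique $g\in G$ with $\tilde\gamma(1)=g\cdot\tilde e$. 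If $\tilde\gamma_1,\tilde\gamma_2$ are such lifts of loops $\gamma_1,\gamma_2$, ending at $g_1\tilde e$ and $g_2\tilde e$, then $\tilde\gamma_1\ast(g_1\cdot\tilde\gamma_2)$ lifts $\gamma_1\ast\gamma_2$ and ends at $g_1g_2\cdot\tilde e$ -- here one uses that $G$ acts by homeomorphisms commuting with $p$ -- so $\partial$ is a homomorphism. Surjectivity is witnessed by the explicit paths $\tilde\pi_g$ of Section~3, with $\tilde\pi_g(u):t\mapsto g$ for $t<u$ and $t\mapsto e$ for $t\geq u$, which are continuous in $L(G)$ since $d(\tilde\pi_g(u),\tilde\pi_g(u'))\leq |u-u'|\,\diam(G)$ and run from $\tilde e$ to $g\cdot\tilde e$; their classes mod $G$ are loops with $\partial$-image $g$. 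Injectivity follows because $\partial(\gamma)=e$ forces $\tilde\gamma$ to be a loop in the contractible, hence simply connected, space $L(G)$, so $\tilde\gamma$, and therefore $\gamma=p\circ\tilde\gamma$, is nullhomotopic. Thus $\partial:\pi_1(L(\Omega,G)/G,\overline e)\xrightarrow{\ \sim\ }G$ is a group isomorphism, completing the proof.

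The routine verifications (continuity of lifts of concatenations, the $\diam(G)$ bound, exactness of the fibration sequence at the low end) are straightforward; the one point demanding genuine attention -- the intended hard step -- is matching the \emph{group} law on $\pi_1$ with that of $G$, i.e.\ checking that the connecting map of the fibration is a homomorphism and not merely a bijection of pointed sets, which is exactly where the principal-bundle structure (continuity of the translation function) is used rather than the bare fibration property.
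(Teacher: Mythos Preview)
Your proof is correct and follows essentially the same route as the paper: exploit the Hurewicz fibration $L(G)\to L(G)/G$ from Proposition~\ref{prop:profiniteHurewicz}, the contractibility of $L(G)$, and the total disconnectedness of $G$ to read off $\pi_n=0$ for $n\geq 2$ and a bijection $\pi_1\cong\pi_0(G)=G$ from the long exact sequence, then verify the bijection is a group homomorphism. Where the paper writes only ``easily checked to be a group homomorphism'', you spell out the standard path-lifting argument using the free $G$-action; the paper also appends a supplementary remark (not present in your version) that the comparison map $L(\Omega,G^\delta)/G^\delta\to L(\Omega,G)/G$ of Proposition~\ref{prop:relationGdelta} induces an isomorphism on $\pi_1$, confirming the identification is the canonical one. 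One terminological caution: you call $p$ a ``principal $G$-bundle'', but by Proposition~\ref{prop:profiniteHurewicz} it admits \emph{no} local cross-section when $G$ is infinite profinite, so it is not locally trivial---your argument only uses the free $G$-action by fibre-preserving homeomorphisms together with the lifting property, and that is all that is needed.
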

\begin{proof} By the proposition the map $L(\Omega,G) \to L(\Omega,G)/G$ is a fibration
with totally disconnected fiber $G$. Since $L(\Omega,G)$ is contractible, from
the homotopy long exact sequence we get $\pi_k(L(\Omega,G)/G) = 1$ for all $k \geq 2$
and a natural bijective map $\pi_1(L(\Omega,G)/G) \simeq \pi_0(G) \simeq G$
which is easily checked to be a group homomorphism. Finally $L(\Omega,G)/G$
is clearly connected. The natural continuous map $L(\Omega,G^{\delta})/G^{\delta}) \to L(\Omega,G)/G$
of proposition \ref{prop:relationGdelta}
is then easily checked to induce an isomorphism on $\pi_1$. It follows that $L(\Omega,G)/G$
is a weak $K(G,1)$.
\end{proof}

It is an intruiguing question of whether these spaces $L(\Omega,G)/G$ (when $G$ is profinite) have the homotopy type
of a CW-complex, that is whether they are actual $K(G,1)$, homotopically equivalent
to $L(\Omega,G^{\delta})/G^{\delta}$. We leave it open. We just notice, taking the example of $G = \Z_p$,
that $L(\Omega,G)/G$ is \emph{not} semi-locally simply connected. Indeed, if it were so there
would exist $\eps>0$ such that every loop inside the open ball $B$ of center the neutral
element and radius $\eps>0$ would be homotopically trivial inside $L(\Omega,G)/G$.
But choose $g \in G \setminus \{ 1 \}$ such that $d(g,1) < \eps$ and let $\gamma_u : [0,1] \to L([0,1],G)$
be defined by $\gamma_u(t) = 1$ if $t \geq u$ and $\gamma_u(t) = g$
if $t < u$. The map $u \mapsto [\gamma_u]$ defines a loop inside $B$, which is homotopically
non trivial, because it maps to $g$ under the map $\pi_1(L(\Omega,G)/G) \to \pi_0(G) = G$
provided by the long exact sequence associated to the Hurewicz fibration
$L(\Omega,G) \to L(\Omega,G)/G$. As a consequence, $L(\Omega,G)/G$
does not admit any simply connected covering, and in particular cannot be homeomorphic
to $L(\Omega,G^{\delta})/G^{\delta}$.

\subsection{Limits}

We now consider limits. For this we introduce the category $\mathbf{GrMet_1}$ of metric groups
whose diameter is bounded by $1$ with morphisms the group homomorphisms which are $1$-Lipschitz,
together with its full subcategory $\mathbf{GrBMet_1}$ of metric groups whose metric is bi-invariant.
Notice that the category $\mathbf{Gr}$ of groups and group homomorphisms embeds into 
$\mathbf{GrBMet_1}$ as a full category through the functor which endows an abstract group
with the discrete metric. We also consider the full subcategories
$\mathbf{AbMet_1}$ and $\mathbf{Ab}$ of $\mathbf{GrBMet_1}$ and $\mathbf{Gr}$, respectively,
whose objects are the abelian metric groups and abelian groups, respectively.

We say that a system of objects inside $\mathbf{Met_1}$ is \emph{thin} if for all $\eps > 0$
there exists only a finite number of objects $X$ of the system which satisfy $\diam(X) \geq \eps$.
The limit of such a (directed) system is called a thin (directed) limit.

We say that a directed system of objects $(X_i,f_{ij})_{i,j \in I}$ inside $\mathbf{Met_1}$ is \emph{ultrametric} if
for all $i,j \in I$ with $i \leq j$ then for all $x,y \in X_j$ we have $d(f_{ij}(x),f_{ij}(y)) \in \{ 0, d(x,y) \}$. In particular a directed system
inside $\mathbf{Gr} \subset \mathbf{Met_1}$ is always ultrametric. This property has the following consequence. If $X$ is the limit of the system
and the $\pi_i : X \to X_i$ are the projection maps, then for all $x,y \in X$ we have $d(\pi_i(x),\pi_i(y)) \in \{ 0 , d(x,y) \}$. 

Indeed,
since $d(x,y) = \sup_k d(\pi_k(x),\pi_k(y))$ by lemma \ref{lem:systproj2suite}, we have either $d(x,y) = 0$ in which case $0 \leq d(\pi_i(x),\pi_i(y))
\leq d(x,y) = 0$, or $d(x,y) >0$ in which case there exists $j_0 \in I$ such that $d(\pi_{j_0}(x),\pi_{j_0}(y)) = d(x,y) > 0$.
Let now $i \in I$. There exists $k \geq \max(i,j_0)$ since we have a directed system.
By the defining property of an ultrametric system we have $d(\pi_k(x),\pi_k(y)) = d(\pi_{j_0}(x),\pi_{j_0}(y)) = d(x,y)$.
Then $d(\pi_i(x),\pi_i(y)) \in \{ 0, d(\pi_k(x),\pi_k(y))\}=\{ 0, d(x,y)\}$, which proves the claim.

\begin{proposition} The categories $\mathbf{GrMet_1}$, $\mathbf{GrBMet_1}$, $\mathbf{AbMet_1}$ admit arbitrary limits.
$L(\Omega,\bullet)$ defines functors $\mathbf{GrMet_1} \to \mathbf{Met_1}$,
$\mathbf{GrBMet_1} \to \mathbf{GrBMet_1}$, $\mathbf{AbMet_1} \to \mathbf{AbMet_1}$ which commute with directed limits and finite
products.
The association $B_M : G \leadsto L(\Omega,G)/G$ induces functors $\mathbf{GrBMet_1} \to \mathbf{Met_1}$
and $\mathbf{AbMet_1} \to \mathbf{AbMet_1}$ which commute 
\begin{enumerate}
\item with finite products
\item with thin directed limits
\item with ultrametric directed limits whose projection maps are surjective
\item with directed limits where the objets belong to $\mathbf{Gr} \subset \mathbf{GrBMet_1}$ and whose projection maps are surjective.
\end{enumerate}
\end{proposition}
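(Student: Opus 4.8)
The strategy is to push everything down to the properties of $L(\Omega,\bullet)$ on $\mathbf{Met_1}$ already established, and then to isolate the one genuinely new point, which concerns the quotient functor $B_M$.

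\emph{Limits in the group categories, and the functor $L(\Omega,\bullet)$.} Given an inverse system $(G_i,f_{ij})$ in $\mathbf{GrMet_1}$ (resp.\ $\mathbf{GrBMet_1}$, $\mathbf{AbMet_1}$) I would take the limit $G$ of the underlying system of metric spaces, which exists in $\mathbf{Met_1}$ by Proposition~\ref{prop:categoryMet1} and carries the metric $d(x,y)=\sup_i d_i(\pi_i x,\pi_i y)$. The set-theoretic inverse limit of groups is a group, and left-invariance (resp.\ bi-invariance, commutativity) of the $d_i$ passes to this sup-metric since $d(gx,gy)=\sup_i d_i(\pi_i(g)\pi_i x,\pi_i(g)\pi_i y)=\sup_i d_i(\pi_i x,\pi_i y)$, so $G$ lies in the relevant category; its universal property follows from the one in $\mathbf{Met_1}$ once one notes that the comparison map $u\colon Y\to G$ is automatically a homomorphism. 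That $L(\Omega,\bullet)$ restricts to functors between these categories is then immediate: it sends $1$-Lipschitz maps to $1$-Lipschitz maps (Lemma~\ref{lemnatuprojlipsch}), preserves diameters and homomorphisms, $L(\Omega,G)$ is complete/separable whenever $G$ is (Proposition~\ref{prop:LEcontractcomplete}), and its metric is bi-invariant (resp.\ the group abelian) exactly when $G$'s is, by the computation in the proof of Proposition~\ref{prop:EGtopgroup}. Commutation with directed limits and finite products is then Proposition~\ref{prop:categoryMet1comm} applied to underlying spaces, the comparison isometry $L(\Omega,\lim G_i)\cong\lim L(\Omega,G_i)$ constructed there being visibly a group homomorphism, and limits in the group categories being computed on underlying spaces by the previous step.

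\emph{The functor $B_M$ and the comparison map $\Phi$.} For $G$ in $\mathbf{GrBMet_1}$, $B_M(G)=L(\Omega,G)/G$ has diameter $\le 1$ and is an abelian metric group when $G$ is, and a $1$-Lipschitz homomorphism induces, via the equivariant map $L(\Omega,\varphi)$, a $1$-Lipschitz map on quotients; so $B_M$ is a functor as claimed. For an inverse system with limit $G$ (so $L(\Omega,G)=\lim L(\Omega,G_i)$ by the previous step) the maps $B_M(\pi_i)$ assemble into a $1$-Lipschitz map $\Phi\colon B_M(G)\to\lim B_M(G_i)$, and the whole task is to show $\Phi$ is an isometric bijection under each of (1)--(4). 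I would first record that $\Phi$ is \emph{always injective}: if $\bar h,\bar h'$ have the same image then for each $i$ there is $g_i\in G_i$ with $\pi_i\circ h'=g_i.(\pi_i\circ h)$ a.e., and since the $G_i$-action is free this $g_i$ is unique; applying $f_{ij}$ gives $f_{ij}(g_j)=g_i$, so the $g_i$ define $g\in G$ with $h'=g.h$. What remains is (a) \emph{surjectivity}, which is the coherence problem: for representatives $h_i$ of a compatible family there are unique $g_{ij}\in G_i$ with $f_{ij}\circ h_j=g_{ij}.h_i$ obeying the nonabelian cocycle identity $g_{ik}=f_{ij}(g_{jk})g_{ij}$, and one must produce $c_i\in G_i$ with $g_{ij}=f_{ij}(c_j)c_i^{-1}$ so that the $c_i^{-1}.h_i$ glue to an element of $\lim L(\Omega,G_i)=L(\Omega,G)$ -- i.e.\ the vanishing of a $\lim^1$-type obstruction; and (b) \emph{isometry}, which, after pulling $\sup_i$ through the integral (valid by Lemma~\ref{lem:systproj2suite}, since $i\mapsto d(\pi_i(gh(t)),\pi_i h'(t))$ is nondecreasing), reduces to the interchange $\inf_{g\in G}\sup_i F_i(g)=\sup_i\inf_{g\in G}F_i(g)$ with $F_i(g)=\int_\Omega d(\pi_i(g)\pi_i h(t),\pi_i h'(t))\,\dd t$, the inequality $\ge$ being automatic; the real point is to choose near-minimisers of $F_i$ compatibly in $i$, again a lifting problem.

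\emph{The four cases, and the obstacle.} Case~(1) is formal: $L(\Omega,\bullet)$ commutes with finite products, and for a product of groups the infimum defining the quotient metric separates over the factors, so $\Phi$ is an isometric bijection. Case~(2): for a thin directed system the numbers $d_i(\pi_i x,\pi_i y)$ are nondecreasing in $i$ and, outside a finite set of indices, bounded by $\diam G_i<\eps$; monotonicity then forces $d(x,y)=0$ unless the index set has a largest element, so either $G=G_{\max}$ and $\Phi$ is the identity, or $G$ is trivial and so is $\lim B_M(G_i)$ (the system $(B_M(G_i))$ being thin over the same index set), and there is nothing to prove. Cases~(3) and~(4) carry the content: there the projection maps are surjective, so $\inf_{g\in G}F_i(g)=\inf_{a\in G_i}\tilde F_i(a)$ and the target of $\Phi$ is correctly identified; one then solves both coherence problems -- trivialising the cocycle, and choosing compatible near-minimisers -- by transfinite recursion along a cofinal chain of the index set, lifting each partial choice through the surjective $f_{ij}$'s, which is the Mittag-Leffler mechanism. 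In the ultrametric case~(3) one moreover uses $d(\pi_i x,\pi_i y)\in\{0,d(x,y)\}$, established just before the statement, to see that $\tilde F_i$ takes a controlled set of values, so that near-minimisers stabilise along the chain; in the discrete case~(4), Proposition~\ref{prop:imagedenombrable} (together with Proposition~\ref{prop:imageseparable}, and the fact that the limit again carries the discrete metric) lets one replace every group by a countable subgroup, so the cofinal chain may be taken countable and $\lim^1$ vanishes outright, while $\tilde F_i(a)=1-\mu\{t:\pi_i h'(t)=a\,\pi_i h(t)\}$ attains its minimum by Corollary~\ref{cor:propstar}. The main obstacle throughout is exactly this simultaneous lifting of orbit representatives and of near-minimisers (equivalently, the vanishing of the nonabelian $\lim^1$); bi-invariance, surjectivity of the projection maps, and the ultrametric or discrete hypotheses are precisely what make it succeed, and their absence is why $B_M$ does not commute with general inverse limits.
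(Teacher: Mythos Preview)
Your treatment of the preliminary material (existence of limits, functoriality of $L(\Omega,\bullet)$ and $B_M$) and of case~(1) matches the paper. Your argument for case~(2) is correct and in fact cleaner than the paper's: you observe that a thin directed system over an index set with no maximum has trivial limit (and so does the induced system of $B_M(G_i)$'s), whereas the paper proves a separate lemma establishing $d(y,G)=\sup_i d(\tilde\pi_i y,G_i)$ directly from thinness without noticing this collapse.

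The substantive divergence is in cases~(3) and~(4). The paper does \emph{not} argue via cocycles or Mittag--Leffler. Having reduced (by bi-invariance) to showing $d(y,G)=\sup_i d(\tilde\pi_i y,G_i)$ for a fixed $y\in L(\Omega,G)$, it proceeds by a measure-theoretic contradiction: assuming $d=d(y,G)>S=\sup_i d(\tilde\pi_i y,G_i)$, it studies the Borel sets $C_i=\{(t_1,t_2)\in\Omega^2:\pi_i y(t_1)=\pi_i y(t_2)\}$, uses Lemma~\ref{lem:systproj2suite} to get $\mu(C_i\setminus C)\to 0$ along a countable sequence, and then exploits the ultrametric identity $d(y(t),g)=d_i(\pi_i y(t),\pi_i g)$ on the set $\{t:\pi_i y(t)\neq\pi_i g\}$ to show that $d(y,g)$ and $d(\tilde\pi_i y,\pi_i g)$ differ only by the measure of the fibre $\Omega^i_{\pi_i(g)}=\{t:\pi_i y(t)=\pi_i g\}$. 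Controlling these fibre measures via the $C_i$'s yields, for every $g\in G$ and large $i$, an $h\in G$ with $|d(y,h)-d(\tilde\pi_i y,\pi_i g)|\leq 2\alpha$; surjectivity of the $\pi_i$ then gives the contradiction. Case~(4) follows from~(3).

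Your proposed route has a concrete gap. You invoke ``transfinite recursion along a cofinal chain of the index set'', but an arbitrary directed set need not admit a cofinal chain (take the finite subsets of an uncountable set, ordered by inclusion: any chain has countable union, hence is not cofinal). Your attempted repair in case~(4) --- passing to countable subgroups via Proposition~\ref{prop:imagedenombrable} --- shrinks the groups but not the index set $I$, so it does not manufacture a countable cofinal sequence. Moreover, the assertion that in the ultrametric case ``near-minimisers stabilise along the chain'' is unsubstantiated: a minimiser $g_j^*$ of $\tilde F_j$ maps to $f_{ij}(g_j^*)$ with $\tilde F_i(f_{ij}(g_j^*))\leq\tilde F_j(g_j^*)$, but nothing forces this to be a near-minimiser of $\tilde F_i$. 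You are right to flag surjectivity of $\Phi$ as an issue (the paper's ``isometry, hence isomorphism'' glosses over it), but your cocycle-splitting argument for it hits the same cofinal-chain obstruction. The paper's argument sidesteps all of this by never trying to build coherent lifts: it works with a single $y$ and estimates distances directly.
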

\begin{proof} The existence of limits is a consequence of the informal facts that $\mathbf{GrMet_1} = \mathbf{Met_1} \cap \mathbf{Gr}$
and $\mathbf{AbMet_1} = \mathbf{Met_1} \cap \mathbf{Ab}$ and that limits exist in $\mathbf{Gr}$, $\mathbf{Ab}$, $\mathbf{Met_1}$
and coincide under the forgetful functors to $\mathbf{Set}$. The fact that $L(\Omega,\bullet)$ defines functors $\mathbf{GrMet_1} \to \mathbf{Met_1}$,
$\mathbf{GrBMet_1} \to \mathbf{GrBMet_1}$, $\mathbf{AbMet_1} \to \mathbf{AbMet_1}$ which commute with directed limits and finite
products is also a straightforward consequence of proposition \ref{prop:categoryMet1comm}.

Let $G_1,G_2 \in \mathbf{GrBMet_1}$ and $\varphi \in \Hom_{\mathbf{GrBMet_1}}(G_1,G_2)$. Then the composite of $L(\Omega,\varphi)$
with the projection map $L(\Omega,G_2) \to L(\Omega,G_2)/G_2$
defines a continuous map $F : L(\Omega,G_1) \to L(\Omega,G_2)/G_2$ which factorizes through $\bar{F} : L(\Omega,G_1)/G_1 \to L(\Omega,G_2)/G_2$.
We prove that $\bar{F}$ is $1$-Lipshitz. Indeed, for $f_1,f_2 : \Omega \to G_1$ and $g \in G_1$ we have
$$
d(F(g f_1),F(f_2)) = \inf_{g_2 \in G_2} \int_{\Omega} d(g_2\varphi(g f_1(t)),\varphi( f_2(t))) \dd t
= \inf_{g_2 \in G_2} \int_{\Omega} d(g_2\varphi(g) \varphi(f_1(t)),\varphi( f_2(t))) \dd t$${}$$
= \inf_{g_2 \in G_2} \int_{\Omega} d(g_2 \varphi(f_1(t)),\varphi( f_2(t))) \dd t
\leqslant
 \inf_{g_1 \in G_1} \int_{\Omega} d(\varphi(g_1) \varphi(f_1(t)),\varphi( f_2(t))) \dd t
$${}$$= \inf_{g_1 \in G_1} \int_{\Omega} d(\varphi(g_1f_1(t)),\varphi( f_2(t))) \dd t
\leqslant \inf_{g_1 \in G_1} \int_{\Omega} d(g_1f_1(t),f_2(t)) \dd t
= \inf_{g_1 \in G_1} d(g_1.f_1,f_2)
$$
and this means $d(\bar{F}(\bar{f}_1),\bar{F}(\bar{f}_2)) \leq d(\bar{f}_1,\bar{f}_2)$ for all $\bar{f}_1,\bar{f}_2 \in L(\Omega,G_1)/G_1$. This
proves that $\bar{F}$ is a morphism in $\mathbf{Met_1}$ and therefore we get a functor $B_M : \mathbf{GrBMet_1} \to \mathbf{Met_1}$.
If $G_1,G_2$ are abelian groups, then $\bar{F}$ is clearly a group homomorphism, and thus $B_M$ can be restricted
to a functor $\mathbf{AbMet_1} \to \mathbf{AbMet_1}$.

We now investigate when the functors $B_M : \mathbf{GrBMet_1} \to \mathbf{Met_1}$ and
$B_M : \mathbf{AbMet_1} \to \mathbf{AbMet_1}$ commute with limits. 
Consider an inverse system $(G_{i},\varphi_{ij})$ of objects and morphisms in $\mathbf{GrBMet_1}$,
and let $G$ be its limit in $\mathbf{GrBMet_1}$, with projection morphisms $\pi_i : G \to G_i$. Assume that
this system is either directed or a finite product.
The inverse system $(B_M(G_i),B_M(\varphi_{ij}))$ also admits a limit that we denote $B$.
The morphisms $L(\Omega,\pi_i) : L(\Omega,G) \to L(\Omega,G_i)$ when composed
with the natural map $1$-Lipschitz map $L(\Omega,G_i) \to L(\Omega,G_i)/G_i$
factorize into $1$-Lipschitz maps $B_M(G) \to L(\Omega,G_i)/G_i$ and thus
provide morphisms in $\mathbf{Met_1}$ which
induce a morphism $F : B_M(G) \to B$ by the universal property of the inverse limit.
We need to check whether this morphism is an isometry, hence an isomorphism in $\mathbf{Met_1}$. If we have started with a system
in $\mathbf{AbMet_1}$, $F$ is clearly a group morphism, so if it is an isometry, then it is also an isomorphism in $\mathbf{AbMet_1}$.

For this, we let $\tilde{F} : L(\Omega,G) \to B$ denote the composite of $F$
with the projection map $L(\Omega,G) \to L(\Omega,G)/G = B_M(G)$.
The natural morphisms $\kappa_i : L(\Omega,G_i) \to B_M(G_i)$ provide a morphism
of inverse systems between $(L(\Omega,G_i),L(\Omega,\varphi_{ij}))$ and
$(B_M(G_i),B_M(\varphi_{ij}))$, and from this we get a $\mathbf{Met_1}$-morphism
$\rho : L \to B$ between the inverse limit $L$ of the system $(L(\Omega,G_i),L(\Omega,\varphi_{ij}))$
to $B$. Finally, in the proof of proposition \ref{prop:categoryMet1comm} we constructed a natural
map $\check{F} : L(\Omega,G) \to L$ and proved that it was an isometry. It is straightforward to check
that the natural diagram involving all these maps commutes.
$$
\xymatrix{
L(\Omega,G) \ar[d] \ar[r]^{\simeq} \ar[dr] & L \ar[d] \\
B_M(G) = L(\Omega,G)/G \ar[r] & B
}
$$
Let $\bar{x},\bar{y} \in B_M(G)$
and $x,y$ two representatives in $L(\Omega,G)$.
By definition $d(\bar{x},\bar{y}) = \inf_{g \in G} d(g.x,y)
= \inf_{g \in G} d(g,yx^{-1})$.

By definition $d(\bar{x},\bar{y}) = \inf_{g \in G} d(g.x,y)
= \inf_{g \in G} d(\check{F}(g.x),\check{F}(y))= \inf_{g \in G} d(g.\check{F}(x),\check{F}(y))$
and $d(F(\bar{x}),\bar{F}(\bar{x})(\bar{y}))=d(\tilde{F}(x),\tilde{F}(y))
=d(\rho(\check{F}(x)),\rho(\check{F}(y))).$
Therefore we only need to prove that $\inf_{g \in G} d(g.x,y) = d(\rho(x),\rho(y))$
for all $x,y \in L$. We recall that $\pi_i$ is the natural map $\pi_i : G \to G_i$ and we set
$\tilde{\pi}_i = L(\Omega,\pi_i)$. Then
$$
\inf_{g \in G} d(g.x,y) = \inf_{g \in G} \sup_i d(\tilde{\pi}_i(g.x),\tilde{\pi}_i(y))
= \inf_{g \in G} \sup_i d(\pi_i(g)\tilde{\pi}_i(x),\tilde{\pi}_i(y))
= \inf_{g \in G} \sup_i d(\pi_i(g),\tilde{\pi}_i(yx^{-1}))
$$
and
$$
d(\rho(x),\rho(y)) = \sup_i d(\kappa_i(\tilde{\pi}_i(x)),\kappa_i(\tilde{\pi}_i(y)))
=
\sup_i \inf_{g_i \in G_i} d(g_i\tilde{\pi}_i(x),\tilde{\pi}_i(y))
=
\sup_i \inf_{g_i \in G_i} d(g_i ,\tilde{\pi}_i(yx^{-1})).
$$
Therefore, we may assume $x = 1$, and set
$$A(y) =  \inf_{g \in G} \sup_i d(\pi_i(g),\tilde{\pi}_i(y)) = d(y,G)
\ \mbox{ and } \ B(y) = \sup_i \inf_{g_i \in G_i} d(g_i ,\tilde{\pi}_i(y)) = \sup_{i \in I} d(\tilde{\pi}_i(y),G_i).
$$
First note that $A(y) \geq B(y)$ for all $y \in L$. Indeed, 
$\inf_{g_i \in G_i} d(g_i ,\tilde{\pi}_i(y)) \leq d(\pi_i(g),\tilde{\pi}_i(y))$
for all $g \in G$. Therefore, $B(y) \leq \sup_i d(\pi_i(g),\tilde{\pi}_i(y))$
for all $g \in G$ and thus $B(y) \leq A(y)$.
We prove the following lemma, which proves part (2) of the proposition.

\begin{lemma} Assume that $(G_i)_{i \in I}$ is a thin directed system and $y \in L(\Omega,G)$.
 Then $d(y,G) = \sup_{i \in I} d(\tilde{\pi}_i(y),G_i)$.
\end{lemma}
\begin{proof} First note that, if the directed system $I$ is finite, then $G = G_{\sup I}$ and the assertion is obvious. Without loss of generality we now assume that $I$ is infinite.
We know that $S = \sup_i d(\tilde{\pi}_i(y),G_i) \leq d(y,G) = d$. Assume that contradiction that
$S < d$ and let $\alpha = (d-S)/5 > 0$. Since our system is thin there exists a finite $I_F \subset I$ such that  $\diam(G_i) \leq \alpha$ for all $i \not\in I_F$.
Since $d = d(y,G)$ there exists $g \in G$ such that $|d - d(y,g)| \leq \alpha$. Since $d(y,g) = \sup_{i \in I} d(\tilde{\pi}_i(y),\pi_i(g))$ there exists $i_1 \in I$
such that $|d(y,g) - d(\tilde{\pi}_{i_1}(y),\pi_{i_1}(g))| \leq \alpha$.  We now choose $i_2 \in U$ such that $|S - d(\tilde{\pi}_{i_2}(y),G_{i_2})| \leq \alpha$.
Since $I$ is directed and infinite there exists $i \in I$ with $i > \max(I_F,i_1,i_2)$. Then $\diam(G_i) \leq \alpha$,
and $d(\tilde{\pi}_{i_1}(y),\pi_{i_1}(g)) \leq d(\tilde{\pi}_{i}(y),\pi_{i}(g)) \leq d(y,g)$ hence $|d(y,g) - d(\tilde{\pi}_{i}(y),\pi_{i}(g))| \leq
|d(y,g) - d(\tilde{\pi}_{i_1}(y),\pi_{i_1}(g))| \leq \alpha$. Moreover, if $\varphi : G_i \to G_{i_2}$ denotes the transition map, 
we have $d(\tilde{\pi}_{i_2}(y),G_{i_2}) \leq d(\varphi \circ \pi_i \circ y,\varphi(G_{i})) \leq d(\tilde{\pi}_i(y),G_i) \leq S$
hence $|S - d(\tilde{\pi}_i(y),G_i)| \leq \alpha$ hence
there exists $g_i \in G_i$ such the $|S - d(\tilde{\pi}_i(y),g_i)| \leq 2 \alpha$
whence $\alpha \geq \diam(G_i) \geq d(\pi_i(g),g_i) \geq |d(\tilde{\pi}_i(y),\pi_i(g)) - d(\tilde{\pi}_i(y),g_i)|$
hence
$$
\alpha \geqslant \left| (d-S) + (d(\tilde{\pi}_i(y),\pi_i(g))-d) + (S-d(\tilde{\pi}_i(y),g_i))
\right|.
$$
On the other hand we have $(d-S) + (d(\tilde{\pi}_i(y),\pi_i(g))-d) + (S-d(\tilde{\pi}_i(y),g_i)) \geq 5 \alpha - \alpha - 2 \alpha = 2 \alpha > 0$
hence $\alpha \geq \diam(G_i) \geq 2 \alpha$, a contradiction which proves the claim.
\end{proof}
Part (1) will be proved be a consequence of the following lemma.

\begin{lemma} \label{lem:distprodfinis} Assume $G = \prod_{i \in I} G_i$ with $I$ finite.
Then $d(y,G) = \sup_{i \in I} d(\tilde{\pi}_i(y),G_i)$.
\end{lemma}
\begin{proof}
Again we know that  $S = \sup_i d(\tilde{\pi}_i(y),G_i) \leq d(y,G) = d$. Assuming by contradiction $\alpha = d-S > 0$,
let us consider some $i \in I$. Since $d(\tilde{\pi}_i(y),G_i) \leq S$, there exists $g_i \in G_i$
such that $d(\tilde{\pi}_i(y),g_i) \leq S + \frac{\alpha}{2}$. Let us consider $g = (g_i)_{i \in I} \in G$.
Then $d(y,g) = \sup_{i \in I} d(\tilde{\pi}_i(y),g_i) \leq S + \frac{\alpha}{2} < d$
and this contradiction proves the claim.
\end{proof}

It remains to consider case (3), case (4) being an immediate consequence of case (3).
Let $y : \Omega \to G$ a Borel map.
We set $\psi(t_1,t_2) = d(y(t_1),y(t_2))$.
We assume again, by contradiction, that $d = d(y,G) > S = \sup_i d(\tilde{\pi}(y),G_i)$, and
we set $\alpha = (d-S)/4$.
The map $\psi : \Omega \times \Omega \to \R_+$
is Borel as a composite of Borel maps. In particular $C = \psi^{-1}(\{ 0 \})$
is a Borel subset of $\Omega^2$. Similarly the maps $\psi_i(t_1,t_2) = d(\pi_i\circ y(t_1),\pi_i\circ y(t_2))$
and the sets $C_i = \psi_i^{-1}(\{ 0 \}) \subset \Omega\times \Omega$ are Borel.
Since $\psi_i \leq \psi$ we have $C_i \supset C$ and more generally $i \leq j \Rightarrow C_i \supset C_j$.

We have $\psi(t_1,t_2) = \sup_{i \in I} \psi_i(t_1,t_2)$ hence $C = \bigcap_i C_i$.
By lemma \ref{lem:systproj2suite} we have a sequence $u_n$ inside $I$ such that 
$\psi_{u_n}$ converges to $\psi$, hence $C = \bigcap_n C_{u_n}$, therefore
$\mu(C) = \inf_n \mu(C_{u_n}) = \inf_{i \in I} \mu(C_i)$.

There exists $i_1 \in I$ such that $\mu(C_{i_1} \setminus C) \leq \alpha^2$.
Let $\mathcal{G} = \{ g \in G \ | \ \mu(y^{-1}(\{ g \})) > 0 \}$. Clearly $\mathcal{G}$ is countable.
Let $g \in \mathcal{G}$, and $\Omega_g = y^{-1}(\{ g \})$. Over $\Omega_g \times \Omega_g \subset \Omega \times \Omega$
we have $\psi = 0$ whence $\Omega_g^2 \subset C$. Clearly $g_1 \neq g_2 \Rightarrow \Omega_{g_1}^2 \cap \Omega_{g_2}^2 = \emptyset$
and $C \supset \bigsqcup_{g \in \mathcal{G}} \Omega_g \times \Omega_g$.
It follows that $\sum_{g \in \mathcal{G}} \mu(\Omega_g)^2 \leq 1 < \infty$ hence we can assume $\sum_{g \in \mathcal{G} \setminus \mathcal{G}_0} \mu(\Omega_g)^2 \leq \alpha^2$
for some
$\mathcal{G}_0 \subset \mathcal{G}$ finite.

 If $(a,b) \in C'=C \setminus \bigsqcup_{g \in \mathcal{G}} \Omega_g \times \Omega_g$
let us set $g_0 = y(a) = y(b)$. We have $g_0 \not\in \mathcal{G}$, since $(a,b) \in \Omega_{g_0}^2$.
By definition $\mu(y^{-1}(\{ g_0 \}))=0$ hence $\mu(\{ t ; d(g_0,y(t)) = 0 \})=0$. Let $\Omega' = y^{-1}(G \setminus \mathcal{G})$.
By Fubini we have $\mu(C') = \int_{\Omega'} \mu(\{ t;  d(y(u),y(t))=0 \}) \dd u = \int_{\Omega'} 0 \dd u = 0$.

Defining similarly $\Omega_g^i = \tilde{\pi}(y)^{-1}(\{ g \})$ for $i \in I$ and $g \in G_i$, we also have that $C_i$ is the disjoint union of the $(\Omega_g^i)^2$
for all $g \in \mathcal{G}_i \subset G_i$, up to a set of measure $0$. For all $g \in G$ we have $(\Omega_{\pi_i(g)}^i)^2 \supset \Omega_g^2$ whence,
for a given $g_i \in G_i$ with $i \geq i_1$, we have
$\mu((\Omega_{g_i}^i)^2 \setminus \bigcup_{\pi_i(g)=g_i} \Omega_g^2) \leq \mu(C_i \setminus C) \leq \alpha^2$.
It follows that $\mu((\Omega_{g_i}^i)^2 \setminus C) \leq \alpha^2$. In particular, if $g_i \not\in \pi_i(\mathcal{G})$,
we have $\mu(\Omega_{g_i}^i) \leq \alpha$.

Now, for each $g \in \mathcal{G}_0$, there exists $i_g \in I$ such the $|d(\tilde{\pi}_i(y),\pi_i(g)) - d(y,g)| \leq \alpha$
for all $i \geq i_g$. Since $\mathcal{G}_0$ is finite and $I$ is directed there exists $i_2 \geq i_1$
such that $|d(\tilde{\pi}_i(y),\pi_i(g)) - d(y,g)| \leq \alpha$ for all $i \geq i_2$ and $g \in \mathcal{G}_0$.
Also, we can assume that $i_2$ is large enough so that $\pi_i$ for $i \geq i_2$ is injective
on $\mathcal{G}_0$ : if $a,b \in \mathcal{G}_0$ satisfy $a \neq b$, that is $d(a,b) > 0$,
since $d(a,b) = \sup_i d(\pi_i(a),\pi_i(b))$ there must exist such an $i_2$ such that $d(\pi_i(a),\pi_i(b)) > 0$ for all such couples $(a,b)$ and $i \geq i_2$ by the finiteness
of $\mathcal{G}_0 \times \mathcal{G}_0$.

Let $g \in G$ and $i \geq i_2 \geq i_1$. 
If $g \not\in \mathcal{G}$, then
$d(y,g) = \int_{\Omega} d(y(t),g)\dd t = \int_{\Omega } d(y(t),g)\dd t
 = \int_{\Omega \setminus \Omega_{\pi_i(g)}^i}  d(y(t),g)\dd t + \int_{\Omega_{\pi_i(g)}^i }d(y(t),g)\dd t
= \int_{\Omega \setminus \Omega_{\pi_i(g)}^i}  d(\pi_i(y(t)),\pi_i(g))\dd t + \int_{\Omega_{\pi_i(g)}^i }d(y(t),g)\dd t$
hence
$d(y,g) - d(\tilde{\pi}_i(y),\pi_i(g)) = 
-\int_{\Omega_{\pi_i(g)}^i}  d(\pi_i(y(t)),\pi_i(g))\dd t + \int_{\Omega_{\pi_i(g)}^i }d(y(t),g)\dd t$
and $|d(y,g) - d(\tilde{\pi}_i(y),\pi_i(g))| \leq 2 \mu(\Omega_{\pi_i(g)}^i ) \leq 2 \alpha$ if $i \geq i_1$.

If $g \in \mathcal{G}_0$ we already have $|d(\tilde{\pi}_i(y),\pi_i(g)) - d(y,g)| \leq \alpha$.  If $g \in \mathcal{G} \setminus \mathcal{G}_0$,
we have $\mu(\Omega_g) \leq \alpha$, and
$d(y,g) = \int_{\Omega} d(y(t),g)\dd t = \int_{\Omega \setminus \Omega_g} d(y(t),g)\dd t
 = \int_{\Omega \setminus \Omega_{\pi_i(g)}^i}  d(y(t),g)\dd t + \int_{\Omega_{\pi_i(g)}^i \setminus \Omega_g}d(y(t),g)\dd t
= \int_{\Omega \setminus \Omega_{\pi_i(g)}^i}  d(\pi_i(y(t)),\pi_i(g))\dd t + \int_{\Omega_{\pi_i(g)}^i \setminus \Omega_g}d(y(t),g)\dd t$
hence
$$0 \leqslant d(y,g) - d(\tilde{\pi}_i(y),\pi_i(g)) = 
-\int_{\Omega_{\pi_i(g)}^i}  d(\pi_i(y(t)),\pi_i(g))\dd t + \int_{\Omega_{\pi_i(g)}^i \setminus \Omega_g}d(y(t),g)\dd t$${}$$
=  \int_{\Omega_{\pi_i(g)}^i \setminus \Omega_g}d(y(t),g)\dd t
\leqslant \mu(\Omega_{\pi_i(g)}^i \setminus \Omega_g).$$
If $\pi_i(g) \not\in \pi_i(\mathcal{G}_0)$ then since $\mu((\Omega_{\pi_i(g)}^i)^2 \setminus \bigcup_{\pi_i(h) = \pi_i(g)} \Omega_h^2) \leq \alpha^2$,
we have 
$$
\mu((\Omega_{\pi_i(g)}^i)^2) = \mu((\Omega_{\pi_i(g)}^i)^2 \setminus C)  +  \mu(C \cap \Omega_{\pi_i(g)}^i)^2)
\leqslant \mu(C_i \setminus C) + \sum_{\stackrel{h \in \mathcal{G}}{\pi_i(h)=\pi_i(g)}} \mu(\Omega_h^2)
\leqslant \alpha^2 + \alpha^2
$$
hence $\mu(\Omega_{\pi_i(g)}^i) \leq \sqrt{2} \alpha \leq 2 \alpha$.
Otherwise, there exists $h \in \mathcal{G}_0$ such that $\pi_i(g)  = \pi_i(h)$. Since $\pi_i$ is injective over $\mathcal{G}_0$,
there exists only one such $h$. Therefore $d(y,\pi_i(g)) = d(y,\pi_i(h))$ and $|d(y,h) - d(\pi_i \circ y, \pi_i(g))|= |d(y,h) - d(\pi_i \circ y, h)| \leq \alpha$
by the preceeding arguments.

This proves the following : for all $i \geq i_2$, for all $g \in G$, there exists $h \in G$ such that $|d(y,h)- d(\pi_i\circ y,\pi_i(g))| \leq 2 \alpha$.

But this contradicts $d -S > 0$. Indeed, there exists $g_{i_3} \in G_{i_3}$ and such that $0 \leq S - d(\tilde{\pi}_{i_3}(y),g_{i_3}) \leq \alpha$.
Since $\pi_{i_3}$ is assumed to be surjective, there exists $g \in G$ such that $g_{i_3} = \pi_{i_3}(g)$. Let us choose $i \in I$
with $i \geq i_2$ and $i \geq i_3$. Then $0 \leq S- d(\tilde{\pi}_i(y),\pi_i(g)) \leq S-d(\tilde{\pi}_{i_3}(y),g_{i_3}) \leq \alpha$.
But we proved that there exists $h \in G$ such that 
$|d(y,h)- d(\pi_i\circ y,\pi_i(g))| \leq 2 \alpha$ hence $|S-d(y,h)| \leq |S-d(\tilde{\pi}_i(y),\pi_i(g))| + |d(\tilde{\pi}_i(y),\pi_i(g)) - d(y,h)|
\leq 3 \alpha$. Therefore $d(y,h) \leq S+3\alpha < d$ contradicting $d = \inf_{h \in G} d(y,h)$.
This concludes the proof of the proposition.

\end{proof}

We remark that the condition that the projection maps are surjective is automatically fulfilled
when the index set $I$ is countable and the transition maps are surjective (see e.g. \cite{DOUADY} exercice 2.5.4).

We now give an example where commutation does not hold, even for a finite limit when it is not directed. Consider such a limit as a closed subgroup
of $\prod_{i \in I} G_i$ with $I$ finite. If $y : \Omega \to G$ is Borel, it defines in particular a Borel map $\Omega \to \prod_{i \in I} G_i$.
Then
$$
d_{L(\Omega,G)}(y,G) = d_{L(\Omega,\prod_{i \in I}G_i)}(y,G) \mbox{ and } d_{L(\Omega,\prod_{i \in I} G_i)}(y,\prod_{i \in I} G_i) = \sup_{i \in I}
d(\tilde{\pi}_i(y),G_i)
$$
where the latter equality is a consequence of lemma \ref{lem:distprodfinis}. But in general
we have $d_{L(\Omega,\prod_{i \in I}G_i)}(y,G) = d_{L(\Omega,\prod_{i \in I} G_i)}(y,\prod_{i \in I} G_i)$.

Indeed, let us consider the following example, illustrated in figure \ref{fig:contrexlimit}.
We consider the following system in the category of groups
$$
\xymatrix{
  \Z/2N\Z \ar[dr]  & & \Z/2N\Z \ar[dl]  \\
   & \Z/2\Z \\
}
$$
where the maps $\Z/2N\Z \to \Z/2\Z$ are the reductions modulo $2$. We endow $\Z/2N\Z$ with the euclidean metric, that is the one induced by the usual metric
$d(x,y) = |x-y|$ over $\Z$, renormalized so that $d(\bar{0},\bar{1}) = 1/2N$. We have $\diam(\Z/2N\Z) = \sqrt{2}N/2N \leq 1$. We endow $\Z/2\Z$ with the only metric satisfying $\diam(\Z/2\Z) = 1/2N$. Then the above
diagram provides a system inside $\mathbf{GrBMet_1}$. We denote $G$ its limit, $G_1 = G_2 = \Z/2N\Z$, $G_3 = \Z/2\Z$. The group $G$ is a subgroup of $\prod_i G_i$.
Since
$\pi_3$ is determined by $\pi_1$ (or $\pi_2$), we can consider it as a subgroup of $G_1 \times G_2$, namely the subgroup of couples $(a,b)$
such that $a \equiv b  \mod 2$. We assume $N \gg 0$.
Let $y : \Omega \to G_1 \times G_2$ such that $\mu(f^{-1}(x))=1/4$ for $x \in \{ (1,1),(2,0),(2,2),(3,1) \}$. Clearly $y \in L(\Omega,G)$.
It is easily checked that $d(y,G) = d(y,x)$ for $x \in \{ (1,1),(2,0),(2,2),(3,1) \}$,
and for such an $x$ we have $d(y,x) = ((1/4) \times 2 + (1/2) \times \sqrt{2})/2N
= (1+\sqrt{2})/4N$. Now consider $x_0 = (2,1,0) \in G_1 \times G_2 \times G_3$. Then $d(y,x_0) = ((1/4) \times 4 ) /2N = 2/4N <(1+\sqrt{2})/4N = d(y,G)$.
This proves that $d(y,G) > d(y,\prod_{i} G_i)$ and thus provides a counterexample. Notice that $I$ is a finite system (but of course not a directed system)
and that its  projection maps are surjective.

\medskip

Finally we notice the following property of limits, that we find interesting in its own right.

\begin{proposition}
Let $G$ be the limit of the directed system $(G_i)_{i \in I}$, and $\pi_i : G \onto G_i$ the natural maps, that we assume surjective for all $i \in I$.
Let us set $\tilde{G}_i = \tilde{\pi}_i^{-1}(G_i) \subset L(\Omega,G)$. Then $G \subset L(\Omega,G)$
is equal to the intersection of the $\tilde{G}_i, i \in I$.
\end{proposition}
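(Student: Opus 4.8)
The plan is to prove the two inclusions separately; the inclusion $G\subseteq\bigcap_{i\in I}\tilde G_i$ is immediate and all the content lies in the reverse one. For the easy direction, if $f\in G\subset L(\Omega,G)$ is the constant map $\delta_g$ with value $g\in G$, then $\tilde\pi_i(f)=L(\Omega,\pi_i)(\delta_g)$ is the constant map $\delta_{\pi_i(g)}$, which lies in $G_i\subset L(\Omega,G_i)$ under the isometric embedding $a\mapsto\delta_a$ of Proposition~\ref{prop:LEcontractcomplete}; hence $f\in\tilde G_i$ for every $i$, so $G\subseteq\bigcap_i\tilde G_i$.

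For the reverse inclusion I would take $f\in L(\Omega,G)$ lying in $\tilde G_i$ for every $i$ and show that $f$ coincides with a constant map whose value lies in $G$. By definition of $\tilde G_i$, for each $i$ there is $c_i\in G_i$ with $\pi_i\circ f=\delta_{c_i}$ in $L(\Omega,G_i)$, i.e.\ $\pi_i(f(t))=c_i$ for $\mu$-almost every $t$. First I would check that $(c_i)_{i\in I}$ is a compatible family for the inverse system: for $i\le j$, the identity $\pi_i=\varphi_{ij}\circ\pi_j$ forces $\varphi_{ij}(\pi_j(f(t)))=\pi_i(f(t))$ for almost every $t$, and evaluating at any $t$ in the (full-measure, hence nonempty) set where simultaneously $\pi_i(f(t))=c_i$ and $\pi_j(f(t))=c_j$ yields $\varphi_{ij}(c_j)=c_i$. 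Since, by the construction of limits in $\mathbf{Met_1}$ (Proposition~\ref{prop:categoryMet1}) and the agreement of limits in $\mathbf{GrBMet_1}$ with those in $\mathbf{Set}$, the underlying set of $G=\lim G_i$ is the set-theoretic inverse limit, the family $c=(c_i)_i$ is an element of $G$; it then remains to prove $f=\delta_c$ in $L(\Omega,G)$.

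The single subtle point — and the main, though mild, obstacle — is that $I$ may be uncountable, so one cannot directly intersect the uncountably many full-measure sets $\{t:\pi_i(f(t))=c_i\}$ to conclude $f(t)=c$ almost everywhere. This is precisely what Lemma~\ref{lem:systproj2suite} is designed to circumvent: applied to the two Borel maps $f$ and $\delta_c$, it produces a sequence $(u_n)_{n\in\N}$ in $I$ with $d(\pi_{u_n}(f(t)),c_{u_n})=d(\pi_{u_n}(f(t)),\pi_{u_n}(\delta_c(t)))\to d(f(t),c)$ for every $t\in\Omega$. Setting $\Omega^\ast=\bigcap_n\{t:\pi_{u_n}(f(t))=c_{u_n}\}$, a countable intersection of full-measure sets and hence of full measure, every term of this convergent sequence vanishes on $\Omega^\ast$, so $d(f(t),c)=0$ for almost every $t$, i.e.\ $f=\delta_c\in G$, which finishes the argument.

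The remaining verifications — that $\tilde\pi_i(\delta_g)=\delta_{\pi_i(g)}$, that a countable intersection of full-measure sets has full measure, and that the intervening sets are nonempty because $\mu(\Omega)=1$ — are routine. I note that the surjectivity hypothesis on the $\pi_i$ is not actually used for this particular statement, and I expect no genuine difficulty beyond the correct invocation of Lemma~\ref{lem:systproj2suite}.
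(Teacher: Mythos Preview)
Your proof is correct and follows essentially the same route as the paper's: both prove the two inclusions separately, construct the compatible family $(c_i)$ (the paper calls it $(g_i)$) and the resulting element $c\in G$, and then invoke Lemma~\ref{lem:systproj2suite} to overcome the uncountability of $I$. The only cosmetic difference is in which conclusion of that lemma is used: you extract the countable cofinal sequence $(u_n)$ and intersect the corresponding full-measure sets to get $f(t)=c$ almost everywhere, whereas the paper applies the ``Moreover'' clause (the $\sup$--$\int$ interchange) directly to compute $d(f,c)=\sup_i\int d(\pi_i f(t),\pi_i(c))\,\dd t=\sup_i d(\tilde\pi_i(f),c_i)=0$ in one line. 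Your remark that surjectivity of the $\pi_i$ is not actually needed is also correct; the paper writes $\tilde\pi_i(G)=G_i$ for the easy inclusion, but only $\tilde\pi_i(G)\subset G_i$ is required there.
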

\begin{proof}
Since $\tilde{\pi}_i(G) = G_i$ we have $G \subset \tilde{G}_i$ for all $i \in I$ hence
$G \subset \bigcap_{i \in I} G_i$. Conversely, let $f \in \bigcap_{i \in I} G_i \subset L(\Omega,G)$. By definition, for
all $i \in I$ there exists $g_i \in G_i$ such that $\tilde{\pi}_i(f) = g_i$. Let us denote $f_{ij} : G_j \to G_i$
the transition maps. By definition, $\pi_i \circ f_{ij} = \pi_j$. Then, for all $i,j$, we have
$f_{ij}(g_j) = L(\Omega,f_{ij})(\tilde{\pi}_j(f)) = (t \mapsto f_{ij}(\pi_j(f(t)))) = (t \mapsto \pi_i(f(t))) = (t \mapsto \tilde{\pi}_i(f)(t)) = g_i$.
Therefore, the collection $g' = (g_i)_{i \in I}$ belongs to $G$. Then, by lemma \ref{lem:systproj2suite} we have 
$$
d(f,g') = \int_{\Omega} d(f(t),g') \dd t = \int_{\Omega} \sup_i d(\pi_i(f(t)),\pi_i(g'))\dd t
= \sup_i \int_{\Omega} d(\pi_i(f(t)),\pi_i(g'))\dd t $${}$$
= \sup_i \int_{\Omega} d(\pi_i(f(t)),g_i)\dd t
= \sup_i d(\tilde{\pi}_i(f),g_i) = \sup_i d(g_i,g_i) = 0
$$
and this proves the claim.
\end{proof}

\bigskip

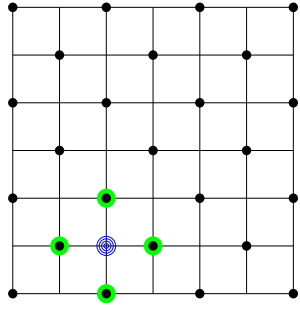
\begin{figure}
\begin{center}
\resizebox{4cm}{4cm}{
\begin{tikzpicture}
\draw (0,0) -- (0,6);
\draw (1,0) -- (1,6);
\draw (2,0) -- (2,6);
\draw (3,0) -- (3,6);
\draw (4,0) -- (4,6);
\draw (5,0) -- (5,6);
\draw (6,0) -- (6,6);
\draw (0,0) -- (6,0);
\draw (0,1) -- (6,1);
\draw (0,2) -- (6,2);
\draw (0,3) -- (6,3);
\draw (0,4) -- (6,4);
\draw (0,5) -- (6,5);
\draw (0,6) -- (6,6);
\fill (0,0) circle (0.1);
\fill (0,2) circle (0.1);
\fill (0,4) circle (0.1);
\fill (0,6) circle (0.1);
\fill (1,1) circle (0.1);
\fill (1,3) circle (0.1);
\fill (1,5) circle (0.1);
\fill (2,0) circle (0.1);
\fill (2,2) circle (0.1);
\fill (2,4) circle (0.1);
\fill (2,6) circle (0.1);
\fill (3,1) circle (0.1);
\fill (3,3) circle (0.1);
\fill (3,5) circle (0.1);
\fill (4,0) circle (0.1);
\fill (4,2) circle (0.1);
\fill (4,4) circle (0.1);
\fill (4,6) circle (0.1);
\fill (5,1) circle (0.1);
\fill (5,3) circle (0.1);
\fill (5,5) circle (0.1);
\fill (6,0) circle (0.1);
\fill (6,2) circle (0.1);
\fill (6,4) circle (0.1);
\fill (6,6) circle (0.1);
\fill[color=green] (2,0) circle (0.2);
\fill[color=green] (2,2) circle (0.2);
\fill[color=green] (1,1) circle (0.2);
\fill[color=green] (3,1) circle (0.2);
\fill[color=black] (3,1) circle (0.1);
\fill[color=black] (1,1) circle (0.1);
\fill[color=black] (2,0) circle (0.1);
\fill[color=black] (2,2) circle (0.1);
\draw[color=blue] (2,1) circle (0.05);
\draw[color=blue] (2,1) circle (0.15);
\draw[color=blue] (2,1) circle (0.1);
\draw[color=blue] (2,1) circle (0.2);

\end{tikzpicture}
}
\end{center}
\caption{A counter-example inside the discrete flat torus $(\Z/2N\Z)^2$ for $N=3$}
\label{fig:contrexlimit}
\end{figure}
\subsection{Two remarks on the construction}

A questionable in this general case is
the choice of $L^1$ instead of $L^p$ for another $p \geq 1$. Indeed, it is easily proved, by combining
the classical Minkowsky inequality with the triangular inequality for the metric in $G$, that $(f,g) \mapsto (\int_I d(f(t),g(t))^p)^{\frac{1}{p}}$
defines a metric on the set $L(G)$. When $1 \leq p < \infty$ it is also true that $L(G)$ is contractible,
the proof being the same.

We know that the usual metric spaces $L^p([0,1],\R)$ are all
homeomorphic for $1 \leq p < \infty$, by the classical result of Stanislaw Mazur
\cite{MAZUR} (see also \cite{BOURBINT}, ch. 4 \S 6 exercice 10), although they are not
uniformly homeomorphic by \cite{ENFLO}. We do not know whether the same thing holds true for an arbitrary
complete metric group $G$. Notice however that, even when $G = \R$, the classical homeomorphism from $L^p(I,\R)$ to $L^q(I,\R)$
given by $f \mapsto |f|^{\frac{p}{q}-1}.f$
is not $G$-invariant, and thus it remains unclear to us whether the $L^p([0,1],\R)$ are isomorphic to each other
as topological $\R$-spaces. While this point remains unsettled, this potentially provides an infinite family
of variations of this construction. For the groups originating from the discrete construction, of
course the choice of $p$ has no relevance.

\smallskip
Another question is about the invariance of this
construction under the symmetries of the probability space.
The group $\Gamma$ of measure-preserving Borel automorphism of $\Omega$
obviously acts on $L(\Omega,G)$, and this action preserves the action of $G$.
Therefore, it induces an action on $L(\Omega,G)/G$. This action factors
through $\overline{\Gamma} = \Gamma/\Gamma_0$ where $\Gamma_0$
is the subgroup of all Borel automorphisms which are almost surely equal to the identity,
and the induced action of $\overline{\Gamma}$ is clearly faithful, as soon as $|G| \geq 2$.
It is known that $\overline{\Gamma}$ is simple (\cite{HARADA}) and \emph{contractible}, by
a theorem of M. Keane \cite{KEANE}. Therefore, there should be no loss in terms of homotopy theory in
dividing out by $\overline{\Gamma}$. The problem however is that the action of $\overline{\Gamma}$
is \emph{not} free, already on $\Omega$, whence also on $L(\Omega,G)$.
Moreover, the action of $G$ on the quotient set $L(\Omega,G)/\Gamma$ is not free
in general : already if $G = \Z/2\Z$, choosing $ \Omega= [0,1]$ for a model, and
letting $\sigma \in \Gamma$ being $x \mapsto 1-x$, we have $\sigma.f = \overline{1}.f$
inside $L(\Omega,G)$ for $f(x) = \overline{0}$ if $x < 1/2$ and $f(x) = \overline{1}$ if $x \geq 1/2$.

The same problem shows up is one is willing to replace the set $L(\Omega,G)$
of `random variables' by their `probability law', namely the induced measure on $G$ -- 
already the case of an amenable group $G$ (e.g. a finite one, or $\Z$) provides
an example where the set of measures on $G$ admits a $G$-invariant subspace.

\subsection{Filtrations and localizations}

In this section we explore some possible metric incarnation of the usual group-theoreric
operations which are classically involved in homotopic localization processes. 

\subsubsection{Extension of metrics}
Let $G$ be a group, $N$ a normal subgroup. Suppose  we are given a bi-invariant metric $d$ on $G/N$
and a bi-invariant metric $\delta$ on $N$. We aim at extending $\delta$ to a bi-invariant metric on $G$.
A sufficient condition is given by the following lemma.

\begin{lemma} \label{lem:extdistancequotient}
Let $G$ be a group, $N$ a normal subgroup, and $\pi : G \to G/N$ the natural projection map.
Assume we are given a biinvariant metric $d$ on $G/N$
and a biinvariant metric $\delta$ on $N$. For $x,y \in G$, define 
$d^+(x,y) = d(\pi(x),\pi(y))$
if $yx^{-1} \not\in N$, and $d^+(x,y) = \delta(yx^{-1},1)$ otherwise. Assume
$\delta(x,y) \leq \inf \{ d(u,v) \ | \ u \neq v \}$. Then $d^+$ defines a bi-invariant metric on $G$,
extending $\delta$. Assume in addition that $\delta$ and $d$ both satisfy the property that equality 
in the triangle inequality
implies $x=y$ or $y=z$. Then, the
same property holds for $d^+$.
\end{lemma}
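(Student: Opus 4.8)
The plan is to verify directly that $d^+$ satisfies the axioms of a bi-invariant metric, the only non-formal point being the triangle inequality. First I would note that $d^+$ is well-defined, since the two clauses are mutually exclusive: $yx^{-1}\in N$ if and only if $\pi(x)=\pi(y)$. Symmetry is immediate (because $N$ is a subgroup, $yx^{-1}\in N\Leftrightarrow xy^{-1}\in N$, and in the second clause $\delta(yx^{-1},\un)=\delta((yx^{-1})^{-1},\un)$ by symmetry together with bi-invariance of $\delta$); and $d^+(x,y)=0$ forces, in the first clause $\pi(x)=\pi(y)$ (contradiction) and in the second $yx^{-1}=\un$, i.e.\ $x=y$. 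That $d^+$ extends $\delta$ is also immediate: for $x,y\in N$ one has $yx^{-1}\in N$, hence $d^+(x,y)=\delta(yx^{-1},\un)=\delta(y,x)$ by right-invariance of $\delta$.

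For the triangle inequality $d^+(x,z)\le d^+(x,y)+d^+(y,z)$ I would split into cases according to which of $yx^{-1}$ and $zy^{-1}$ lie in $N$, noting that $zx^{-1}=(zy^{-1})(yx^{-1})$, so membership of $zx^{-1}$ in $N$ is determined by the other two. When both lie in $N$, all three distances are $\delta$-distances and the inequality is just the triangle inequality of $\delta$ at the three points $\un, yx^{-1}, zx^{-1}$ of $N$, after rewriting $\delta(yx^{-1},zx^{-1})=\delta(\un,zy^{-1})$ by bi-invariance. When neither lies in $N$ and $zx^{-1}\notin N$ too, it is the triangle inequality of $d$ in $G/N$. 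When neither lies in $N$ but $zx^{-1}\in N$, the left-hand side is $\delta(zx^{-1},\un)\le\inf\{d(u,v)\mid u\neq v\}\le d(\pi(x),\pi(y))=d^+(x,y)$ --- this is exactly where the hypothesis $\delta\le\inf d$ is used. Finally, when exactly one of $yx^{-1},zy^{-1}$ lies in $N$, say $yx^{-1}\in N$, then $\pi(x)=\pi(y)$ and $zx^{-1}\notin N$, so $d^+(x,z)=d(\pi(x),\pi(z))=d(\pi(y),\pi(z))=d^+(y,z)\le d^+(x,y)+d^+(y,z)$; the other sub-case is symmetric.

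For bi-invariance, right-invariance is painless because $(yg)(xg)^{-1}=yx^{-1}$ is unchanged, so both clauses of $d^+$ are preserved using right-invariance of $d$ (resp.\ of $\delta$). Left-invariance reduces, via right-invariance, to conjugation-invariance of $d^+$, i.e.\ to $d^+(gag^{-1},\un)=d^+(a,\un)$. For $a\notin N$ this follows from conjugation-invariance of the bi-invariant metric $d$ on $G/N$; for $a\in N$ it is conjugation-invariance of $\delta$ under $G$, which I would flag as the point where one implicitly uses that $\delta$ is $\mathrm{Ad}(G)$-invariant --- automatic, for instance, when $N$ is central in $G$, which is the situation arising in the filtration applications.

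For the last assertion I would run through the same case division and track when equality can occur. In the all-in-$N$ case, equality in the $d^+$-triangle is precisely equality in the triangle inequality of $\delta$ at $\un, yx^{-1}, zx^{-1}$, so the strict-triangle property of $\delta$ yields $yx^{-1}=\un$ or $yx^{-1}=zx^{-1}$, i.e.\ $x=y$ or $y=z$. In the all-outside, $zx^{-1}\notin N$ case, equality would give $\pi(x)=\pi(y)$ or $\pi(y)=\pi(z)$ by the strict-triangle property of $d$, each contradicting the case hypothesis, so equality cannot occur. In the all-outside, $zx^{-1}\in N$ case, equality would force $\delta(zx^{-1},\un)\ge 2\inf\{d(u,v)\mid u\neq v\}$, impossible since $\delta(zx^{-1},\un)\le\inf\{d(u,v)\mid u\neq v\}$ and this infimum is positive unless $N$ is trivial (a degenerate case in which $d^+$ is just the pull-back of $d$ along the injective $\pi$). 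In the mixed cases equality forces the surviving $\delta$-term, $\delta(yx^{-1},\un)$ or $\delta(zy^{-1},\un)$, to vanish, i.e.\ $x=y$ or $y=z$. The main obstacle is thus not any isolated deep step but the bookkeeping of the case analysis for the triangle inequality, together with keeping straight the bi-invariance subtlety above; the hypothesis $\delta\le\inf d$ is what glues the two ``scales'' ($N$ and $G/N$) together in the mixed cases.
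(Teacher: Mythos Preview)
Your argument is correct and follows essentially the same direct case-by-case verification as the paper: the organization differs only cosmetically (you split first on which of $yx^{-1},zy^{-1}$ lie in $N$, while the paper splits first on whether $xz^{-1}\in N$), and your treatment of the equality cases is in fact a bit cleaner than the paper's.

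Your flag about left-invariance deserves emphasis rather than apology: it is not a mere subtlety but an actual gap in the lemma as stated. Bi-invariance of $\delta$ on $N$ gives $\delta(nan^{-1},1)=\delta(a,1)$ only for $n\in N$; for $g\in G\setminus N$ the identity $\delta(gag^{-1},1)=\delta(a,1)$ is an extra hypothesis. A concrete counterexample is $G=S_3$, $N=A_3$, $d$ the discrete metric of diameter~$1$ on $G/N\cong\Z/2\Z$, and $\delta$ any metric on the abelian group $A_3$ with $\delta((123),1)\neq\delta((132),1)$ (automatically bi-invariant on $A_3$, and one can keep $\delta\le 1$): then $d^+\big((12)(123),(12)\big)=\delta((132),1)\neq\delta((123),1)=d^+\big((123),1\big)$. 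The paper's proof asserts this step ``by the bi-invariance of $\delta$'', which is not sufficient. Your proposed fix --- assume $\delta$ is $\Ad(G)$-invariant, which holds automatically when $N$ is central in $G$ --- is exactly right, and this is the situation in the subsequent filtration applications (where the relevant quotients $G_n/G_{n+1}$ are central once $[G,G_n]\subset G_{n+1}$).
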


\begin{proof}
First notice that $d^+(x,y) = d(x,y) >0$ unless $xy^{-1} \in N$. In this case
$d^+(x,y) = \delta(xy^{-1},1) = 0$ iff $xy^{-1} = 0$ iff $x = y$. Therefore $d^+(x,y) = 0 \Rightarrow x = y$.
Moreover, if $g \in G$, since $N$ is normal we have 
$yx^{-1} \in N \Leftrightarrow 
yg(xg)^{-1} \in N \Leftrightarrow 
gy(gx)^{-1} \in N $,
and in this case $\delta(yx^{-1},1) = \delta(yg(xg)^{-1},1) = \delta(gy(gx)^{-1},1)$
by the biinvariance of $\delta$. If $yx^{-1} \not\in N$, then
$d^+(gx,gy) = d(\pi(g) \pi(x) ,\pi(g) \pi(y) ) = d(\pi(x),\pi(y)) = d^+(x,y)$
and 
$d^+(xg,yg) = d( \pi(x) \pi(g), \pi(y)\pi(g) ) = d(\pi(x),\pi(y)) = d^+(x,y)$.
Therefore $d^+$ is biinvariant.

Clearly $d^+(x,y) = d^+(y,x)$ for all $x,y$, since $\delta(yx^{-1},1) = \delta(1,(yx^{-1})) = \delta(1,xy^{-1}) = \delta(xy^{-1},1)$.
 Let us now consider $x,y,z \in G/N$. We prove
the triangle inequality, including the case of equality.

First assume $xz^{-1} \not\in N$.
Then $d^+(x,z) = d(\pi(x),\pi(z))$. We have three cases two consider. If $xy^{-1} \in N$, then necessarily
$yz^{-1} \not\in N$. Thus $d^+(x,y) + d^+(y,z) 
\geq d^+(y,z) = 
d(\pi(y),\pi(z)) = d(\pi(x),\pi(z)) = d^+(x,z)$.
Equality can happen only if $d^+(x,y) = 0$, that is $x = y$.
 The symmetric case $yz^{-1} \in N$ and $xy^{-1} \not\in N$ is similar.
The third case is when neither $xy^{-1}$ nor $yz^{-1}$ belong to $N$, and follows immediatly from
the properties of
$d$.

We now assume $xz^{-1} \in N$. A first case to consider is when $xy^{-1}$ and $yz^{-1}$ both
belong to $N$. If $x=y$ or $y=z$ the statement is clear and therefore we assume otherwise.
Then $d^+(x,z) = \delta(xz^{-1},1)= \delta(xy^{-1} .yz^{-1},1)
= \delta(xy^{-1} ,zy^{-1}) \leq \delta(xy^{-1},1) + \delta(zy^{-1},1) = d^+(x,y)+d^+(y,z)$.
Moreover equality holds iff  $ \delta(xy^{-1} ,zy^{-1}) \leq \delta(xy^{-1},1) + \delta(1,zy^{-1})$. If this implies
$xy^{-1} = 1$ or $1 = zy^{-1}$, then this implies $x=y$ or $y = z$.

A second case is when $xy^{-1} \not\in N$ and $yz^{-1} \not\in N$. 
In other words $\pi(x) \neq \pi(y)$ and $\pi(y) \neq \pi(z)$. Then
$d^+(x,z) = \delta(xz^{-1},1)$ is by assumption no greater than $d^+(x,y) = d(x,y)$
unless $x=y$, and no greater than $d^+(y,z) = d(y,z)$ unless $y=z$.
This proves that $d^+(x,z) \leq d^+(x,y)+d^+(y,z)$ unless $x=y=z$
in which the inequality is still true. Finally equality implies $d^+(y,z) = 0$ or $d^+(x,z) = 0$
that is $x=y$ or $y = z$.
\end{proof}

A useful lemma for rescaling a metric is the following one.

\begin{lemma} \label{lem:rescaledistanceelem}
Let $(X,d)$ be a pseudo-metric space. Let $\la > 0$. Then $d_{\la}(x,y)=\la d(x,y)$ defines a pseudo-metric, which is a metric if $d$ is a metric, and which
defines the same topology as $d$.
Let $d'(x,y) = 0$ if $x=y$ and $d'(x,y) = 1 + \la d(x,y)$ otherwise. Then $d'$ is a metric on $X$, with the additional property that $d'(x,z) = d'(x,y) + d'(y,z)$
implies $x=y$ or $y=z$. The topology of $(X,d')$ is the discrete topology. If $G$ is a (pseudo-)metric group and $d$ is bi-invariant,
then both $d'$ and $d$ are bi-invariant.
\end{lemma}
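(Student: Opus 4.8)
The statement of Lemma~\ref{lem:rescaledistanceelem} is entirely elementary, so the plan is simply to verify each asserted property in turn, being a little careful only about the triangle inequality for $d'$ and the strict-equality clause. First I would treat $d_\la$: positivity, symmetry and $d_\la(x,x)=0$ are immediate from the corresponding properties of $d$, the triangle inequality follows by multiplying the triangle inequality for $d$ by $\la>0$, and $d_\la(x,y)=0\iff d(x,y)=0\iff x=y$ when $d$ is a metric. That $d_\la$ defines the same topology as $d$ is clear since the identity map is bi-Lipschitz in both directions (with constants $\la$ and $1/\la$), so open balls for one metric contain open balls for the other.

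Next I would handle $d'$. Symmetry and $d'(x,y)=0\iff x=y$ are immediate from the definition. For the triangle inequality $d'(x,z)\le d'(x,y)+d'(y,z)$ I would split into cases according to how many of the three points coincide. If $x=z$ the left side is $0$ and there is nothing to prove. If $x\neq z$ then at least one of $x\neq y$, $y\neq z$ holds; if exactly one holds, say $x\neq y$ and $y=z$, then the right side is $1+\la d(x,y)=1+\la d(x,z)=d'(x,z)$, so equality holds and indeed $y=z$; the symmetric subcase is identical. If both $x\neq y$ and $y\neq z$, then the right side is $2+\la(d(x,y)+d(y,z))\ge 2+\la d(x,z)>1+\la d(x,z)=d'(x,z)$, using the triangle inequality for $d$, and the inequality is strict. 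This case analysis simultaneously establishes the additional property: $d'(x,z)=d'(x,y)+d'(y,z)$ forces us out of the last case (where the inequality is strict) and out of the trivial case $x=z$ (where, if also $x\neq y$, the right side is $\ge 1>0$), leaving only the situation where exactly one of $x=y$, $y=z$ fails, i.e.\ $x=y$ or $y=z$.

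For the topology of $(X,d')$: since $d'(x,y)\ge 1$ whenever $x\neq y$, the ball of radius $1$ about any point is the singleton $\{x\}$, so every point is open and the topology is discrete. Finally, for the bi-invariance statement, suppose $G$ is a (pseudo-)metric group with $d$ bi-invariant. Then $d_\la(gx,gy)=\la d(gx,gy)=\la d(x,y)=d_\la(x,y)$ and likewise on the right, so $d_\la$ is bi-invariant; and for $d'$, the condition $gx=gy\iff x=y\iff xg=yg$ together with bi-invariance of $d$ gives $d'(gx,gy)=d'(x,y)$ and $d'(xg,yg)=d'(x,y)$ in both the ``equal'' and ``not equal'' cases. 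I do not anticipate any genuine obstacle here; the only point requiring a moment's attention is organizing the case split for $d'$ so that the triangle inequality and its strict-equality refinement come out of the same argument, which the above arrangement does.
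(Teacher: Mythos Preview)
Your proposal is correct and follows essentially the same approach as the paper's proof: dismiss $d_\la$ as classical, then for $d'$ do a case split on whether $x=y$ or $y=z$ to get the triangle inequality with strictness in the remaining case, observe discreteness from $d'(x,y)\ge 1$ for $x\neq y$, and note bi-invariance is immediate. Your version is in fact slightly more careful than the paper's, which silently assumes $x\neq z$ when writing $d'(x,z)=1+\la d(x,z)$ in the ``$x\neq y$ and $y\neq z$'' case; you handle $x=z$ explicitly.
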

\begin{proof} The statement on $d_{\la}$ is classical. The fact that $d'(x,y) = d'(y,x)$ and $d'(x,y) = 0 \Rightarrow x=y$ is clear. Clearly
$d'(x,z) = d'(x,y)+d'(y,z)$ if $x=y$ or $y=z$. If not, $d'(x,z) = 1 + \la d(x,z) \leq 1 + \la d(x,y) + \la d(y,z) = d'(x,y)+\la d(y,z) < d'(x,y)+d'(y,z)$ and
this proves the claim about the metric $d'$. It is discrete because $d'(x,y) \leq 1/2 \Rightarrow x=y$. The property of bi-invariance is obvious.
\end{proof}

\subsubsection{Filtrations on metric groups}

Let's say that a pseudo-metric group is a group endowed with a pseudo-metric (that is, $d(x,y) = 0$ does not necessarily
imply $x=y$). This pseudo-metric is said to be bi-invariant if $d(gx,gy)=d(xg,yg)=d(x,y)$ for all $x,y,g \in G$.
We first establish the following elementary proposition.

\begin{proposition} Let $G$ be a group.
\begin{enumerate}
\item If $d$ is a bi-invariant pseudo-metric on $G$, then $\mathcal{K}(d) = \{ g \in G ; d(g,1) = 0 \}$ is a normal subgroup of $G$.
Moreover $d(x,y)$ depends only on the classes of $x$ and $y$ in $G/\mathcal{K}(d)$.
If $d_1,d_2$ are two such pseudo-metrics, $d_1 \leq d_2 \Leftrightarrow \mathcal{K}(d_2) \subset \mathcal{K}(d_1)$.
\item If $N \vartriangleleft G$ and $d$ is a bi-invariant pseudo-metric on $G/N$ with projection map $\pi : G \to G/N$,
then $d(x,y) = d(\pi(x),\pi(y))$ defines a bi-invariant pseudo-metric on $G$ with $N \subset \mathcal{K}(d)$. It is a metric iff $N = \mathcal{K}(d)$.
\item If $d$ is a bi-invariant pseudo-metric on $G$, then $d$ originates from a bi-invariant metric $\bar{d}$ on $G/\mathcal{K}(d)$
by the previous construction. This metric is simply given by $\bar{d}(u,v) = d(x,y)$ for arbitrary $x \in u, y \in v$.
\end{enumerate}
\end{proposition}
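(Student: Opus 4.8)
The whole proposition consists of elementary verifications resting on two ingredients only --- the triangle inequality and the \emph{two-sided} invariance of the pseudo-metrics involved --- and I would simply work through the three items in order, the third being a repackaging of the first two. For (1), I would first check the subgroup axioms for $\mathcal{K}(d)$: it contains $1$; if $g\in\mathcal{K}(d)$ then left-translating by $g$ gives $d(g^{-1},1)=d(1,g)=d(g,1)=0$; and if $g,h\in\mathcal{K}(d)$ then the triangle inequality together with right-invariance gives $d(gh,1)\le d(gh,h)+d(h,1)=d(g,1)+d(h,1)=0$. Normality is where two-sided invariance is genuinely used: for $g\in G$, $h\in\mathcal{K}(d)$, right-translating by $g$ and then left-translating by $g^{-1}$ gives $d(ghg^{-1},1)=d(gh,g)=d(h,1)=0$. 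For the descent of $d$ to cosets, I would write a general element of the class of $x$ as $kx$ and of the class of $y$ as $\ell y$ with $k,\ell\in\mathcal{K}(d)$ (legitimate since $\mathcal{K}(d)$ is normal); then $d(kx,\ell y)\le d(kx,x)+d(x,y)+d(y,\ell y)=d(k,1)+d(x,y)+d(\ell,1)=d(x,y)$, and the reverse inequality follows by symmetry. Finally, $d_1\le d_2$ forces $d_1(g,1)\le d_2(g,1)$, whence $\mathcal{K}(d_2)\subseteq\mathcal{K}(d_1)$, which is the comparison clause.

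For (2), the fact that $(x,y)\mapsto d(\pi(x),\pi(y))$ is a bi-invariant pseudo-metric on $G$ is immediate, all axioms being inherited pointwise from $d$ and bi-invariance following from $\pi$ being a group homomorphism; and $\pi(N)=\{1\}$ gives $N\subseteq\mathcal{K}$ of the pulled-back pseudo-metric. For the last clause I would compute that $\mathcal{K}$ of this pseudo-metric is $\pi^{-1}(\{\bar g\in G/N:d(\bar g,1)=0\})$, which by surjectivity of $\pi$ equals $N=\pi^{-1}(\{1\})$ exactly when $\{\bar g:d(\bar g,1)=0\}$ is trivial, i.e. exactly when $d$ is a metric on $G/N$. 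For (3), the coset-invariance proved in (1) makes $\bar d(u,v):=d(x,y)$ (any representatives $x\in u$, $y\in v$) a well-defined function on $(G/\mathcal{K}(d))^2$; it inherits symmetry and the triangle inequality from $d$, is bi-invariant for the same reason as in (2), and satisfies $\bar d(u,v)=0\Leftrightarrow d(xy^{-1},1)=0\Leftrightarrow xy^{-1}\in\mathcal{K}(d)\Leftrightarrow u=v$, so it is a genuine metric; applying the construction of (2) with $N=\mathcal{K}(d)$ to $\bar d$ returns $(x,y)\mapsto\bar d(\pi x,\pi y)=d(x,y)$, so $d$ indeed arises from $\bar d$ in that way.

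The only point that is not pure bookkeeping is keeping track of which translation (left or right) is applied at each step: left-invariance alone suffices neither for the normality of $\mathcal{K}(d)$ nor for the descent of $d$ to $G/\mathcal{K}(d)$, and the places where I invoke right-invariance above are exactly where the hypothesis of bi-invariance is doing real work. Everything else reduces to the triangle inequality and the observation that, by bi-invariance, $d(x,y)$ depends on $x,y$ only through $xy^{-1}$.
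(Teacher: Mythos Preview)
Your proof is correct and follows essentially the same route as the paper's: both verify the subgroup and normality axioms for $\mathcal{K}(d)$ via direct applications of bi-invariance and the triangle inequality, establish coset-invariance of $d$ (the paper via the auxiliary identity $d(n,g)=d(1,g)$ for $n\in\mathcal{K}(d)$, you via a direct triangle-inequality sandwich, which amounts to the same thing), and then read off (2) and (3) as formal consequences. Like the paper, you prove only the forward implication of the comparison clause, which is the correct thing to do since the converse is false as stated (e.g.\ $d_2=2d_1$).
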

\begin{proof}
We first prove that $\mathcal{K}(d)$ is a subgroup.
Indeed, we have 
$g \in \mathcal{K}(d) \Rightarrow d(g^{-1},1) = d(1,g) = 0$,
and $g_1,g_2 \in \mathcal{K}(d) \Rightarrow d(g_1g_2,1) = d(g_1,g_2^{-1}) \leq d(g_1,1) + d(1,g_2^{-1}) = 0+0 = 0$.
It is a normal one because $g \in G , n \in \mathcal{K}(d) \Rightarrow d(gng^{-1},1) = d(n,1) = 0$. 
Moreover, if $x,y \in G$, and $n_1,n_2 \in \mathcal{K}(d)$, we have
$d(xn_1,yn_2) = d(xn_1n_2^{-1},y) = d(n_1n_2^{-1},x^{-1} y)$.
But when $n \in \mathcal{K}(d)$ and $g \in G$ we have
$d(n,g) \leq d(n,1) + d(1,g) = d(1,g)$ et
$d(n,g) \geq | d(n,1) - d(1,g)| = d(1,g)$ whence $d(n,g) = d(1,g)$.
From this we get $d(xn_1,yn_2)= d(1,x^{-1}y) = d(x,y)$, meaning that
$d(x,y)$ only depends on the classes of $x$ and $y$ modulo $\mathcal{K}(d)$.
Finally, if $d_1 \leq d_2$ then $d_2(x,y) = 0 \Rightarrow d_1(x,y) = 0$ and this proves (1).

The proof of (2) is straightforward. Let us prove (3). By (1) we know that
$d$ is induced from a map $\bar{d} : (G/N)\times(G/N) \to \R_+$ with $N = \mathcal{K}(d)$, and it immediate to check that this map
is a bi-invariant pseudo-metric, and then by (2) a bi-invariant metric.

\end{proof}

\begin{definition}
Let $G$ be a group. A sequence $(d_n)_{n \in \N}$ of pseudo-metrics on $G$ is called
weakly increasing if $d_n \leq d_{n+1}$ for all $n \in \N$.
A normal series $G = G_0 > G_1 > \dots$ with $G_i \vartriangleleft$ is called (pseudo-)\emph{metric} if
it is endowed with a collection of bi-invariant (pseudo-)metrics $d_n$ on $G/G_n$ with the property
that the induced pseudo-matrics on $G$ is weakly increasing.
\end{definition}

Note that, to each weakly increasing sequence $(d_n)_{n \in \N}$ of pseudo-metrics on $G$ one can associate
a metric normal series $(\mathcal{K}(d_n), \bar{d}_n)_{n \in \N}$ where $\bar{d}_n$ is the metric
on $G/\mathcal{K}(d_n)$ canonically deduced from $d_n$.

Let $(d_n)_{n \in \N}$ be a weakly increasing sequence $(d_n)_{n \in \N}$ of pseudo-metrics on $G$,
and $G_n$ the associated metric normal series. If it is bounded, we can define $d(x,y) = \lim d_n(x,y) = \sup d_n(x,y)$
which is a bi-invariant pseudo-metric on $G$. The system of metric groups $(G/G_n,d_n)$ actually forms
a projective system inside $\mathbf{GrBMet_1}$, because the elementary transition maps $G/G_{n+1} \to G/G_n$
are $1$-Lipschitz by assumption. Then the metric group $(G,d)$ naturally embeds into the limit of the projective
system of the $(G/G_n,d_n)$ in the category $\mathbf{GrBMet_1}$.

It is a metric iff $\mathcal{K}(d) = \bigcap_n \mathcal{K}(d_n) = \bigcap_n G_n = \{ 0 \}$.
Such a sequence is called \emph{ultrametric} if $d_n(x,y) \in \{ 0, d(x,y) \}$ for all $n \in \N$ and $x,y \in G$.
The corresponding system $(G/G_n,d_n)$ in the category $\mathbf{GrBMet_1}$ is then ultrametric.

Let $G$ be an arbitrary group, and $G = G_0 > G_1 > G_2 \dots$ a normal series. To each $\alpha \in ]0,1[$ we can
define a sequence of pseudo-metrics on $G$ by $d_n(x,y) = \alpha^r$ where $\alpha^r = \inf \{ r \in [0,n[ \ | \ xy^{-1} \in G_r \}$,
Then $\mathcal{K}(d_n) = G_n$, and $d(x,y) =  \alpha^r$ where $\alpha^r = \inf \{ r \in \N \ | \ xy^{-1} \in G_r \}$.

\subsubsection{Localizations and metric refinements}

Let $G$ a group endowed with a normal series $G_n$, and let $(\alpha_n)_{n \in \N}$ be a
decreasing sequence of positive real numbers with $\alpha_0 = 1$. We associate to this a metric
normal series by defining the pseudo-metrics $d_n(x,y) = \alpha_n$ if $xy^{-1} \not\in G_n$
and $d_n(x,y) = 0$ if $xy^{-1} \in G_n$. We call it the ultrametric metric normal series associated
to $(G_n)_n$ and $(\alpha_n)_n$.

A metric refinement of this structure is by definition the data of a collection of bi-invariant metrics $\delta_n$
on the groups $G_n/G_{n+1}$ which are bounded by $1$. We attach to it a collection of metrics
$d_n^+$ on the groups $G/G_n$, defined inductively as follows.

Assuming $d_n^+$ defined, and letting $\pi : G/G_{n+1} \to G/G_n$ the projection map,
we have a short exact sequence 
$$
1 \to G_n/G_{n+1} \to G/G_{n+1} \to G/G_n \to 1
$$
For $x,y \in G/G_{n+1}$, we define $d^+_{n+1}(x,y) = d^+_n(\pi(x),\pi(y))$
if $yx^{-1} \not\in G_n/G_{n+1}$,
and 
{}
$$
d_{n+1}^+(x,y) = \alpha_{n+1} \left( 1 + \frac{\alpha_n- \alpha_{n+1}}{\alpha_{n+1}} \delta_n(yx^{-1},1) \right)
$$

when $yx^{-1} \in (G_n/G_{n+1}) \setminus \{ 1 \}$.

\begin{proposition} $(d_n^+)_{n \in \N}$ is a weakly increasing sequence of pseudo-metrics, whose associated
normal series is $(G_n)_{n \in \N}$. It
satisfies the following properties
\begin{enumerate}
\item $d_n^+(x,y) \neq 0 \Rightarrow d_n^+(x,y) \geq \alpha_n$
\item $d_n^+(x,z) = d_n^+(x,y)+d_n^+(y,z) \Rightarrow x=y \mbox{ or } y=z $
\item The sequence $(d_n^+)_n$ is ultrametric.
\item 
If $d^+ = \sup d_n^+$,
then $d^+$ defines the topology and the uniform structure on $G$ associated to the filtration.
\end{enumerate}
\end{proposition}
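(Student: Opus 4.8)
The plan is to prove all four items, together with the preliminary assertions, by a single induction on $n$: at stage $n$ we establish that $d_n^+$ is a bi-invariant metric on $G/G_n$ satisfying (1) and (2) (with $x,y$ now running over $G/G_n$). The base case $n=0$ is vacuous, since $G/G_0$ is trivial and $d_0^+\equiv 0$. The assertions about the associated normal series, the weak monotonicity, and then (3) and (4) are obtained afterwards.

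For the inductive step, assuming the claim for $n$, set $\la_n=(\alpha_n-\alpha_{n+1})/\alpha_{n+1}\geq 0$. First I would apply Lemma~\ref{lem:rescaledistanceelem} to $(G_n/G_{n+1},\delta_n)$ twice --- forming $1+\la_n\delta_n$ and then rescaling by $\alpha_{n+1}$, each step preserving bi-invariance and the property ``equality in the triangle inequality forces $x=y$ or $y=z$'' --- to obtain on $G_n/G_{n+1}$ a bi-invariant metric $\delta$ with $\delta(a,b)=\alpha_{n+1}+(\alpha_n-\alpha_{n+1})\delta_n(ab^{-1},1)$ for $a\ne b$, having that degeneracy property. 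Since $\delta_n\leq 1$ one has $\delta(a,b)\leq\alpha_n$ for $a\ne b$, while by (1) for $d_n^+$ every non-zero value of $d_n^+$ is $\geq\alpha_n$; hence $\delta(a,b)\leq\inf\{d_n^+(u,v)\mid u\ne v\}$, which is precisely the hypothesis of Lemma~\ref{lem:extdistancequotient}. Applying that lemma to $1\to G_n/G_{n+1}\to G/G_{n+1}\to G/G_n\to 1$ with $d=d_n^+$ and this $\delta$ reproduces verbatim the metric $d_{n+1}^+$ of the construction, and yields that $d_{n+1}^+$ is a bi-invariant metric on $G/G_{n+1}$ extending $\delta$ which, since $d_n^+$ and $\delta$ both have the degeneracy property, inherits it; this is (2) at stage $n+1$. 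Property (1) at stage $n+1$ is read off the two clauses of the construction: for $x\ne y$, if $yx^{-1}\notin G_n/G_{n+1}$ then $d_{n+1}^+(x,y)=d_n^+(\pi x,\pi y)\geq\alpha_n\geq\alpha_{n+1}$, and otherwise $d_{n+1}^+(x,y)=\delta(yx^{-1},1)\geq\alpha_{n+1}$.

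Once the induction is complete, viewing each $d_n^+$ as a pseudo-metric on $G$ via $\pi_n:G\to G/G_n$, we get $\mathcal{K}(d_n^+)=\ker\pi_n=G_n$ because $d_n^+$ is a genuine metric downstairs; and $d_n^+\leq d_{n+1}^+$ since $d_n^+(\pi_n x,\pi_n y)$ equals $d_{n+1}^+(\pi_{n+1}x,\pi_{n+1}y)$ when $yx^{-1}\notin G_n$ and is $0$ otherwise. For (3) I would fix $x\ne y$ in $G$ with $xy^{-1}\notin G_n$, take $k\leq n$ least with $xy^{-1}\notin G_k$, and note that for every $m\geq k$ we have $yx^{-1}\notin G_m\subseteq G_k$, so the first clause gives $d_{m+1}^+(\pi_{m+1}x,\pi_{m+1}y)=d_m^+(\pi_m x,\pi_m y)$; hence this value is constant for $m\geq k$ and vanishes for $m<k$, so $d^+(x,y)=d_k^+(\pi_k x,\pi_k y)=d_n^+(\pi_n x,\pi_n y)$, giving $d_n^+(x,y)\in\{0,d^+(x,y)\}$ (and trivially so when the value is $0$).

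Finally, for (4), a short induction on $m\geq n$ along the construction (the second clause being $\leq\alpha_m\leq\alpha_n$ because $\delta_m\leq 1$) gives $d_m^+(\pi_m x,\pi_m y)\leq\alpha_n$ whenever $\pi_n x=\pi_n y$, so $d^+(g,1)\leq\alpha_n$ for $g\in G_n$; conversely if $g\notin G_n$ and $k\leq n$ is least with $g\notin G_k$, then $d_k^+(\pi_k g,1)=\alpha_k(1+\la_{k-1}\delta_{k-1}(\pi_k g,1))>\alpha_k\geq\alpha_n$ since $\delta_{k-1}$ is a metric and $\pi_k g\ne 1$, whence $d^+(g,1)>\alpha_n$. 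Thus $\{g\in G\mid d^+(g,1)\leq\alpha_n\}=G_n$; by left-invariance $B_{d^+}(g,\alpha_n)=gB_{d^+}(1,\alpha_n)\subseteq gG_n=G_n$ for $g\in G_n$, so each $G_n$ is $d^+$-open, and conversely every $d^+$-ball $B_{d^+}(g,\eps)$ contains, around each of its points $h$, a coset $hG_n$ as soon as $\alpha_n<\eps-d^+(g,h)$ (here one uses $\alpha_n\to 0$); hence the $d^+$-topology is the filtration topology, and the same two inclusions at the level of the entourages $\{(x,y):x^{-1}y\in G_n\}$ and $\{(x,y):d^+(x,y)<\eps\}$ identify the uniform structures. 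The steps I expect to demand the most care are matching the rescaled metric $\delta$ with the exact formula in the construction and checking its diameter bound $\alpha_n$ against property (1) for $d_n^+$, and, in this last paragraph, pinning down that the closed ball of radius $\alpha_n$ is exactly $G_n$ and locating the use of $\alpha_n\to 0$.
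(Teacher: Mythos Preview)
Your proof is correct and follows essentially the same approach as the paper: induction on $n$ using Lemmas~\ref{lem:rescaledistanceelem} and~\ref{lem:extdistancequotient}, with (1) supplying the diameter hypothesis needed for the latter; then reading off $\mathcal{K}(d_n^+)=G_n$, monotonicity, and the ultrametric property (3) from the two-clause recursion. One point worth highlighting: for (4) you correctly isolate the use of $\alpha_n\to 0$ to show that every $d^+$-ball contains a filtration neighbourhood, whereas the paper's proof simply invokes ``$\alpha_n$ strictly decreasing'' without making this limit hypothesis explicit --- your observation is the sharper one, since strict decrease alone does not suffice.
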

\begin{proof}

The fact that $d_n^+$ defines a metric on $G/G_n$ as well as (2) is a consequence
by induction of lemmas \ref{lem:extdistancequotient} and \ref{lem:rescaledistanceelem},
since $(\alpha_n - \alpha_{n+1})/\alpha_{n+1} > 0$,
provided we can prove (1). We prove (1) by induction on $n$,
as well as the fact that $d_n^+(x,y)=0 \Rightarrow xy^{-1} \in  G_n$. 

For $n=0$ we have $d_0^+ = d_0 = 0$ since $G_0=G$ therefore the statement is true for $n=0$.
In general, assuming $d_n^+(x,y) \neq 0  \Rightarrow d_n^+(x,y) \geq \alpha_n$,
we get that, when $d_{n+1}^+(x,y) \neq 0$, by definition
either $d_{n+1}^+(x,y) = d_n^+(\pi(x),\pi(y)) \geq \alpha_n \geq \alpha_{n+1}$,
or $d_{n+1}^+(x,y) =\alpha_{n+1} ( 1 + \frac{\alpha_n- \alpha_{n+1}}{\alpha_{n+1}} \delta_n(yx^{-1},1))  \geq \alpha_{n+1}$
and this proves (1) by induction on $n$.

Similarly we prove $\mathcal{K}(d_n^+) = G_n$ by induction on $n$. When $n=0$ this is clear,
and assuming $\mathcal{K}(d_n) = G_n$ we get that $d_{n+1}^+(x,y) = 0$
implies that, either $xy^{-1} \in G_{n+1}$, or
\begin{itemize}
\item if $xy^{-1} \not\in G_n$ we have $d_{n+1}^+(x,y) = d_n^+(\pi(xy^{-1}),1) = 0$ implies $xy^{-1} \in \mathcal{K}(d_n^+) = G_n$,
a contradiction,
\item and otherwise $d_{n+1}^+(x,y) = \alpha_{n+1} ( 1 + \frac{\alpha_n- \alpha_{n+1}}{\alpha_{n+1}} \delta_n(yx^{-1},1)) \geq \alpha_{n+1} > 0$,
a contradiction.
\end{itemize}
Therefore we get $\mathcal{K}(d_n^+) = G_n$ by induction on $n$.

We prove that the sequence is ultrametric. Let $x,y \in G$ and $r = \inf \{ s ; xy^{-1} \not\in G_s \} \in \N \cup \{ + \infty \}$. We have $d_n^+(x,y) = 0$ when $n < r$,
$d_r^+(x,y) =  \alpha_{n+1} ( 1 + \frac{\alpha_n- \alpha_{n+1}}{\alpha_{n+1}} \delta_n(yx^{-1},1)) > 0$
and $d_n^+(x,y) = d_r^+(x,y)$ when $n \geq r$, whence $d_r^+(x,y) = d^+(x,y)$, which proves (3).

Actually, this proves that
$$
d^+(x,y) =  \alpha_{n+1} \left( 1 + \frac{\alpha_n- \alpha_{n+1}}{\alpha_{n+1}} \delta_n(yx^{-1},1)\right)
$$
when $yx^{-1} \in G_n \setminus G_{n+1}$. Therefore $d^+(x,y) \in [\alpha_{n+1},\alpha_n]$ when $yx^{-1} \in G_n \setminus G_{n+1}$.
Since $\alpha_n$ is (strictly) decreasing this proves that the topology (and the uniform structure) defined by the pseudo-metric $d^+$
is the same as the topology (and the uniform structure) defined by the $G_n$.

\end{proof}

A typical case is the following. Assume we have a normal series with $[G,G_n] \subset G_{n+1}$.
Then the quotients $G_n/G_{n+1}$ are abelian. 
We fix $\alpha \in ]0,1[$. Then, we start from the discrete metric of diameter $\alpha^n$ on $G/G_n$. At least four cases
are potentially interesting.
\begin{itemize}
\item The $G_n/G_{n+1}$ are (submodules of) free $\Z_p$-modules of finite rank. Then any isomorphism $G_n/G_{n+1} \simeq \Z_p^m$ determines
a metric $\delta_n$ by $\delta_n(x_1\oplus \dots \oplus x_m,0) = \max_i(\beta^{v_p(x_i)})$ for some fixed $\beta \in ]0,1[$.
\item The $G_n/G_{n+1}$ are (submodules of) finite-dimensional $\R$-vector spaces. Then any isomorphism $G_n/G_{n+1} \simeq \R^m$ determines
a metric $\delta_n$ by $\delta_n(x_1\oplus \dots \oplus x_m,0) = \max_i(d_{\R}(x_i,0))$ where $d_{\R}(x,y) = \min(1, |x-y|)$.
\item The $G_n/G_{n+1}$ are free $\Z$-modules of finite rank, and we have both possibilities
\item The $G_n/G_{n+1}$ are $\Z_p$-modules of finite rank, not necessarily free. We have isomorphisms $G_n/G_{n+1} \simeq \bigoplus_{i=1}^m \Z_p/p^{a_i}$
with $a_i \in \{ - \infty \} \cup \N$. We can define a metric $\delta_n$ by $\delta_n(x_1\oplus \dots \oplus x_m,0) = \max_i(\beta^{v_p(x_i)})$ for some fixed $\beta \in ]0,1[$,
where $v_p : \Z/p^r \Z \to \{ 0,1 ,\dots, r-1, + \infty \}$ is again the discrete valuation.
\end{itemize}

\section{Quotient fibrations}

In this section we establish properties on the behavior of $G \leadsto L(\Omega,G)$
with respect to quotient maps. In particular, we show that a quotient
map $G \to G/N$ induces a fibration $L(\Omega,G) \to L(\Omega,G/N)$
when $G \to G/N$ admits a local isometric cross-section, and that this
is the case when $G$ is discrete or a compact Lie group.

\medskip

Recall that, if $p : G \to Q$ is a continuous morphism between two topological groups
admitting a continuous cross-section, then it is a Hurewicz fibration. Indeed, if
$\psi : X \to G$, $H : X \times [0,1] \to Q$ are given with $X$ a topological space,
we then define $\tilde{H}(x,u) = \psi(x) s( \overline{\psi(x)}^{-1} H(x,u))$
where $s : Q\to G$ is the given cross-section. Since $L(G)$ and therefore $L(G/N)$ are topological groups, this
is a continuous map providing an homotopy lifting.

In the following proposition we do not assume that $G$ is separable (that is, countable).

\begin{proposition}Let $G$ be a group endowed with the discrete metric, and $N$ be a normal
subgroup of $G$. The natural map $L(G) \to L(G/N)$ is a Hurewicz fibration
admitting a continuous (even 1-Lipschitz) cross-section.
\end{proposition}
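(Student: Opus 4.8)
The plan is to produce a $1$-Lipschitz cross-section of the natural map $\tilde\pi = L(\Omega,\pi)\colon L(G)\to L(G/N)$, where $\pi\colon G\to G/N$ is the quotient homomorphism, and then invoke the general remark recalled just before the statement: since the discrete metric is bi-invariant, $L(G)$ and $L(G/N)$ are topological groups by Proposition~\ref{prop:EGtopgroup}, $\tilde\pi$ is a continuous group homomorphism (it is even $1$-Lipschitz by Lemma~\ref{lemnatuprojlipsch}, as $\pi$ is $1$-Lipschitz between discrete spaces of diameter $1$), and a continuous morphism of topological groups admitting a continuous cross-section is a Hurewicz fibration.

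First I would fix a set-theoretic section $\sigma\colon G/N\to G$ of $\pi$, normalized so that $\sigma(eN)=e$. Being a section of a surjection, $\sigma$ is injective; since the metrics on $G$ and on $G/N$ are discrete of diameter $1$, an injective map between them is an isometric embedding, and $\sigma$ is Borel because its source is discrete. Then one sets $s(f)=\sigma\circ f$ for $f\in L(\Omega,G/N)$: the map $\sigma\circ f$ is Borel (composite of Borel maps; alternatively $f(\Omega)$ is countable by Proposition~\ref{prop:imagedenombrable}), so $s$ is well defined on equivalence classes, and $\tilde\pi\circ s = L(\Omega,\pi\circ\sigma) = \mathrm{id}$. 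For every $t$ one has $d_G(\sigma(f_1(t)),\sigma(f_2(t))) = d_{G/N}(f_1(t),f_2(t))$ by injectivity of $\sigma$, whence $d_{L(G)}(s(f_1),s(f_2)) = d_{L(G/N)}(f_1,f_2)$: the section $s$ is in fact an isometric embedding, in particular $1$-Lipschitz and continuous, and the normalization gives $s(\tilde e_{G/N}) = \tilde e_G$.

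Finally I would make the homotopy lifting explicit, exactly as in the recalled argument. Given a space $X$, maps $\psi\colon X\to L(G)$ and $H\colon X\times[0,1]\to L(G/N)$ with $H(\cdot,0)=\tilde\pi\circ\psi$, set $\tilde H(x,u) = \psi(x)\,s\bigl(\tilde\pi(\psi(x))^{-1}H(x,u)\bigr)$. This is continuous because $L(G)$ is a topological group and $\psi$, $H$, $\tilde\pi$, $s$ are continuous; one has $\tilde\pi\circ\tilde H = H$ since $\tilde\pi$ is a homomorphism with $\tilde\pi\circ s=\mathrm{id}$; and $\tilde H(x,0)=\psi(x)\,s(\tilde e_{G/N})=\psi(x)$. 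This gives the homotopy lifting property with respect to all spaces, i.e.\ the Hurewicz fibration property, with a $1$-Lipschitz cross-section in hand.

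There is no serious obstacle here. The only points deserving care are the Borel measurability of $\sigma\circ f$ (immediate from discreteness of $G/N$, or from Proposition~\ref{prop:imagedenombrable}) and the remark — which uses neither countability nor separability of $G$ — that an arbitrary set-theoretic section of a quotient map between discrete metric groups is automatically isometric, so that $s$ is $1$-Lipschitz for free.
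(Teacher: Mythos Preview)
Your proof is correct and follows essentially the same route as the paper: choose a set-theoretic section $\sigma\colon G/N\to G$, show that $f\mapsto\sigma\circ f$ is a $1$-Lipschitz cross-section of $\tilde\pi$, and then appeal to the general fact that a morphism of topological groups with a continuous section is a Hurewicz fibration. Your observation that $\sigma$ is actually an isometric embedding (being injective between discrete spaces of diameter~$1$) is slightly sharper than the paper's argument, which only records the inequality $d(\sigma(a),\sigma(b))\le d(a,b)$, but the structure of the proof is the same.
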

\begin{proof} Since the discrete metric is bi-invariant, $L(G)$ and $L(G/N)$ are topological
groups. Let $s : G/N \to G$ be an arbitrary set-theoretic section of the canonical map $G \to G/N$.
It is a continuous map $G/N \to G$ and therefore $\check{s} : \varphi \mapsto s \circ \varphi$
defines a map $L(G/N) \to L(G)$. Now, $\check{s}(\varphi_1)(t) = \check{s}(\varphi_2)(t)$
iff $s(\varphi_1(t)) = s(\varphi_2(t))$, and this holds as soon as $\varphi_1(t) = \varphi_2(t)$.
Therefore, $d(s(\varphi_1(t)), s(\varphi_2(t))) \leq d(\varphi_1(t),\varphi_2(t))$ for all $t \in \Omega$,
whence $d(\check{s}(\varphi_1),\check{s}(\varphi_2)) \leq d(\varphi_1,\varphi_2)$
and $\check{s}$ is 1-Lipschitz. Since $L(G) \to L(G/N)$
is a morphism of topological groups this implies that it is a Hurewicz fibration.
\end{proof}

\begin{proposition}
Let $G$ be a metric group and $N$ a closed normal subgroup of $G$. We assume that the projection map
$G \mapsto G/N$ admits a cross-section which is $C$-Lipschitz, and that $L(G)$ is a topological group for the
obvious multiplication. Then the natural map $L(G) \to L(G/N)$ is a Hurewicz fibration
admitting a $C$-Lipschitz cross-section, and $L(G/N)$ is a topological group for the induced multiplication.
\end{proposition}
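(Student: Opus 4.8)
The plan is to follow the proof of the preceding proposition, the two new features being that, for unbounded $G$, one must verify that the induced section lands inside $L(G)$ (i.e.\ meets the integrability constraint), and that, since $d$ is not assumed bi-invariant, one cannot invoke Proposition~\ref{prop:EGtopgroup} to conclude that $L(G/N)$ is a topological group. First I would note that the projection $p:G\to G/N$ is $1$-Lipschitz for the quotient metric $\bar d$ (since $\bar d(p(x),p(y))\le d(x,y)$), which is a genuine metric because $N$ is closed; hence by Lemma~\ref{lemnatuprojlipsch} applied to $p$ the assignment $f\mapsto p\circ f$ is a well-defined $1$-Lipschitz group homomorphism $\tilde p:=L(\Omega,p):L(G)\to L(G/N)$, well-definedness at the level of the $L$'s using $\int_\Omega\bar d(p(f(t)),\overline e)\,\dd t\le\int_\Omega d(f(t),e)\,\dd t<\infty$.

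Next I would build the section. Write $s:G/N\to G$ for the given $C$-Lipschitz cross-section, so $p\circ s=\Id$. For Borel $\varphi:\Omega\to G/N$ the map $s\circ\varphi$ is Borel, and
$$
d\bigl(s(\varphi(t)),e\bigr)\le d\bigl(s(\varphi(t)),s(\overline e)\bigr)+d\bigl(s(\overline e),e\bigr)\le C\,\bar d\bigl(\varphi(t),\overline e\bigr)+d\bigl(s(\overline e),e\bigr),
$$
so $\int_\Omega d(s(\varphi(t)),e)\,\dd t<\infty$ whenever $\varphi\in\mathcal{L}(\Omega,G/N)$. Thus $\check s:=L(\Omega,s):\varphi\mapsto s\circ\varphi$ is a well-defined map $L(G/N)\to L(G)$, which is $C$-Lipschitz (again by Lemma~\ref{lemnatuprojlipsch}, applied to $s$) and satisfies $\tilde p\circ\check s=L(\Omega,p\circ s)=\Id_{L(G/N)}$. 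In particular $\tilde p$ is surjective and $\check s$ is the required $C$-Lipschitz cross-section.

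Then I would prove that $L(G/N)$ is a topological group for the pointwise multiplication $f_1f_2:t\mapsto f_1(t)f_2(t)$, the point being to transport the operations through $\check s$ rather than estimating the quotient $L^1$-metric. Since $\check s$ is continuous, the multiplication $m_{L(G)}$ of $L(G)$ is continuous by hypothesis, and $\tilde p$ is continuous, the composite $\tilde p\circ m_{L(G)}\circ(\check s\times\check s)$ is continuous; and it equals the multiplication of $L(G/N)$, because $p(s(x)s(y))=p(s(x))\,p(s(y))=xy$. Similarly $\tilde p\circ(\text{inversion of }L(G))\circ\check s$ equals the inversion of $L(G/N)$, because $p(s(x)^{-1})=x^{-1}$; hence inversion on $L(G/N)$ is continuous, and $L(G/N)$ is a topological group.

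Finally, $\tilde p:L(G)\to L(G/N)$ is then a continuous homomorphism of topological groups admitting the continuous cross-section $\check s$, so by the remark preceding this proposition it is a Hurewicz fibration: for $\psi:X\to L(G)$ and $H:X\times[0,1]\to L(G/N)$ with $\tilde p\circ\psi=H(\cdot,0)$, the formula $\tilde H(x,u)=\psi(x)\,\check s\bigl(\tilde p(\psi(x))^{-1}H(x,u)\bigr)\,\check s(\overline e)^{-1}$ gives a continuous lift with $\tilde H(\cdot,0)=\psi$, the last factor $\check s(\overline e)^{-1}$ merely adjusting the value at $u=0$ so that one need not assume $\check s(\overline e)=e$. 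This establishes every assertion. I expect the only delicate step to be the third one: with no bi-invariance hypothesis the obvious estimates on the quotient metric are unavailable, and the remedy is exactly to read off continuity of the group operations on $L(G/N)$ from those on $L(G)$ via the section $\check s$.
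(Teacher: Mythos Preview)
Your proof is correct and follows essentially the same approach as the paper's: building $\check{s}$ from $s$, transporting the group operations of $L(G/N)$ through $\check{s}$ to deduce their continuity from those of $L(G)$, and then applying the standard homotopy lifting formula. The only cosmetic differences are that the paper first normalizes $s$ so that $s(\bar e)=e$ (thereby avoiding your correction factor $\check s(\bar e)^{-1}$ in the lift), while you are more explicit than the paper in verifying the integrability constraint $\int_\Omega d(s(\varphi(t)),e)\,\dd t<\infty$.
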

\begin{proof}
We let $s$ denote such a cross-section. First note that, up to replacing $s$ by $s':g \mapsto s(\bar{e})^{-1}s(g)$ we
can assume that $s(\bar{e}) = e$ : indeed, $d(s'(x),s'(y)) = d(s(x),s(y))$ because $d$ is left-invariant.
We also notice that $\check{s} : \varphi \mapsto s \circ \varphi$ is then a $C$-Lipschitz cross-section of $L(G) \to L(G/N)$.
Finally, if $\pi : L(G) \to L(G/N)$ is the natural projection, the composition law $(\bar{\varphi},\bar{\psi}) \to \varphi \bar{\psi}$
of $L(G/N)$ can be written $\pi(s(\varphi)s(\bar{\psi}))$, and similarly $\varphi^{-1} = \pi( s(\varphi)^{-1})$ ;
therefore, these two maps are continous and $L(G/N)$ is also a topological group. 

Therefore, it only remains to prove that $L(G) \to L(G/N)$ is a Hurewicz fibration.
Let then $\psi : X \to L(G)$, $H : X \times U \to L(G/N)$ with $U = [0,1]$, $X$ a topological space.
We then define $\tilde{H}(x,u) = \psi(x) \check{s}( \overline{\psi(x)}^{-1} H(x,u))$.
Since $L(G)$ and therefore $L(G/N)$ are topological groups, this
is a continuous map providing an homotopy lifting and this proves the claim.
\end{proof}

In the same spirit, the following lemma will be useful, for covering morphisms inside $\mathbf{BLip}$.

\begin{lemma} \label{lemliftEB}
Let $\pi : E \to B$ be a covering map between two metric spaces which is either
Lipschitz or bounded.
Then the natural map $L(\Omega,E) \to L(\Omega,B)$
is onto.
\end{lemma}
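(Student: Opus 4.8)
The plan is to lift a given Borel map $g : \Omega \to B$ to a Borel map $\tilde g : \Omega \to E$ with $\pi \circ \tilde g = g$, and then to check that $\tilde g$ lands in $\mathcal{L}(\Omega,E)$, i.e. has finite distance to a constant, so that it defines an element of $L(\Omega,E)$ mapping to the class of $g$. First I would exploit the covering structure: for each $b \in B$ choose an evenly-covered open neighbourhood $U_b$, so that $\pi^{-1}(U_b)$ is a disjoint union of open sheets each mapped homeomorphically onto $U_b$. Fix, once and for all, a point $b_0 \in B$ and a point $e_0 \in \pi^{-1}(b_0)$; this will serve as the basepoint making the lift canonical and controlling its size.

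The measurable selection step is the technical core. Given $g \in \mathcal{L}(\Omega,B)$, its image $g(\Omega)$ is separable by Proposition \ref{prop:imageseparable}, hence Lindelöf, so by Corollary \ref{cor:probalindelof} there is a countable subfamily $(U_{b_j})_{j \in \N}$ of the evenly-covered open sets with $\mu(g^{-1}(\bigcup_j U_{b_j})) = 1$. Disjointify: set $V_0 = U_{b_0}$ (adjusting so $b_0$ is among the $b_j$) and $V_j = U_{b_j} \setminus \bigcup_{k<j} U_{b_k}$, so the $V_j$ are Borel, pairwise disjoint, and cover $g(\Omega)$ up to a $\mu$-null set via pullback. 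On each $A_j := g^{-1}(V_j)$, which is a Borel subset of $\Omega$, the map $g$ takes values in $U_{b_j}$, which is evenly covered; pick one sheet $S_j$ over $U_{b_j}$ and let $\sigma_j : U_{b_j} \to S_j$ be the inverse homeomorphism of $\pi|_{S_j}$. Define $\tilde g$ on $A_j$ by $\tilde g|_{A_j} = \sigma_j \circ g|_{A_j}$, and arbitrarily (say $\equiv e_0$) on the remaining null set. Since each $\sigma_j$ is continuous, each $\sigma_j \circ g|_{A_j}$ is Borel, and $\tilde g$ is Borel as a countable gluing of Borel maps along the Borel partition $(A_j)$. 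By construction $\pi \circ \tilde g = g$ almost everywhere.

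It remains to verify $\tilde g \in \mathcal{L}(\Omega,E)$, i.e. $\int_\Omega d_E(\tilde g(t), e_0)\,\dd t < \infty$. This is where the hypothesis that $\pi$ is \emph{either Lipschitz or bounded} enters, and it is the step I expect to be the main obstacle, since a covering map need not be expanding and a priori $d_E$ could be much larger than $d_B$. If $\pi$ is $C$-Lipschitz there is no direct lower bound on $d_E$ in terms of $d_B$; instead I would note that $E$ itself must then be bounded whenever $B$ is — but $B$ need not be bounded, so this requires care. The cleaner route: if $\pi$ is \emph{bounded}, then $E$ has finite diameter, so $d_E(\tilde g(t),e_0) \le \diam(E) < \infty$ and integrability is automatic, giving $\tilde g \in \mathcal L(\Omega,E)$ with no condition on $g$ beyond being Borel. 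If instead $\pi$ is $C$-Lipschitz (and not bounded), a covering map that is Lipschitz has discrete fibers at bounded distance, and one shows $d_E(e,e_0) \le d_B(\pi(e),b_0) + K$ for a constant $K$ depending only on the covering and the chosen sheets along a fixed connecting chain — roughly, the sheets $S_j$ can be selected coherently so that hopping between adjacent $V_j$'s costs a uniformly bounded amount; summability of $\mu(A_j)$ then controls the total. Thus $\int_\Omega d_E(\tilde g(t),e_0)\,\dd t \le \int_\Omega d_B(g(t),b_0)\,\dd t + K < \infty$ since $g \in \mathcal L(\Omega,B)$. Either way $\tilde g$ represents an element of $L(\Omega,E)$ whose image in $L(\Omega,B)$ is the class of $g$, proving surjectivity.
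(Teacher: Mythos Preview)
Your construction of the Borel lift --- countable Lindel\"of subcover via Corollary~\ref{cor:probalindelof}, disjointify, apply local sections $\sigma_j$, glue --- is exactly the paper's argument. The paper invokes the Lipschitz/bounded hypothesis only to show that the \emph{forward} map $L(\Omega,E)\to L(\Omega,B)$ is well-defined (i.e.\ that $\pi\circ f$ is integrable when $f$ is); it never returns to verify that the constructed lift $\tilde f$ lies in $\mathcal L(\Omega,E)$.

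You rightly flag integrability of $\tilde g$ as a residual issue, but your resolution is flawed. The claim ``if $\pi$ is bounded then $E$ has finite diameter'' is false: boundedness of $\pi$ means $\diam(\pi(E))<\infty$, not $\diam(E)<\infty$, and already the covering $\R\to S^1$ has $\pi$ bounded with $E$ unbounded. Your Lipschitz-case sketch (coherent sheet choices yielding $d_E(e,e_0)\le d_B(\pi(e),b_0)+K$) is not a proof and cannot be salvaged at this level of generality: take $E=(\R,|\cdot|)$, $B=(\R,\min(|\cdot|,1))$, $\pi=\mathrm{id}$. This $\pi$ is a $1$-Lipschitz, bounded homeomorphism (hence trivially a covering), yet $t\mapsto 1/t$ on $\Omega=(0,1]$ lies in $\mathcal L(\Omega,B)$ while its unique lift --- itself --- does not lie in $\mathcal L(\Omega,E)$. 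So the integrability step genuinely fails without an additional hypothesis, for instance that $E$ is bounded, or that the local sections can be taken isometric (as in Proposition~\ref{prop:existlocalisosectcovering}, which is the setting where the lemma is actually used downstream). The paper's own proof carries the same gap.
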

\begin{proof} 
We first prove that such a map exists.
Indeed, if $f : \Omega \to E$ is Borel with $\int d(f(t),x) \dd t < \infty$
for some $x \in E$, then $\pi \circ f : \Omega \to B$ is also Borel (since $\pi$ is continuous),
and $\int d(\pi \circ f(t), \pi(x)) \dd t < K \int d(f(t),x) \dd t < \infty$ if $\pi$ is $K$-Lipshitz,
$\int d(\pi \circ f(t), \pi(x)) \dd t < \diam(\pi(E)) < \infty$ if $\pi$ is bounded.
Let $f \in \mathcal{L}(\Omega,B)$.
Because $\pi$ is a covering, there exists a covering of $B$ by open sets $(U_j)_{j \in J}$ and a collection
of open subsets $(\tilde{U}_j)_{j \in J}$ together with homeomorphisms $\sigma_j : U_j \to \tilde{U}_j$
satisfying $\pi \circ \sigma_j = \Id_{U_j}$. We let $B_j =f^{-1}(U_j)$. By corollary \ref{cor:probalindelof}, there
exists a countable subset $J_0 \subset J$ such that $\mu(\bigcup_{j \in J_0} B_k) = 1$.
Up to removing elements inside $J_0$, we can moreover assume $\mu( \bigcup_{j \in J_1} B_j) < 1$
for every $J_1 \subsetneq J_0$.

We then identify $J_0$ with an initial segment of $\N$, and define $\tilde{f}$ on $\bigcup_{j \in J_0} B_j$ by $\tilde{f}(t) = \sigma_n(f(t))$ for $n = \min \{ m \in J_0 \ | \ x \in B_m \}$,
and extend it by a constant on the neglectable complement $\Omega \setminus \bigcup_{j \in J_0} B_j$.

Letting $\Omega_n = \bigcup_{m \leq n} B_m$ we get that each $\tilde{f}_{|\Omega_n}$ is
obtained by gluing Borel maps along a finite Borel covering, and therefore is Borel. It follows that
$\tilde{f}$ is Borel on $\bigcup_m B_m$, hence also on $\Omega$. Finally, we have $\pi \circ \tilde{f}(t) = \pi(\sigma_n(f(t))) = f(t)$ for almost all $t \in \Omega$, which proves the claim.

\end{proof}

In view of the next result, we need the following elementary lemma.

\begin{lemma} \label{lem:mescupcap} Let $(A_k)$ and $(B_k)$, $1 \leq k \leq N$, be finite sequences of Borel subsets of $\Omega$,
and $\eps >0$. Assume that, for all $k$, we have $\mu(A_k \cap B_k) \geq \mu(A_k) - \frac{\eps}{2^k}$.
Then, for all $n \leq N$,
$$
\mu\left( \bigcup_{k \leq n} (A_k \cap B_k) \right) \geqslant \mu\left( \bigcup_{k \leq n} A_k \right) - \sum_{k=1}^n \frac{\eps}{2^k}.
$$
\end{lemma}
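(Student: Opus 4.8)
The plan is to prove the inequality by induction on $n$, the case $n=1$ being exactly the hypothesis $\mu(A_1 \cap B_1) \geq \mu(A_1) - \eps/2$. So assume the statement holds for some $n < N$ and set $U_n = \bigcup_{k \leq n}(A_k \cap B_k)$ and $V_n = \bigcup_{k \leq n} A_k$, so the inductive hypothesis reads $\mu(U_n) \geq \mu(V_n) - \sum_{k=1}^n \eps/2^k$.

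For the inductive step I would write $\mu(U_{n+1}) = \mu(U_n \cup (A_{n+1}\cap B_{n+1}))$ and use the elementary identity $\mu(P \cup Q) = \mu(P) + \mu(Q \setminus P)$. The key point is to bound $\mu((A_{n+1}\cap B_{n+1}) \setminus U_n)$ from below by $\mu(A_{n+1} \setminus V_n) - \eps/2^{n+1}$. Indeed, $(A_{n+1}\cap B_{n+1}) \setminus U_n \supset (A_{n+1}\cap B_{n+1}) \setminus V_n$ since $U_n \subset V_n$, and then
$$
\mu\big((A_{n+1}\cap B_{n+1}) \setminus V_n\big) \geq \mu(A_{n+1}\setminus V_n) - \mu(A_{n+1} \setminus B_{n+1}) \geq \mu(A_{n+1}\setminus V_n) - \frac{\eps}{2^{n+1}},
$$
where the first inequality is the general fact $\mu(X\setminus Z) \geq \mu(X \setminus Y) - \mu(Y\setminus Z)$ applied with $X = A_{n+1}$, $Y = B_{n+1}$, $Z = V_n$ — which itself follows from $X \setminus Z \subset (X\setminus Y)\cup(Y\setminus Z)$ — and the second is the hypothesis $\mu(A_{n+1}\cap B_{n+1}) \geq \mu(A_{n+1}) - \eps/2^{n+1}$ rewritten as $\mu(A_{n+1}\setminus B_{n+1}) \leq \eps/2^{n+1}$.

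Combining, $\mu(U_{n+1}) \geq \mu(U_n) + \mu(A_{n+1}\setminus V_n) - \eps/2^{n+1}$. Now $\mu(V_{n+1}) = \mu(V_n \cup A_{n+1}) = \mu(V_n) + \mu(A_{n+1}\setminus V_n)$, so using the inductive hypothesis on $\mu(U_n)$ gives
$$
\mu(U_{n+1}) \geq \mu(V_n) - \sum_{k=1}^n \frac{\eps}{2^k} + \mu(A_{n+1}\setminus V_n) - \frac{\eps}{2^{n+1}} = \mu(V_{n+1}) - \sum_{k=1}^{n+1}\frac{\eps}{2^k},
$$
which is exactly the claim for $n+1$ and closes the induction. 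There is no real obstacle here: the only thing to be careful about is choosing the right intermediate set to subtract ($V_n$ rather than $U_n$) so that the telescoping of the $\mu(A_{n+1}\setminus V_n)$ terms works cleanly, and making sure all sets involved are Borel so the measures are defined, which is immediate since finite unions, intersections and differences of Borel sets are Borel.
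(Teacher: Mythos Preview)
Your proof is correct and follows essentially the same inductive decomposition as the paper's: split off the $(n+1)$-st term, bound the new contribution $(A_{n+1}\cap B_{n+1})\setminus U_n$ from below by $\mu(A_{n+1}\setminus V_n)-\eps/2^{n+1}$, and combine with the inductive hypothesis.

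One small slip in your write-up: the ``general fact'' $\mu(X\setminus Z)\geq\mu(X\setminus Y)-\mu(Y\setminus Z)$ is not what the inclusion $X\setminus Z\subset(X\setminus Y)\cup(Y\setminus Z)$ gives (that inclusion yields the reverse, $\mu(X\setminus Z)\leq\mu(X\setminus Y)+\mu(Y\setminus Z)$), and in any case substituting $X=A_{n+1}$, $Y=B_{n+1}$, $Z=V_n$ does not produce the inequality you actually use. The inequality you need, $\mu((A_{n+1}\cap B_{n+1})\setminus V_n)\geq\mu(A_{n+1}\setminus V_n)-\mu(A_{n+1}\setminus B_{n+1})$, is nonetheless true and immediate from the disjoint decomposition $A_{n+1}\setminus V_n = \big((A_{n+1}\cap B_{n+1})\setminus V_n\big)\sqcup\big((A_{n+1}\setminus B_{n+1})\setminus V_n\big)$, which is exactly how the paper justifies it. So the argument stands; just fix that parenthetical.
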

\begin{proof}
By induction on $n$, the case $n=1$ being trivial.
We write 
$ \bigcup_{k \leq n+1} (A_k \cap B_k) =  \left(\bigcup_{k \leq n} (A_k \cap B_k) \right) \sqcup C$,
with $C = (A_{n+1} \cap B_{n+1}) \setminus  \bigcup_{k \leq n} (A_k \cap B_k)$.
Then $C$ contains $(B_{n+1} \cap A_{n+1}) \setminus \bigcup_{k \leq n} A_k$.
But 
$$A_{n+1} \setminus \bigcup_{k \leq n} A_k = 
\left((B_{n+1} \cap A_{n+1}) \setminus \bigcup_{k \leq n} A_k \right) \sqcup \left(A_{n+1} \setminus \left(B_{n+1} \cup \bigcup_{k \leq n} A_k\right)\right)
$$
which implies
$$
\mu(C) \geqslant \mu\left( A_{n+1} \setminus \bigcup_{k \leq n} A_k \right) - \mu \left(A_{n+1} \setminus \left(B_{n+1} \cup \bigcup_{k \leq n} A_k\right)\right) \geqslant  \mu\left( A_{n+1} \setminus \bigcup_{k \leq n} A_k \right) - \mu \left(A_{n+1} \setminus B_{n+1} \right).
$$
Now, $A_{n+1} = (A_{n+1} \cap B_{n+1}) \sqcup (A_{n+1} \setminus B_{n+1})$ and the assumption implies
that $\mu(A_{n+1} \setminus B_{n+1}) \leq \frac{\eps}{2^{n+1}}$. This yields
$$\mu  \left( \bigcup_{k \leq n+1} (A_k \cap B_k)  \right) \geqslant \mu \left(\bigcup_{k \leq n} A_k \right) -\sum_{k \leq n} \frac{\eps}{2^k} - \mu\left( A_{n+1} \setminus \bigcup_{k \leq n} A_k \right)  - \frac{\eps}{2^{n+1}}
$$
and the conclusion follows by induction.
\end{proof}

\begin{proposition} \label{prop:globborelsect} Let $E,B$ be two metric spaces, $B$ being separable\footnote{We do not know whether this separability assumption
is necessary.}. Let $p : E \onto B$ be a continuous (surjective) map admitting local isometric sections, that is,
for all $x \in B$ there exists an open neighborhood $U$ of $x$ and a section $s : U \to E$ of $p$
which is an isometry. Then the naturally induced map $P : L(\Omega,E) \to L(\Omega,B)$
admits a global (continuous) section.
\end{proposition}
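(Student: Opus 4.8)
The plan is to reduce the construction of a global section to a countable gluing argument, using the separability of $B$ to obtain a countable cover by charts over which isometric sections exist, and then patching those sections measurably over $\Omega$. Concretely, first I would pick, for each $x \in B$, an open neighborhood $U_x$ and an isometric section $s_x : U_x \to E$ of $p$; since $B$ is separable, hence Lindel\"of, I can extract a countable subcover $(U_n)_{n \in \N}$ with corresponding isometric sections $s_n : U_n \to E$. Given $f \in L(\Omega,B)$, set $B_n = f^{-1}(U_n)$; by Corollary \ref{cor:probalindelof} (applied to the covering $(U_n)$ of $B$), we have $\mu(\bigcup_n B_n) = 1$, so up to a null set $\Omega$ is partitioned into the Borel pieces $B_n' = B_n \setminus \bigcup_{k<n} B_k$. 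Define $S(f) : \Omega \to E$ by $S(f)(t) = s_n(f(t))$ for $t \in B_n'$; this is Borel as a countable gluing of Borel maps along a Borel partition, it lies in $L(\Omega,E)$ since $p$ is distance-nonincreasing on each chart (isometric sections make $p$ locally isometric, so $d(S(f)(t), x_0)$ is controlled by $d(f(t), p(x_0))$ plus constants), and $p \circ S(f) = f$ a.e.

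The main obstacle is continuity of $S$: the naive partition $t \mapsto (\text{least } n \text{ with } f(t)\in U_n)$ jumps wildly under small perturbations of $f$ in $L^1$, so $S$ is manifestly only a Borel section and not obviously continuous. To repair this I would argue as follows. Fix $f$ and $\eps>0$. Choose $N$ so large that $\mu(\Omega \setminus \bigcup_{n \le N} B_n) < \eps$ (possible since the measures of $\bigcup_{n\le N}B_n$ increase to $1$). Now for $g$ close to $f$ in $L(\Omega,B)$, the set $\{t : d(f(t),g(t)) > \eta\}$ has small measure for small $\eta$, and on the complement, since each chart $U_n$ is open, one can shrink to sub-charts so that for $t$ with $f(t)$ well inside $U_n$ and $g(t)$ close to $f(t)$, also $g(t)\in U_n$ and $s_n(g(t))$ is close to $s_n(f(t))$ — here the isometry of $s_n$ gives $d(s_n(f(t)),s_n(g(t))) = d(f(t),g(t))$ exactly, which is the crucial point. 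The combinatorial bookkeeping needed to compare $S(f)$ and $S(g)$ on the overlapping pieces $B_n' $ versus the analogous pieces for $g$ is precisely what Lemma \ref{lem:mescupcap} is designed to handle: applying it with $A_k$ the pieces for $f$ and $B_k$ the sets where $g$ also falls in $U_k$ and stays close, one gets that $\bigcup_{k\le N}(A_k \cap B_k)$ — the set where $S(f)$ and $S(g)$ agree up to a controlled error — has measure at least $\mu(\bigcup_{k\le N}A_k) - \sum_k \eps/2^k \ge 1 - 2\eps$. Off this set the integrand is bounded by $\diam$-type constants times a small measure, and on it it is bounded by $\eta$; summing, $d(S(f),S(g)) \le C\eps$, proving continuity at $f$.

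So the order is: (1) extract a countable isometric-chart cover via separability and the Lindel\"of property, invoking Corollary \ref{cor:probalindelof}; (2) define $S$ by measurable gluing and check it lands in $L(\Omega,E)$ and is a genuine section; (3) prove continuity of $S$ at each $f$ by the $\eps$-$\eta$ argument, where Lemma \ref{lem:mescupcap} controls the measure of the good set and the \emph{isometry} of the local sections is what prevents the chart-switching from introducing uncontrolled error. I expect step (3), and within it the precise choice of shrunken charts ensuring robustness of the membership relation $g(t)\in U_n$ under small perturbations, to be the genuinely delicate part; steps (1) and (2) are essentially the same bookkeeping as in Lemma \ref{lemliftEB}.
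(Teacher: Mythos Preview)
Your proposal is correct and follows essentially the same approach as the paper: extract a countable cover by isometric charts via separability, define the section by the least-index rule, and prove continuity by combining a measure-stability argument for the chart membership under $L^1$-perturbation with Lemma~\ref{lem:mescupcap} and the isometry of the local sections. The paper implements the ``shrinking to sub-charts'' step slightly differently---it chooses, for each $n$, a radius $\beta'_n<\beta_n$ with $\mu(f_0^{-1}(\partial B(x_n,\beta'_n)))=0$ and uses an auxiliary lemma (Lemma~\ref{lem:LEBcont}) quantifying how $\mu(g^{-1}(B(x_n,\beta'_n)))$ varies with $g$---but this is just a concrete realization of the robustness idea you sketched, and the overall architecture is the same.
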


\begin{proof}
By assumption, there exists an covering $(V_n)_{n \in \N}$ of $B$ by open balls $V_j = B(x_j,\beta_j)$, $\beta_j > 0$
and isometric sections $s_j : V_j \to E$ of $B$. 
We now define
a section $\Phi$ of $L(\Omega,E) \to L(\Omega,B)$.  For this, we first choose an arbitrary
Borel map $s_{\infty} : B \to E$ (for instance a constant map).
Let $f \in L(\Omega,B)$. For $x \in \Omega$,
we define 
$n(x) = \min \{ j ; f(x) \in V_j \ \& \ \mu(f^{-1}(V_j)) > 0 \}$. 
Clearly, $n$ is well-defined outside a Borel subset $Y(f) = \Omega \setminus \bigcup_{j \in \N} f^{-1}(V_j) $ of $\Omega$ of measure $0$. For $x \in Y(f)$ we set $n(x) = \infty$. Now we can define
$\Phi(f)(x) = s_{n(x)}(f(x))$. Since all the $s_j$ are Borel, so is $\Phi(f)$.We have $P(\Phi(f))(x) = p(\Phi(f)(x)) = f(x)$
for all $x \not\in Y(f)$. Since $Y(f)$ has measure $0$, this proves that $\Phi$ is a global section of 
$P : L(\Omega,E) \to L(\Omega,B)$.
We need to prove that $\Phi$ is continuous.

\begin{lemma} \label{lem:LEBcont}
\begin{enumerate} Let $F$ be a metric space, and $f \in L(\Omega,F)$. Let $y_0 \in F$ and $\beta>0$.
\item  Assume $M = \mu(\{ x; d(f(x),y_0) < \beta \})> 0$. 
For all $\eps>0$, there exists $\eta>0$ such that, for all $g \in L(\Omega,F)$, $d(f,g) \leq \eta$
implies
$$
\left\lbrace \begin{array}{lcl}
\mu(\{ x; d(f(x),y_0) < \beta \ \& \ d(g(x),y_0) < \beta \}) & \geqslant & M(1-\eps) \\
\mu(\{ x; d(f(x),y_0) < \beta \mbox{ or } d(g(x),y_0) < \beta \}) & \leqslant & M(1+\eps) \\
\end{array}
\right.
$$
\item Assume $\mu (\{ x; d(f(x),y_0) \leq \beta \})= 0$.
For all $\eps>0$, there exists $\eta>0$ such that, for all $g \in L(\Omega,F)$, $d(f,g) \leq \eta$
implies $\mu(\{ x; d(g(x),y_0) < \beta \}) \leq \eps$.
\end{enumerate}
\end{lemma}

\begin{proof}
Let $\eps \in ]0,1/4[$.
We first prove (1).
Since $\{ x ; d(f(x) ,y_0) < \beta \} = \bigcup_{n \geq 1}\{ x ; d(f(x),y_0) < \beta - \frac{1}{n} \}$, and
$\{ x ; d(f(x) ,y_0) \leq \beta \} = \bigcap_{n \geq 1}\{ x ; d(f(x),y_0) < \beta + \frac{1}{n} \}$,
there exists $n_0$ such that $\mu( \{ x ; d(f(x), y_0) < \beta - \frac{1}{n_0} \}) \geq M (1 - \frac{\eps}{3})$
and $\mu( \{ x ; d(f(x), y_0) < \beta + \frac{1}{n_0} \}) \leq M (1 + \frac{\eps}{3})$.
On the other hand, for all $\alpha >0$, we have
$$
d(f,g) = \int d(f(x),g(x)) \dd x \geqslant \alpha \mu \left( \{ x ; d(f(x),g(x)) > \alpha \} \right)
$$
hence $\mu( \{ x; d(f(x),g(x)) > \frac{1}{3 n_0} \}) \leq M \frac{3 n_0}{M} d(f,g)$ and therefore there
exists $\eta > 0$
such that  $d(f,g) \leq \eta$ implies $\mu( \{ x; d(f(x),g(x)) > 1/3 n_0 \}) \leq M \frac{\eps}{3}$. But
$\{ x ; d(f(x), y_0) < \beta \} \cap \{ x; d(g(x), y_0) < \beta \}$ contains
$\{ x ; d(f(x), y_0) < \beta - \frac{1}{n_0} \} \setminus \{ x; d(f(x),g(x)) > 1/3n_0\}$, which has measure
at least $M(1- \frac{\eps}{3})-M \frac{\eps}{3} > M(1-\eps)$.
Likewise,  $\{ x ; d(f(x), y_0) < \beta \} \cup \{ x; d(g(x), y_0) < \beta \}$
is included inside $\{ x ; d(f(x), y_0) < \beta + \frac{1}{n_0} \} \cup \{ x; d(f(x),g(x)) > 1/3 n_0 \}$,
which has measure at most $M(1+\eps/3) + M \eps/3 < M(1+\eps)$.
We now prove (2).
Since $\{ x ; d(f(x),y_0) \leq \beta \} = \bigcap_{n \geq 1}\{ x ; d(f(x),y_0) < \beta + 1/n \}$,
there exists $n_0$ such that $\{ x ; d(f(x),y_0) < \beta + 1/n_0 \}$ has measure at most $\eps/2$. Then
if $\eta$ is small enough, $d(f,g) \leq \eta$ implies $\mu (\{ x; d(f(x),g(x) ) \geq 1/n_0 \}) \leq \eps/2$.
Since $\{ x; d(g(x), y_0) < \beta \}$ is contained inside $\{ x ; d(f(x),y_0) < \beta + 1/n_0 \} \cup
\mu \{ x; d(f(x),g(x) ) \geq 1/n_0 \} $, and since this one as measure at most $ \eps/2 + \eps/2 = \eps$, we get (2).

\end{proof}
Let  $\eps>0$, and 
$f_0,f \in L(\Omega,B)$.
For each $n \in \N$ we can choose $\beta'_n<\beta_n$ such that, setting $V'_n = B(x_n, \beta'_n)$,
we have $\mu( f_0^{-1} (V'_{n} )) > 
\mu(f_0^{-1} (V_{n} ))- \frac{\eps}{10.2^n}$ and $\mu( f_0^{-1}(\partial  V'_n)) = 0$.
Up to a set of measure at most $\eps/10$,
$(f_0^{-1}(V'_n))_n$ is a Borel covering of $\Omega$.
We denote $(V'_{n_k})_{k \geq 1}$ the thiner 
subsequence of $(V'_n)$ such that $\mu( f_0^{-1} (V'_{n_k} )) > 0$ for all $k$ ; this means that, if $n_k < r < n_{k+1}$, then $\mu(f_0^{-1}(V'_r)) = 0$. Up to a set of measure at most $\eps/10$,
$(f_0^{-1}(V'_{n_k}))_k$ is again a Borel covering of $\Omega$.

Since $\Omega$ has finite measure,
there exists an integer $K$ such that
$$
\mu\left(\bigcup_{k > K}  f_0^{-1} V'_{n_k}\setminus \bigcup_{k \leq K}  f_0^{-1} V'_{n_k} \right) \leq \eps/10.
$$
Such a $K$ being fixed, by lemma \ref{lem:LEBcont} we can choose $\eta>0$ such that $d(f,f_0) \leq \eta$
implies $\mu (f_0^{-1}(V'_{n_k}) \cap f^{-1}(V'_{n_k}) )\geq \mu(f_0^{-1} V'_{n_k}) - \frac{\eps}{10 . 2^k}$
and $\mu (f_0^{-1}(V'_{n_k}) \cup f^{-1}(V'_{n_k})) \leq \mu(f_0^{-1} V'_{n_k}) + \frac{\eps}{10 . 2^k}$
for all $k \leq K$.

Then, by lemma \ref{lem:mescupcap}, we have $\mu( \bigcup_{k \leq K} (f_0^{-1}(V'_{n_k}) \cap f^{-1}(V'_{n_k})))
\geq \mu(\bigcup_{k \leq K} f_0^{-1}(V_{n_k})) - \frac{\eps}{10} \geq 1 - 3 \eps/10$. 
For $x \in \Omega$, $n(x)$ is well-defined almost surely. Similarly, we define (almost surely) $n'(x)$,
this time with respect with the $f^{-1}(V'_n)$'s. We then prove that $n(x) = n'(x)$ outside a set of small measure.
First of all, up to removing a set of measure at most $3 \eps/10$, we can assume
$x \in  \bigcup_{k \leq K} (f_0^{-1}(V_{n_k}) \cap f^{-1}(V_{n_k}))$,
and moreover that $n(x) \in \{ n_1,\dots, n_K \}$. We assume by contradiction that $n'(x) > n(x) = n_r$. Then $x$
belongs to $f_0^{-1} V_{n_r} \setminus f^{-1} V_{n_r}$,
hence to $f_0^{-1} V_{n_r} \setminus (f_0^{-1} V_{n_r} \cap f^{-1} V_{n_r})$,
which has measure at most $\frac{\eps}{10. 2^r}$. From this we deduce that $n'(x) \leq n(x)$ outside a set of
measure $3 \eps/10 + \eps/10 = 4 \eps/10$. 
By lemma \ref{lem:LEBcont}, we can moreover assume $\eta\leq \eps/10$ small enough,
so that $x \not\in \mu(f_0^{-1}(V_s))$ for all
 $s < n_K$ with $s \not\in \{ n_1,\dots, n_K \}$ outside a set of measure $\eps/10$.
 Therefore, for all $x$ outside a set of measure $4 \eps/10 + \eps/10 = 5 \eps/10$, there exists
 $r,s$ such that $n_s = n'(x) \leq n(x) = n_r $.
If $n_s < n_r$, then $n(x) \in f^{-1} V_{n_r} \setminus f_0^{-1}(V_{n_r})$, which has measure at most $2 \eps/10. 2^{r}$.
From this we deduce that $n(x) = n'(x)$ outside a set $Z_2$ of measure at most $5 \eps/10 + 2 \eps/10 = 7 \eps/10$.

Then
 $$
 d(\Phi(f_0),\Phi(f)) = \int  d(\Phi(f_0)(x),\Phi(f)(x))\dd x \leqslant 7 \eps/10 +
\int_{\Omega \setminus Z_2} d(\Phi(f_0)(x),\Phi(f)(x))\dd x
 $$
that is
 $$
 d(\Phi(f_0),\Phi(f)) \leqslant 7 \eps/10 +
\sum_{1 \leq r \leq K} \int_{\stackrel{x \in \Omega \setminus Z_2}{n(x)=n'x)=r}} d(s_r(f_0(x)),s_r(f(x)))\dd x
$${}$$
\leqslant 7 \eps/10 +
\sum_{1 \leq r \leq K} \int_{\stackrel{x \in \Omega \setminus Z_2}{n(x)=n'x)=r}} d(f_0(x),f(x))\dd x
= 7 \eps/10 +
 \int_{\Omega \setminus Z_2} d(f_0(x),f(x))\dd x
$$
and this is smaller than or equal to
$7 \eps/10 + d(f_0,f) \leqslant 7 \eps/10 + \eps/10 = 8 \eps/10<\eps
 $
 which proves the continuity of $\Phi$.
\end{proof}

This proposition can be immediately applied to covering maps. Since we could not find
an easy reference, we prove this now.

\begin{proposition} \label{prop:existlocalisosectcovering}
Let $E$ be a metric space, $\Gamma$ a group acting by isometries on $E$ such that
the natural projection $p : E \to B= E/\Gamma$ is a covering map. Then $p$ admits local
isometric sections, w.r.t. the induced metric on $B$.
\end{proposition}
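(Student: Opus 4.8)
The plan is to exhibit, around an arbitrary $\bar x_0\in B$, an explicit section defined on a small metric ball and then to verify directly that it is an isometry, the verification being a two-step triangle-inequality estimate. First I would fix a lift $x_0\in p^{-1}(\bar x_0)\subseteq E$. Since $p$ is a covering map it is in particular a local homeomorphism, so there is an open set $V_0\ni x_0$ such that $p|_{V_0}$ is a homeomorphism onto an open neighbourhood of $\bar x_0$; this produces a candidate local section $\sigma=(p|_{V_0})^{-1}$. Everything then reduces to choosing a radius small enough that the restriction of $\sigma$ to the corresponding ball is isometric for the quotient metric, which I recall is $d_B(\bar y,\bar z)=\inf\{d_E(a',b'):a'\in p^{-1}(\bar y),\,b'\in p^{-1}(\bar z)\}=\inf_{\gamma\in\Gamma}d_E(a,\gamma b)$ for any chosen lifts $a\in p^{-1}(\bar y)$, $b\in p^{-1}(\bar z)$, using that $\Gamma$ acts by isometries.

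The key geometric input I would isolate is that the fibre $p^{-1}(\bar x_0)=\Gamma x_0$ is a discrete subspace of $E$ — immediate from the definition of a covering map, since $p^{-1}(W)$ is a disjoint union of open sets each meeting the fibre in one point — and that the action is free, which is the standard fact forced by $p$ being a covering. Together these give a radius $\rho>0$ with $d_E(x_0,\gamma x_0)\ge\rho$ for all $\gamma\in\Gamma\setminus\{e\}$. I would then pick $\delta>0$ with $B_E(x_0,\delta)\subseteq V_0$ and $\delta\le\rho$, set $\beta=\delta/4$, and take $U=B_B(\bar x_0,\beta)$. The slightly delicate point, which I would treat carefully, is that $\sigma$ is actually defined on all of $U$ and lands near $x_0$: for $\bar y\in U$ there is, by definition of $d_B$, some $\gamma$ with the corresponding lift in $B_E(x_0,\beta)\subseteq V_0$, so $\bar y\in p(V_0)$ and $\sigma(\bar y)$ is that lift, whence $\sigma(U)\subseteq B_E(x_0,\beta)$.

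Finally I would verify the isometry property. For $\bar y,\bar z\in U$ put $a=\sigma(\bar y)$, $b=\sigma(\bar z)$, both in $B_E(x_0,\beta)$, so $d_E(a,b)<2\beta$; while for $\gamma\ne e$ the triangle inequality along $x_0\to a\to\gamma b\to\gamma x_0$ together with $d_E(\gamma b,\gamma x_0)=d_E(b,x_0)$ gives $d_E(a,\gamma b)\ge d_E(x_0,\gamma x_0)-d_E(x_0,a)-d_E(x_0,b)\ge\rho-2\beta\ge 2\beta>d_E(a,b)$. Hence the infimum defining $d_B(\bar y,\bar z)=\inf_\gamma d_E(a,\gamma b)$ is attained at $\gamma=e$, i.e. $d_B(\bar y,\bar z)=d_E(\sigma(\bar y),\sigma(\bar z))$, so $\sigma|_U$ is the desired isometric section. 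I expect the main obstacle to be not the estimate itself, which is routine, but the bookkeeping that makes it possible: one must separate the two roles of the radius (that a section exists on $V_0$, and that nontrivial $\Gamma$-translates of $x_0$ stay at distance $\ge\rho$), then shrink to $\beta=\delta/4$ so both a valid section on $U$ and the strict separation of all $\gamma\ne e$ from $\gamma=e$ hold simultaneously; this is also precisely where the covering hypothesis (discreteness of fibres and freeness of the action) enters.
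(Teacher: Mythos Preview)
Your argument is essentially the same as the paper's: pick a lift, restrict to a small ball, and use a triangle-inequality estimate to show that the infimum defining the quotient metric is realised on the sheet containing the chosen lift. The paper packages the separation input slightly differently---it works with the distance $\alpha$ between the whole sheet $V\ni\tilde x$ and the union of the other sheets in $p^{-1}(U)$, rather than with the distance $\rho$ from $x_0$ to the rest of its orbit---but the subsequent estimate and the shrinking $\delta=\min(\alpha/3,\beta,\gamma)$ play exactly the role of your $\beta=\delta/4$. If anything your version is a bit more direct, since discreteness of the single fibre $\Gamma x_0$ is immediate from the covering hypothesis, whereas the paper's positivity of $\alpha$ implicitly uses that the other sheets are $\Gamma$-translates of $V$.

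One small caution: the assertion that ``$p$ a covering forces the $\Gamma$-action to be free'' is not literally true---nothing prevents $\Gamma$ from acting through a proper quotient (e.g.\ the trivial action makes $p=\mathrm{id}$). What you actually need, and what the covering hypothesis does give, is that for every $\gamma$ with $\gamma x_0\neq x_0$ one has $d_E(x_0,\gamma x_0)\ge\rho>0$. For the remaining $\gamma$ (those with $\gamma x_0=x_0$) the isometry $\gamma$ maps $B_E(x_0,\beta)$ into itself, so $\gamma b\in V_0$ and injectivity of $p|_{V_0}$ forces $\gamma b=b$, whence $d_E(a,\gamma b)=d_E(a,b)$ and such $\gamma$ do not affect the infimum. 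With that one-line amendment your proof is complete.
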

\begin{proof}
Let us choose $x \in B$, and $\tilde{x} \in p^{-1}(\{ x \})$. There exists $\beta > 0$ such that the open ball $U$
with center $x$ and radius $\beta$ satisfies $p^{-1}(U) \simeq_{\Phi} U \times F$, for
some discrete space $F$ (actually a necessarily discrete quotient of $\Gamma$). We let $* \in F$
such that $\Phi(\tilde{x}) = (x,*)$. Since $F$ is discrete, letting $V = \Phi^{-1}( U \times \{ * \})$ we
have $\alpha = \inf \{ d(x,y) \ | \ x \in V, y \in p^{-1}(U) \setminus V \} > 0.$
Finally, $V$ being open, there exists $\gamma > 0$ such that $V$ contains the open ball of center $\tilde{x}$
and radius $\gamma$. Let $\delta = \min(\alpha/3, \beta,\gamma)$ and $W$ the open ball of center $\tilde{x}$
and radius $\delta$. Since $\delta \leq \gamma$ we have $W \subset V$. Then the restriction $p_W$
of $p$ to $W$ is an homeomorphism $W \to p(W)$. Let $s_W$ be the converse of $p_W$. We first
prove that $p(W)$ is equal to the open ball $C$ with center $x$ and radius $\delta$. Indeed,
let us choose $y \in p(W)$. Since $d(x,y) \leq d(s_W(x),s_W(y)) = d(\tilde{x}, s_W(y)) < \delta$
 by definition of the induced metric we have $y \in C$, whence $p(W) \subset C$.
 
 Conversely, let $y \in C$. We know $C \subset U$ since $\delta < \beta$. By definition of
 the induced metric, there exists $\check{x} \in p^{-1}(\{ x \})$, $\check{y} \in p^{-1}(\{y \}) \subset p^{-1}(U)$
 such that $d(\check{x},\check{y}) \leq d(x,y) + \alpha/3$. SInce $\Gamma$ is acting by isometries
 we may assume $\check{x} = \tilde{x}$. Then $d(\check{x},\check{y}) < \delta + \alpha/3 \leq 2 \alpha/3 < \alpha$
 and $\check{y} \in p^{-1}(U)$ implies $\check{y} \in V$. Since $p^{-1}(\{ y \}) \cap V$ has cardinality $1$,
 we have $d(x,y) = d(\tilde{x},\check{y})$ hence $d(\tilde{x},\check{y}) < \delta$ and $\check{y} \in W$, whence
 $y \in p(W)$ and $C = p(W)$.
 
 Now, if $y_1,y_2 \in C = p(W)$, we have $d(y_1,y_2) \leq d(s_W(y_1),s_W(y_2))$ by definition
 of the induced metric. Assume by contradiction that $d(y_1,y_2) < d(s_W(y_1),s_W(y_2))$.
 Then there should exist $\tilde{y}_k \in p^{-1}(y_k)$, $k = 1,2$, such that
 $d(\tilde{y}_1,\tilde{y_2}) < d(s_W(y_1),s_W(y_2))$. Since $\Gamma$ acts by isometries we can assume
 $\tilde{y}_1 = s_W(y_1)$. But $d(s_W(y_1),s_W(y_2)) < 2 \delta \leq 2 \alpha/3 < \alpha$,
 and $d(s_W(y_1), \tilde{y}_2) < \alpha$ implies $\tilde{y}_2 \in V$. Since $V \cap p^{-1}(\{ y_2 \}) = \{ s_W(y_2) \}$ this proves $\tilde{y}_2 = s_W(y_2)$, a contradiction. This proves that $s_W$ is an isometry and the proposition.

\end{proof}

\begin{lemma} Let $G$ be a connected compact Lie group endowed with a bi-invariant metric, and $N$ a closed normal subgroup of $G$.
Then the projection map $G \to G/N$ admits local isometric cross-sections.
\end{lemma}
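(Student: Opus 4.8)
**Proof plan for: $G \to G/N$ admits local isometric cross-sections when $G$ is a connected compact Lie group with bi-invariant metric.**

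The plan is to reduce the statement to the previous proposition (Proposition~\ref{prop:existlocalisosectcovering}), which handles covering maps, by factoring $G \to G/N$ through the intermediate group $G/N^\circ$, where $N^\circ$ denotes the identity component of $N$. First I would observe that $N^\circ$ is a \emph{connected} closed normal subgroup of the connected compact Lie group $G$, and that $G \to G/N^\circ$ is a submersion of compact Lie groups whose fibers are the cosets of $N^\circ$. The key geometric input is that, with respect to the bi-invariant metric, the quotient $G/N^\circ$ can be identified isometrically (locally) with a totally geodesic submanifold of $G$: namely, the orthogonal complement of the Lie algebra $\mathfrak{n}^\circ = \mathrm{Lie}(N^\circ)$ inside $\mathfrak{g} = \mathrm{Lie}(G)$ is an $\Ad(N^\circ)$-invariant subspace (because the metric is $\Ad(G)$-invariant, hence $\Ad(N^\circ)$-invariant, and $\mathfrak{n}^\circ$ is an ideal so its orthocomplement is too), and exponentiating it gives a local slice transverse to $N^\circ$ at the identity. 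Translating by elements of $G$ gives such slices everywhere.

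Next I would show that this slice construction yields a \emph{local isometric section} of $G \to G/N^\circ$. Let $\mathfrak{m} = (\mathfrak{n}^\circ)^\perp$, and let $S = \exp(B)$ where $B$ is a small ball in $\mathfrak{m}$ around $0$; for $B$ small enough $S$ maps diffeomorphically onto a neighborhood $U$ of $\bar e$ in $G/N^\circ$, and we let $s : U \to G$ be the inverse map composed with the inclusion $S \hookrightarrow G$. The point is that $s$ is distance-preserving for the quotient metric on $U$: on one hand $d(p(x),p(y)) \le d(x,y)$ always holds since $p$ is $1$-Lipschitz (the quotient metric is the infimum metric), so $d(p(s(a)),p(s(b))) \le d(s(a),s(b))$ gives one inequality; for the reverse, I would use that $S$ is, for $B$ small, a totally geodesic submanifold on which the induced metric agrees with the submanifold metric, combined with the fact that the bi-invariant exponential is a Riemannian exponential, so that minimizing geodesics in $G$ between two points of $S$ close to $e$ are controlled, and the infimum defining $d(p(x),p(y))$ is attained at a pair of points whose displacement is realized within a single slice coset. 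Here I may invoke that for $B$ sufficiently small, $G \to G/N^\circ$ restricted over $U$ is a Riemannian submersion in a neighborhood and $S$ is a horizontal slice, so horizontal geodesics project isometrically. Translating $s$ by left multiplication covers all of $G/N^\circ$ with local isometric sections (left-invariance of the metric on both $G$ and $G/N^\circ$).

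Finally I would handle the remaining quotient $G/N^\circ \to G/N$. Since $N/N^\circ$ is a finite group (a closed subgroup of a compact Lie group with connected identity component has finitely many components) acting by isometries on $G/N^\circ$ — the action being by right translations, which are isometries for the bi-invariant-induced metric — the map $G/N^\circ \to G/N$ is a covering map between compact metric spaces given by a finite group of isometries. Proposition~\ref{prop:existlocalisosectcovering} then applies directly and gives local isometric sections of $G/N^\circ \to G/N$. Composing a local isometric section of $G/N^\circ \to G/N$ near a point $x$ with a local isometric section of $G \to G/N^\circ$ near its image yields a local isometric section of $G \to G/N$ near $x$ (composition of isometries onto their images is an isometry; one shrinks the neighborhoods so the composite is defined), which proves the lemma. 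The main obstacle I expect is the second step: verifying rigorously that the exponential slice $S$ is genuinely \emph{isometric}, not merely bi-Lipschitz, for the quotient metric — i.e. that the infimum in $d(p(x),p(y)) = \inf_{n\in N^\circ} d(x, yn)$ is attained with displacement exactly $d(s(a),s(b))$. This requires a careful local Riemannian-submersion argument controlling how minimizing geodesics of $G$ interact with the fibers $gN^\circ$, and choosing the slice radius small relative to the injectivity radius and to the geometry of $N^\circ$.
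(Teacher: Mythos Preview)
Your approach is correct, but it takes a more circuitous route than the paper. The paper does not factor through $N^\circ$: it works directly with $N$, observing that the orthogonal complement $\mathfrak{H}$ of $\mathfrak{N}=\mathrm{Lie}(N)$ inside $\mathfrak{G}$ is a Lie ideal, and that the restriction $\varphi:\mathfrak{H}\to\mathfrak{G}/\mathfrak{N}$ of the quotient map is simultaneously a Lie algebra isomorphism and a linear isometry. The section is then $\exp_G\circ\varphi^{-1}\circ\exp_{G/N}^{-1}$ on a small ball, and the isometry check is purely algebraic via the Campbell--Hausdorff series: since $\varphi^{-1}$ is a Lie morphism it commutes with $CH$, and since the Lie exponential coincides with the Riemannian exponential, one gets a chain of equalities $d_G(s(x),s(y))=d_{\mathfrak{G}}(CH(\varphi^{-1}\log x,-\varphi^{-1}\log y),0)=d_{\mathfrak{G}/\mathfrak{N}}(CH(\log x,-\log y),0)=d_{G/N}(x,y)$. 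This resolves your ``main obstacle'' in one line and makes the detour through $N^\circ$ and Proposition~\ref{prop:existlocalisosectcovering} unnecessary.

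Your Riemannian-submersion sketch would also go through: the point you are circling around is that because $\mathfrak{m}$ is an \emph{ideal}, the slice $S=\exp(B)$ is a local subgroup, hence totally geodesic with tangent spaces equal to the horizontal distribution of the submersion $G\to G/N^\circ$; thus $\pi|_S$ is a local Riemannian isometry and $d_G(s(a),s(b))=d_S(s(a),s(b))=d_{G/N^\circ}(a,b)$. What the paper's Campbell--Hausdorff argument buys is that this entire geometric discussion collapses to an algebraic identity, and that connected and disconnected $N$ are treated uniformly. What your approach buys is that it reuses Proposition~\ref{prop:existlocalisosectcovering} and separates the ``connected fiber'' phenomenon from the ``discrete fiber'' one, which is conceptually tidy but operationally longer.
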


\begin{proof}
We need only look at a neighborhood of the neutral element of $G/N$.
Let $\mathfrak{G}$ denote the Lie algebra of $G$, and $\mathfrak{N} \subset \mathfrak{G}$ the Lie algebra of $N$.
The Lie algebra of $G/N$ is canonically identified with $\mathfrak{G}/\mathfrak{N}$. The bi-invariant metrics on $G$
and $G/N$ are associated to invariant scalar products on $\mathfrak{G}$ and $\mathfrak{G}/\mathfrak{N}$. Let $\mathfrak{H}$ denote the orthogonal inside
$\mathfrak{G}$ of $\mathfrak{N}$ with respect to such a scalar product. Since this scalar
product is invariant it is a Lie subalgebra (even a Lie ideal). The canonical Lie algebra morphism $\mathfrak{G} \to \mathfrak{G}/\mathfrak{N}$ restricts to a linear isometry $\varphi : \mathfrak{H} \to \mathfrak{G}/\mathfrak{N}$.
Now, there is $\eps>0$ such that the exponential map defines diffeomorphisms $B_{\mathfrak{G}/\mathfrak{N}}(0,\eps) \to B_{G/N}(1,\eps)$
and $B_{\mathfrak{G}}(0,\eps) \to B_G(1,\eps)$ which coincide with the Riemannian exponential map. Then $\exp \circ \varphi^{-1} \circ \exp^{-1} : B_{G/N}(0,\eps) \to B_{G}(0,\eps)$ defines an isometric local cross-section. Indeed, it is clearly a local cross-section and,
assuming without restriction of generality $\mathfrak{G} \subset \gl_N(\R)$ for some $N$ and letting $CH$ denote the Campbell-Hausdorff (Lie) series (which is convergent if $\eps$ is chosen small enough)
satisfying $\exp(u) \exp(v) = \exp CH(u,v)$, we have
$$
\begin{array}{clcl}
 &d_G(\exp \circ \varphi^{-1} \circ \exp^{-1} x,\exp \circ \varphi^{-1} \circ \exp^{-1} y) &\stackrel{(1)}{=}
&d_G( \exp ( \varphi^{-1} ( \exp^{-1} x))\exp (- \varphi^{-1} (\exp^{-1} y)), 1) \\
\stackrel{(2)}{=}& d_G( \exp (CH( \varphi^{-1} \circ \exp^{-1} x,\varphi^{-1} \circ \exp^{-1} y^{-1})), 1)
&\stackrel{(5)}{=}& d_{\mathfrak{G}} (CH( \varphi^{-1} \circ \exp^{-1} x, \varphi^{-1}\circ \exp^{-1} y^{-1})), 0) \\
\stackrel{(3)}{=}& d_{\mathfrak{G}} (\varphi^{-1}( CH(  \exp^{-1} x,\exp^{-1} y^{-1})), 0)&
=& d_{\mathfrak{H}} (\varphi^{-1}( CH(  \exp^{-1} x, \exp^{-1} y^{-1})), 0)\\
\stackrel{(4)}{=}& d_{\mathfrak{G}/\mathfrak{N}} (CH(  \exp^{-1} x, \exp^{-1} y^{-1})), 0)&
\stackrel{(5)}{=}& d_{G/N} (\exp CH(   \exp^{-1} x, \exp^{-1} y^{-1}), 1)\\
\stackrel{(2)}{=}& d_{G/N} (xy^{-1}, 1) \stackrel{(1)}{=} d_{G/N}(x,y).
\end{array}
$$
by using the right invariance of the metric (1), the definition of the Campbell-Hausdorff series (2),
the fact that $\varphi^{-1}$ is a morphism of Lie algebras (3) and an isometry (4), and the fact
that the Lie exponential coincides with the Riemannian exponential (5).
\end{proof}

\section{A $K(\Z,2)$ from $S^1$-valued random variables}

In this section we exhibit a family of local sections of the projection map
$L(\Omega,S^1) \to L(\Omega,S^1)/S^1$. This family is quite elementary
and the neighborhood on which it is defined is very concretely determined.
This proves that $L(\Omega,S^1)/S^1$ is a classifying space for $S^1$
and therefore a $K(\Z,2)$ without having to use Gleason's theorem or
any other sophisticated machinery.

\bigskip

To a Borel map $f : \Omega \to S^1$ we associate $F(x) = \int d(f(t),e^{\ii \pi x}) \dd t$, where $d$ is the geodesic metric
on $S^1$ with diameter $1$ (that is, this is the arc length $d(e^{\ii \pi\theta_1},e^{\ii \pi\theta_2}) = |\theta_1-\theta_2|$ if 
for instance $\theta_1,\theta_2 \in ]0,1[$). We have $F \in C(\R/(2\Z),\R_+)=C(S^1,\R_+)$.
Clearly, $f \mapsto F$ is  $1$-Lipschitz for the metric induced by the uniform norm, since
$$
\left| \int d(f_1(t),u) \dd t - \int d(f_2(t),u)\dd t \right| 
\leqslant
 \int \left|d(f_1(t),u) \dd t - \int d(f_2(t),u)\right| \dd t 
\leqslant
 \int d(f_1(t),f_2(t))  \dd t.
 $$

We now assume
$F(0) = \int d(f(t),1) \dd t < \alpha$, for some well-chosen $\alpha>0$. For all $\beta > 0$ we then have
$$
\alpha > \int_{d(f(t),1)\geq \beta} d(f(t),1)\dd t \geqslant \beta \mu\{ t; d(f(t),t) \geq \beta \}
$$
that is $\mu\{ t; d(f(t),1) \geq \beta \} < \alpha/\beta$, which implies
$\mu\{ t; d(f(t),1) < \beta \} \geq 1- \alpha/\beta$.

\begin{lemma} \label{lem:croissancecercle} Assume $x \in ]-1,1[$, and that $F(0) = \int d(f(t),1) \dd t < \alpha$. Then
\begin{enumerate}
\item If $|x| \geq \frac{1}{2}$, then $F(x)  \geq \frac{1}{4} - \alpha$
\item If $|x| \leq \frac{1}{16}$, then $F(x)  \leq \frac{1}{8} +16 \alpha$
\item if $\frac{1}{16} \leq y \leq x \leq \frac{1}{2}$ and $\alpha < 1/64$, then $F(x) - F(y) >0$
\item if $\frac{-1}{2} \leq x \leq y \leq \frac{-1}{16}$ and $\alpha < 1/64$, then $F(x) - F(y) >0$
\end{enumerate}
\end{lemma}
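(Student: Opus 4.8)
The plan is to reduce everything to an explicit computation of the function $x \mapsto \int d(f(t), e^{\ii\pi x})\,\dd t$ by first understanding the auxiliary function $\delta_x : \theta \mapsto d(e^{\ii\pi\theta}, e^{\ii\pi x})$ for a single point of the circle, and then integrating. First I would record the key pointwise estimate already derived just above the lemma: from $F(0) < \alpha$ one gets $\mu\{t : d(f(t),1) \geq \beta\} < \alpha/\beta$, equivalently $\mu\{t : d(f(t),1) < \beta\} \geq 1 - \alpha/\beta$. So most of the mass of $f$ sits in a small arc around $1 = e^{\ii\pi\cdot 0}$, and I want to control $F(x)$ by splitting $\Omega$ into the ``good'' set $A_\beta = \{t : d(f(t),1) < \beta\}$ and its complement, choosing $\beta$ judiciously in each of the four cases.

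For (1): if $|x| \geq \tfrac12$ then for $t \in A_\beta$ with $\beta$ small (say $\beta = \tfrac14$), the triangle inequality on $S^1$ gives $d(f(t), e^{\ii\pi x}) \geq d(1, e^{\ii\pi x}) - d(1, f(t)) \geq \tfrac12 - \beta$; integrating over $A_{\beta}$ (of measure $\geq 1 - \alpha/\beta$) and discarding the rest yields $F(x) \geq (\tfrac12 - \beta)(1 - 4\alpha) \geq \tfrac14 - \alpha$ after a crude bound, adjusting constants. For (2): if $|x| \leq \tfrac{1}{16}$, then for every $t$ one has $d(f(t), e^{\ii\pi x}) \leq d(f(t),1) + |x| \leq d(f(t),1) + \tfrac{1}{16}$, so $F(x) \leq F(0) + \tfrac{1}{16} < \alpha + \tfrac{1}{16}$; here the stated bound $\tfrac18 + 16\alpha$ is comfortably weaker, so a direct pointwise triangle inequality suffices. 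For (3) and (4), which are the monotonicity statements, the plan is to differentiate under the integral sign, or better, to compute $F(x) - F(y)$ as $\int \bigl(\delta_x(f(t)) - \delta_y(f(t))\bigr)\dd t$ and analyze the integrand. For a fixed point $z = e^{\ii\pi\theta}$ with $\theta$ small (i.e. $t \in A_\beta$), the map $x \mapsto d(z, e^{\ii\pi x})$ is, on the relevant range $x \in [\tfrac{1}{16}, \tfrac12]$, simply $x - \theta$ (since $z$ lies near $1$ and $e^{\ii\pi x}$ is at angular distance roughly $x$), hence strictly increasing with slope $1$; so $\delta_x(f(t)) - \delta_y(f(t)) = x - y > 0$ on the good set. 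On the bad set (measure $< \alpha/\beta$), the difference is bounded below by $-(x-y)$ trivially, but more carefully by $-2\cdot(\text{something})$; choosing $\beta$ appropriately and using $\alpha < 1/64$, the contribution $(x-y)\mu(A_\beta) - (\text{error})\cdot\mu(\Omega\setminus A_\beta)$ stays strictly positive. Case (4) is the mirror image under $x \mapsto -x$, $\theta \mapsto -\theta$.

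The main obstacle I anticipate is case (3) (and its mirror (4)): one must be careful that the elementary formula $d(e^{\ii\pi\theta}, e^{\ii\pi x}) = |x - \theta|$ is only valid when $\theta$ and $x$ are close enough that the minimizing arc does not wrap around; for $t$ in the good set $A_\beta$ with $\beta$ chosen so that $\beta + \tfrac12 < 1$, this holds, but one must also ensure the sign is right, i.e. that on $A_\beta$ we genuinely have $\delta_x(f(t)) > \delta_y(f(t))$ and not merely $\geq$. The cleanest route is to choose $\beta$ small (e.g. $\beta = \tfrac{1}{16}$ or smaller), so that for $y \geq \tfrac{1}{16}$ and $t \in A_\beta$ the point $e^{\ii\pi x}$ is strictly farther from $f(t)$ than $e^{\ii\pi y}$ is, by a definite amount $x - y$ minus at most $2\beta$; then pick $\beta$ and use $\alpha < 1/64$ to absorb the bad-set error $\alpha/\beta \cdot (\text{diam})$. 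Balancing these constants so that the final quantity is provably $> 0$ (rather than just $\geq 0$) is the delicate bookkeeping; everything else is a routine triangle-inequality splitting argument.
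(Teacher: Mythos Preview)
Your plan is correct and follows essentially the same approach as the paper: split $\Omega$ into a good set $A_\beta = \{t : d(f(t),1) < \beta\}$ (of measure $\geq 1 - \alpha/\beta$) and its complement, and exploit that on the good set the geodesic distance behaves like $|x - \theta|$. Two remarks on your execution. For (2), your direct triangle inequality $F(x) \leq F(0) + d(1,e^{\ii\pi x}) < \alpha + \tfrac{1}{16}$ is cleaner than the paper's split argument and yields a sharper bound than the stated $\tfrac18 + 16\alpha$. For (3), you can drop the hedge ``$x-y$ minus at most $2\beta$'': with $\beta$ small enough (the paper takes $\beta = \tfrac{1}{32}$, but your $\beta = \tfrac{1}{16}$ also works since $|\theta| < \tfrac{1}{16} \leq y$ forces $y - \theta > 0$), on the good set one has the exact equality $\delta_x(f(t)) - \delta_y(f(t)) = x - y$, and on the bad set the reverse triangle inequality gives the sharp bound $|\delta_x - \delta_y| \leq d(e^{\ii\pi x},e^{\ii\pi y}) = x - y$. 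Thus $F(x) - F(y) \geq (x-y)\bigl(\mu(A_\beta) - \mu(\Omega\setminus A_\beta)\bigr) \geq (x-y)(1 - 2\alpha/\beta)$, which is $> 0$ once $\alpha < \beta/2$; with $\beta = \tfrac{1}{32}$ this is exactly the hypothesis $\alpha < \tfrac{1}{64}$. Part (4) is, as you say, the reflection $x \mapsto -x$ (the paper phrases it via complex conjugation $f \mapsto \bar f$).
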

\begin{proof}
Let us choose $x \in ]-1,1[$. We assume $|x| \geq \frac{1}{2}$. If $d(f(t),1) \leq \frac{1}{4}$, then
$d(f(t),e^{\ii \pi x}) \geq \frac{1}{4}$. This implies
$$
|x| \geq \frac{1}{2} \Rightarrow F(x) = \int d(f(t),e^{\ii \pi x}) \dd t \geqslant \frac{1}{4} \mu \{ t ; d(f(t),1) < \frac{1}{4} \} \geqslant \frac{1}{4} ( 1 - 4\alpha)
$$
which proves (1).
We now assume $|x| \leq \frac{1}{16}$. If $d(f(t),1) \leq 1/16$ then $d(f(t),e^{\ii \pi x}) \leq 2 \times \frac{1}{16} = \frac{1}{8}$. Otherwise, we have  $d(f(t),e^{\ii \pi x}) \leq 1$. 
Since
$$
F(x) = \int_{d(f(t),1) < \frac{1}{16}} d(f(t),e^{\ii \pi x}) \dd t + \int_{d(f(t),1) \geq \frac{1}{16}} d(f(t),e^{\ii \pi x}) \dd t
$$
we get
$$
|x| \leq \frac{1}{16} \Rightarrow F(x) \leqslant \mu\{ t; d(f(t),1) \geq \frac{1}{16} \} + \frac{1}{8}\mu \{ d(f(t),1) < \frac{1}{16} \} < 16 \alpha
+ \frac{1}{8}
$$
which proves (2). 
\begin{figure}
\begin{center}
\resizebox{8cm}{!}{\includegraphics{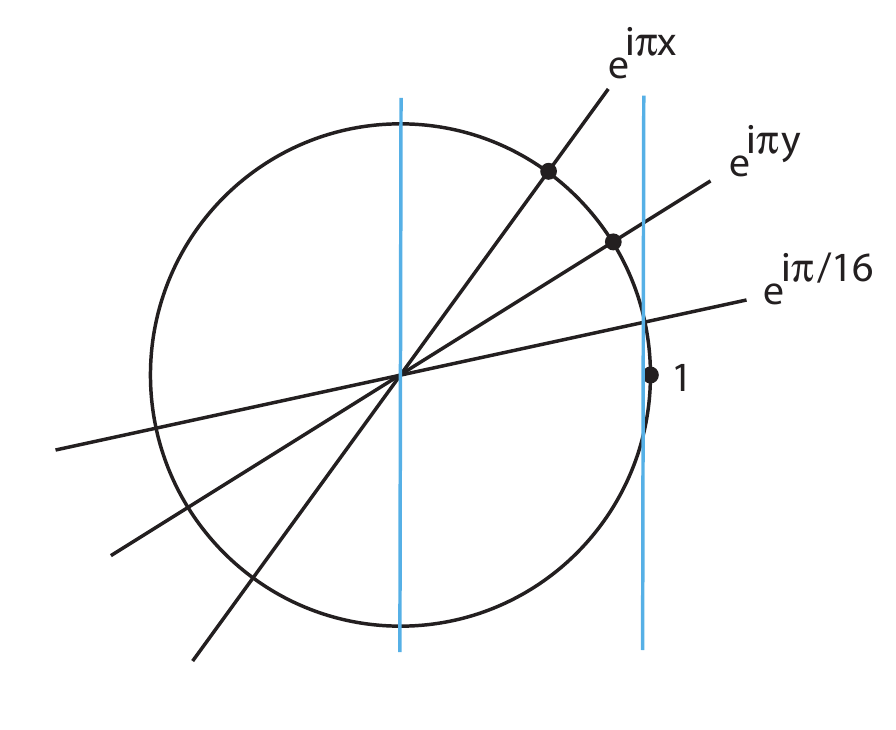}}
\end{center}
\caption{Proof of lemma \ref{lem:croissancecercle} : $F(x)-F(y)>0$ when $\frac{1}{16} \leq y \leq x \leq \frac{1}{2}$
and $\alpha < 1/64$}
\label{fig:croissancecercle}
\end{figure}
Then, $d(f(t),e^{\ii \pi x}) = d(f(t),e^{\ii \pi y}) + d(e^{\ii \pi y},e^{\ii \pi x}) = d(f(t),e^{\ii \pi y}) + y-x$
as soon as $f(t)$ belongs to the half-circle containing $1$ bounded by $e^{\ii \pi y}$ and $-e^{\ii \pi y}$ (see figure \ref{fig:croissancecercle}).
In particular, if $d(f(t),1) \leq 1/32$, then $d(f(t),e^{\ii \pi x}) = d(f(t),e^{\ii \pi y}) + x-y$.
It follows that
$$
F(x) = \int_{d(f(t),1) \leq 1/32} d(f(t),e^{\ii \pi y}) \dd t + (x-y) \mu \{ t ; d(f(t),1) \leq 1/32\} + \int_{d(f(t),1) > 1/32} d(f(t),e^{\ii \pi x}) \dd t
$$hence$$
F(x)= F(y) + (x-y) \mu \{ t ; d(f(t),1) \leq 1/32\}  + \int_{d(f(t),1) > 1/32} \left(d(f(t),e^{\ii \pi x})-d(f(t),e^{\ii \pi y})\right) \dd t
$$
and finally
$F(x) = F(y)+(x-y) \Delta(x,y)$ with
$$
\Delta(x,y) = \mu \{ t ; d(f(t),1) \leq 1/32\}  + \int_{d(f(t),1) > 1/32} \frac{d(f(t),e^{\ii \pi x})-d(f(t),e^{\ii \pi y})}{x-y} \dd t.
$$
We have $\mu \{ t ; d(f(t),1) \leq 1/32\}  \geq 1 - 32 \alpha$,
and
$$
\left| \frac{d(f(t),e^{\ii \pi x})-d(f(t),e^{\ii \pi y})}{x-y} \right|= 
 \left| \frac{d(f(t),e^{\ii \pi x})-d(f(t),e^{\ii \pi y})}{d(e^{\ii \pi x},e^{\ii \pi y})}\right|\left|\frac{d(e^{\ii \pi x},e^{\ii \pi y})}{x-y}\right| \leqslant \left|\frac{d(e^{\ii \pi x},e^{\ii \pi y})}{x-y} \right| \leqslant 1
$$
by the triangular inequality and the definition of the geodesic distance on $S^1$.
Therefore, $|\Delta(x,y) -1| \leq 32 \alpha + \mu \{ t ;d(f(t),1)> 1/32\} \leq 64 \alpha$
hence $\Delta(x,y) > 0$ as soon as $\alpha < 1/64$ and this proves (3). We now prove (4).
Let $z \mapsto \overline{z}$ denote the complex conjugation. It induces an isometry of $S^1$.
Then $F(x) = \int d(f(t),e^{\ii \pi x}) \dd t = \int d(\overline{f(t)},e^{-\ii \pi x}) \dd t$. Since $t \mapsto \overline{f(t)}$
belongs to $L(\Omega,S^1)$ and $\int d(\overline{f(t)},1) \dd t = \int d(f(t),1)\dd t < \alpha< 1/64$, by applying (3)
to $\overline{f}$ and $\frac{1}{16} \leq -y \leq -x \leq \frac{1}{2}$ we get
$F(x) - F(y)>0$  and (4).
\end{proof}

In particular, if $\alpha < 3/640$, we get the following property. 
This condition implies that $\frac{1}{8} + 16 \alpha < \frac{1}{5} < \frac{1}{4}- \alpha$ and $\alpha < 1/64$.
Therefore, there exists exactly two points $e^{\ii \pi s_-(f)},e^{\ii \pi s_+(f)} \in S^1$ such that
$F(s_{\pm}(f)) = \frac{1}{5}$, with $\frac{-1}{2} < s_-(f)  < \frac{-1}{16}$
and $\frac{1}{16} < s_+(f)  < \frac{1}{2}$. This defines two maps $s_-, s_+ : B(1,\alpha) \to ]-1,1[$
where $B(1,\alpha)$ is the open ball in $L(\Omega,S^1)$ with center $1$ and radius $\alpha$.

\begin{lemma} \label{lem:continuoussectionS1} The maps $s_+,s_- : B(1,\alpha) \to ]-1,1[$ are continuous.
\end{lemma}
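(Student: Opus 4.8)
The plan is to solve the equation $F_f(x) = \frac{1}{5}$ implicitly in $x$, using the two facts already in hand: the assignment $f \mapsto F_f$ (where $F_f(x) = \int d(f(t), e^{\ii\pi x})\,\dd t$) is $1$-Lipschitz from $L(\Omega,S^1)$ into $C(S^1,\R_+)$ with the uniform norm, and, since $\alpha < 3/640 < 1/64$, for every $f \in B(1,\alpha)$ the function $F_f$ is strictly increasing on $[\frac{1}{16},\frac{1}{2}]$ by part (3) of Lemma \ref{lem:croissancecercle}; consequently $s_+(f)$ is the \emph{unique} point of $(\frac{1}{16},\frac{1}{2})$ at which $F_f$ equals $\frac{1}{5}$. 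Thus the situation is the textbook one — a continuous family of functions, monotone in the variable and Lipschitz in the parameter, whose root one wants to track — and the continuity of $s_+$ should follow from a direct $\eps$--$\eta$ argument rather than any deeper machinery.

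Concretely, first I would fix $f_0 \in B(1,\alpha)$, write $x_0 = s_+(f_0) \in (\frac{1}{16},\frac{1}{2})$ so that $F_{f_0}(x_0) = \frac{1}{5}$, and take $\eps>0$. Since $s_+$ takes values in the \emph{open} interval $(\frac{1}{16},\frac{1}{2})$ and continuity is a small-$\eps$ statement, I may shrink $\eps$ so that $[x_0-\eps,\, x_0+\eps] \subset (\frac{1}{16},\frac{1}{2})$. Strict monotonicity of $F_{f_0}$ on $[\frac{1}{16},\frac{1}{2}]$ then gives $F_{f_0}(x_0-\eps) < \frac{1}{5} < F_{f_0}(x_0+\eps)$, and I set $\eta$ to be the smaller of the two positive gaps $\frac{1}{5} - F_{f_0}(x_0-\eps)$ and $F_{f_0}(x_0+\eps) - \frac{1}{5}$.

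Next, for $f \in B(1,\alpha)$ with $d(f,f_0) < \eta$, the $1$-Lipschitz property yields $\|F_f - F_{f_0}\|_\infty < \eta$, hence $F_f(x_0-\eps) < \frac{1}{5} < F_f(x_0+\eps)$. By the intermediate value theorem $F_f$ attains the value $\frac{1}{5}$ at some point of $(x_0-\eps,x_0+\eps)$, and by strict monotonicity on $[\frac{1}{16},\frac{1}{2}]$ this is the only point of that interval where it does; by definition it is $s_+(f)$. Hence $|s_+(f)-s_+(f_0)|<\eps$. (One should also take $\eta$ below $\alpha - d(1,f_0)$ so that the perturbed maps lie in $B(1,\alpha)$; this is automatic for $f$ close to $f_0$ since the ball is open.) As $f_0$ is arbitrary this proves that $s_+$ is continuous, and the continuity of $s_-$ follows identically, using part (4) of Lemma \ref{lem:croissancecercle} in place of part (3) — or, as in the proof of that lemma, by precomposition with complex conjugation on $S^1$.

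I do not expect a genuine obstacle here: both inputs (Lipschitz dependence on $f$, strict monotonicity in $x$) are already established, so what remains is the routine "continuity of an implicitly defined root of a monotone family" argument. The only points needing a word of care are the harmless reduction to small $\eps$, which keeps $[x_0-\eps,x_0+\eps]$ inside the region of monotonicity, and staying inside the open ball $B(1,\alpha)$ when perturbing $f$.
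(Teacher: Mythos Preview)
Your argument is correct and is essentially the same as the paper's: both exploit the $1$-Lipschitz dependence $f \mapsto F_f$ together with the strict monotonicity of $F_f$ on $[\tfrac{1}{16},\tfrac{1}{2}]$ to trap the root $s_+(f)$ near $s_+(f_0)$. The only cosmetic difference is that the paper phrases the estimate via the inverse map $\Phi_0 = (F_{f_0})^{-1}$ and its uniform continuity, whereas you evaluate $F_{f_0}$ directly at $x_0\pm\eps$; your formulation is slightly more direct but the content is identical.
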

\begin{proof} Let $f_0 \in B(1,\alpha)$. We prove that $s_+$ is continuous at $f_0$, the proof for
$s_-$ being similar. Let $\eps > 0$. We want to prove that there exists $\delta > 0$ such that,
is $d(f_0,f) \leq \delta$, then $|s_+(f) - s_+(f_0)| \leq \eps$.
First of all, let $\delta_1 > 0$ be such that $d(f_0,f) \leq \delta_1 \Rightarrow f \in B(1,\alpha)$. We let
$F_0,F \in C(S^1,\R_+)$ be the maps associated to $f,f_0$.  We set $\alpha^u = \frac{1}{4} - \alpha$,
$\alpha^d = \frac{1}{8} +16 \alpha$.
Obviously 
$\alpha^d < \frac{1}{5} < \alpha^u$.
Now $F_0$ induces a bicontinuous bijection from  $[\frac{1}{16} , \frac{1}{2}]$ to its image. Let $\Phi_0$
denote the converse map. Its range contains $[\alpha^d, \alpha^u]$. By definition, $\Phi_0(1/5) = s_+(f_0)$. By (uniform) continuity,
there exists $\delta_2$ such that $|x-y| \leq \delta_2 \Rightarrow |\Phi_0(x) - \Phi_0(y)| \leq \eps$.

Without loss of generality, we can assume that $\delta_2$ is small enough so that $\alpha^d < \frac{1}{5} - \delta_2/2 < \frac{1}{5}+ \delta_2/2 < \alpha^u$.

We choose $0< \delta < \min(\delta_1,\delta_2/2)$.
Since $f \mapsto F$ is 1-Lipschitz, we have $d(F_0,F) \leq d(f_0,f) \leq \delta$. We set 
$c_{\pm} = \Phi_0(\frac{1}{5} \pm \delta)$. Note that $c_{\pm} \in [\alpha^d, \alpha^u]$.
By definition $F_0(c_{\pm}) = \frac{1}{5} \pm \delta$ 
hence $F(c_+) \leq \frac{1}{5} \leq F(c_-)$. It follows that
$s_+(f) \in [c_+,c_-]$. Now $2\delta < \delta_2$ implies that $|\Phi_0(\frac{1}{5} + \delta)-\Phi_0(\frac{1}{5} - \delta)| \leq \eps$ that is $|c_+ - c_-| \leq \eps$. Since $s_+(f_0) = \Phi_0(\frac{1}{5}) \in [c_+,c_-]$ this implies
$|s_+(f)-s_+(f_0)|  \leq \eps$ and the conclusion.
\end{proof}
\begin{remark}
\end{remark}
Instead of the maps $s_+,s_-$, one may be willing to consider the place where $F$ reaches a minimum.
This does not work, first of all because the minimum may be reached at several places, but also, maybe more crucially,
because of the following phenomenon. For simplicity let us consider the gentle case where the map $F$ is convex in a neighborhood of $1$,
for instance if $f$ is identically $0$ in a neightborhood of $-1$. In that case, $F(x) = \int_0^1|\varphi(t)-x| \dd t$
for some $\varphi : [0,1] \to ]-\beta,\beta[ \subsetneq ]-1,1[$, and the place where $F$ reaches a minimum is necessarily a closed interval,
of the form $[s_0^-(f), s_0^+(f)]$, where $f(t) = \exp(\ii \pi \varphi(t))$. We claim that the maps $s_0^-$ and $s_0^+$ are \emph{not} continuous, and
illustrate this by the following example. 
\begin{figure}
\begin{center}
\resizebox{4cm}{!}{\includegraphics{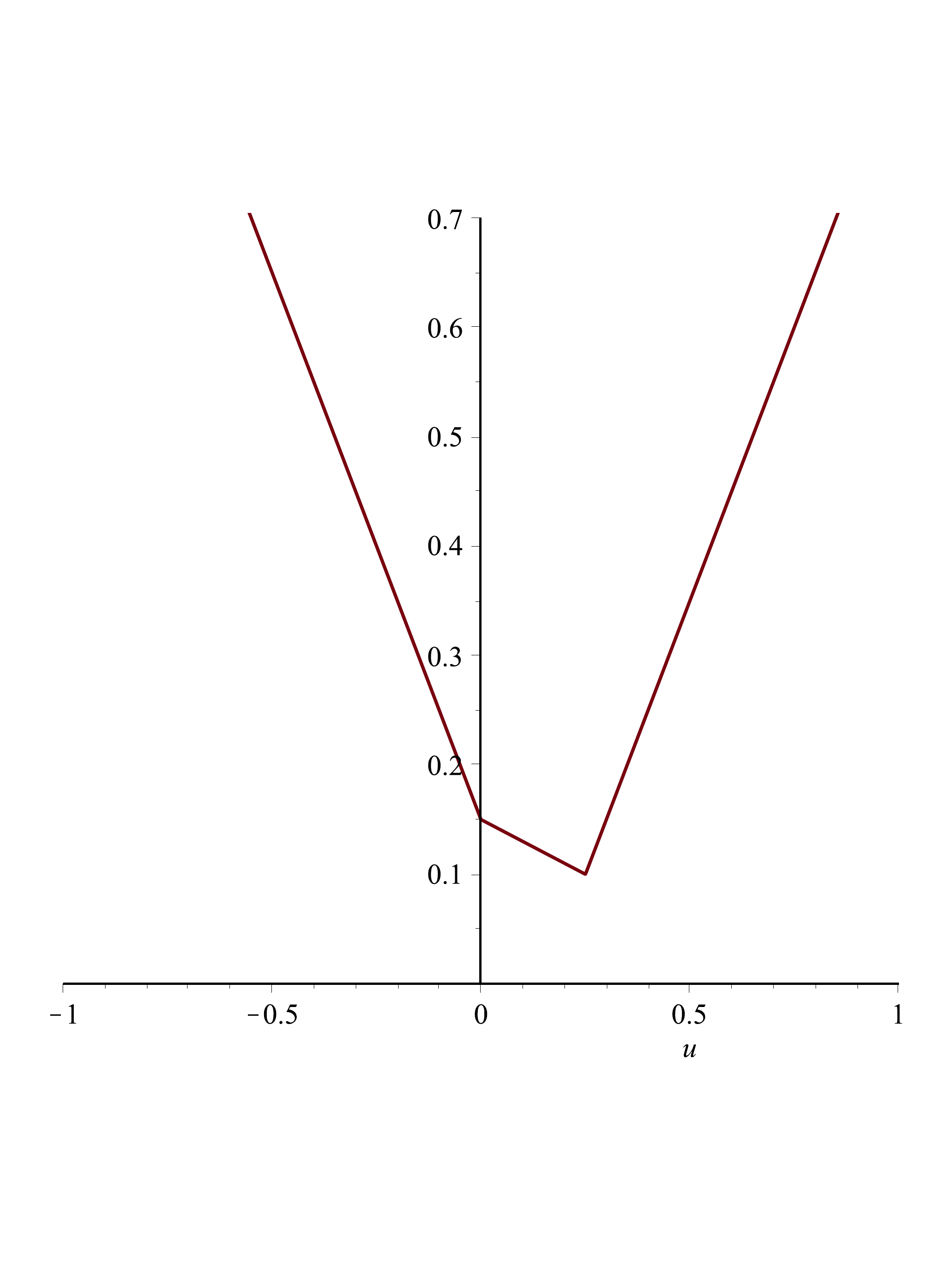}}
\resizebox{4cm}{!}{\includegraphics{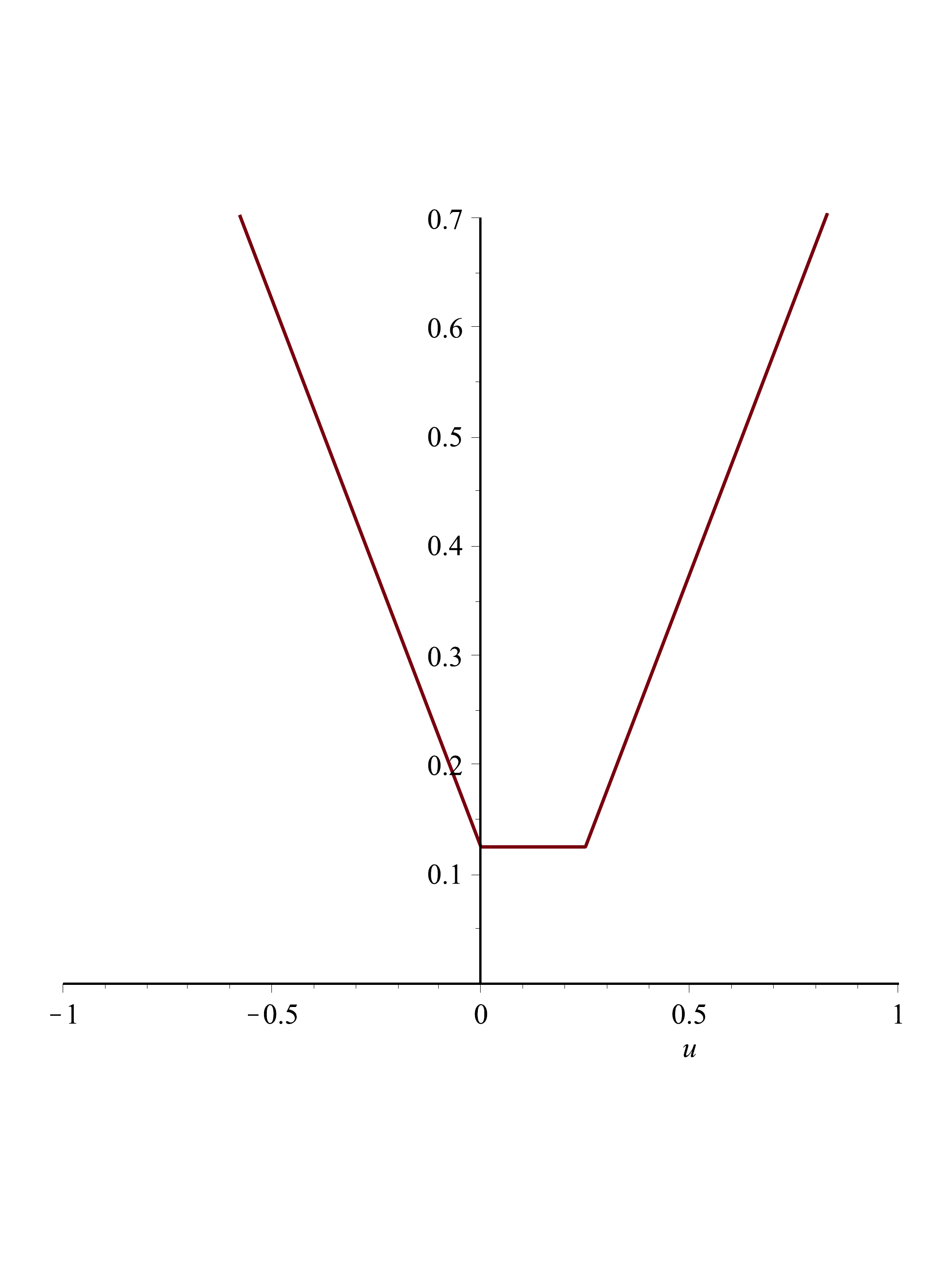}}
\resizebox{4cm}{!}{\includegraphics{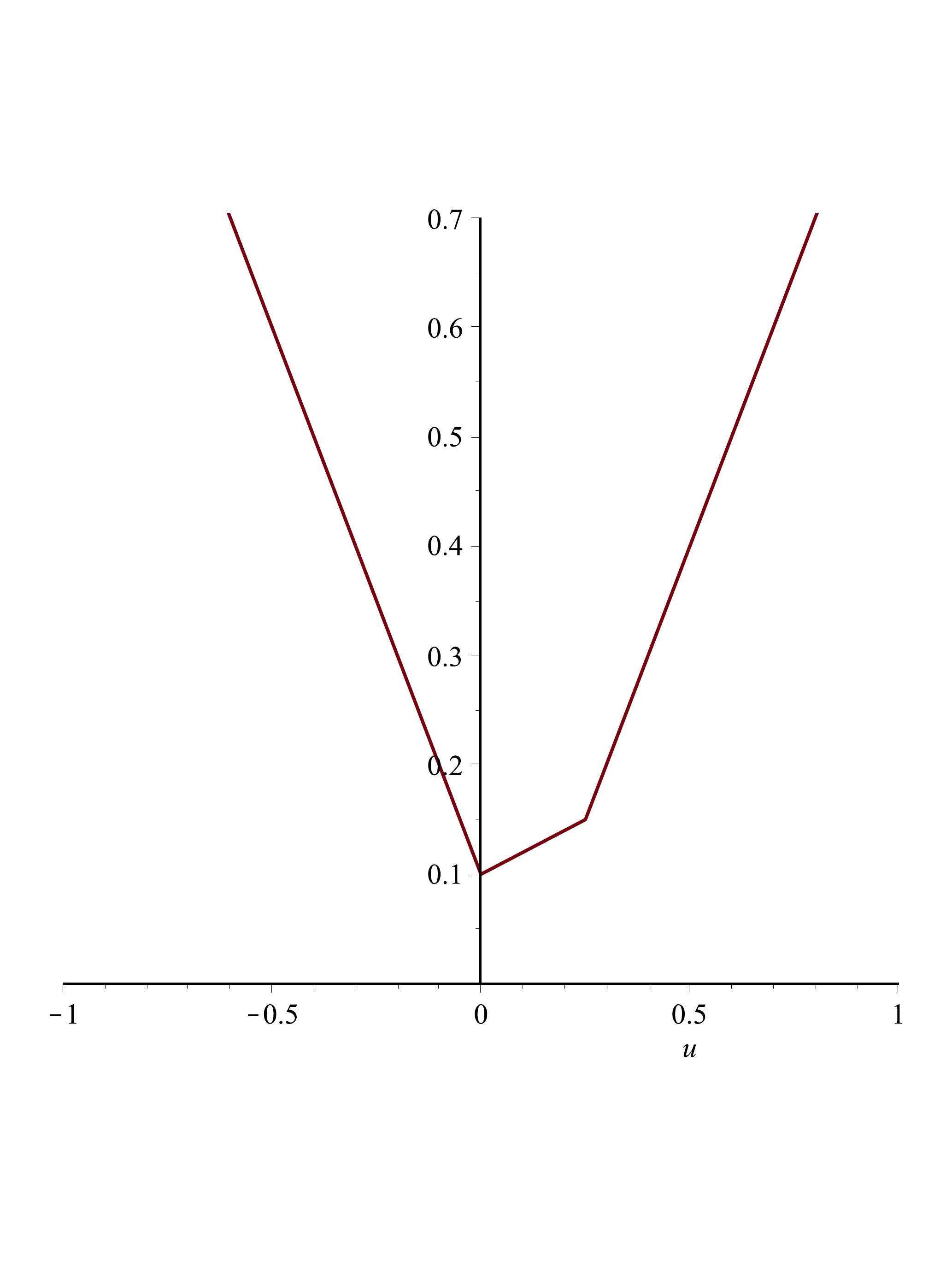}}
\end{center}
\caption{Non-continuity of $\lambda \mapsto s_0^{\pm}(f_{\la})$ : $\la =\frac{1}{2} - \frac{1}{10},\frac{1}{2},\frac{1}{2} + \frac{1}{10}$}
\label{fig:remnoncont}
\end{figure}
Let us set $\varphi_{\la}(t) = (1/4) \un_{[\la,1]}$ when $\la \in [0,1]$. Then, in a neighbourghood of $1$, we have $F_{\la}(u) = \int_0^1|\varphi_{\la}(t) - u|\dd t
= \la |u| + (1- \la)|(1/4) - u|$. The map
$\la \mapsto f_{\la}$ is continuous, but (see figure \ref{fig:remnoncont})
\begin{itemize}
\item if $\la < 1/2$, then $s_0^-(f_{\la}) = s_0^+(f_{\la}) = 1/4$
\item if $\la > 1/2$, then $s_0^-(f_{\la}) = s_0^+(f_{\la}) = 0$
\item if $\la = 1/2$, then $s_0^-(f_{\la}) =0$ and $ s_0^+(f_{\la}) = 1/4$.
\end{itemize}
Therefore, the maps $\la \mapsto s_0^{\pm}(f_{\la})$ are not continuous.

\begin{proposition} \label{prop:existcrosssectionS1} The projection map $L(\Omega,S^1) \to L(\Omega,S^1) /S^1$
admits (continuous) local sections.
\end{proposition}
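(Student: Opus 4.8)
The plan is to reduce everything to constructing one continuous section near the base point $\overline{1}\in L(\Omega,S^1)/S^1$ (the class of the constant map with value $1$), and to build that section by using the two marked points $s_{\pm}$ produced by Lemmas \ref{lem:croissancecercle} and \ref{lem:continuoussectionS1} as a canonical way to ``recenter'' a map. Since $S^1$ is abelian, $L(\Omega,S^1)$ is an abelian topological group (proposition \ref{prop:EGtopgroup}) on which $S^1$ acts by translations; the projection $p:L(\Omega,S^1)\to L(\Omega,S^1)/S^1$ is then open (for $V$ open, $p^{-1}(p(V))=\bigcup_{z\in S^1}z.V$). For any $h_0\in L(\Omega,S^1)$ the left translation $\tau:f\mapsto h_0^{-1}f$ is an isometry of $L(\Omega,S^1)$ that commutes with the $S^1$-action and sends $h_0$ to $1$, hence descends to an isometry of the quotient carrying $\overline{h_0}$ to $\overline{1}$; composing a section defined near $\overline{1}$ with this isometry and with $\tau^{-1}$ gives a section near $\overline{h_0}$. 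So it suffices to work near $\overline{1}$.

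Fix $\alpha<3/640$ as in the discussion preceding Lemma \ref{lem:continuoussectionS1}. For $f$ in the open ball $B(1,\alpha)\subset L(\Omega,S^1)$ keep the notation $F_f(x)=\int d(f(t),e^{\ii\pi x})\,\dd t$ and the points $s_-(f)\in\,]-1/2,-1/16[$, $s_+(f)\in\,]1/16,1/2[$ with $F_f(s_\pm(f))=1/5$, which depend continuously on $f$ by Lemma \ref{lem:continuoussectionS1}. Two elementary observations do all the work. First, the action is equivariant: since $S^1$ acts by isometries, $F_{e^{\ii\pi\theta}.f}(x)=F_f(x-\theta)$, so if $e^{\ii\pi\theta}.f$ also lies in $B(1,\alpha)$ then the solutions of $F_{e^{\ii\pi\theta}.f}=1/5$ are exactly those of $F_f=1/5$ shifted by $\theta$. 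Second, the triangle inequality gives $|F_f(x)-|x||\le\int d(f(t),1)\,\dd t=F_f(0)<\alpha$ for $|x|\le 1$; hence $|s_+(f)-1/5|<\alpha$, and if $e^{\ii\pi\theta}.f\in B(1,\alpha)$ (choosing $\theta\in\,]-1,1]$) then $F_f(-\theta)=F_{e^{\ii\pi\theta}.f}(0)<\alpha$ forces $|\theta|<2\alpha$.

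I would then put $U=\{\overline f:\ d(\overline f,\overline 1)<\alpha/2\}$, an open neighbourhood of $\overline 1$, and define $\sigma(\overline f)=e^{-\ii\pi s_+(f)}.f$ for any representative $f\in B(1,\alpha)$ of $\overline f$. The crucial point is that this is independent of the representative: if $f,f'\in B(1,\alpha)$ lie in the same $S^1$-orbit, write $f'=e^{\ii\pi\theta}.f$; by the two observations $|\theta|<2\alpha$, so $s_+(f)+\theta\in\,]1/5-3\alpha,1/5+3\alpha[\subset\,]1/16,1/2[$ (as $3\alpha<11/80$), and it is a solution of $F_{f'}=1/5$ by the shift identity, hence it equals $s_+(f')$ by the uniqueness built into the definition of $s_+$; therefore $e^{-\ii\pi s_+(f')}.f'=e^{-\ii\pi(s_+(f)+\theta)}.(e^{\ii\pi\theta}.f)=e^{-\ii\pi s_+(f)}.f$. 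Clearly $p\circ\sigma=\mathrm{id}_U$. For continuity at $\overline{f_0}\in U$, pick a representative $f_0$ with $d(f_0,1)<\alpha/2$; given $\overline f_n\to\overline{f_0}$, choose representatives $f_n$ of $\overline f_n$ with $d(f_n,f_0)<d(\overline f_n,\overline{f_0})+1/n$, so that eventually $d(f_n,1)<\alpha$ and $\sigma(\overline f_n)=e^{-\ii\pi s_+(f_n)}.f_n$; since $f_n\to f_0$, continuity of $s_+$ (Lemma \ref{lem:continuoussectionS1}) and the joint continuity of the $S^1$-action give $\sigma(\overline f_n)\to e^{-\ii\pi s_+(f_0)}.f_0=\sigma(\overline{f_0})$.

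The routine parts are the numerical inequalities (all comfortably met by $\alpha<3/640$) and the continuity of $f\mapsto e^{-\ii\pi s_+(f)}.f$ on $B(1,\alpha)$. The one genuinely delicate step, which I expect to be the main obstacle, is the independence of $\sigma$ from the chosen representative: one must control the rotation $\theta$ relating two representatives that both lie in the small ball and make sure the shifted marked point stays on the correct branch of $F^{-1}(1/5)$; once that is secured, combined with the openness of $p$ (which lets one choose representatives as close as needed to a given one), everything else is bookkeeping.
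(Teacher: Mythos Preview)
Your argument is correct and is essentially the paper's own proof: both define the section by $\overline{f}\mapsto e^{-\ii\pi s_+(f)}f$, establish the shift identity $F_{e^{\ii\pi\theta}f}(x)=F_f(x-\theta)$, bound the rotation $\theta$ between two representatives (you get $|\theta|<2\alpha$ via $|F_f(x)-|x||<\alpha$, the paper gets $|\delta|<1/16$), deduce $s_+(e^{\ii\pi\theta}f)=s_+(f)+\theta$, and then transport by translation to an arbitrary base point. Your continuity argument via sequences and chosen representatives is slightly more explicit than the paper's, which simply notes that the continuous map $T(f)=S_+(f)^{-1}f$ is constant on $S^1$-orbits and hence descends.
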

\begin{proof}
Let $\alpha \in ]0,3/640[$ as before. We set $S_+(f) = \exp(\ii \pi s_+(f))$,
thus defining a continuous map $S_+ : B(1,\alpha) \to S^1$. Let $f \in B(1,\alpha)$
and $z = e^{\ii \pi \delta}$ with $\delta \in ]-1,1]$ such that $z. f \in B(1,\alpha)$. Let $F$ and $H$
denote the continuous maps $\R \to \R_+$ attached to $f$ and $z.f$. We have
$H(x) = \int d(z.f(t),e^{\ii \pi x}) \dd t = \int d(f(t),e^{\ii \pi (x-\delta)}) \dd t = F(x-\delta)$ for all $x \in \R$.
Since $H(\delta) = F(0) < \alpha$ we have $|\delta| < 1/16$. It follows that
$x + \delta \in ]0,1[$ for all $x \in [1/16,1/2]$. Since $H(s_+(f)+\delta) = F(s_+(f)) = 1/5$ this implies
$s_+(z.f) = s_+(f) + \delta$ hence $S_+(z.f) = z.S_+(f)$. Let $g \mapsto \bar{g}$ denote the projection
map $L(\Omega,S^1) \to L(\Omega,S^1) /S^1$. The image of $B(1,\alpha)$ is the open ball
$B(\bar{1},\alpha)$. Let us associate to $f \in B(1,\alpha)$ the map $T(f) = S_+(f)^{-1} f \in L(\Omega,S^1)$.
If $z.f \in B(1,\alpha)$, then $T(z.f) = T(f)$. This proves that $T$
defines a map  $\tilde{T} : B(\bar{1},\alpha) \to L(\Omega,S^1)$ such that $\overline{\tilde{T}(\bar{f})} = \bar{f}$.
By lemma \ref{lem:continuoussectionS1} this map is continuous, therefore it is a local continuous section near $\bar{1}$. A local section in a neighbourghood of any given $\bar{f_0} \in L(\Omega,S^1)/S^1$ is then given by $\bar{f} \mapsto \tilde{T}( \bar{f} \bar{f}_0^{-1}) \bar{f}_0$.
\end{proof}

\begin{corollary}  $L(\Omega,S^1)/S^1$ is a classifying space for the topological group $S^1$
inside the category of paracompact spaces.
\end{corollary}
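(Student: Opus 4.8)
The plan is to assemble this corollary directly from the machinery already in place. The key earlier results are Proposition \ref{prop:genclassifiant}, which states that if $d$ is bi-invariant and the projection $L(G) \to L(G)/G$ admits a local cross-section, then $L(G)/G$ is a classifying space for $G$ in the category of paracompact spaces, and Proposition \ref{prop:existcrosssectionS1}, which produces exactly such local cross-sections for $G = S^1$. So the first step is simply to observe that the hypotheses of Proposition \ref{prop:genclassifiant} are met in the case $G = S^1$.

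Concretely, I would verify each hypothesis in turn. First, $S^1$ is commutative, hence the geodesic metric of diameter $1$ on it is bi-invariant; by Proposition \ref{prop:EGtopgroup} the space $L(\Omega,S^1)$ is therefore a topological group, and the action of $S^1$ on it by translations is continuous and free with continuous translation function $(f_1,f_2) \mapsto f_2 f_1^{-1}$, so $L(\Omega,S^1) \to L(\Omega,S^1)/S^1$ is a principal $S^1$-bundle in the sense of Husemoller. Second, $L(\Omega,S^1)$ is contractible by Proposition \ref{prop:LEcontractcomplete} (or by the explicit contraction described in the discrete case, which works verbatim here). Third, and this is the only substantive input, Proposition \ref{prop:existcrosssectionS1} shows the bundle is locally trivial, hence numerable since $L(\Omega,S^1)/S^1$ is metrizable and therefore paracompact. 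By Dold's theorem (\cite{DOLD}, Theorem 7.5) a numerable principal bundle with contractible total space is universal, so $L(\Omega,S^1)/S^1$ is a classifying space for $S^1$ in the category of paracompact spaces.

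There is really no main obstacle here: the corollary is a direct specialization of Proposition \ref{prop:genclassifiant}, and all the work — in particular the construction of the local section via the functions $s_\pm$ and the estimates of Lemma \ref{lem:croissancecercle} and Lemma \ref{lem:continuoussectionS1} — has already been done in establishing Proposition \ref{prop:existcrosssectionS1}. The proof is thus a two-line citation: apply Proposition \ref{prop:genclassifiant} with $G = S^1$, noting that $S^1$ is abelian (so the metric is bi-invariant) and that the required local cross-section exists by Proposition \ref{prop:existcrosssectionS1}.

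\begin{proof}
Since $S^1$ is abelian, its geodesic metric is bi-invariant, and by Proposition \ref{prop:existcrosssectionS1} the projection map $L(\Omega,S^1) \to L(\Omega,S^1)/S^1$ admits local continuous cross-sections. The conclusion follows by applying Proposition \ref{prop:genclassifiant} to $G = S^1$.
\end{proof}
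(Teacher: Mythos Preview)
Your proof is correct and takes essentially the same approach as the paper: cite Proposition \ref{prop:existcrosssectionS1} for the local cross-section, observe that $S^1$ is commutative so the metric is bi-invariant, and apply Proposition \ref{prop:genclassifiant}. The paper's proof is the same two-line citation, without the additional unpacking you provide in your discussion.
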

\begin{proof}
By proposition \ref{prop:existcrosssectionS1} we know that the projection map
 $L(\Omega,S^1) \to L(\Omega,S^1) /S^1$ admits a local cross-section. Since $S^1$
 is commutative, proposition \ref{prop:genclassifiant} implies the conclusion.

\end{proof}
\begin{corollary} $L(\Omega,S^1)/S^1$ is a $K(\Z,2)$.
\end{corollary}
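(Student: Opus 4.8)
The plan is to read off the statement from the preceding corollary together with the classical uniqueness of classifying spaces. By that corollary, $L(\Omega,S^1)/S^1$ is a classifying space for the topological group $S^1$ in the category of paracompact spaces; equivalently, by Proposition \ref{prop:genclassifiant} and Proposition \ref{prop:existcrosssectionS1}, the projection $\pi : L(\Omega,S^1) \to L(\Omega,S^1)/S^1$ is a (locally trivial, hence numerable) principal $S^1$-bundle whose total space $L(\Omega,S^1)$ is contractible by Proposition \ref{prop:LEcontractcomplete}.

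First I would compute the homotopy groups. Since a continuous morphism of topological groups admitting a continuous cross-section is a Hurewicz fibration, $\pi$ is a fibration with fiber $S^1$. Feeding the contractibility of $L(\Omega,S^1)$ into the long exact sequence of homotopy groups of this fibration yields isomorphisms $\pi_k(L(\Omega,S^1)/S^1) \cong \pi_{k-1}(S^1)$ for all $k \geq 1$; as $\pi_1(S^1) \cong \Z$ and $\pi_j(S^1) = 0$ for $j \neq 1$, we obtain $\pi_2(L(\Omega,S^1)/S^1) \cong \Z$ and $\pi_k(L(\Omega,S^1)/S^1) = 0$ for $k \neq 2$. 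Moreover $L(\Omega,S^1)/S^1$ is path-connected (it is the continuous image of the connected space $L(\Omega,S^1)$). Thus $L(\Omega,S^1)/S^1$ is at least a weak $K(\Z,2)$.

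Second, to upgrade this to an honest $K(\Z,2)$, I would show $L(\Omega,S^1)/S^1$ is homotopy equivalent to $\mathbb{CP}^\infty$. The space $\mathbb{CP}^\infty$ is a classifying space for $S^1$, it is paracompact, and it has the homotopy type of a CW-complex (indeed it is one). Both $\mathbb{CP}^\infty$ and $L(\Omega,S^1)/S^1$ carry universal principal $S^1$-bundles that are numerable (the latter because it is metrizable and the bundle is locally trivial by Proposition \ref{prop:existcrosssectionS1}). By Dold's classification theorem — already invoked in the proof of Proposition \ref{prop:genclassifiant} — the universal bundle over $\mathbb{CP}^\infty$ is pulled back along some map $u : \mathbb{CP}^\infty \to L(\Omega,S^1)/S^1$, and conversely along some $v : L(\Omega,S^1)/S^1 \to \mathbb{CP}^\infty$; the uniqueness clause of that theorem forces $u \circ v$ and $v \circ u$ to be homotopic to the respective identities. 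Hence $L(\Omega,S^1)/S^1 \simeq \mathbb{CP}^\infty = K(\Z,2)$, which gives the claim.

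There is no serious obstacle here: every ingredient has been established earlier, and the only point requiring a little care is that the bundle $L(\Omega,S^1) \to L(\Omega,S^1)/S^1$ be good enough (locally trivial, hence numerable over the paracompact base) for Dold's uniqueness statement to apply — this is exactly what the explicit local cross-section of Proposition \ref{prop:existcrosssectionS1}, combined with metrizability, provides.
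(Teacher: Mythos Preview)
Your proof is correct and follows essentially the same route as the paper: both invoke the preceding corollary to identify $L(\Omega,S^1)/S^1$ as a classifying space for $S^1$, then use the uniqueness of classifying spaces (Dold's theorem) to obtain a homotopy equivalence with a known CW-model such as $\mathbb{CP}^\infty$, which is a $K(\Z,2)$. Your additional computation of the homotopy groups via the long exact sequence is correct but redundant once the homotopy equivalence with $\mathbb{CP}^\infty$ is established; one small imprecision is that the quoted fact about group morphisms with a cross-section being Hurewicz fibrations requires a \emph{global} section, whereas here you only have local ones --- but the conclusion still holds since a locally trivial bundle over a paracompact (metrizable) base is a Hurewicz fibration.
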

\begin{proof} By the previous corollary we know that $L(\Omega,S^1)/S^1$ is
a classifying space for $S^1$. Since it is known how to built a paracompact classifying
space for $S^1$, which has the homotopy type of a CW-complex and is a $K(\Z,2)$, $L(\Omega,S^1)/S^1$,
being homotopy equivalent to it, has itself  the homotopy type of a CW-complex and is a $K(\Z,2)$.
\end{proof}

\section{Classifying spaces for compact Lie groups}

In this section we prove that a local cross-section of the projection
map $L(\Omega,G) \to L(\Omega,G)/G$ when $G$ is a compact Lie
group can be obtained by following the geometric idea of assigning
continously an almost-center of mass to the map $f \in L(\Omega,G)$.
This provides an alternative proof to the existence of a local
cross-section, that does not use Gleason's theorem, but uses instead
Karcher's theory of a Riemannian center of mass.

\medskip

\subsection{Riemannian preliminaries}

Here we recall a few basic facts on Riemannian manifolds.
Our textbook reference for the material used here is \cite{JOST}.
We recall that a $n$-dimensional Riemannian manifold $M$ is
naturally endowed with an intrinsic metric $d(x,y)$, defined by the minimal length
of a geodesic joining $x$ and $y$. A geodesic of minimal length will
be called a (length-)minimizing geodesic. By the Hopf-Rinow theorem, $M$ is complete
as a metric space iff all its bounded-closed subsets are compact iff every two points can
be joined by a minimizing geodesic iff for all $p \in M$ there is an
exponential map $\exp_p : T_p M \to M$ satisfying $d(\exp_p(v),p) = \| v \|$, where $\| . \|$
is the norm on $T_p M$ defined by the riemannian structure. Such an exponential map is a diffeomorphism
when restricted to some ball of radius $\rho_p(M)$, where $\rho_p(M) > 0$ is known as the radius of injectivity
of $M$ at $p$. A complete Riemannian manifold is compact iff it is has finite diameter for the intrinsic metric.
We denote $diam(M)$ this diameter.

For every point $p \in M$ there exists a neighborhood $U$ of $p$ which is geodesically convex,
meaning that every two points can be joined by exactly one minimizing geodesic, whose
support lies inside $U$. Moreover, there exists a positive $r>0$ such that all balls
of center $p$ and radius $< r$ are geodesically convex. The supremum of such $r$'s
is called the convexity radius at $p$ (see \cite{GHL}, 2.90).

We also recall that a manifold is orientable iff its tangent bundle can be trivialized iff
it admits a nowhere-vanishing volume form. Such a volume form defines a measure
on $M$, and two such measures can be deduced one from the other by multiplying
by some nowhere-vanishing continuous map $M \to \R_+^{\times}$.

Finally, a crucial fact for us will be that, for every $p \in M$, there exists an
open neighborhood $U$ of $p$ which is (geodesically) convex and on which,
for each $a \in U$, the map
$x \mapsto d(x,a)^2$
is \emph{strictly convex} (see for example \cite{JOST}, theorem 4.6.1).
We recall that the (strict) convexity of a real-valued function $f$ on a geodesically convex
subset $U$ of $M$ means that, for every geodesic $\gamma : [0,\alpha] \to U$,
the map $f_{|\gamma} : t \mapsto f \circ \gamma$ is (strictly) convex.

\subsection{The Riemannian center of mass}

We need to use Karcher's notion of the Riemannian center of mass,
and we also need to establish the following `continuous' version of it. We denote $E \Delta F = (E \setminus F) \cup (F \setminus E)$.

\begin{proposition} \label{prop:masscont} Let $M$ be a Riemannian manifold, and $p_0 \in M$. We
assume $M$ endowed with a volume form $\dd p$ which is non-vanishing in a neighborhood of $p_0$. There
exists an open neighborhood $U$ of $p_0$ with the following property.
For every Borel subset $E$ of $U$ of positive measure, there exists a unique $c(E) \in M$
on which the map $x \mapsto \int_E d(x,p)^2 \dd p$ has minimal value.
Moreover, $c(E) \in U$, and the map $E \mapsto c(E)$ is continuous, from the set of Borel subsets of $U$
which have positive measure, endowed with the (pseudo-)distance $d(E,F) = \int_{E \Delta F} \dd p$,
to $U$.
\end{proposition}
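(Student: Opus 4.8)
The plan is to choose $U$ to be a small geodesically convex ball around $p_0$ on which the functions $x \mapsto d(x,a)^2$ are uniformly strictly convex (as recalled at the end of the Riemannian preliminaries) and which is contained in the region where $\dd p$ is non-vanishing; in particular $\overline{U}$ should be a compact geodesically convex set with the same properties, so I would first shrink to such a $U$ and then work with its closure. For existence and uniqueness of $c(E)$, fix a Borel $E \subset U$ with $\mu(E) = \int_E \dd p > 0$ and consider $g_E(x) = \int_E d(x,p)^2 \,\dd p$. This is a finite continuous function on $\overline{U}$ (the integrand is bounded by $\diam(U)^2$), so it attains a minimum on the compact set $\overline{U}$. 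The key point is that the minimum is attained in the interior and is unique: since each $x \mapsto d(x,p)^2$ is strictly convex along geodesics of $\overline{U}$, the average $g_E$ is also strictly convex along every geodesic of $\overline{U}$ (strict convexity is preserved by averaging against a positive-mass measure), hence has at most one minimum; and a standard first-variation / barycentre argument (or the fact that on a slightly larger convex ball $g_E$ is strictly convex and its values on the boundary sphere of $U$ exceed those near the Karcher mean) shows the minimizer lies in $U$, not on $\partial U$. This gives a well-defined $c(E) \in U$.

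For continuity, I would argue by a quantitative stability estimate. Let $E, F$ be two such Borel subsets, and write $x = c(E)$, $y = c(F)$. The strict convexity of $g_E$ should be made uniform: there is a constant $\kappa > 0$ (depending only on $U$, via a lower bound on the Hessian of $d(\cdot,a)^2$ over $\overline{U}\times\overline{U}$) such that along the minimizing geodesic from $x$ to $y$ one has $g_E(y) \geq g_E(x) + \tfrac{\kappa}{2}\mu(E)\, d(x,y)^2$ and symmetrically $g_F(x) \geq g_F(y) + \tfrac{\kappa}{2}\mu(F)\, d(x,y)^2$ (using that $y$ minimizes $g_F$, so the first-order term drops). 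Adding these and rearranging, $\kappa\,\min(\mu(E),\mu(F))\, d(x,y)^2 \leq \big(g_E(y) - g_F(y)\big) + \big(g_F(x) - g_E(x)\big)$. Each difference $g_E(z) - g_F(z) = \int_{E\setminus F} d(z,p)^2\dd p - \int_{F\setminus E} d(z,p)^2\dd p$ is bounded in absolute value by $\diam(U)^2 \cdot \mu(E \Delta F) = \diam(U)^2\, d(E,F)$. Hence
$$
d\big(c(E),c(F)\big)^2 \;\leq\; \frac{2\,\diam(U)^2}{\kappa\,\min(\mu(E),\mu(F))}\; d(E,F).
$$
Since $\mu(F) \geq \mu(E) - d(E,F)$, for $F$ in a $d$-neighbourhood of $E$ of radius $\leq \mu(E)/2$ the denominator is bounded below by $\kappa\mu(E)/2$, giving Hölder-$\tfrac12$ continuity of $E \mapsto c(E)$ at $E$, hence continuity.

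\textbf{Main obstacle.} The routine parts are the boundedness/continuity of $g_E$ and the $\mu(E\Delta F)$ estimate on $g_E - g_F$; the delicate point is the \emph{uniform} strict convexity, i.e. producing the single constant $\kappa>0$ valid for all $a \in \overline{U}$ and all base points, together with the control that guarantees $c(E)$ stays in the interior $U$ rather than escaping to $\partial U$ (so that the geodesic from $x$ to $y$ used in the estimate genuinely lies in the convex region where the Hessian bound holds). I expect to handle this by first choosing a convex ball $U'$ on which $(a,x)\mapsto d(a,x)^2$ is smooth with positive-definite Hessian in $x$, taking $\kappa$ to be the minimum of the smallest Hessian eigenvalue over the compact set $\overline{U'}\times\overline{U'}$, and then taking $U \subset\subset U'$ small enough (radius less than, say, a fixed fraction of the convexity radius at $p_0$) that any Karcher mean of a subset of $U$ lies well inside $U'$; a comparison of $g_E$ at the center versus near $\partial U$ makes this precise. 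Once $U$ is pinned down this way, the estimate above goes through verbatim.
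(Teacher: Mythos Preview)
Your argument is correct, but it takes a more quantitative route than the paper. The paper does not invoke a uniform lower Hessian bound $\kappa$; instead it factors the map as $\mathcal{B}(U)\to E_c(U)\to U$, where $E_c(U)$ is the space of bounded continuous strictly convex functions on $U$ with the sup norm. The first arrow $E\mapsto F_E$ is shown to be $\diam(M)^2$-Lipschitz (your same estimate on $g_E-g_F$), and the second arrow $f\mapsto m(f)$ (the argmin) is proved continuous by a soft level-set argument: for $f_0\in E_c(U)$ the function $\delta\mapsto\inf\{f_0(x):d(x,m(f_0))\ge\delta\}$ is strictly increasing and continuous, and combined with the $1$-Lipschitz dependence of $\min f$ on $f$ this forces $m(f)$ close to $m(f_0)$ when $\|f-f_0\|_\infty$ is small. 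The paper then remarks explicitly that $m:E_c(U)\to U$ is \emph{not} Lipschitz (via the example $f_n(t)=t^2/n$, $g_n(t)=(t-1)^2/n$), which is why no rate is claimed.

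Your approach buys a concrete H\"older-$\tfrac12$ modulus (with the $\mu(E)$ in the denominator, consistent with the paper's non-Lipschitz remark), at the price of extracting the uniform constant $\kappa$ from compactness and smoothness of $(a,x)\mapsto d(a,x)^2$ on $\overline{U'}\times\overline{U'}$; this is legitimate on a sufficiently small ball and is exactly the ``main obstacle'' you already flagged. The paper's approach is softer---only strict convexity is used---and separates the two steps cleanly, which is convenient later when the same argmin lemma is reused. Your handling of the interior condition $c(E)\in U$ via $U\subset\subset U'$ and a boundary comparison matches the paper's device of taking $U=B(p_0,A/3)$ inside $U'\supset B(p_0,A)$.
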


We recalled earlier that there exists a  neighborhood $U'$ of $p_0$ such which is (geodesically) convex and on which,
for each $a \in U'$, the map
$x \mapsto d(x,a)^2$
is strictly convex. We can assume that $U'$ is small enough so that the volume form $\dd p$ is non-vanishing over $U$.
If $E \subset U'$ has positive measure, then the
map $x \mapsto \int_E d(x,p)^2 \dd p$ is also strictly convex on $U'$, and therefore
there exists a unique $c(E) \in U'$ on which it takes minimal value. This is Karcher's argument for the Riemannian center
of mass.

Assume that $U'$ contains the ball $B(p_0,A)$ for some $A > 0$. We can assume that $A$
is smaller than the convexity radius of $p_0$, namely every ball $B(p_0,A')$ with $A' \leq A$ is geodesically
convex. Let $U$ be the ball $B(p_0,A/3)$, and $E \subset U$.
Then, for each $x \not\in U$, and $p \in B(p_0,A/3)$ then we have $d(x,p) \geq d(x,p_0) - d(p_0,p) \geq A - A/3 = 2A/3 > d(p_0,p)$
whence $\int_E d(x,p)^2 \dd p > \int_E d(p_0,p)^2 \dd p$, which proves that $c(E)$ is the only minimum of $x \mapsto \int_E d(x,p)^2 \dd p$
not only over $U$, but over $M$.

We denote $\mu_M$ the measure on $M$ associated to the volume form $\dd p$. We now introduce the following topological spaces,
where $X$ denotes a geodesically convex open subset of $M$ :
\begin{itemize}
\item the set $\mathcal{B}(X)$ of Borel subsets of $X$ with positive measure endowed with the pseudo-distance $d(E,F) = \mu_M(E \Delta F)$
\item the set $E(X)$ of real-valued bounded continuous functions admitting a unique minimum on $X$ for the topology induced by $\| . \|_{\infty}$
\item its subspace $E_c(X) \subset E(X)$ of strictly convex functions on $X$.
\end{itemize}

\begin{lemma} \label{lem:contBUtoECU}
The map $\mathcal{B}(U) \to E_c(U)$ defined by
$$
E \mapsto \left( x \mapsto \int_M d(x,p)^2 \un_E(p) \dd p \right)
$$
is continuous (and actually $diam(M)^2$-Lipschitz).
\end{lemma}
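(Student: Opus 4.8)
The plan is to verify first that the stated map is well-defined, i.e.\ that it actually takes values in $E_c(U)$ and not merely in the space of bounded continuous functions, and then to establish the Lipschitz bound directly; continuity will be an immediate consequence of the latter. Throughout I will use that $M$ is compact, so that $\diam(M) < \infty$ and the measure $\mu_M$ is finite.

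\textbf{Well-definedness.} Fix $E \in \mathcal{B}(U)$ and write $\Phi(E) : x \mapsto \int_M d(x,p)^2 \un_E(p)\,\dd p$. Since $d(x,p)^2 \leq \diam(M)^2$ and $\un_E \in L^1(\mu_M)$, the integral converges and defines a bounded function on $U$, with $0 \leq \Phi(E)(x) \leq \diam(M)^2 \mu_M(E)$. Continuity of $\Phi(E)$ in $x$ follows from the continuity of $x \mapsto d(x,p)^2$ together with the domination $d(x,p)^2\un_E(p) \leq \diam(M)^2 \un_E(p)$ and the dominated convergence theorem. Finally, strict convexity of $\Phi(E)$ on $U$ (recall $U = B(p_0,A/3) \subset U'$) and the existence and uniqueness of its minimizer $c(E)$ were established in the discussion preceding Proposition \ref{prop:masscont}, following Karcher's argument and using the strict convexity of $x \mapsto d(x,a)^2$ on $U'$ from \cite{JOST}, Theorem 4.6.1. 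Hence $\Phi(E) \in E_c(U)$.

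\textbf{The Lipschitz estimate.} Let $E, F \in \mathcal{B}(U)$. For every $x \in U$ we have, using $\un_E - \un_F = \un_{E \setminus F} - \un_{F\setminus E}$ so that $|\un_E - \un_F| = \un_{E \Delta F}$,
$$
|\Phi(E)(x) - \Phi(F)(x)| = \left| \int_M d(x,p)^2\bigl(\un_E(p) - \un_F(p)\bigr)\,\dd p \right| \leqslant \int_M d(x,p)^2 \un_{E\Delta F}(p)\,\dd p \leqslant \diam(M)^2 \mu_M(E \Delta F).
$$
Since $\mu_M(E\Delta F) = d(E,F)$ and the right-hand side does not depend on $x$, taking the supremum over $x \in U$ yields $\|\Phi(E) - \Phi(F)\|_\infty \leqslant \diam(M)^2\, d(E,F)$. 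Thus $\Phi$ is $\diam(M)^2$-Lipschitz, and in particular uniformly continuous. (In passing this also shows $\Phi$ is compatible with the pseudo-metric, since $d(E,F)=0$ forces $\Phi(E)=\Phi(F)$.)

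I do not expect a genuine obstacle here: the metric estimate reduces to the elementary inequality $d(x,p) \leq \diam(M)$ combined with $|\un_E - \un_F| = \un_{E\Delta F}$, and the only place the earlier material is needed is to certify that the image lies in $E_c(U)$ rather than in a larger function space, which is exactly the strict-convexity statement recalled just above.
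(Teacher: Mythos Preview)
Your proof is correct and follows essentially the same route as the paper: you verify that $F_E$ is bounded, continuous, and strictly convex (the paper does continuity via a Lipschitz estimate on $x\mapsto d(x,p)^2$ rather than dominated convergence, a cosmetic difference), and then derive the $\diam(M)^2$-Lipschitz bound from the identical pointwise inequality $|\un_E-\un_F|=\un_{E\Delta F}$ together with $d(x,p)^2\le\diam(M)^2$.
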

\begin{proof}
If $E$ is non-empty we know that $F_E : x \mapsto \int_M d(x,p)^2 \un_U(p) \dd p$
is strictly convex. Since $M$ is bounded, $F_E$ is also bounded.
Now, $F_E$ is continuous because $x \mapsto d(x,p)^2$ is a $2 diam(M)$-Lipschitz
map $M \to \R_+$, and therefore $|F_E(x) - F_E(y)| \leq 2 diam(M) \mu_M(E) d(x,y)$
for all $x,y \in M$. We now prove that $E \mapsto F_E$ is continuous. We know
that
$$
|F_E(x) - F_{E'}(x)| \leq \int_M d(x,p)^2|\un_E(p) - \un_{E'}(p)| \dd p \leq diam(M)^2 \int_M |\un_E(x) - \un_{E'}(x)| \dd p
=diam(M)^2 \mu_M(E\Delta E')
$$
hence $E \mapsto F_E$ is $diam(M)^2$-Lipschitz.
\end{proof}

Proposition \ref{prop:masscont} is then an immediate consequence of lemma \ref{lem:contBUtoECU} and of the following one.

\begin{lemma} \label{lem:convcont}
For every $f \in E_c(U)$ there exists a unique $m(f) \in U$ on which $f$ has minimal value.
For all $f \in E_c(U)$, there exists $r_{max} > 0$ such that the map
$\delta \mapsto \inf \{ f(x) ; d(x,m(f)) \geq \delta \}$ is strictly increasing $[0,r_{\max}[ \to \R_+$,
and continuous.
Moreover, the map
$E_c(U) \to U$, $f \mapsto m(f)$
is continuous.
\end{lemma}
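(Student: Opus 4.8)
\textbf{Proof plan for Lemma \ref{lem:convcont}.} The plan is to treat the three assertions in turn, the first being essentially Karcher's argument already recalled, the second a one-variable convexity fact, and the third the only genuinely new point.

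First I would dispose of existence and uniqueness of $m(f)$: since $f \in E_c(U)$ is by definition a bounded continuous strictly convex function on the geodesically convex set $U$ admitting a unique minimum on $U$, there is nothing to prove beyond recording the definition; the only subtlety worth a sentence is that, as in the discussion preceding the lemma (with $U = B(p_0,A/3)$ inside the larger convex ball $B(p_0,A) \subset U'$), we may arrange that $U$ is small enough that the minimum is not attained on $\partial U$, so $m(f)$ is an interior point. Next, for the monotonicity statement, fix $f$ and write $m = m(f)$. Choose $r_{\max}>0$ small enough that the closed ball $\overline{B(m,r_{\max})}$ lies in $U$ and is geodesically convex. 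For $0 \leq \delta < r_{\max}$ set $\varphi(\delta) = \inf\{ f(x) \mid d(x,m) \geq \delta,\ x \in U\}$. Since $f$ is continuous and $\{x \in U \mid d(x,m)\geq \delta\}$ can be intersected with the compact sphere-plus-exterior part of a slightly larger closed ball, the infimum over the compact sphere $\{d(x,m)=\delta\}$ is attained, say at $x_\delta$; strict convexity of $f$ along the minimizing geodesic from $m$ to $x_\delta$ gives $f$ strictly increasing along that geodesic, which forces $\varphi(\delta_1) < \varphi(\delta_2)$ for $\delta_1 < \delta_2$ (push $x_{\delta_2}$ back toward $m$ along its geodesic to reach radius $\delta_1$, strictly decreasing the value). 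Continuity of $\varphi$ follows from continuity of $f$ together with the compactness argument: $\varphi$ is the minimum of the continuous function $f$ over a continuously-varying compact family of spheres, hence continuous; I would phrase this via $|\varphi(\delta)-\varphi(\delta')| \leq \omega_f(|\delta - \delta'|)$ where $\omega_f$ is a modulus of continuity of $f$ on $\overline{B(m,r_{\max})}$, using that nearby spheres are within Hausdorff distance $|\delta-\delta'|$ of each other.

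The main obstacle is the last assertion, continuity of $f \mapsto m(f)$ from $(E_c(U),\|\cdot\|_\infty)$ to $U$. Here is the plan. Fix $f_0 \in E_c(U)$, set $m_0 = m(f_0)$, and let $\varepsilon>0$ be smaller than $r_{\max}(f_0)$ and small enough that $\overline{B(m_0,\varepsilon)}\subset U$. By the strict monotonicity just established, $\mathrm{gap} := \varphi_{f_0}(\varepsilon) - f_0(m_0) > 0$ is a strictly positive number: every point of $U$ at distance $\geq \varepsilon$ from $m_0$ has $f_0$-value exceeding $f_0(m_0)$ by at least $\mathrm{gap}$. Now suppose $\|f - f_0\|_\infty < \mathrm{gap}/2$. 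Then for any $x$ with $d(x,m_0)\geq \varepsilon$ we have $f(x) \geq f_0(x) - \mathrm{gap}/2 \geq f_0(m_0) + \mathrm{gap}/2 > f_0(m_0) + \mathrm{gap}/2 - \mathrm{gap} \geq \ldots$ — more cleanly: $f(x) \geq f_0(m_0) + \mathrm{gap}/2$ while $f(m_0) \leq f_0(m_0) + \mathrm{gap}/2$, and since $m(f)$ minimizes $f$ we get $f(m(f)) \leq f(m_0) \leq f_0(m_0)+\mathrm{gap}/2 \leq f(x)$ for all $x$ outside $B(m_0,\varepsilon)$; as the minimum of $f$ is \emph{unique}, $m(f)$ cannot lie outside $B(m_0,\varepsilon)$, i.e. $d(m(f),m_0) < \varepsilon$. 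This proves continuity at $f_0$, and since $f_0$ was arbitrary, continuity of $m$ on all of $E_c(U)$.

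The delicate point to get right in the writeup is ensuring $\mathrm{gap}>0$ \emph{uniformly} away from $m_0$, i.e. that $\inf\{f_0(x) \mid x \in U,\ d(x,m_0)\geq \varepsilon\} > f_0(m_0)$ rather than merely a pointwise strict inequality; this is exactly what the compactness-plus-strict-convexity argument of the second paragraph delivers, after possibly shrinking $U$ so that $\overline{U}$ is compact (which we may, replacing $U$ by a slightly smaller concentric ball as in the passage before the lemma). Once that uniform gap is in hand the rest is the elementary three-$\varepsilon$ estimate above, using only that $m(f)$ is the \emph{unique} minimizer — no convexity of $f$ is needed for the continuity step itself, only for the existence/uniqueness of $m$ and for the uniform gap. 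I would then remark that Proposition \ref{prop:masscont} follows by composing Lemma \ref{lem:contBUtoECU} with Lemma \ref{lem:convcont}, as already indicated.
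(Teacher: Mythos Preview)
Your proposal is correct and follows essentially the same route as the paper: the gap argument for continuity of $m$ (your threshold $\mathrm{gap}/2$ versus the paper's $\mathrm{gap}/3$, the latter phrased via an auxiliary lemma that $f \mapsto \min f$ is $1$-Lipschitz) and the geodesic-pushing argument for strict monotonicity of $\varphi$ coincide with the paper's. Your modulus-of-continuity argument for the continuity of $\varphi$ is a bit more direct than the paper's sequential contradiction proof, but this is a stylistic variation rather than a genuinely different approach.
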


In order to prove this lemma, we first recall the following very general fact (valid on arbitrary metric spaces) :
\begin{lemma} \label{lem:minlip}
The map $E(U) \to \R$ defined by $f \mapsto \min f$ is 1-Lipschitz.
\end{lemma}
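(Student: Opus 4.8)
This is \texttt{Lemma~\ref{lem:minlip}}, asserting that $f \mapsto \min f$ is $1$-Lipschitz on $E(U)$ with respect to the sup-norm. The plan is the standard and completely elementary argument for such statements.

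\textbf{Approach.} Let $f, g \in E(U)$ and write $a = m(f)$, $b = m(g)$ for the (unique) points where the minima are attained, so $\min f = f(a)$ and $\min g = g(b)$. The goal is to show $|\min f - \min g| = |f(a) - g(b)| \leq \|f - g\|_\infty$. First I would establish the inequality in one direction: since $b$ is in the domain and $a$ is the minimizer of $f$, we have $f(a) \leq f(b) \leq g(b) + \|f-g\|_\infty = \min g + \|f-g\|_\infty$, hence $\min f - \min g \leq \|f-g\|_\infty$. Then by the symmetric argument, swapping the roles of $f$ and $g$ and using $g(b) \leq g(a) \leq f(a) + \|f-g\|_\infty$, one gets $\min g - \min f \leq \|f-g\|_\infty$. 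Combining the two inequalities yields $|\min f - \min g| \leq \|f - g\|_\infty$, which is exactly the $1$-Lipschitz claim.

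\textbf{Remark on generality.} As the paper notes, this works on an arbitrary metric space (indeed for any set, as long as the two functions attain their infima, or even replacing $\min$ by $\inf$ throughout, in which case no attainment is needed). In the present context $U$ is a geodesically convex open subset of the Riemannian manifold $M$ and the functions in $E(U)$ are bounded continuous with a unique minimum, so all the quantities involved are well-defined real numbers and the argument applies verbatim.

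\textbf{Main obstacle.} There is essentially none: the only thing to be slightly careful about is that the minima are genuinely attained (so that $f(a)$, $g(b)$ make sense as values rather than infima), which is guaranteed by the very definition of $E(U)$ as the space of bounded continuous functions admitting a (unique) minimum on $U$. No compactness of $U$, convexity of $f$, or the uniqueness clause is actually needed for this particular lemma — uniqueness and convexity will be used only in the subsequent \texttt{Lemma~\ref{lem:convcont}} to control $m(f)$ itself.
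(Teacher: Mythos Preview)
Your proof is correct and is essentially the same standard argument as the paper's: the paper writes the pointwise inequality $f(x) \geq f_0(x) - \|f-f_0\|_\infty \geq \min f_0 - \|f-f_0\|_\infty$ for all $x$ and then passes to the minimum, while you evaluate directly at the minimizer $b$ of $g$, but this is a trivial stylistic difference. Your remarks on generality and on which hypotheses are actually used are accurate.
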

\begin{proof}
Let $f,f_0 \in E(U)$ and $|| f - f_0 ||_{\infty} = a$. 
For every $x \in U$ we have
$f(x) \geq f_0(x) - a \geq \min(f_0)-a$ hence
$\min(f) \geq \min(f_0) - a$ and $\min(f_0) - \min(f) \leq a$. Symmetrically
we have $\min(f) - \min(f_0) \leq a$ and therefore $|\min(f_0) - \min(f)| \leq || f - f_0||$,
which concludes the proof.
\end{proof}

We can then proceed with the
\begin{proof} ({\it of lemma \ref{lem:convcont}})
The existence and uniqueness of $m(f)$ is immediate from the strict convexity of $f \in E_c(U)$ (and the locally compactness as well as the
geodesic convexity of $U$).
Let $f_0 \in E_c(U)$ and $m_0 =m(f_0) \in U$. We choose $r_{max}>0$ so that
$$
r_{max} \leq  \sup \{ \alpha ; \forall x \in M \ d(x,m_0) \leq \alpha \Rightarrow  x \in U \ \& \ \exists x \in U \ d(x,m_0) = \alpha \}.
$$
and so that $r_{max}$ is smaller than the injectivity radius (of the exponential map) at $m_0$.

For $0 \leq \delta< r_{max}$ we denote $F(\delta) = \inf \{ f_0(x) ; d(x,m_0) \geq \delta \}$. We have $F(0) = f_0(m_0) = \min f_0$.
Let $0 \leq \delta < r_{max}$ and  $x_1 \in U$ such that $d(x_1,m_0) > \delta$. Let $\gamma$ be a minimizing geodesic
from $m_0$ to $x_1$.
Then $\gamma$ contains $x_2$ such that $d(m_0,x_2) = \delta$. 
Since $f_0$ is strictly convex and has its minimum at $m_0$, we know that 
$(f_0)_{|\gamma}$ is strictly increasing, hence $f_0(x_2) < f_0(x_1)$. It follows that
$F(\delta) = \inf \{ f_0(x) ; d(x,m_0) = \delta \}$.

Let us choose $\delta_1,\delta_2$ with $0 \leq \delta_2 < \delta_1$. 
Since $f_0$ is continuous, $\{ f_0(x) : d(x,m_0) = \delta_1 \}$ is bounded and closed and therefore compact. 
Therefore there exists $x_1 \in M$ with $d(m_0,x_1) = \delta_1$
such that $f_0(x_1) = F(\delta_1)$.  Let $\gamma$ be a minimizing geodesic
from $x_1$ to $m_0$, and $x_2$ on $\gamma$ with $d(x_2,m_0) = \delta_2$.
By the same argument as before we get $f_0(x_2) < f_0(x_1) = F(\delta_1)$
and therefore $F(\delta_2) \leq f_0(x_2) < F(\delta_1)$. This proves that $F$ is strictly increasing.

We now prove that $F$ is continuous.
 Let $\delta_{\infty} \in [0,r_{max}[$.
We prove that $F$ is continuous at $\delta_{\infty}$. If not, there would be $\alpha > 0$ a sequence $(\delta_n)_{n \in \N}$ in $[0,r_{max}[$
converging to $\delta_{\infty}$ such that $|F(\delta_n)-F(\delta_{\infty})| \geq \alpha > 0$.
Let $x_n \in U$ such that $d(m_0,x_n) = \delta_n$ and $f_0(x_n) = F(\delta_n)$. Since $(\delta_n) \to \delta_{\infty} < r_{max}$
we know $\forall n \ \delta_n \leq r'$ for some $r'  < r_{max}$. By our assumption on $r_{max}$ the ball $\{ x \in U; d(x,m_0) \in [0,r'] \}$
is compact, and therefore 
there exists $x_{\infty} \in U$ and a subsequence $(x_{n_k})_{k \geq 0}$ in $U$ converging to $x_{\infty}$.
By continuity of the distance function we have $d(x_{\infty},m_0) = \lim_k \delta_{n_k} = \delta_{\infty}$.
Since $F(\delta_{n_k}) = f_0(x_{n_k}) \to f_0(x_{\infty}) \geq F(\delta_{\infty})$, we can assume
up to replacing $(\delta_n)_{n \in \N}$ by a subsequence that $\forall n \ F(\delta_n) \geq F(\delta_{\infty})$.
This implies $\delta_n \geq \delta_{\infty}$.

Let now $y_{\infty} \in U$ such that $d(y_{\infty},m_0) = \delta_{\infty}$ and $f(y_{\infty}) = F(\delta_{\infty})$. By continuity
of $f_0$ at $y_{\infty}$ we know that there exists $\eta > 0$ such that $d(y_{\infty},x) \leq \eta$ implies $|F(\delta_{\infty}) - f_0(x)| \leq \alpha/2$.
Let $\gamma : t \mapsto \exp_{m_0} (tv)$ the minimizing geodesic from $m_0$ to $y_{\infty}$, with $\| v \| = 1$. Since $\delta_{\infty} < r_{max}$
we know that $\gamma(t) \in U$ for $\delta_{\infty} < t < r_{max}$.
 Moreover, $d(\gamma(t),m_0) = t$
inside this range, because $U$ is geodesically convex and $r_{max}$ is smaller than the radius of injectivity at $m_0$.
Since $\gamma$ is continuous and $y_{\infty} = \gamma(\delta_{\infty})$ there exists $t_0$ inside this range such that $d(\gamma(t),y_{\infty})
\leq \eta$,and therefore $|F(\delta_{\infty}) - f_0(\gamma(t))| \leq \alpha/2$,  for all $t \in [\delta_{\infty},t_0]$.
But for $n$ large enough, we have $\delta_n \in ]\delta_{\infty},t_0]$ hence 
$$
F(\delta_{\infty})+ \alpha \leqslant F(\delta_n) \leqslant f_0(\gamma(\delta_n)) \leqslant F(\delta_{\infty}) + \frac{\alpha}{2}
$$
and this contradiction proves the continuity of $F$.

We now want to prove that $f \mapsto m(f)$ is continuous at $f_0$. Let us choose $\eps>0$ with $\eps < r_{max}$.
We have $F(\eps)>F(0)$ since $F$ is strictly increasing. Let $\eta = |F(\eps)-F(0)|/3$ and assume
$\| f- f_0 \|_{\infty} < \eta$ with $f \in E_c(U)$. By lemma \ref{lem:minlip} we know that this implies
$|\min(f) - f_0(m_0)| <|F(\eps)-F(0)|/3$. Since $\min(f) = f(m(f))$ this yields
$$
|f_0(m(f)) - f_0(m_0)| \leq |f_0(m(f)) - f(m(f))| + |\min(f) - f_0(m_0)| \leqslant 2|F(\eps)-F(0)|/3
< |F(\eps)-F(0)|
$$
that is
$f_0(m(f)) - F(0)= |f_0(m(f)) - F(0)| < F(\eps) - F(0)$
hence $f_0(m(f)) < F(\eps)$. This implies by definition of $F$ that
$d(m_0,m(f))<\eps$, and therefore
$f \mapsto m(f)$ is continuous over $E_c(U)$.

\end{proof}

We notice that the map $m : E_c(U) \to U$ is \emph{not} Lipschitz. For instance, when $U  = [-1,2] \subset \R$,
$f_n : t \mapsto  t^2/n$,
$g_n : t \mapsto (t-1)^2/n$, we have $f_n,g_n \in E_c(U)$ for all $n$, $d(m(f_n),m(g_n)) = d(0,1) = 1$,
and $d(f_n,g_n) =\frac{1}{n}  \| t^2-(t-1)^2 \|_{\infty} \to 0$.

\subsection{Essential support}

We recall that, when $M$ is a metric
space, $L(\Omega,M)$ is the set of (equivalence classes of) Borel maps $\Omega \to M$ such that
for one (hence for every) point $x_0 \in M$ we have $\int d(f(t),x_0)\dd t< \infty$. If a topological group $G$ acts on $M$ by isometries,
it obviously acts on $L(\Omega,M)$ via $g.f : t \mapsto g. f(t)$ for $g \in G$. The metric on $L(\Omega,M)$ is given as usual
by $d(f_1,f_2) = \int d(f_1(t),f_2(t))\dd t$. For $x_0 \in M$ we let $\tilde{x}_0 : \Omega \to M$ be the
constant map $t \mapsto x_0$. For $f_0 \in L(\Omega,M)$, we denote $B(f_0,\alpha)$ the open ball $\{ f  \in L(\Omega,M) \ | \ \int d(f(t),f_0(t))\dd t < \alpha \}$. The goal of this section is to prove the following proposition, which enables us to associate continuously to
a distribution of mass $f : \Omega \to M$, mostly concentrated near a point, an open subset of $M$ which
lies inside some prescribed geodesically convex neighborhood of this point.

\begin{proposition} \label{prop:esssupport}
Let $M$ be a connected complete Riemannian manifold endowed with its intrinsic metric
and a volume form $\dd p$, $x_0 \in M$, and $U$ an open neighborhood
of $x_0$. We assume that $\dd p$ is nowhere-vanishing over $U$.
Let $G$
be a topological group acting by isometries on $M$. There exists 
a real number
$\alpha > 0$ and a continuous map $\Phi : B(\tilde{x}_0,\alpha) \to \mathcal{B}(U)\subset \mathcal{B}(M)$ such that,
if $f \in B(\tilde{x}_0,\alpha)$ and $g \in G$ satisfy $g.f \in B(\tilde{x}_0,\alpha)$, then
$\Phi(g.f) = g.\Phi(f)$, where the (partly defined) action of $G$ on $\mathcal{B}(U)$ is the one obviously
induced by the action of $G$ on $M$.
\end{proposition}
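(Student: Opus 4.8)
The plan is to attach to $f$ a mollified ``concentration function'' $\chi_f$ on $M$ and to let $\Phi(f)$ be a superlevel set of $\chi_f$ taken at a level \emph{close to its maximum}, so that the set automatically sits in a tiny ball around $x_0$. Using the Riemannian facts recalled above, I would first fix a geodesically convex ball $U_0=B(x_0,r_0)$ with $\overline{U_0}\subset U$, with $r_0$ below the injectivity and convexity radii at $x_0$ and so that $x\mapsto d(a,x)^2$ has geodesic Hessian $\succeq c_0\,\Id$ on $U_0$ for every $a\in U_0$ and some $c_0>0$. Then fix $\rho\in(0,r_0)$ and a fixed $C^2$, Lipschitz, strictly decreasing $\phi:[0,\infty)\to\R_+$, supported in $[0,\rho]$, equal to $\phi(0)(1-u^2/\rho^2)$ on $[0,\rho/2]$ and convexly $C^2$-tapered to $0$ on $[\rho/2,\rho]$, and set $\chi_f(p)=\int_\Omega\phi(d(f(t),p))\,\dd t$. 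Since $0\le\phi\le\phi(0)$ is Lipschitz and $d(\cdot,p)$ is $1$-Lipschitz, $\chi_f$ is defined, bounded and Lipschitz on all of $M$ for \emph{every} Borel $f$, the map $f\mapsto\chi_f$ is Lipschitz from $L(\Omega,M)$ into $C_b(M)$ for the uniform norm, and $\chi_{g.f}=\chi_f\circ g^{-1}$ because $G$ acts by isometries. I would then fix a small $\eta>0$ and put
\[
\Phi(f)=\bigl\{\,p\in M:\chi_f(p)>(1-\eta)\phi(0)\,\bigr\}.
\]
Since the cut-off level is a \emph{fixed constant}, equivariance is immediate and exact, with no restriction: $\Phi(g.f)=\{p:\chi_f(g^{-1}p)>(1-\eta)\phi(0)\}=g\cdot\Phi(f)$.

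The next step is to choose $\alpha>0$ small enough. For $f\in B(\tilde x_0,\alpha)$, Markov's inequality gives $\mu\{t:d(f(t),x_0)\ge\beta\}<\alpha/\beta$ for all $\beta>0$, so all but an $O(\alpha)$ fraction of the mass of $f$ lies in any prescribed small ball about $x_0$; I expect two consequences for $\alpha$ small. First, the superlevel set is tiny: near $x_0$ one has $\chi_f(p)=\phi(0)\bigl(1-d(p,x_0)^2/\rho^2\bigr)+O(\alpha)$, so $\chi_f(x_0)>(1-\eta)\phi(0)$ (hence $x_0\in\Phi(f)$) while $\chi_f(p)<(1-\eta)\phi(0)$ once $d(p,x_0)\ge R:=2\rho\sqrt\eta$; thus $\Phi(f)$ is a nonempty open subset of $B(x_0,R)\subset U_0\subset U$, and, the volume form being nowhere zero on $U$, $\mu_M(\Phi(f))>0$, i.e. $\Phi(f)\in\mathcal B(U)$. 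Second, $\chi_f$ is strictly geodesically concave on $B(x_0,R)$: along a unit-speed geodesic $\gamma$ of $B(x_0,R)$, every $t$ with $d(f(t),\gamma)\le\rho/2$ — which is $1-O(\alpha)$ of the mass, since $R$ is small — contributes $\frac{d^2}{ds^2}\phi(d(f(t),\gamma(s)))=-\frac{\phi(0)}{\rho^2}\,\frac{d^2}{ds^2}d(f(t),\gamma(s))^2\le-\frac{\phi(0)c_0}{\rho^2}<0$, while the remaining $O(\alpha)$ mass contributes only a bounded amount (here one uses $\phi\in C^2$ and $|\phi'(u)|\lesssim u$ near $0$ to tame the Hessian of the distance function), so the total second derivative is $<0$.

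Continuity of $\Phi$ — the real point — then follows. With $\eps=d(f,f_0)$ one has $\|\chi_f-\chi_{f_0}\|_\infty\le L\eps$, whence $\{\chi_{f_0}>(1-\eta)\phi(0)+L\eps\}\subset\Phi(f)\subset\{\chi_{f_0}>(1-\eta)\phi(0)-L\eps\}$, so
\[
\mu_M\bigl(\Phi(f)\,\Delta\,\Phi(f_0)\bigr)\le\mu_M\bigl(\{\,p:|\chi_{f_0}(p)-(1-\eta)\phi(0)|\le L\eps\,\}\bigr)\xrightarrow[\;\eps\to0\;]{}\mu_M\bigl(\{\,p:\chi_{f_0}(p)=(1-\eta)\phi(0)\,\}\bigr).
\]
By the first consequence this level set is contained in $B(x_0,R)$, and $(1-\eta)\phi(0)$ is strictly between the minimum and the maximum of $\chi_{f_0}$ on that ball; by the second, $\chi_{f_0}$ is strictly concave along each geodesic of $B(x_0,R)$, so it takes this intermediate value at most twice along each geodesic. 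Reading this in geodesic polar coordinates at $x_0$ (a diffeomorphism on $B(x_0,R)$, as $R<r_0$ is below the injectivity radius), the level set meets every radial geodesic in at most two points and is therefore $\mu_M$-negligible. Hence $\mu_M(\Phi(f)\Delta\Phi(f_0))\to0$, $\Phi$ is continuous, and the proposition follows — the partial $G$-action on $\mathcal B(U)$ going through because $\Phi(f)$ and $g.\Phi(f)=\Phi(g.f)$ both lie in $U$ whenever $f,g.f\in B(\tilde x_0,\alpha)$.

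The main obstacle is exactly the last paragraph: a \emph{threshold} operation is generically discontinuous in the symmetric-difference pseudometric, and what rescues it here is the interplay of two design choices — the quadratic mollifier forces $\chi_f$ to be strictly concave on a small ball about $x_0$, and the near-maximal cut-off level confines the relevant level set to that \emph{same} small ball — so that the level set is forced to be a ``sphere-like'' hypersurface, hence $\mu_M$-null. Arranging one radius $\alpha$ to secure both effects at once, together with the uniform Hessian estimate behind the strict concavity (and the bookkeeping pinning all relevant superlevel sets inside $B(x_0,2\rho\sqrt\eta)$), is the technical content; once this is in place, the inclusion $\Phi(f)\in\mathcal B(U)$ and the $G$-equivariance come essentially for free.
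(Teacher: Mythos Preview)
Your construction is correct and shares the paper's core idea: both take a sublevel/superlevel set, at a \emph{fixed} threshold, of an averaged distance-type functional. The paper uses the raw $F(x)=\int d(x,f(t))^2\,\dd t$ and sets $\Phi(f)=\{F<r_0^2\}$; your $\chi_f$ is essentially $\phi(0)-\tfrac{\phi(0)}{\rho^2}F$ truncated by the bump, so the resulting sets are close cousins. Equivariance and the containment $\Phi(f)\subset B(x_0,R)\subset U$ follow identically in both.

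Where you genuinely diverge is the continuity argument. The paper establishes, via an elementary convexity estimate (its Lemma on slopes along radial geodesics), a \emph{quantitative} lower bound $F(y)-F(x)\geq \tfrac{m}{2}d(x,y)$ on the radial derivative of $F$; this forces $\partial\Phi(f)$ to be a radial graph $v\mapsto\varphi_f(v)$ over $S^{n-1}$, and the slope bound converts directly into a Lipschitz estimate $\|\varphi_{f_1}-\varphi_{f_2}\|_\infty\leq \tfrac{4\,\diam(M)}{m}d(f_1,f_2)$, whence $\mu_M(\Phi(f_1)\Delta\Phi(f_2))\leq K'\,d(f_1,f_2)$ by a coarea-type computation in polar coordinates. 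Your route instead sandwiches $\Phi(f)$ between two superlevel sets of $\chi_{f_0}$ and reduces to showing the exact level set $\{\chi_{f_0}=c\}$ is $\mu_M$-null, which you obtain from strict concavity along radial geodesics. This is sound, but yields only continuity, not the Lipschitz bound the paper gets; conversely, your bump truncation makes $\chi_f$ globally bounded and Lipschitz without any diameter hypothesis on $M$, at the price of the more delicate Hessian bookkeeping you flag (controlling $\phi'(u)\,u''$ on the transition region, and invoking compactness of $\overline{B(x_0,R+\rho)}$ via Hopf--Rinow to bound the distance Hessian there). Both trade-offs are reasonable; the paper's argument is a bit more elementary in that it avoids second derivatives of $d$ entirely.
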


In order to simplify notations, we denote $0 = x_0$. Up to possibly replacing $U$ by a smaller open neighborhood,
we can assume as in the previous section that $U$ is taken to a ball $B(0,A)$ with $A>0$ small enough so that
it is geodesically convex and on which, for each $a \in U$, the map $x \mapsto d(x,a)^2$
is strictly convex on $U$. We can also assume that $A$ is strictly smaller
than the injectivity radius of $M$ at $x_0$ and, for some technical reason, that $A \leq 4$.

\begin{lemma} \label{lem:lemC}
Let $r_2 \in ]0,A[$ with $A$ as before. Let us choose $r_1 \in ]0,r_2/2[$.
Then, for all $x,y,a \in M$,
with $d(x,0) \in ]r_2,r_3[$,$d(y,0) \in ]r_2,A[$,$d(0,a) \in [0,r_1[$ and such that
$x$ lies on the minimizing geodesic from $0$ to $y$, we have
$d(y,a)^2-d(x,a)^2 \geq md(x,y)$ with $m = r_2-2r_1>0$.
\end{lemma}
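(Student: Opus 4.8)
The statement is a purely Riemannian, one-variable convexity estimate: on the geodesic from $0$ to $y$, the function $t \mapsto d(\gamma(t),a)^2$ grows at a definite rate once we are past the sphere of radius $r_2$, provided $a$ is close enough to $0$. The plan is to reduce everything to the behavior of the single real function $\rho(t) = d(\gamma(t),a)^2$ along the minimizing geodesic $\gamma$ from $0$ to $y$, parametrized by arclength, and to exploit two facts already set up in this subsection: (i) on the convex ball $U = B(0,A)$, for each fixed $a \in U$ the map $x \mapsto d(x,a)^2$ is \emph{strictly convex}, hence $\rho$ is a strictly convex function of $t$; and (ii) $A$ is below the injectivity radius at $0$, so the geodesic is minimizing and $d(\gamma(t),0) = t$ throughout.

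First I would fix the minimizing geodesic $\gamma : [0, d(0,y)] \to M$ with $\gamma(0) = 0$, $\gamma(d(0,y)) = y$, parametrized by arclength, and note that by hypothesis $x = \gamma(t_x)$ with $t_x = d(x,0) \in ]r_2, r_3[$, while $y = \gamma(t_y)$ with $t_y = d(y,0) \in ]r_2, A[$; in particular $t_x \le t_y$ and both lie in $[0,A[$, so the whole segment stays in the convex ball $U$. Then $\rho(t) = d(\gamma(t),a)^2$ is strictly convex on $[0, t_y]$, so its right derivative is nondecreasing and
$$
d(y,a)^2 - d(x,a)^2 = \rho(t_y) - \rho(t_x) \geq \rho'(t_x^+)\,(t_y - t_x) = \rho'(t_x^+)\, d(x,y).
$$
So the whole lemma reduces to the lower bound $\rho'(t_x^+) \geq m = r_2 - 2r_1$. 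The key step is therefore a first-variation estimate: for $a$ fixed, $\frac{\dd}{\dd t} d(\gamma(t),a)^2 = 2\, d(\gamma(t),a)\cdot \frac{\dd}{\dd t} d(\gamma(t),a)$, and $\frac{\dd}{\dd t} d(\gamma(t),a) = \langle \gamma'(t), -\nu \rangle$ where $\nu$ is the unit initial vector of the minimizing geodesic from $\gamma(t)$ to $a$ (valid where that geodesic is unique, which holds here since everything sits inside the convex ball below the injectivity radius); in particular $|\frac{\dd}{\dd t} d(\gamma(t),a)| \le 1$, and more usefully $\frac{\dd}{\dd t} d(\gamma(t),a) \geq \frac{d(0,\gamma(t)) - d(0,a)}{d(\gamma(t),a)} \cdot(\text{something} \ge$ ...$)$ — the clean way is via the triangle inequality for the \emph{function} $d(\gamma(\cdot),a)$: since $d(\gamma(s),a) \geq d(\gamma(s),0) - d(0,a) = s - d(0,a) \geq s - r_1$ and $d(\gamma(t_x),a) \le d(\gamma(t_x),0) + d(0,a) = t_x + r_1 < r_3 + r_1 < A$, we get that the slope of the convex function $\rho$ at $t_x$ is at least the average slope of the lower-bounding affine-in-$s$ estimate, which I would pin down to give $\rho'(t_x^+)\geq 2(t_x - r_1) - \text{(correction)}$, and then bound below by $r_2 - 2r_1$ using $t_x > r_2$.

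The main obstacle — and the step I would be most careful about — is making the derivative lower bound $\rho'(t_x^+) \geq r_2 - 2r_1$ clean and honest, rather than the slightly lossy triangle-inequality juggling above. The most robust route is to avoid differentiating $\rho$ directly and instead argue with chords: define $\ell(s) = \big(s - d(0,a)\big)$, a $1$-Lipschitz lower bound for $h(s) := d(\gamma(s),a)$ valid for $s \in [d(0,a), A[$ (triangle inequality, since $d(\gamma(s),0)=s$), with $\ell(t_x) \le h(t_x)$ actually needing care — instead use $h(s) \ge s - r_1$ and $h(t_x) \le t_x + r_1$; then for $t_x < s$, $h(s)^2 - h(t_x)^2 = (h(s)-h(t_x))(h(s)+h(t_x))$, and bound $h(s) - h(t_x) \geq (s - r_1) - (t_x + r_1)$... this is negative for $s$ near $t_x$, so this particular split fails and one genuinely needs the convexity/first-variation input. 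Hence the honest structure is: (1) strict convexity of $\rho$ to pass to the right derivative at $t_x$; (2) first variation formula $\rho'(t_x^+) = 2 h(t_x)\,\langle \gamma'(t_x), -\nu\rangle$ with $\nu$ the unit vector at $\gamma(t_x)$ toward $a$; (3) the angle estimate: since $d(0,a) < r_1$ and $d(0,\gamma(t_x)) = t_x > r_2 > 2r_1$, the geodesic from $\gamma(t_x)$ to $a$ points "mostly back toward $0$", so $\langle \gamma'(t_x), -\nu\rangle$ is close to $1$ — quantitatively, comparing with the geodesic from $\gamma(t_x)$ to $0$ (whose initial vector is exactly $-\gamma'(t_x)$) and using that $a$ is within $r_1$ of $0$ while $\gamma(t_x)$ is at distance $> r_2$, one gets $h(t_x)\langle \gamma'(t_x),-\nu\rangle \geq t_x - 2r_1 > r_2 - 2r_1$ via the projection/triangle-inequality comparison in the tangent space at $\gamma(t_x)$ together with $h(t_x) \le t_x + r_1$. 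I would then assemble: $\rho'(t_x^+) \geq 2(t_x - 2r_1) \cdot(\ldots)$ — and I expect the cleanest statement to actually come out as $\rho'(t_x^+) \ge m$ with $m = r_2 - 2r_1 > 0$ (the factor arrangement being exactly what the author chose so that the subsequent chord inequality $\rho(t_y)-\rho(t_x) \ge m\,d(x,y)$ reads off immediately). The verification that the various balls stay inside $U = B(0,A)$ and below the injectivity radius — using $r_3 < A$, $r_1 < r_2/2 < A/2$, and $A \le 4$ — is routine bookkeeping I would do at the start and then not belabor.
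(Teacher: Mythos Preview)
Your overall framework is right and matches the paper: parametrize the minimizing geodesic $\gamma$ from $0$ to $y$ by arclength, set $\rho(t)=d(\gamma(t),a)^2$, and use the strict convexity of $\rho$ on $[0,A[$ to turn the desired inequality into a slope bound. Where you diverge from the paper is in \emph{how} you extract that slope bound, and your route runs into a real obstacle.

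You try to bound the one-sided derivative $\rho'(t_x^+)$ directly via the first-variation formula $\rho'(t)=2h(t)\langle\gamma'(t),-\nu\rangle$ and an angle estimate comparing $\nu$ (the direction toward $a$) with $-\gamma'(t_x)$ (the direction toward $0$). But that angle comparison is not free: in a general Riemannian manifold it requires a comparison theorem (Toponogov/Rauch-type), i.e.\ a curvature hypothesis. The only standing assumption here is that $x\mapsto d(x,a)^2$ is strictly convex on $U$ --- no curvature bound is available --- so your step ``one gets $h(t_x)\langle\gamma'(t_x),-\nu\rangle\ge t_x-2r_1$'' is not justified in this setting. This is a genuine gap, not just missing bookkeeping.

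The paper sidesteps the derivative entirely with a chord argument. For a convex function, the secant slope on $[t_1,t_2]$ with $r_2<t_1<t_2$ dominates the secant slope on $[0,r_2]$. So
\[
\frac{\rho(t_y)-\rho(t_x)}{t_y-t_x}\;\ge\;\frac{\rho(r_2)-\rho(0)}{r_2}.
\]
Now only the triangle inequality is needed: $\rho(0)=d(0,a)^2<r_1^2$ and $\rho(r_2)=d(\gamma(r_2),a)^2\ge (r_2-r_1)^2$ since $d(\gamma(r_2),0)=r_2$. Hence the right-hand side is at least $\bigl((r_2-r_1)^2-r_1^2\bigr)/r_2=r_2-2r_1=m$, and you are done. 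No first-variation formula, no angle estimate, no tangent-space comparison --- the whole lemma is a one-variable convexity fact plus two applications of the triangle inequality at the endpoints $t=0$ and $t=r_2$.
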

\begin{proof}
Let us denote $\gamma : [0,d(0,y)] \to M$ the unique minimizing geodesic from $0$ to $y$,
parametrized according to arclength. By hypothesis we have $\gamma(d(0,x)) = x$. 
We choose $r_2 \in ]0,A[$ arbitrarily, and $r_1 \in ]0, r_2/2[$.
 Let
us consider the function $G(t) = d(\gamma(t),a)^2$. We have $G(0) = d(0,a)^2 < r_1^2$.
Let $x_2 = \gamma(r_2)$. 
We have $d(0,x_2) = r_2 > r_1 \geq d(0,a)$ and therefore
$d(a,x_2) \geq |d(0,x_2) - d(0,a)| = d(0,x_2)-d(0,a) \geq r_2 - r_1$
hence $G(r_2)=d(a,x_2)^2 \geq  (r_2-r_1)^2> r_1^2$ since $r_2 > 2 r_1$.
From this we deduce $G(r_2)>G(0)$.
By assumption on $A$ the map
 $t \mapsto G(t)$ is strictly convex on $B(0,A)$
hence $G$ is strictly increading on $[r_2,A]$, and moreover the slopes
$(G(t_2)-G(t_1))/(t_2-t_1)$ for $r_2 < t_1 < t_2 < A$ are greater than
the slope $(G(r_2)-G(0))/r_2 \geq ((r_2-r_1)^2-r_1^2)/r_2=r_2-2r_1$.
Letting $m = r_2 - 2 r_1 >0$, $t_2 = d(0,y)$, $t_1 = d(0,x)$
we get $d(y,a)^2-d(x,a)^2 \geq m d(x,y)$ and the conclusion.

\end{proof}

\includegraphics{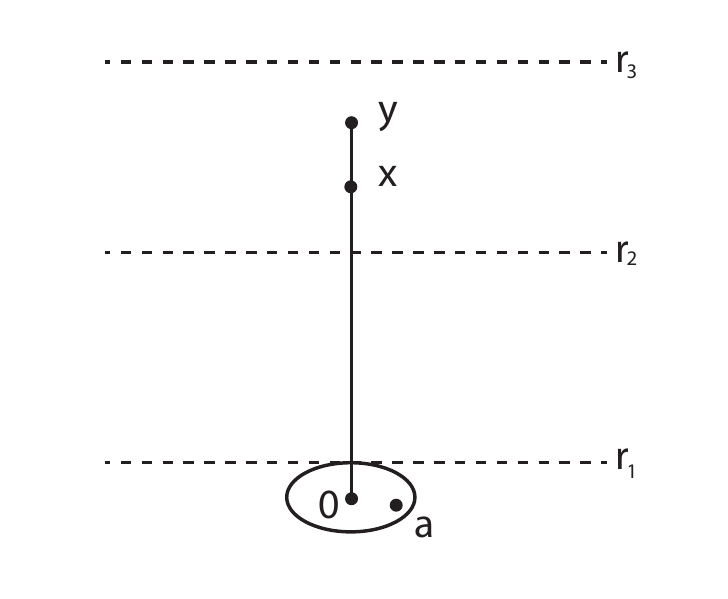}

For $f \in L(\Omega,M)$ we let $F(x) = \int d(x,f(t))^2\dd t$. This defines a continuous map $M \to \R_+$.
Note that, for every $\alpha,\beta>0$,
if $d(f,0) < \alpha$ then $\mu \{ t ; d(f(t),0) < \beta \} \geq 1 - \frac{\alpha}{\beta}$. We now choose $\alpha > 0$ small enough so
that $\alpha \leq \frac{A}{24diam(M)}$ and $1 - \frac{\alpha}{r_1} \geq \frac{\frac{m}{2} + 2 diam(M)}{m + 2 diam(M)}$.
The latter condition implies the following. Assume that $y,x$ are as in the previous lemma, that $d(0,f)<\alpha$ and let $E = \{ t ; d(f(t),0)<r_1\}$. Then
$$
F(y) - F(x) = \int_E (d(y,f(t))^2 - d(x,f(t))^2)\dd t + \int_{\Omega \setminus E} (d(y,f(t))^2 - d(x,f(t))^2)\dd t 
$$
hence 
$$
F(y) - F(x) \geqslant \mu(E)md(x,y) + \int_{\Omega \setminus E} (d(y,f(t))^2 - d(x,f(t))^2)\dd t \geqslant \frac{m}{2} d(x,y)
$$
as soon as 
$$
\int_{\Omega \setminus E} (d(y,f(t))^2 - d(x,f(t))^2)\dd t \leqslant
2 diam(M) d(x,y) (1 - \mu(E)) \leqslant \mu(E)md(x,y) - \frac{m}{2} d(x,y)
$$
which means $\mu(E) \geq \frac{\frac{m}{2} + 2 diam(M)}{m + 2 diam(M)}$, and by assumption this holds true.
Therefore we have $F(y) - F(x) \geq (m/2)d(x,y)$ as soon as $x,y$ are as in lemma \ref{lem:lemC},
and in particular $F$ is strictly increasing on the corresponding part of the geodesic.

From $|d(x,f(t))^2 - d(x,0)^2| \leq d(0,f(t)) 2 diam(M)$
we get
$$
-2 diam(M) d(0,f(t))  \leq d(x,f(t))^2 - d(x,0)^2 \leq 2 diam(M) d(0,f(t))
$$
for all $t \in \Omega$, and by integrating over $\Omega$ we get 
$|F(x) - d(x,0)^2| \leq 2 diam(M) d(0,f) \leq A/12$ for all $x \in M$,
since $\alpha \leq \frac{A}{24 diam(M)}$.

Let us set $r_2 = A/2$, and $r_1 = A/6$. With the notations of lemma \ref{lem:lemC}
we have $m = r_2 - 2 r_1 = A/6$. Let $r_0 = (r_2+r_3)/2 = 3A/4$,
and $\delta_0 = (r_3 -r_2)/2 = A/4$. By assumption on $A$ we have $\delta_0 \leq 1$.
Moreover $\delta_0$ is such that $|F(x) - d(x,0)^2| \leq A/12 = \delta_0/3$.

\begin{lemma}  \label{lem:riemphicont} {\ }
\begin{enumerate}
\item If $d(x,0) \leq r_2+\delta_0/3$, then $F(x) < r_0^2 - \delta_0/2$.
\item If $d(x,0) \geq r_3-\delta_0/3$, then $F(x) > r_0^2 + \delta_0$.
\item For all $v \in S^{n-1}$, there exists a unique $\varphi(v) \in [0, A[$ such that
$F(\exp_0(\varphi(v) v)) = r_0^2$. Moreover, we have $\varphi(v) \in ]r_2 + \delta_0/3,A- \delta_0/3[$.
\item The map $v \mapsto \varphi(v)$, $S^{n-1} \to ]r_2,A[$, is continuous.
\end{enumerate}
\end{lemma}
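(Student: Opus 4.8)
The plan is to exploit the two facts already established just before the lemma: that $|F(x) - d(x,0)^2| \leq \delta_0/3$ for all $x \in M$, and that $F$ is strictly increasing along the part of each minimizing geodesic from $0$ where the radial coordinate lies in $]r_2, A[$ (this is the consequence of Lemma \ref{lem:lemC} together with the choice of $\alpha$). The first two items are then pure bookkeeping with the constants $r_0 = 3A/4$, $r_2 = A/2$, $r_3 = A$, $\delta_0 = A/4$. For (1): if $d(x,0) \leq r_2 + \delta_0/3$ then $F(x) \leq d(x,0)^2 + \delta_0/3 \leq (r_2+\delta_0/3)^2 + \delta_0/3$, and one checks numerically (using $A \leq 4$, so $\delta_0 \leq 1$ and hence $\delta_0/3 < \delta_0^2$, etc.) that $(r_2+\delta_0/3)^2 + \delta_0/3 < r_0^2 - \delta_0/2$; since $r_2 + \delta_0/3 = 7A/12$ and $r_0 = 9A/12$, the gap $r_0^2 - (r_2+\delta_0/3)^2 = (81-49)A^2/144 = 2A^2/9$ comfortably dominates $\delta_0/3 + \delta_0/2 = 5A/24$ once $A \leq 4$. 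For (2): symmetrically, $F(x) \geq d(x,0)^2 - \delta_0/3 \geq (r_3 - \delta_0/3)^2 - \delta_0/3 = (11A/12)^2 - A/12$, and $(11A/12)^2 - (9A/12)^2 = 40A^2/144 = 5A^2/18$ dominates $\delta_0/3 + \delta_0 = 4\delta_0/3 = A/3$ for $A \leq 4$. I would just present these two inequality chains cleanly, leaving the elementary arithmetic to the reader.

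For (3), fix $v \in S^{n-1}$ and consider $g_v(t) = F(\exp_0(tv))$ on $[0,A[$. Since $A$ is below the injectivity radius at $0$, the curve $t \mapsto \exp_0(tv)$ is a minimizing geodesic with $d(\exp_0(tv),0) = t$. By (1), $g_v(r_2 + \delta_0/3) < r_0^2 - \delta_0/2 < r_0^2$, and by (2), $g_v(r_3 - \delta_0/3) > r_0^2 + \delta_0 > r_0^2$; by continuity of $F$ there is a value in between where $g_v$ equals $r_0^2$. Uniqueness and the location of that value in $]r_2 + \delta_0/3, r_3 - \delta_0/3[$ follow because on the interval $]r_2, A[$ the function $g_v$ is strictly increasing (the geodesic segment with radial coordinate in $]r_2,A[$ is exactly the situation of Lemma \ref{lem:lemC}, applied with $a$ ranging over the essential support of $f$, which lies in $B(0,r_1)$): any solution must lie where $g_v > $ its value at $r_2 + \delta_0/3$, which by (1) is where the radial coordinate exceeds $r_2 + \delta_0/3$; and once we are in the strictly increasing regime there can be at most one solution. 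Note $\varphi(v) < A - \delta_0/3$ by (2), so $\varphi(v)$ stays safely inside $[0,A[$.

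For (4), I would argue continuity of $v \mapsto \varphi(v)$ by a standard implicit-function-type argument made elementary by the strict monotonicity: fix $v_0$, let $\varphi_0 = \varphi(v_0)$, and for small $\eps>0$ pick radii $\varphi_0 - \eps, \varphi_0 + \eps$ still inside $]r_2 + \delta_0/3, A - \delta_0/3[$. By strict monotonicity of $g_{v_0}$ on this interval, $g_{v_0}(\varphi_0 - \eps) < r_0^2 < g_{v_0}(\varphi_0 + \eps)$, with strict gaps, say $g_{v_0}(\varphi_0-\eps) \leq r_0^2 - c$ and $g_{v_0}(\varphi_0+\eps) \geq r_0^2 + c$ for some $c>0$. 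The map $(v,t) \mapsto F(\exp_0(tv))$ is jointly continuous (composition of $\exp_0$, which is smooth, with the continuous $F$), so for $v$ near $v_0$ we still have $F(\exp_0((\varphi_0-\eps)v)) < r_0^2 < F(\exp_0((\varphi_0+\eps)v))$; hence $\varphi(v) \in ]\varphi_0 - \eps, \varphi_0 + \eps[$, giving continuity at $v_0$. The main obstacle here is merely making sure the constants in (1)–(2) are chosen so that there is genuine room (strict inequalities with a margin) at $r_2+\delta_0/3$ and $r_3 - \delta_0/3$ — which is precisely why the lemma is stated with those particular offsets — and checking that the strictly-increasing regime from Lemma \ref{lem:lemC} really does cover the whole relevant range $[r_2+\delta_0/3, A-\delta_0/3]$; both are secured by the earlier setup, so no new difficulty arises.
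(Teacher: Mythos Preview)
Your strategy coincides with the paper's: use the estimate $|F(x)-d(x,0)^2|\le\delta_0/3$ for (1)--(2), then combine the intermediate value theorem with the strict radial monotonicity already derived from Lemma~\ref{lem:lemC} for (3), and a perturbation argument via the joint continuity of $(v,t)\mapsto F(\exp_0(tv))$ for (4). Parts (3) and (4) are correctly sketched; your argument for (4) is in fact slightly cleaner than the paper's (which passes through $C^0([0,r_3],\R_+)$ and ends with a spurious equality $b-a=2\eps$).

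The arithmetic in (1) and (2), however, does not hold as claimed. You assert that $2A^2/9$ ``comfortably dominates'' $5A/24$ once $A\le 4$, but $2A^2/9\ge 5A/24$ is equivalent to $A\ge 15/16$; indeed for $A<2/9$ one has $r_0^2-\delta_0/2=9A^2/16-A/8<0$, so statement (1) is vacuously false there. Likewise $5A^2/18\ge A/3$ requires $A\ge 6/5$, and your aside ``$\delta_0\le 1$ hence $\delta_0/3<\delta_0^2$'' is backwards (that inequality needs $\delta_0>1/3$). The underlying issue is dimensional: $r_0^2$ is quadratic in $A$ while the error bound $\delta_0/3=A/12$ is only linear, so for small $A$ the error swamps the signal. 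The paper's own computation hides the same gap, expanding $(r_0-2\delta_0/3)^2$ as $r_0^2-4\delta_0/3+\cdots$ where the cross term should be $-(4/3)r_0\delta_0=-4\delta_0^2$. A clean repair is to tighten the choice of $\alpha$ to $\alpha\le cA^2/\diam(M)$ so that the basic estimate becomes $|F(x)-d(x,0)^2|\le c'\delta_0^2$, and to replace the linear margins $\delta_0/3,\ \delta_0/2,\ \delta_0$ in the statement by quadratic ones; the arguments for (3), (4) and the subsequent Lipschitz lemma then go through verbatim with these homogeneous constants.
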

\begin{proof}
We have $F(x) \leq d(x,0)^2 + \delta_0/3$
hence if $d(x,0) \leq r_2+\delta_0/3 = r_0 - 2 \delta_0/3$ we have
$F(x) \leq (r_0 - 2 \delta_0/3)^2 + \delta_0/3 = r_0^2 - 4 \delta_0/3 + 4 \delta_0^2/9 + \delta_0/3
\leq r_0^2- 5 \delta_0/9  < r_0^2 - \delta_0/2$ (since $\delta_0^2 \leq \delta$).

Similarly, $F(x) \geq d(x,0)^2 - \delta_0/3$,
hence if $d(x,0) \geq r_3 - \delta_0/3 = r_0 + 2 \delta_0/3$, we have
$F(x) \geq r_0^2 + 4 \delta_0/3 + 4 \delta_0^2/9 - \delta_0/3
> r_0^2 + 4 \delta_0/3  - \delta_0/3 = r_0^2 +\delta_0$. This proves (1) et (2).

We now prove (3). It is clear that $F$ is continuous on $M$, therefore its composition with
the geodesic $t \mapsto \exp_0(t v)$ is continuous, too. Assume $t < r_3=A$. We
have $d(\exp_0(tv),0) = t$ and therefore we deduce from (1) and (2) that,
on the one hand $F(\exp_0(tv)) \neq r_0^2$ when $t \not\in 
 ]r_2 + \delta_0/3,r_3- \delta_0/3[$, and that, on the other hand,
 there exists  $\varphi(v) \in ]r_2 + \delta_0/3,r_3- \delta_0/3[$
 such that $F(\exp_0(\varphi(v) v))=r_0^2$. Since 
 $t \mapsto F(\exp_0(tv))$ is strictly decreasing on $]r_2,r_3[$
 this proves that $\varphi(v)$ is uniquely determined, and this proves (3).

We now prove (4). Since $F$ and $(t,v) \mapsto \exp_0(tv)$ are continuous,
we know that $(t,v) \mapsto F(\exp_0(tv))$ is continuous, too.
Therefore (see e.g. \cite{BOURBTOP} TG X.29, cor. 2 du théorème 3)
we notice that the map $h : v \mapsto (t \mapsto F(\exp(tv))$, $S^{n-1} \to C^0([0,r_3],\R^+)$
is continuous, too. We denote $d$ a metric on the sphere $S^{n-1}$ compatible
with its natural topology. Let us choose $v_0 \in S^{n-1}$, and prove
that $\varphi$ is continuous at $v_0$. Let $\eps >0$ small enough so
that $\eps < \delta_0/2$. By continuity of $h$ there exists $\eta >0$ such that,
if $v \in S^{n-1}$ satisfies $d(v,v_0) \leq \eta$, then
$|F(\exp_0(tv_0)) - F(\exp_0(tv))| \leq \eps/3$ for all $t \in [0,r_3]$.
Since $r_0^2 - \eps > r_0^2- \delta_0/2$ and $r_0^2 + \eps < r_0^2+\delta_0$,
by (1) and (2) there exists $a,b$ with $ r_2 < a < \varphi(v_0) < b < r_3$
such that $F(\exp_0(av_0)) = r_0^2 - \eps$ and $F(\exp_0(bv_0)) = r_0^2 + \eps$.
From $|F(\exp_0(bv)) - F(\exp_0(bv_0))| \leq \eps/3$ we get
$F(\exp_0(bv)) \geq F(\exp_0(bv_0)) - \eps/3 =r_0^2 + 2 \eps/3$
and from $|F(\exp_0(av)) - F(\exp_0(av_0))| \leq \eps/3$
that $F(\exp_0(av)) \leq F(\exp_0(av_0)) + \eps/3 = r_0^2 - \eps + \eps/3 = r_0^2 - 2 \eps/3$.
Since $t \mapsto F(\exp_0(tv))$ is strictly increasing we get $\varphi(v) \in ]a,b[$.
Since we know $\varphi(v_0) \in ]a,b[$ this implies $|\varphi(v)- \varphi(v_0) | \leq b-a = 2 \eps$.
This proves that $\varphi$ is continuous at every $v_0 \in S^{n-1}$, whence (4).

\end{proof}

For $f \in B(\tilde{0},\alpha)$, and $F$ as before, we define
$$
\Phi(f) = \{ x \in M ; F(x) < r_0^2 \}.
$$
Since $F$ is continuous it is an open subset of $M$. By lemma \ref{lem:riemphicont} (1) it
is non-empty, and therefore of positive measure. By lemma \ref{lem:riemphicont} (2)
it is included inside $B(0,A)$ hence $\Phi(f) \in \mathcal{B}(U)$. Let $g \in G$ be such
that $g.f \in B(\tilde{0},\alpha)$, and $F_g : x \mapsto \int d(g.f(t),x)^2 \dd t$
the associated map. Since $G$ acts by isometries we have
$d(g.f(t),x) = d(f(t),g^{-1}.x)$ for all $t,x$ hence $F_g(x) = F(g^{-1}.x)$.
Then, by definition 
$$\Phi(g.f) = \{ x \in M ; F_g(x) < r_0^2 \}
= \{ x \in M ; F(g^{-1}.x) < r_0^2 \} = \{ x \in M ; g^{-1}.x \in \Phi(f)\}
= g. \Phi(f).
$$
It remains to prove that $\Phi$ is continuous. This is done by the next lemma, where we prove that $\Phi$ is
$K'$-Lipschitz on $B(\tilde{0},\alpha)$ for some $K'>0$. Recall that $\mu_M$ denote the measure on $M$ hence also on $U$ associated to
the volume form $\dd p$. Note that $\Phi(f)$ always contains the ball with radius $r_2$ centered at $0$,
and is always contained inside the one with radius $r_3$.

\begin{lemma}
There exists $K,K' > 0$ such that,
for all $f_1,f_2 \in B(\tilde{0},\alpha)$, and $\varphi_1,\varphi_2 : S^{n-1} \to ]r_2,r_3[$ the (continuous) maps associated to them
by lemma \ref{lem:riemphicont},
we have
\begin{enumerate}
\item $\| \varphi_1 - \varphi_2 \|_{\infty} \leq \frac{4diam(M)}{m} d(f_1,f_2)$
\item $\mu_M( \Phi(f_1) \Delta \Phi(f_2)) \leq K \| \varphi_1 -  \varphi_2 \|_{\infty}$
\item $\mu_M( \Phi(f_1) \Delta \Phi(f_2)) \leq K' d(f_1,f_2)$
\end{enumerate}
\end{lemma}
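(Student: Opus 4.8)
Part (3) is immediate from (1) and (2), so the work is in those two. The two ingredients are already essentially in hand: the \emph{uniform pointwise comparison} of the two energy functions, and the \emph{quantitative radial monotonicity} established just after Lemma \ref{lem:lemC}. For the first, write $F_i(x) = \int d(x,f_i(t))^2 \dd t$; then for every $x \in M$,
$$
|F_1(x) - F_2(x)| \leqslant \int |d(x,f_1(t))^2 - d(x,f_2(t))^2| \dd t \leqslant 2\,diam(M) \int d(f_1(t),f_2(t))\dd t = 2\,diam(M)\, d(f_1,f_2),
$$
using $|d(x,a)^2 - d(x,b)^2| = |d(x,a)-d(x,b)|\,(d(x,a)+d(x,b)) \leqslant 2\,diam(M)\, d(a,b)$. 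For the second, recall that for $f = f_i \in B(\tilde 0,\alpha)$ and $x,y$ with $x$ on the minimizing geodesic from $0$ to $y$ and $d(x,0),d(y,0)\in\,]r_2,r_3[$, the computation following Lemma \ref{lem:lemC} gives $F_i(y) - F_i(x) \geqslant \tfrac m2\, d(x,y)$.

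\emph{Proof of (1).} Fix $v \in S^{n-1}$ and assume, without loss of generality, $\varphi_1(v) \leqslant \varphi_2(v)$. Put $x = \exp_0(\varphi_1(v)v)$ and $y = \exp_0(\varphi_2(v)v)$. Since $\varphi_i(v) \in \,]r_2,r_3[$ and $r_3 = A$ is below the injectivity radius at $0$, the point $x$ lies on the unique minimizing geodesic from $0$ to $y$, and both points have distance to $0$ in $\,]r_2,r_3[$. Applying the radial monotonicity estimate to $F_2$ yields $F_2(y) - F_2(x) \geqslant \tfrac m2\, d(x,y) = \tfrac m2\,(\varphi_2(v) - \varphi_1(v))$. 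But $F_2(y) = r_0^2 = F_1(x)$ by definition of $\varphi_1,\varphi_2$, so $F_2(y) - F_2(x) = F_1(x) - F_2(x) \leqslant |F_1(x)-F_2(x)| \leqslant 2\,diam(M)\, d(f_1,f_2)$. Hence $\varphi_2(v) - \varphi_1(v) \leqslant \tfrac{4\,diam(M)}{m}\, d(f_1,f_2)$; taking the supremum over $v$ gives (1).

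\emph{Proof of (2).} The idea is to read $\Phi(f_i)$ off in geodesic normal coordinates at $0$. Since $\exp_0$ is a diffeomorphism on a ball strictly larger than $\overline{B(0,A)}$, we may write $x = \exp_0(tv)$ with $t \in [0,A]$, $v \in S^{n-1}$. We have $B(0,r_2) \subset \Phi(f_i) \subset B(0,r_3)$ by Lemma \ref{lem:riemphicont}(1)--(2), and $t \mapsto F_i(\exp_0(tv))$ is strictly increasing on $\,]r_2,r_3[\,$; therefore $\exp_0(tv) \in \Phi(f_i)$ if and only if $t < \varphi_i(v)$ whenever $t \in \,]r_2,r_3[\,$, and the set $\Phi(f_i)$ is, up to a null set, the image under $\exp_0$ of the star-shaped region $\{tv : 0 \leqslant t < \varphi_i(v)\}$. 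Consequently $\Phi(f_1)\Delta\Phi(f_2)$ is the $\exp_0$-image of $\{tv : \min(\varphi_1(v),\varphi_2(v)) \leqslant t < \max(\varphi_1(v),\varphi_2(v))\}$. Writing $\dd p$ in normal coordinates as $J(t,v)\,\dd t\, d\sigma(v)$ for a density $J$ continuous (hence bounded, say $J \leqslant C$) on the compact set $[0,A]\times S^{n-1}$, we obtain
$$
\mu_M(\Phi(f_1)\Delta\Phi(f_2)) = \int_{S^{n-1}}\!\!\int_{\min(\varphi_1(v),\varphi_2(v))}^{\max(\varphi_1(v),\varphi_2(v))} J(t,v)\,\dd t\, d\sigma(v) \leqslant C\,\sigma(S^{n-1})\,\|\varphi_1-\varphi_2\|_\infty,
$$
which is (2) with $K = C\,\sigma(S^{n-1})$.

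\emph{Proof of (3).} Combine (1) and (2): $\mu_M(\Phi(f_1)\Delta\Phi(f_2)) \leqslant K\|\varphi_1-\varphi_2\|_\infty \leqslant \tfrac{4\,diam(M)}{m}K\, d(f_1,f_2)$, so $K' = \tfrac{4\,diam(M)}{m}K$ works. This shows $\Phi : B(\tilde 0,\alpha) \to \mathcal{B}(U)$ is $K'$-Lipschitz, hence continuous, completing the proof of Proposition \ref{prop:esssupport}. The only delicate point is the normal-coordinate description in (2): one must check that the strict radial monotonicity of $F_i$ together with the sandwiching $B(0,r_2)\subset\Phi(f_i)\subset B(0,r_3)$ really forces $\Phi(f_i)$ to be the radial subgraph of $\varphi_i$, so that the symmetric difference collapses to a thin shell whose volume is controlled linearly by $\|\varphi_1-\varphi_2\|_\infty$.
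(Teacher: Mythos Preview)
Your proof is correct and follows essentially the same route as the paper's. For (1) you use the same two ingredients—the uniform bound $\|F_1-F_2\|_\infty \leq 2\,diam(M)\,d(f_1,f_2)$ and the radial slope estimate $F_i(y)-F_i(x)\geq \tfrac{m}{2}d(x,y)$—combined in the same way; for (2) you pull back to normal coordinates and bound the Jacobian on the compact annulus exactly as the paper does, and (3) is immediate in both. Your phrasing is slightly more explicit (the WLOG ordering in (1), the radial-subgraph description of $\Phi(f_i)$ in (2)), but there is no substantive difference.
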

\begin{proof}
We first prove (1). Note that this part does not involve $K,K'$.
First of all, we know that
$\| F_1 -  F_2 \|_{\infty} \leq 2 diam(M) d(f_1,f_2)$. Let us choose $v \in S^{n-1}$.
We have $| F_2(\exp_0(\varphi_1(v) v))-r_0^2| = | F_2(\exp_0(\varphi_1(v) v)) - F_1(\exp_0(\varphi_1(v) v))| \leq 2 diam(M) d(f_1,f_2)$.
We deduce from this that $| F_2(\exp_0(\varphi_1(v) v))-F_2(\exp_0(\varphi_2(v) v))| = | F_2(\exp_0(\varphi_1(v) v))-r_0^2| \leq 2 diam(M) d(f_1,f_2)$.
But we know that
$|F_2(\exp_0(\varphi_1(v) v))-F_2(\exp_0(\varphi_2(v) v))| \geq \frac{m}{2}d(\exp_0(\varphi_1(v)v),\exp_0(\varphi_2(v)v) = \frac{m}{2} |\varphi_1(v) - \varphi_2(v)|$, which proves
 (1).

Since (3) is a trivial consequence of (1) and (2), we only prove (2). 
Let us consider the diffeomorphism $G_0 : (t,v) \mapsto \exp_0(tv)$
for $(t,v) \in ]0,A^+[ \times S^{n-1}$, where $A^+ > A = r_3$ is the injectivity radius at $0$,
and let us write the pullback of the volume form $\dd p$ on $M$
as $G_0^* \dd p = a(t,v) \dd t \dd v$, with $a : ]0,A^+[ \times S^{n-1} \to \R_+$ a continuous map,
where $\dd t, \dd v$ are the natural
volume forms on $\R$ and $S^{n-1} \subset T_0 M$.
We let $\| a \|_{\infty}$ denote the supremum of $|a|$ on $[r_2,r_3]$.
Let us set $K = \| a \|_{\infty} \la(S^{n-1})$ where $\la$
is the Lebesgue measure on $T_0 M \simeq \R^n$.

By abuse of notation we will denote $[a,b] = [\min(a,b),\max(a,b)]$ even when $a > b$. Then,
for $E= \Phi(f_1) \Delta \Phi(f_2) \subset G_0([r_2,r_3] \times S^{n-1})$ we have that
$\mu_M(\Phi(f_1) \Delta \Phi(f_2))$ is equal to
$$  \int_{[r_2,r_3] \times S^{n-1}} a(t,v)\un_{G_0^{-1}(E)}(t,v) \dd t \dd v
= \int_{[r_2,r_3] \times S^{n-1}} a(t,v)\un_{\{ (u,w) \ | \ u \in [\varphi_1(w),\varphi_2(w)]\} }(t,v) \dd t \dd v
$$
whence
$$
\mu_M(C(f_1) \Delta C(f_2)) \leq 
\| a \|_{\infty} \int_{[0,r_3] \times S^{n-1}} \un_{\{ (u,w) \ | \ u \in [\varphi_1(w),\varphi_2(w)]\} }(t,v) \dd t \dd v$${}$$
\leq \| a \|_{\infty} \int_{S^{n-1}} \left( \int_0^{r_3}  \un_{\{ (u,w) \ | \ u \in [\varphi_1(w),\varphi_2(w)]\} }(t,v) \dd t \right) \dd v
= \| a \|_{\infty} \int_{S^{n-1}} |\varphi_2(v)- \varphi_1(v)| \dd v 
$$
and this yields $\mu_M(\Phi(f_1) \Delta \Phi(f_2)) \leq \| a \|_{\infty} \| \varphi_2 - \varphi_1 \|_{\infty} \la (S^{n-1}) = K \| \varphi_2 - \varphi_1 \|_{\infty}$. This proves (2).

\end{proof}

\subsection{Main result}

If $M$ is a metric space, we still denote $L(M) = L(\Omega,M)$ the set
of (equivalence classes of) Borel maps $f : \Omega \to M$ such that $\int d(f(t),p) \dd t < \infty$
for one (and hence for all) $p \in M$. If $G$ is a topological group acting isometrically on $M$,
and $M$ is bounded, then $G$ acts isometrically on $L(\Omega,M)$, under $g.f = (t \mapsto g.f(t))$.
Moreover,
if $G$ is a metric group, then $L(\Omega,G)$ acts isometrically on $L(\Omega,M)$
by $g.f = (t\mapsto g(t).f(t))$. We first note the following.

\begin{proposition}  Let $G$ be a locally compact metric group with a countable base acting by isometries on 
the compact
metric space $M$. 
Assume that $G$ is separable of finite dimension.
If the action of $G$ is transitive, then so is the induced action of $L(\Omega,G)$
on $L(\Omega,M)$.
\end{proposition}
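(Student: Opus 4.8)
The plan is to reduce the statement to a measurable selection problem. Fix $f \in L(\Omega, M)$ and a basepoint $h \in L(\Omega, G)$ — say, up to translating, we want to show that for any $f_1, f_2 \in L(\Omega, M)$ there exists $\gamma \in L(\Omega, G)$ with $\gamma . f_1 = f_2$. Since $G$ acts transitively on $M$, for every $t \in \Omega$ the fibre $S(t) = \{ g \in G \mid g . f_1(t) = f_2(t) \}$ is a non-empty closed subset of $G$ (indeed a coset of the stabilizer of $f_1(t)$). The task is to choose $\gamma(t) \in S(t)$ in a Borel-measurable way and to check that the resulting $\gamma$ lies in $L(\Omega, G)$, i.e. that $\int_\Omega d_G(\gamma(t), e)\, \dd t < \infty$.

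First I would set up the measurable-selection machinery. The map $(g, p) \mapsto g . p$ is continuous $G \times M \to M$, so the set $R = \{ (t, g) \in \Omega \times G \mid g . f_1(t) = f_2(t) \}$ is Borel in $\Omega \times G$, being the preimage of the diagonal of $M \times M$ under the Borel map $(t,g) \mapsto (g . f_1(t), f_2(t))$. Since $G$ is a locally compact second-countable metric group, it is Polish, so $R$ is a Borel subset of the product of a standard Borel space with a Polish space whose sections $R_t = S(t)$ are all non-empty and closed (hence $\sigma$-compact and in particular not too wild). The Jankov–von Neumann / Kuratowski–Ryll-Nardzewski selection theorem then yields a Borel (indeed universally measurable, which suffices after modification on a null set) map $\gamma : \Omega \to G$ with $\gamma(t) \in S(t)$ for all $t$. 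This is the step I expect to be essentially routine given the hypotheses (the second-countability and local compactness of $G$ are exactly what make $G$ Polish and the selection theorem applicable), though one must be slightly careful about which measurable-selection theorem one invokes and about the distinction between Borel and $\mu$-measurable selections.

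The genuine obstacle is integrability: a priori the selection $\gamma$ could be wildly large, so that $\int_\Omega d_G(\gamma(t), e)\, \dd t = \infty$ and $\gamma \notin L(\Omega, G)$. To handle this I would not take an arbitrary selection but a selection of \emph{near-minimal norm}. Concretely, consider the function $\rho(t) = \inf \{ d_G(g, e) \mid g \in S(t) \} = d_G(e, S(t))$; this is a Borel function of $t$ (the distance to a Borel-section-valued multifunction with closed values is measurable), and for each $\eps > 0$ one can, again by measurable selection applied to the Borel set $\{ (t,g) \in R \mid d_G(g,e) < \rho(t) + \eps \}$ with non-empty sections, choose $\gamma$ with $d_G(\gamma(t), e) \le \rho(t) + 1$ for all $t$. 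So it suffices to show $\int_\Omega \rho(t)\, \dd t < \infty$. Here I would use that $M$ is bounded together with local compactness of $G$ and transitivity: fix $p_0 \in M$ and $g_0(t) \in G$ with $g_0(t) . p_0 = f_1(t)$ chosen so that $d_G(g_0(t), e)$ is controlled by $d_M(f_1(t), p_0)$ — this uses that near the identity the orbit map $G \to M$, $g \mapsto g . p_0$, is open and, on a compact neighbourhood, admits the bound $d_G(g, e) \le C \cdot d_M(g . p_0, p_0)$ for a constant $C$ depending on that neighbourhood (local compactness + the fact that $M = G/\mathrm{Stab}(p_0)$ with the quotient metric). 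Since $f_1, f_2 \in L(\Omega, M)$ and $M$ is bounded, $d_M(f_1(t), p_0)$ and $d_M(f_2(t), p_0)$ are integrable (trivially, being bounded), and one bounds $\rho(t)$ by the $G$-length of a path from $e$ to some element of $S(t)$, routed through $g_0(t)$ and an analogous $g_1(t)$ for $f_2$; the bounded-diameter hypothesis on $M$ makes all the relevant $M$-distances bounded, hence integrable, which forces $\int \rho < \infty$.

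Putting these together: choose $\gamma$ a $\mu$-measurable selection of $t \mapsto S(t)$ with $d_G(\gamma(t), e) \le \rho(t) + 1$, redefine it on the null set where measurability fails to make it Borel, observe $\int_\Omega d_G(\gamma(t), e)\, \dd t \le \int_\Omega \rho(t)\, \dd t + 1 < \infty$ so that $\gamma \in L(\Omega, G)$, and conclude $\gamma . f_1 = f_2$ in $L(\Omega, M)$ by construction. Since $f_1, f_2$ were arbitrary, $L(\Omega, G)$ acts transitively on $L(\Omega, M)$. The delicate points to write carefully will be (i) the exact measurable-selection statement used and the Borel-versus-universally-measurable bookkeeping, and (ii) the local Lipschitz bound $d_G(g,e) \le C\, d_M(g . p_0, p_0)$ near the identity, which is where the local compactness and finite-dimensionality hypotheses on $G$ are really used.
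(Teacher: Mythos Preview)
Your measurable-selection strategy is a valid alternative to the paper's argument, but the integrability step contains a real gap. The bound $d_G(g,e) \le C\cdot d_M(g.p_0, p_0)$ near the identity is false whenever the stabilizer $G_0$ is nontrivial: any $g \in G_0 \setminus \{e\}$ close to $e$ has $d_M(g.p_0, p_0) = 0$ but $d_G(g,e) > 0$. Even read charitably as a bound along a local section of $G \to G/G_0$, it would require that section to be Lipschitz, which is more than the available theorems give. Fortunately you only need $\rho$ to be \emph{bounded}, not Lipschitz-controlled, and that follows more cheaply: $G$ is $\sigma$-compact, so the orbit map $g \mapsto g.p_0$ carries an exhaustion $G = \bigcup_n K_n$ by compacta onto closed sets covering $M$; by Baire some $K_{n_0}.p_0$ has nonempty interior $U$, finitely many $G$-translates of $U$ cover the compact $M$, and hence one compact $L \subset G$ satisfies $L.p_0 = M$. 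Select $\gamma(t)$ from $S(t) \cap LL^{-1}$ (closed nonempty sections in a Polish space, so Kuratowski--Ryll-Nardzewski applies directly) and integrability is automatic.

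The paper's route is different and more constructive. It uses that the continuous bijection $\psi : G/G_0 \to M$ is a homeomorphism (a theorem in Helgason, from local compactness and countable base), then invokes Mostert's theorem --- this is where finite-dimensionality enters --- to obtain \emph{continuous} local cross-sections $s_i : B_i \to G$ of $G \to G/G_0$, with finitely many $B_i$ covering the compact $G/G_0$. The lift is then defined explicitly and piecewise: $g(t) = s_i(\psi^{-1}(f(t)))$ for the least $i$ with $\psi^{-1}(f(t)) \in B_i$, a map which is Borel (finite Borel patching of continuous maps) and bounded (each $s_i$ continuous on the compact $\overline{B_i}$), with no abstract selection theorem needed. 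Your approach trades that explicitness for selection machinery; once repaired as above, it has the pleasant feature of apparently not requiring the finite-dimension hypothesis at all.
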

\begin{proof}
Let us choose $x_0 \in M$, and let $f_0 \in L(\Omega,M)$ be the constant map $t \mapsto x_0$.
We let $G_0 = \{ g \in G ; g.x_0 = x_0 \}$ denote the fixer of $x_0$. It is a closed subgroup of
$G$, and the map $g \mapsto g.x_0$ induces an injective continuous map $\psi : G/G_0 \to M$.
Since the action of $G$ on $M$ is transitive we know that $\psi$ is bijective. Under our assumptions
 (\cite{HELGASON1}, \S 3 theorem 3.2) we know that it is an homeomorphism. Now, the projection map $G \to G/G_0$
 admits local cross-sections (see \cite{MOSTERT}) since $G$ is metric separable of finite dimension. 
 For each $x \in G/G_0$, there exists $\eps_x > 0$ and a local section $s_x : B(x,\eps_x) \to G$
 of the projection map, where $B(x,r)$ denotes the open ball of radius $r$ for the induced metric.
 Since the collection $B(x,\eps_x/2), x \in G/G_0$ is a covering of the compact
 space $G/G_0$, there exists $x_1,\dots,x_n$ such that $B_1,\dots,B_n$ is a covering of $G/G_0$,
 where $B_i = B(x_i,\eps_{x_i}/2)$. We denote $s_i = s_{x_i}$.
 
 Let $f : \Omega \to M$ be a Borel map. Then we define $g : \Omega \to G$
 by $g(t) = s_i(  \psi^{-1} ( f(t)))$ if $i$ is the minimal $r \in \{1,\dots, n\}$ such that $\psi^{-1}(f(t)) \in B_r$.
 Letting $X_i = f^{-1}(\psi(B_i))$ we know that $X_i$ is Borel since $\psi$ is an homeomorphism and $B_i$
 is open, and therefore so is $Y_i = X_i \setminus \bigcup_{r < i} X_i$. Since $g$ coincides on $Y_i$
 on the Borel map $s_i \circ \psi^{-1} \circ f$ we get that that $g$ is Borel.
 Since each $s_i$ is continuous on the compact set $\overline{B_i}$, the map $t \mapsto g(t)$
 is bounded, and therefore $g \in L(\Omega,B)$. One checks readily that $g.f_0 = f$, and this proves the claim.
\end{proof}

We want to provide a geometric proof of the following theorem (which readily follows from Gleason's general
theorem on actions of compact Lie groups).

\begin{theorem}
Let $G$ be a compact Lie group endowed with a Haar measure. Then,
the natural projection map $L(\Omega,G) \to L(\Omega,G)/G$ admits local cross-sections.
\end{theorem}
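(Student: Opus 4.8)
The strategy is to glue together the two main ingredients already developed in this section: the equivariant ``essential support'' map $\Phi$ of Proposition~\ref{prop:esssupport}, which attaches to a mass distribution $f$ concentrated near a point a Borel subset of a prescribed geodesically convex neighbourhood, and the continuous Riemannian center of mass map $c$ of Proposition~\ref{prop:masscont}, which attaches to such a Borel subset a point of the manifold. Composing them gives a continuous, $G$-equivariant assignment $f \mapsto c(\Phi(f))$ defined on a small ball around a constant map, and this will produce the desired local cross-section near the base point; translating by elements of $G$ (using left-invariance of the metric on $G$) then yields local cross-sections everywhere.

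\textbf{Key steps.} First I would fix a point, say the neutral element $e \in G$, and regard $G$ as a compact connected Riemannian manifold with its bi-invariant metric and with the Haar measure as volume form $\dd p$ (non-vanishing everywhere); left translation by $G$ acts by isometries, so we are exactly in the setting where $G$ plays the role of both the acting group and the manifold $M$ of Proposition~\ref{prop:esssupport}. Second, choose a geodesically convex neighbourhood $U$ of $e$ small enough to be simultaneously admissible for Proposition~\ref{prop:esssupport} (so that $\Phi : B(\tilde{e},\alpha) \to \mathcal{B}(U)$ exists, is continuous, and satisfies $\Phi(g.f) = g.\Phi(f)$ whenever $f$ and $g.f$ both lie in $B(\tilde{e},\alpha)$) and for Proposition~\ref{prop:masscont} (so that the center of mass $c(E) \in U$ is defined and continuous for Borel $E \subset U$ of positive measure, with $\mathcal{B}(U)$ equipped with the pseudo-distance $d(E,F) = \mu_M(E \Delta F)$); shrinking $\alpha$ if necessary we may assume $\Phi(B(\tilde{e},\alpha)) \subset \mathcal{B}(U)$ lands in the domain of $c$. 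Third, set $\sigma = c \circ \Phi : B(\tilde{e},\alpha) \to U \subset G$; this is continuous as a composite of continuous maps. Fourth, check equivariance of $c$: since $G$ acts by isometries on $G$ (itself), $d(g.x, p) = d(x, g^{-1}.p)$, so $\int_{g.E} d(x,p)^2 \dd p = \int_E d(g^{-1}.x, p)^2 \dd p$ (using invariance of Haar measure), whence the minimizer of $x \mapsto \int_{g.E} d(x,p)^2 \dd p$ is $g.c(E)$, i.e. $c(g.E) = g.c(E)$; combined with $\Phi(g.f) = g.\Phi(f)$ this gives $\sigma(g.f) = g.\sigma(f)$ whenever both $f, g.f \in B(\tilde{e},\alpha)$. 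Fifth, pass to the quotient: the projection $p : L(\Omega,G) \to L(\Omega,G)/G$ restricted to $B(\tilde{e},\alpha)$ has open image containing an open ball $B(\overline{\tilde{e}},\alpha)$ around the base point; define $s(\overline{f}) = \sigma(f)^{-1}\cdot f$, which by equivariance depends only on the class $\overline{f}$, is continuous, and satisfies $p(s(\overline{f})) = \overline{f}$ — this is the local cross-section near $\overline{\tilde{e}}$. Finally, for an arbitrary $\overline{f}_0 \in L(\Omega,G)/G$, lift it to some $f_0 \in L(\Omega,G)$ and use $\overline{f} \mapsto s(\overline{f}\,\overline{f_0}^{-1})\cdot f_0$ as a local section near $\overline{f}_0$, exactly as in the proof of Proposition~\ref{prop:existcrosssectionS1}; alternatively, since $L(\Omega,G)$ is a topological group (Proposition~\ref{prop:EGtopgroup}) and left translations are isometries, one transports the section at $\overline{\tilde{e}}$ to any other point.

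\textbf{Main obstacle.} The genuinely substantive work has already been front-loaded into Propositions~\ref{prop:masscont} and~\ref{prop:esssupport} and their supporting lemmas; the only real point requiring care here is the \emph{compatibility of the two neighbourhoods}. One must verify that a single convex $U$ (and a single radius $\alpha$) can be chosen so that $\Phi$ maps $B(\tilde{e},\alpha)$ into the precise subset of $\mathcal{B}(U)$ on which $c$ is defined and continuous — in particular that $\Phi(f)$ always has positive measure and lies inside the convex ball used in Proposition~\ref{prop:masscont}. Fortunately this is built into the construction: $\Phi(f)$ is an open set containing the ball of radius $r_2$ about $e$ and contained in the ball of radius $r_3 < A$, so it has positive Haar measure and lies in any convex neighbourhood of $e$ containing $B(e,A)$; hence taking $U$ to be a common refinement (and $\alpha$ the smaller of the two thresholds) suffices. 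A secondary point worth a sentence is that the pseudo-distance $d(E,F) = \mu_M(E\Delta F)$ used on $\mathcal{B}(U)$ in Proposition~\ref{prop:masscont} is literally the one in which $\Phi$ was shown ($K'$-)Lipschitz, so continuity of the composite is immediate and no further estimate is needed.
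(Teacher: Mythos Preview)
Your proposal is correct and follows essentially the same approach as the paper: compose the equivariant essential-support map $\Phi$ with the Riemannian center of mass $c$ to obtain a continuous $G$-equivariant $\Psi = c\circ\Phi$, then set the local section to $f\mapsto \Psi(f)^{-1}\cdot f$ and translate to arbitrary points via the group structure. The only cosmetic differences are notational (you call the composite $\sigma$ where the paper calls it $\Psi$), and your remark on the compatibility of the two neighbourhoods $U$ is exactly the bookkeeping the paper handles by first choosing $U$ for Proposition~\ref{prop:masscont} and then feeding that same $U$ into Proposition~\ref{prop:esssupport}.
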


Let $x_0 \in G$, and $f_0 \in L(\Omega,G)$. There exists $g_0 \in L(\Omega,G)$ such that
$f_0g_0 = \tilde{x}_0$. Let $\bar{x}_0$, $\bar{f}_0$ the images of $\tilde{x}_0$ and $f_0$, respectively,
inside $L(\Omega,G)/G$. If we can find $\alpha>0$ and a local section $s : B(\bar{x}_0,\alpha) \to L(\Omega,G)$,
then the map $f \mapsto s(\overline{f g_0})g_0^{-1} $ defines a section $B(f_0,\alpha) \to L(\Omega,G)$.
Therefore, we can assume $f_0 = x_0$, and even $x_0 = 1$.

We denote $\dd p$ the volume form associated to the Haar measure of $G$.
Let $U$ be the open neighborhood of $p_0 = 1$ provided by proposition \ref{prop:masscont}. We have a continuous map $c : \mathcal{B}(U) \to
U$, where $c(E)$ is the unique element of $U$ minimizing $F : x \mapsto \int_E d(x,p)^2 \dd p$. If $g \in G$ and $E \in \mathcal{B}(U)$
satisfy $g.E \subset U$, then $c(g.E) \in U$, and $c(g.E)$ minimizes 
$$x \mapsto \int_{g.E} d(x,p)^2 \dd p = \int_E d(x,g.p)^2 \dd p
= \int_E d(g^{-1}.x,p)^2 \dd p = F(g^{-1}.x)$$
over $M$. But $F(g^{-1}.x)$ is minimal iff $g^{-1}.x = c(E)$ that is $x = g .c(E)$, whence $c(g.E) = g.c(E)$.

By proposition \ref{prop:esssupport} there exists $\alpha >0$ and a continuous map $\Phi : B(\tilde{x}_0,\alpha) \to \mathcal{B}(U)$
such that if $f \in B(\tilde{x}_0,\alpha)$ and $g \in G$ satisfy $g.f \in B(\tilde{x}_0,\alpha)$, then
$\Phi(g.f) = g.\Phi(f)$. From this we get a continuous map $\Psi : B(\tilde{1},\alpha) \to U \subset G$ defined
by $\Psi(f) = c(\Phi(f))$ with the property that, for every $f,g$ with $f \in B(\tilde{x}_0,\alpha)$ and $g \in G$ satisfying
$g.f \in B(\tilde{x}_0,\alpha)$ we have $\Psi(g.f) = g.\Psi(f)$.
Now let us consider the continuous map $\sigma : B(\tilde{1},\alpha) \to L(\Omega,G)$ defined by $f \mapsto \Psi(f)^{-1} .f$
and denote $B(\bar{1},\alpha)$ the ball in $L(\Omega,G)/G$ of center the image $\bar{1}$ of $\tilde{1}$ and radius $\alpha$.
In order to check that $\sigma$ factorizes through the natural projection map $B(\tilde{1},\alpha) \to B(\bar{1},\alpha)$
it is sufficient to have $\sigma(g.f) = \sigma(f)$ whenever $g \in G$ satisfies $g.f \in B(\tilde{1},\alpha)$, and we already checked
that this is true. Therefore $\sigma$ provides a local cross-section and this proves the theorem.

\begin{corollary} If $G$ is a compact Lie group endowed with a
bi-invariant metric, then $L(\Omega,G)/G$ is a classifying space for the topological group $G$
inside the category of paracompact spaces.
\end{corollary}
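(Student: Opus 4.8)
The plan is to deduce this immediately from the theorem just established together with Proposition~\ref{prop:genclassifiant}. First I would note that a bi-invariant metric on $G$ is, tautologically, a bi-invariant metric, so the standing hypothesis of Proposition~\ref{prop:genclassifiant} on the metric is met. By the theorem, the natural projection map $L(\Omega,G)\to L(\Omega,G)/G$ admits local (continuous) cross-sections. Feeding these two facts into Proposition~\ref{prop:genclassifiant} yields at once that $L(\Omega,G)/G$ is a classifying space for $G$ in the category of paracompact spaces.

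For completeness I would recall the mechanism packaged inside Proposition~\ref{prop:genclassifiant}: by Proposition~\ref{prop:EGtopgroup} the bi-invariance makes $L(\Omega,G)$ a topological group, so $L(\Omega,G)\to L(\Omega,G)/G$ is a principal $G$-bundle (the translation function $(f_1,f_2)\mapsto f_2 f_1^{-1}$ being continuous in the topological group $L(\Omega,G)$); the existence of a local cross-section upgrades this to a locally trivial, hence numerable, bundle, since $L(\Omega,G)/G$ is metrizable and therefore paracompact; Dold's theorem then gives universality from the contractibility of $L(\Omega,G)$ established in Proposition~\ref{prop:LEcontractcomplete}.

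There is essentially no obstacle at this final step: all the substantive work has already been carried out, on the one hand in the theorem (the geometric construction of the cross-section $\sigma(f)=\Psi(f)^{-1}.f$ from the Riemannian center of mass $c$ and the essential-support map $\Phi$ of Proposition~\ref{prop:esssupport}), and on the other hand in Proposition~\ref{prop:genclassifiant}. The only point worth emphasizing is that, in contrast with the earlier corollary which passed through complete regularity of $L(\Omega,G)$ and Gleason's theorem, this version rests solely on the explicit geometric cross-section, so it is genuinely Gleason-free.
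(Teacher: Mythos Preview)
Your proposal is correct and follows essentially the same approach as the paper: invoke the theorem to get the local cross-section, then apply Proposition~\ref{prop:genclassifiant}. The additional paragraph unpacking the mechanism of Proposition~\ref{prop:genclassifiant} and the remark that this bypasses Gleason's theorem are accurate elaborations, but the core argument is identical to the paper's two-line proof.
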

\begin{proof}
By the theorem we know that the projection map
 $L(\Omega,G) \to L(\Omega,G) /G$ admits a local cross-section. 
 Proposition \ref{prop:genclassifiant} implies the conclusion.
 \end{proof}

\section{Exponentiations}
\label{sect:exponential}

In this section we prove results of the form $L(\Omega,L(\Omega,G)) \simeq
L(\Omega \times \Omega,G)$, for a sufficiently large class of groups $G$.
By construction, this bijection is a $G$-equivariant isometry. We start by some
preliminaries, then establish the case where $G$ is discrete (but not necessarily countable), and  then when $G$ is a Borel subset of a compact metric space. This covers the case
of compact Lie groups and of metric profinite groups. Finally we investigate some
interesting subspaces of $L(\Omega,L(\Omega,G))$.

\subsection{Dense subsets of $L(2)$}

\label{sect:prelimL2}

We denote $\mathcal{L}(2) = \mathcal{L}(\Omega,2)$ and
$L(2) = L(\Omega,2) = L(\Omega,\{ 0, 1 \})$.
We identify them with the (classes of) Borel subsets of 
 $[0,1]$, endowed with the pseudo-distance $(X,Y) \mapsto \la(X \Delta Y)$,
 where $\la$ is the Lebesgue measure on $[0,1]$.

We let $\mathcal{P}_n \subset \mathcal{L}(2)$
the set of all the Borel subsets which are unions of at most $n$ closed
intervals of $[0,1]$. Clearly $\mathcal{P}_n \subset \mathcal{P}_{n+1}$ for
all $n \geq 1$. We let $\mathcal{G}_n \subset \mathcal{L}(2)$ the set
of maps $[0,1] \to \{0,1 \}$ which are constant on each interval
of the form $[\frac{k}{n},\frac{k+1}{n}[$, for $0 \leq k < n$. 
We let $P_n, G_n$ denote the images of $\mathcal{P}_n, \mathcal{G}_n$
in $L(2)$. Clearly $G_n \subset P_n$.

\begin{lemma} \label{lem:pavescompacts}
For all $n$, $\mathcal{P}_n$ is a closed subset of $\mathcal{L}(2)$,
and is metacompact. In particular its image inside $L(2)$ is compact.
\end{lemma}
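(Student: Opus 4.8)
The plan is to parametrize $\mathcal{P}_n$ by the endpoints of the constituent intervals and use a compactness argument for the endpoint space, together with the continuity of the map sending endpoint data to the corresponding element of $\mathcal{L}(2)$. First I would set up the surjection
$$
q : \Delta_n \longrightarrow \mathcal{P}_n, \qquad (a_1,b_1,\dots,a_n,b_n) \longmapsto \bigcup_{i=1}^n [a_i,b_i],
$$
where $\Delta_n = \{ (a_i,b_i) \in [0,1]^{2n} \ | \ a_i \leq b_i \}$ is a compact subset of $[0,1]^{2n}$ (with the convention that $[a_i,b_i]$ is a single point, hence neglectable, when $a_i = b_i$; this allows unions of \emph{fewer} than $n$ intervals and gives $\mathcal{P}_{n-1} \subset \mathcal{P}_n$ for free). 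The key computation is that $q$ is continuous from $\Delta_n$ (with its euclidean topology) to $(\mathcal{L}(2), \la(\cdot\,\Delta\,\cdot))$: indeed $\la\left( \bigcup_i [a_i,b_i] \ \Delta \ \bigcup_i [a_i',b_i'] \right) \leq \sum_i \la([a_i,b_i] \Delta [a_i',b_i']) \leq \sum_i (|a_i - a_i'| + |b_i - b_i'|)$, which is a one-line estimate. Since $\Delta_n$ is compact and $q$ is continuous, $\mathcal{P}_n = q(\Delta_n)$ is a compact subset of the pseudo-metric space $\mathcal{L}(2)$; in particular it is closed in $\mathcal{L}(2)$, and, being compact, it is metacompact (every open cover has a point-finite open refinement — indeed a finite subcover). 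This already yields the first two assertions.

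For the last assertion — that the image of $\mathcal{P}_n$ inside $L(2)$ is compact — I would argue that the canonical projection $\pi : \mathcal{L}(2) \to L(2)$ is $1$-Lipschitz (it is distance-preserving on the level of the pseudo-distance, passing to the quotient metric), hence continuous, so $\pi(\mathcal{P}_n) = P_n$ is the continuous image of a compact set and therefore compact. Alternatively, and perhaps more cleanly, I would note that the composite $\pi \circ q : \Delta_n \to L(2)$ is continuous with image $P_n$, so $P_n$ is compact directly; and since $L(2)$ is a genuine metric space, compactness of $P_n$ also gives that it is closed in $L(2)$.

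The main obstacle, such as it is, is purely bookkeeping rather than conceptual: one must be slightly careful about the distinction between $\mathcal{P}_n$ as a subset of the \emph{pseudo}-metric space $\mathcal{L}(2)$ (where "closed" must be interpreted with the pseudo-metric, and where the limit of a sequence is only determined up to a null set) versus $P_n$ as a subset of the honest metric space $L(2)$. The statement "$\mathcal{P}_n$ is a closed subset of $\mathcal{L}(2)$" should be read as: if a sequence in $\mathcal{P}_n$ converges in pseudo-distance to some $X \in \mathcal{L}(2)$, then $X$ agrees almost everywhere with a member of $\mathcal{P}_n$; this follows from compactness of $\mathcal{P}_n$ exactly as above (a convergent sequence has its limit in the compact set, up to the ambiguity inherent in $\mathcal{L}(2)$). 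The term "metacompact" is a bit of a red herring here since compact spaces are trivially metacompact (and paracompact, and much more); I would simply remark this. I expect no genuine difficulty, only the need to phrase the pseudo-metric statements precisely.
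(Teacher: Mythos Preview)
Your proposal is correct and follows essentially the same approach as the paper: parametrize $\mathcal{P}_n$ by a compact subset of $[0,1]^{2n}$, check that the parametrization map is continuous (Lipschitz) into the pseudo-metric space, and conclude compactness of the image. The only cosmetic difference is that the paper uses the coordinates $(x_i,e_i)$ with $x_i+e_i\leq 1$ (start point and length) rather than your $(a_i,b_i)$ with $a_i\leq b_i$ (two endpoints), and the paper's continuity estimate is phrased as $\lambda(\Phi(X)\Delta\Phi(X'))\leq 4n\eps$ rather than your cleaner $\sum_i(|a_i-a_i'|+|b_i-b_i'|)$; but these are the same argument.
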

\begin{proof}
It is enough to show that the image $P_n$ of $\mathcal{P}_n$
inside $L^1$ is compact. Let $B = [0,1]^{2n}$, endowed with the
usual topology, and $D \subset B$ being defined by
$$
\{ (x_1,\dots,x_n,e_1,\dots,e_n) \ | \ x_i + e_i \leq 1 \}.
$$
Since $D$ is closed inside $B$, it is compact. We consider
$\Phi : D \to \mathcal{L}^1$ defined by
$$
X = (x_1,\dots,x_n,e_1,\dots,e_n) \mapsto \bigcup_{i=1}^n [x_i,x_i+e_i].
$$
If $X' = (x'_1,\dots,x'_n,e'_1,\dots,e'_n) \in D$, with $|x'_i - x_i| \leq \eps$, $|e'_i-e_i| \leq \eps$, then
$$
\Phi(X) \cup \Phi(X') \subset \bigcup [x_i-\eps, x_i + e_i + \eps]
$$
et 
$$
\Phi(X) \cap \Phi(X') \supset \bigcup [x_i+\eps, x_i + e_i - \eps]
$$
hence, if $t \in  \Phi(X) \Delta \Phi(X')$, then
$\exists i \ \ x_i - \eps \leq t \leq x_i + e_i + \eps$
and $\forall i \ \ t < x_i+\eps $  ou $t > x_i + e_i-\eps$.
It follows that there exists $i$ such that $t \in [x_i-\eps,x_i+\eps] \cup 
[x_i + e_i-\eps, x_i + e_i+\eps]$.
Therefore,
$$
\Phi(X) \Delta \Phi(X') \subset \bigcup_{i=1}^n \left([x_i-\eps,x_i+\eps] \cup 
[x_i + e_i-\eps, x_i + e_i+\eps] \right) 
$$
whence $\la(\Phi(X) \Delta \Phi(X') ) \leq 4n \eps$,
thus proving that $\Phi$ is continuous. Then $P_n$ is the image of a compact set under a continous map, and therefore is compact.
\end{proof}

\begin{lemma} \label{lem:pavesdenses} $\bigcup_{n \geq 1} P_n$ is dense inside $L(2)$. If $(y_n)_n$ is a positive integer-valued sequence tending to $\infty$,
then $\bigcup_n G_{y_n}$ is dense inside $L(2)$.
\end{lemma}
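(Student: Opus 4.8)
The plan is to prove the two density assertions in succession, deducing the second from the first.

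First I would show that $\bigcup_{n\geq 1}P_n$ is dense in $L(2)$. Fix a Borel set $X\subset[0,1]$ representing a class in $L(2)$ and $\eps>0$. By outer regularity of the Lebesgue measure there is an open set $U\supseteq X$ with $\la(U\setminus X)<\eps/2$. Every open subset of $[0,1]$ is a countable disjoint union of (relatively) open intervals, $U=\bigsqcup_{i\geq 1}I_i$; since $\sum_i\la(I_i)=\la(U)\leq 1<\infty$, there is $N$ with $\sum_{i>N}\la(I_i)<\eps/2$. Put $V=\overline{I_1}\cup\dots\cup\overline{I_N}$, a union of at most $N$ closed intervals, so that the class of $V$ lies in $P_N$ (adding finitely many endpoints changes nothing in $L(2)$). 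Then $X\setminus V\subseteq U\setminus V\subseteq\bigsqcup_{i>N}I_i$ and $V\setminus X\subseteq U\setminus X$, whence $\la(X\,\Delta\,V)<\eps/2+\eps/2=\eps$, i.e.\ $d(X,V)<\eps$. Since $\eps$ and $X$ are arbitrary, $\bigcup_n P_n$ is dense.

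Next, for $\bigcup_n G_{y_n}$, I would use the first part to reduce to approximating an arbitrary $W\in P_m$, say $W=\bigcup_{i=1}^m[a_i,b_i]$. Given $y\in\N$, let $W_y$ be the union of those cells $[\tfrac{k}{y},\tfrac{k+1}{y})$, $0\leq k<y$, that meet $W$; its characteristic function is constant on each cell of the partition of step $1/y$, so $W_y\in\mathcal{G}_y$. For each $i$, the cells meeting $[a_i,b_i]$ form a contiguous block whose union is contained in $[a_i-\tfrac1y,b_i+\tfrac1y]$ and contains $[a_i,b_i]$; hence $W\subseteq W_y$ and $\la(W_y\setminus W)\leq\sum_{i=1}^m\la(\mathrm{block}_i\setminus[a_i,b_i])\leq 2m/y$. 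Since $y_n\to\infty$, for $n$ large we get $d(W,W_{y_n})<\eps$ with $W_{y_n}\in G_{y_n}$. Combining: given $Z\in L(2)$ and $\eps>0$, choose $W\in\bigcup_m P_m$ with $d(Z,W)<\eps/2$, then $n$ with $d(W,W_{y_n})<\eps/2$, so $d(Z,W_{y_n})<\eps$; thus $\bigcup_n G_{y_n}$ is dense.

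I do not expect a serious obstacle here: the argument is elementary. The two points needing a little care are the passage from a countable to a finite union of intervals (justified by the convergence of $\sum_i\la(I_i)$), and the bookkeeping of the symmetric difference when the endpoints $a_i,b_i$ are snapped to the grid $\tfrac1y\Z$, which I would organize exactly as above so that the bound $2m/y$ is transparent and uniform over $W\in P_m$.
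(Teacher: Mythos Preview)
Your proof is correct and follows essentially the same approach as the paper: approximate a Borel set from outside by a countable union of intervals (via outer regularity/exterior measure), truncate to finitely many, then snap the endpoints of a finite union to a fine grid. The paper's proof is terser, especially in the second part where it simply declares the grid approximation ``clear''; your explicit $2m/y$ bound makes this step transparent.
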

\begin{proof} Let $B \in \mathcal{L}(2)$ be a Borel set. Since $B$
is a Lebesgue-measurable set, its
Lebesgue measure coincides with its exterior
measure, namely
$
\la(B) = \inf \{ \la(C) ; B \subset C =  \bigcup_{i \geq 1}E_i, E_i\  \mathrm{intervals} \ \}$. Let $\eps > 0$, and $C = \bigcup_{i \geq 1} E_i$
such that $B \subset C$ and $\la(B \Delta C) \leq \eps$. Letting
$C_n = \bigcup_{1 \leq i \leq n} E_i$ we have $\la(C_n \Delta C) \to 0$
hence there
exists $n$ such that $\la(C_n \Delta C) \leq \eps$. Therefore
$\la(B \Delta C_n) \leq 2 \eps$. Since $C_n \in \mathcal{P}_n$
this proves the first claim. Let $m \in \N$ and $A \in \mathcal{P}_m$.
Then it is clear that, for $n$ large enough, $A$ can be approximated
by an element of $\mathcal{G}_{y_n}$, and this proves the second claim.

\end{proof}

\subsection{Discrete Exponentiation theorem}

The goal of this section is to prove the following result, which can be viewed as a partial
analog, for $X$ discrete, of the exponentiation theorem $(X^Y)^Z \simeq X^{Y \times Z}$ in
topology (e.g. in the realm of Hausdorff locally compact spaces).

\begin{theorem} \label{theoskolem} Let $X,Y$ be two non-atomic probability spaces ($X = Y = \Omega$), and $(D,e)$ be a pointed discrete metric space.
There exists a morphism $\mathcal{L}(X \times Y,D) \to \mathcal{L}(X,\mathcal{L}(Y,D))$ inducing
a bijective isometry $L(X\times Y,D) \to L(X,L(Y,D))$. This isometry is equivariant with respect to the obvious actions of $\mathfrak{S}(D)$.
\end{theorem}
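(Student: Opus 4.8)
The natural map is defined as follows: given a Borel map $h : X \times Y \to D$, for each $x \in X$ let $h_x : Y \to D$ be the partial map $t \mapsto h(x,t)$; then $\Lambda(h) : x \mapsto h_x$ should be an element of $\mathcal{L}(X,\mathcal{L}(Y,D))$. The first thing I would verify is that this is well-posed: by Corollary \ref{cor:propstar} (property $(\star)$), applied to the Borel map $h$ on the non-atomic space $X \times Y \simeq \Omega$, the image $h(X\times Y)$ is countable, say $\{d_k\}_{k}$, with $\sum_k (\mu\times\mu)(h^{-1}(d_k)) = 1$. Writing $A_k = h^{-1}(d_k) \subset X\times Y$, Fubini shows that for almost every $x$, the slice $(A_k)_x$ is Borel in $Y$, and $\sum_k \mu((A_k)_x) = 1$, so $h_x \in \mathcal{L}(Y,D)$ a.e.\ (it has countable image, hence finite $L^1$-distance to a constant since $D$ has diameter $\le 1$). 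Measurability of $x \mapsto h_x$ as a map into the metric space $L(Y,D)$ is the delicate measurability point: I would check it by noting that $x \mapsto d_{L(Y,D)}(h_x, \tilde{d}) = \int_Y d(h(x,t),d)\,dt = \sum_{k} \mu((A_k)_x)\,d(d_k,d)$ is Borel in $x$ for each fixed $d \in D$ (again by Fubini), and that the values of $h_x$ lie in the separable subspace of $L(Y,D)$ spanned by step functions with values in the countable set $\{d_k\}$ — so Borel-measurability into this separable metric target follows from measurability of all the distance functions $x\mapsto d(h_x, g)$ for $g$ ranging over a countable dense subset (staircase maps with rational breakpoints and values in $\{d_k\}$), cf.\ the density statements around Lemma \ref{lem:alphaseparable}.

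Next I would prove that $\Lambda$ is an isometry onto its image. This is a direct double application of Fubini:
$$
d_{L(X,L(Y,D))}(\Lambda(h),\Lambda(h')) = \int_X d_{L(Y,D)}(h_x,h'_x)\,dx = \int_X \left( \int_Y d(h(x,t),h'(x,t))\,dt\right) dx = \int_{X\times Y} d(h,h') = d_{L(X\times Y,D)}(h,h'),
$$
which in particular shows $\Lambda$ is well-defined on equivalence classes (two $h$'s agreeing a.e.\ on $X\times Y$ give $\Lambda(h)$'s at distance $0$) and injective. The $\mathfrak{S}(D)$-equivariance is immediate from the pointwise formula $\sigma \cdot h : (x,t)\mapsto \sigma(h(x,t))$.

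The main obstacle is \textbf{surjectivity}: given $\varphi \in \mathcal{L}(X,\mathcal{L}(Y,D))$, I must produce a single Borel $h : X\times Y \to D$ with $h_x = \varphi(x)$ for a.e.\ $x$. Here $\varphi(x)$ is only an \emph{equivalence class} of Borel maps $Y\to D$, so the issue is a measurable selection of genuine representatives, coherently in $x$. The plan is: since $\varphi$ is Borel into the separable (by Lemma \ref{lem:alphaseparable} together with Proposition \ref{prop:imageseparable}, as $\varphi(X)$ is separable) metric space $L(Y,D)$, approximate $\varphi$ by Borel \emph{simple} functions $\varphi_n : X \to L(Y,D)$ taking finitely many values, each value being (the class of) an honest staircase map $Y \to D$; these $\varphi_n$ can be chosen with $d(\varphi_n(x),\varphi(x)) \to 0$ for a.e.\ $x$ and $\sum_n d(\varphi_n,\varphi_{n+1}) < \infty$ in $L(X,L(Y,D))$ (pass to a fast Cauchy subsequence as in the completeness proof of Proposition \ref{prop:LEcontractcomplete}). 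Each $\varphi_n$ manifestly lifts to a Borel $h_n : X\times Y \to D$ (on each of the finitely many Borel pieces of $X$ where $\varphi_n$ is constant, put the corresponding staircase map). Then $\int_{X\times Y} d(h_n,h_{n+1}) = d_{L}(\varphi_n,\varphi_{n+1})$ is summable, so by the monotone convergence argument of Proposition \ref{prop:LEcontractcomplete} the $h_n(x,t)$ form a Cauchy sequence in the \emph{discrete} space $D$ for a.e.\ $(x,t)$, hence are eventually constant a.e.; define $h(x,t)$ to be that eventual value (and a constant on the null exceptional set). Then $h$ is Borel as an a.e.-pointwise limit of Borel maps into a discrete space (the sets $\{h = d\}$ are countable unions/intersections of the Borel sets $\{h_n = d\}$), and $\Lambda(h) = \lim \Lambda(h_n) = \lim \varphi_n = \varphi$ in $L(X,L(Y,D))$. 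This yields surjectivity and completes the proof; I expect the only genuinely fussy points to be the two measurability verifications (of $\Lambda(h)$ and of the simple-function approximation), both of which reduce to Fubini plus the countability-of-image results $(\star)$ and Proposition \ref{prop:imagedenombrable}.
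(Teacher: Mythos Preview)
Your construction of the map $\Lambda$ and the isometry verification via Fubini are essentially the same as the paper's (the paper also reduces to countable image via Proposition~\ref{prop:imagedenombrable}, checks that sections of Borel sets are Borel, and verifies measurability of $x\mapsto h_x$ by testing against a countable basis of balls in the separable target). One cosmetic remark: slices of a \emph{Borel} set are Borel for \emph{every} $x$, not just almost every $x$; you do not need Fubini for this step.

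Your surjectivity argument, however, is genuinely different from the paper's and in fact more streamlined. The paper first reduces to countable $D$, then further reduces to $D=\{0,1\}$ by projecting along the coordinate maps $\varphi_d:D\to\{0,1\}$, and finally proves a dedicated technical result (Proposition~\ref{prop:approxborelcarre}): given $A\subset[0,1]^2$ with Borel slices and Borel slice-map $x\mapsto A_x$, one constructs explicit Borel approximants $B_\alpha$ by exploiting the \emph{compactness} of the spaces $P_n$ of $n$-interval unions (Lemma~\ref{lem:pavescompacts}), then passes to the limit in $L^1([0,1]^2,\{0,1\})$. Your route bypasses both the $\{0,1\}$-reduction and Proposition~\ref{prop:approxborelcarre}: you use separability of $\varphi(X)$ (Proposition~\ref{prop:imageseparable}) together with density of staircase maps in $L(Y,D)$ to build simple approximants $\varphi_n$ directly, lift them trivially to Borel $h_n:X\times Y\to D$, and invoke completeness of $L(X\times Y,D)$. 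This is the same underlying mechanism (approximate, lift, complete) but organized more economically; what the paper's approach buys is an explicit description of the approximating Borel sets in the square, at the cost of the extra reduction and the compactness lemma.

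One small point to tighten: your $\varphi_n$ will in general take \emph{countably} many values, not finitely many, since $\varphi(X)$ need not be totally bounded. This is harmless---the lifting $h_n(x,y)=g_k(y)$ on the Borel partition $X=\bigsqcup_k \varphi_n^{-1}(g_k)$ is still Borel when the partition is countable---but you should say ``countably many'' or add the one-line truncation argument (discard the values hit on a set of small measure) if you want finitely many.
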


Let $f \in \mathcal{L}(X \times Y,D) $, that is a Borel map $X \times Y \to D$.
By proposition \ref{prop:imagedenombrable} it has countable image $f(I) =D_0$, hence $f \in \mathcal{L}(X\times Y,D_0)$.
As a consequence, every $f^x : y \mapsto f(x,y)$
is a Borel map $Y \to D_0$ if we can prove that $(f^x)^{-1}(\{ d_0 \})$
is Borel for every $d_0 \in D_0$. But $(f^x)^{-1}(\{ d_0 \}) = \{ y ; (x,y) \in f^{-1}(\{ d_0 \}) \})$
is a section of the Borel set $f^{-1}(\{ d_0 \}) \}) \subset X \times Y$, and is
therefore Borel.

This defines a map $F : x \mapsto f^x$, $X \to \mathcal{L}(Y,D_0) \subset \mathcal{L}(Y,D)$. We want to
show that this is a Borel map, with respect to the (non-Hausdorff) topology on $\mathcal{L}(Y,D)$ defined by the natural
pseudo-metric. For this we need to show that $F^{-1}(U)$ is Borel for every open subset $U$ in $\mathcal{L}(Y,D)$ or, equivalently of $\mathcal{L}(Y,D_0)$, since $F$ takes values
inside $\mathcal{L}(Y,D_0)$.
Since $D_0$ is countable, $\mathcal{L}(Y,D_0)$ is separable (see proposition \ref{prop:LEcontractcomplete}) ; because it is
pseudo-metric, its topology admits a countable basis of open sets made of open balls. We thus need to show
that $F^{-1}(U)$ is Borel for every ball 
$U$, with center $g \in \mathcal{L}(Y,D_0)$ and radius $\alpha > 0$.
Then $F^{-1}(U) = \{ x \in X ; d(f^x,g) < \alpha \} = \{ x \in X ; \mu( \{ y; d(f(x,y),g(y)) \neq 0 \}) < \alpha \}$.
Let $A = \{ (x,y) \in X \times Y ; d(f(x,y),g(y)) = 1 \}$. Since $f : X \times Y \to D_0$ and $G : (x,y) \mapsto g(y)$ are
Borel,
then so is
$f \times G : X \times Y \to D_0 \times D_0$. Since $d : D_0 \times D_0 \to \{ 0,1 \}$ is continuous,
we get that $d \circ (f_0 \times G)$ is Borel-measurable, therefore $A = (d \circ (f_0 \times G))^{-1}(\{ 1 \})$ is a Borel set
($\{ 1 \}$ being open in $\{0,1\}$). It follows that all its sections $A_x = \{ y \in Y; (x,y) \in A \} = \{ y \in Y ; d(f_0(x,y),g(y))=1 \}$
are Borel-measurable, and so is the map $\mathcal{A} : x \mapsto \mu(A_x)$, $\mathcal{A} : X \to \R_+$. It follows that
$F^{-1}(U) = \mathcal{A}^{-1}([0,\alpha[)$ is a Borel set. This proves that $F$ is 
a Borel map.

We have thus defined a map $\mathcal{L}(X \times Y,D) \to \mathcal{L}(X,\mathcal{L}(Y,D))$ given by $f \mapsto F$.
Since the Borel subsets of $\mathcal{L}(Y,D)$ are the inverse images by the natural projection of the Borel subsets of $L(Y,D)$,
we have a natural map $\mathcal{L}(X,\mathcal{L}(Y,D)) \to \mathcal{L}(X,L(Y,D))$. By composition,
we get a map $\Psi : \mathcal{L}(X \times Y,D) \to \mathcal{L}(X,\mathcal{L}(Y,D)) \to \mathcal{L}(X,L(Y,D)) \to L(X,L(Y,D))$.
Let now $f,h \in \mathcal{L}(X \times Y, D)$ having the same image under $\Psi$ inside
$L(X \times Y,D)$, meaning that $\{ (x,y) ; f(x,y) \neq h(x,y) \}$ has measure $0$, or in
other terms $\int_{X \times Y} d(f(x,y),h(x,y)) \dd x \dd y = 0$.
We can assume $f,h \in \mathcal{L}(X \times Y,D_0)$
for the same countable subset $D_0$ of $D$.
By the Fubini-Tonelli theorem we get $\int_X \left( \int_Y d(f(x,y),h(x,y))\dd y \right) \dd x = 0$, that is
$$
0 = \int_X \left( \int_Y d(f^x(y),h^x(y)) \dd y \right) \dd x = \int_X d_{L(Y,D)} (f^x,h^x) \dd x = d_{L(X,L(Y,D))} (\Psi(f),\Psi(h)).
$$
The map $\Psi$ thus induces a map $L(X \times Y,D) \to L(X,L(Y,D))$, that we still denote by $\Psi$, and which is clearly isometric by the same computation
following from Fubini-Tonelli's theorem. We want to prove that it is surjective.

Let $f \in \mathcal{L}(X,\mathcal{L}(Y,D))$, that we identify with a map $X \times Y \to D$. We need to prove that
there exists $\hat{f} \in \mathcal{L}(X \times Y,D)$ with the same image inside $L(X,L(Y,D))$. 
Let $f_0$ be the image of $f$ inside $\mathcal{L}(X,L(Y,D))$. Since $L(Y,D)$ is metric, 
 by corollary \ref{cor:probalindelof} we know that $f_0(X)$ is separable. Let $(g_n)_{n \in \N}$
 be a dense sequence inside $f_0(X)$, and $\tilde{g}_n$ a representative of $g_n$ inside
 $\mathcal{L}(Y,D)$. By proposition \ref{prop:imageseparable} we know
 that each $\tilde{g}_n(Y)$ is countable. Let $D_0 = \bigcup_n \tilde{g}_n(Y)$. 
 Since $L(Y,D_0)$ is closed inside $L(Y,D)$, and $g_n \in L(Y,D_0)$ for all $n$,
 we get $f_0(X) \subset L(Y,D_0)$, hence $f \in L(X,L(Y,D_0))$. We can thus assume that $D$
 is countable.

We show that we can reduce our problem to the case $D_0 = \{0,1 \}$. Indeed, let us
associate to $d \in D_0 \setminus \{ e \}$ the map $\varphi_d : D_0 \to \{0,1 \}$ given
by $d \mapsto 1$ and $d' \mapsto 0$ if $d'\neq d$. The set $\{0,1\}$ being endowed with the discrete metric,
this is a 1-Lipschitz map, and therefore by lemma \ref{lemnatuprojlipsch} there are 1-Lipschitz induced maps $\check{\varphi}_d : \mathcal{L}(X,\mathcal{L}(Y,D_0))
\to \mathcal{L}(X,\mathcal{L}(Y,\{0,1\}))$.

Let us assume that we know how to build a $f_d \in \mathcal{L}(X \times Y,\{0,1\})$ having the same image as $\check{\varphi}_d(f)$ inside $L(X,L(Y,\{0,1\}))$. This provides $\tilde{f}_d \in \mathcal{L}(X \times Y,
\{e,d\}) \subset \mathcal{L}(X \times Y,
D)$ by using the 1-Lipschitz obvious map $\{e,d \} \to D$.
We can assume that $D$ is an interval of $\N = \Z_{\geq 0}$, with $e$ being identified to $0$. We let $f^{(0)} \in \mathcal{L}(X \times Y,D)$ be the constant map $(x,y) \mapsto e$ and in general $f^{(n)}(x,y) = f(x,y)$ if $f(x,y) \leq n$, $f^{(n)}(x,y) = e$ otherwise. We have $f^{(n+1)} = \max (f^{(n)}, \tilde{f}_{n+1})$
hence by induction the $f^{(n)}$ are Borel. Then $f$ is a limit of a sequence of Borel maps $X \times Y \to D$ and therefore is Borel.

We can thus assume $D = \{0,1 \}$. In this case this will be a consequence of the following proposition.

\begin{proposition} \label{prop:approxborelcarre}
Let $\la$ be the Lebesgue measure on $[0,1]$ and $A \subset [0,1]^2$ such that
\begin{itemize}
\item for all $x \in [0,1]$ the section $A_x = \{ y \in [0,1] ; (x,y) \in A \}$ is Borel
\item the map $x \mapsto A_x$ is a Borel map $[0,1] \to L^1(\Omega,\{0,1 \})$
\end{itemize}
then there exists $B \subset [0,1]^2$ Borel
such that $\int_0^1 \la(A_x \Delta B_x) \dd x = 0$.
\end{proposition}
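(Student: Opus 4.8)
The plan is to approximate the Borel map $\Phi : [0,1] \to L(2)$, $x \mapsto A_x$, by countably-valued Borel maps, to realize each approximant by a genuine Borel subset of $[0,1]^2$, and then to observe that the associated indicator functions form a Cauchy sequence in $L^1([0,1]^2)$ whose limit provides the desired $B$. The point that makes this work is that, although $A$ itself need not be measurable in $[0,1]^2$, the completeness of $L^1([0,1]^2)$ applies to the honest Borel approximants.

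First I would fix a countable dense sequence $(v_k)_{k\in\N}$ in $L(2)$, which is legitimate since $L(2) = L(\Omega,\{0,1\})$ is separable by Proposition \ref{prop:LEcontractcomplete} (see also Lemma \ref{lem:pavesdenses}). For each $n \geq 1$ set $\Phi_n(x) = v_{k}$ where $k = \min\{ j \ ; \ d(\Phi(x),v_j) < 1/n \}$. Since $x \mapsto d(\Phi(x),v_j)$ is Borel for every $j$ (it is the composite of the Borel map $\Phi$ with the continuous function $d(\cdot,v_j)$), each $\Phi_n$ is a Borel map taking at most countably many values and satisfies $d(\Phi_n(x),\Phi(x)) < 1/n$ for all $x$. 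Writing $[0,1] = \bigsqcup_{j} E^{(n)}_j$ for the Borel partition on whose parts $\Phi_n$ is constant, say equal to $v_{k_j}$ on $E^{(n)}_j$, I would choose a Borel representative $C^{(n)}_j \subset [0,1]$ of $v_{k_j}$ and put
$$
B^{(n)} = \bigcup_{j} \bigl( E^{(n)}_j \times C^{(n)}_j \bigr) \subset [0,1]^2 ,
$$
a Borel set (a countable union of Borel rectangles) whose section $B^{(n)}_x$ at $x\in E^{(n)}_j$ is exactly $C^{(n)}_j$, hence represents $\Phi_n(x)$. Let $u_n = \un_{B^{(n)}}$, a Borel function on $[0,1]^2$.

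For each $x$ one has $\la(B^{(n)}_x \Delta A_x) = d(\Phi_n(x),\Phi(x)) < 1/n$, and the function $x \mapsto \la(B^{(n)}_x \Delta A_x)$ is Borel (on each $E^{(n)}_j$ it equals $x \mapsto d(v_{k_j},\Phi(x))$), so dominated convergence gives $\int_0^1 \la(B^{(n)}_x \Delta A_x)\,\dd x \to 0$. Then, by the triangle inequality in $L(2)$ applied sectionwise and Fubini's theorem for the genuinely Borel function $|u_n-u_m|$ on $[0,1]^2$,
$$
\int_{[0,1]^2}|u_n-u_m|\,\dd\la^2 \;=\; \int_0^1 \la(B^{(n)}_x \Delta B^{(m)}_x)\,\dd x \;\leqslant\; \int_0^1 \la(B^{(n)}_x \Delta A_x)\,\dd x + \int_0^1 \la(A_x \Delta B^{(m)}_x)\,\dd x,
$$
and the right-hand side tends to $0$ as $n,m\to\infty$; hence $(u_n)$ is Cauchy in $L^1([0,1]^2)$ and converges there to some $u$. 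As the $u_n$ are $\{0,1\}$-valued, a subsequence converges to $u$ almost everywhere, so $u$ agrees almost everywhere with $\un_B$ for a Borel set $B \subset [0,1]^2$ (take a Borel representative and correct it on a null set).

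It remains to conclude that $B$ works. Along a common subsequence $(n_k)$ one has $\la(B^{(n_k)}_x \Delta A_x) \to 0$ for a.e. $x$ (from the first limit above, after passing to a subsequence) and, applying Fubini to $\|u_{n_k}(x,\cdot) - \un_B(x,\cdot)\|_{L^1}$ together with $u_{n_k}\to\un_B$ in $L^1([0,1]^2)$, also $\la(B^{(n_k)}_x \Delta B_x) \to 0$ for a.e. $x$; the triangle inequality then yields $\la(A_x \Delta B_x) = 0$ for a.e. $x$, whence $\int_0^1 \la(A_x \Delta B_x)\,\dd x = 0$. I expect the main obstacle to be purely a matter of bookkeeping the measurability of the scalar functions of $x$ that occur — above all that $x\mapsto \la(B^{(n)}_x \Delta A_x)$ is Borel, which is where the hypothesis that $x\mapsto A_x$ is a Borel map into $L(2)$ is genuinely used — rather than any deeper difficulty: the substance of the proof is simply that passing from $A$ to the countably-valued Borel approximants $B^{(n)}$ moves us into $L^1([0,1]^2)$, where completeness is available.
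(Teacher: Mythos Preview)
Your proof is correct and follows the same overall architecture as the paper's: approximate the Borel map $x\mapsto A_x$ by simple (countably-valued) Borel maps into $L(2)$, realize each approximant as an honest Borel subset of $[0,1]^2$ built from rectangles, observe that the resulting indicators form a Cauchy sequence in $L^1([0,1]^2)$, and extract $B$ from the limit by completeness.

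The genuine difference lies in the approximation step. The paper does not use a countable dense set directly; instead it invokes the compact exhaustion $P_n\subset L(2)$ of Lemma~\ref{lem:pavescompacts}, so that for each tolerance $\alpha$ one first finds $n_0$ with $\la\{x: d(A_x,P_{n_0})\geq \alpha/3\}\leq \alpha/3$, then covers $P_{n_0}$ by finitely many balls of radius $\alpha/3$ centered at $Q_1,\dots,Q_m\in P_{n_0}$, and assigns to each $x$ (in the good set) the first $Q_i$ within $2\alpha/3$ of $A_x$. This yields a \emph{finitely}-valued approximant and a finite union of rectangles, at the price of an exceptional set of $x$'s of measure $\alpha/3$ where no control is claimed. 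Your approach is cleaner: by using only separability and the ``first $v_j$ within $1/n$'' rule you obtain a countably-valued approximant that works uniformly for \emph{every} $x$, with the bound $\la(B^{(n)}_x\Delta A_x)<1/n$ pointwise, so no exceptional set and no appeal to Lemma~\ref{lem:pavescompacts} are needed. The paper's finitary bookkeeping buys nothing here that your argument lacks; the final Cauchy-plus-completeness step is identical in both.
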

\begin{proof}
Let $\mathcal{T}$ denote the set of subsets $A \subset [0,1]^2$ satisfying these two assumptions. We denote $\la_2$ the Lebesgue measure on $[0,1]^2$.
By the Fubini-Tonelli theorem, $\mathcal{T}$ contains the Borel subsets of $[0,1]^2$. Moreover, it is a $\sigma$-algebra, and there is a measure $m : \mathcal{T} \to \R_+$ given by $A \mapsto \int_0^1 \la(A_x) \dd x$ that coincides
with the Lebesgue measure $\la_2$ on the Borel sets, again by Tonelli's theorem. We have a pseudo-distance $d : (X,Y) \mapsto m(X \Delta Y)$ on $\mathcal{T}$
that extends the $\mathcal{L}^1$
pseudo-distance on the Borel $\sigma$-algebra.

As in section \ref{sect:prelimL2} we let $\mathcal{P}_n$ denote the set of all the Borel subsets which are unions of at most $n$ closed intervals
in $[0,1]$, and $P_n$ its image in $L^1$. By lemma \ref{lem:pavescompacts}
the $P_n$ are compact, and by lemma \ref{lem:pavesdenses} the set $\bigcup_n P_n$
is dense. We let $\mathcal{P}_n(\eps) = \{ x \in \mathcal{L}^1 ; d(x,P_n) < \eps \}$.

Let $\alpha > 0$. Since $\bigcup_{n \geq 1} \mathcal{P}_n$
is dense inside $\mathcal{L}^1$, we have $\mathcal{L}^1 = \bigcup_{n \geq }
\mathcal{P}_n(\alpha/3)$. Since $x \mapsto d(x,P_n)$ is continuous,
each $\mathcal{P}_n(\alpha/3)$ is Borel, and we get
$\la(\mathcal{P}_n(\alpha/3)) \to 0$. Therefore, there exists $n_0$
such that $\la( ^c \mathcal{P}_n(\alpha/3) ) \leq \alpha/3$.

Since $\mathcal{P}_{n_0}$ is metacompact, there exists $Q_1,\dots,Q_m \in
\mathcal{P}_{n_0}$ such that $\mathcal{P}_{n_0} \subset \bigcup_{1 \leq i \leq m} Q_i(\alpha/3)$,
where $Q_i(\eps) = \{ x \in \mathcal{L}_1 ; d(x,Q_i) \leq \eps \}$.
Therefore $\mathcal{L}^1 = \bigcup_{1 \leq i \leq m} Q_i(2 \alpha/3)$,
and there is a well-defined map $p : \mathcal{L}^1 \to \{1,\dots, m \}$
given by $x \mapsto \min \{ i ; x \in Q_i(2 \alpha/3) \}$.
We have $p^{-1}(\{ i \}) = Q_i(2 \alpha/3) \setminus \bigcup_{1 \leq j < i } Q_j(\alpha)$. Since the $Q_i(\eps)$ are closed, each $p^{-1}(\{ i \})$
is Borel, and therefore $p$ is a Borel map. 

We let $B_{\alpha} = \{ (x,y);  A_x \in \mathcal{P}_{n_0}(\alpha/3) , y \in Q_{p(A_x)}\}$. We have
$$
B_{\alpha} = \bigcup_{i=1}^{m} \{ x ; A_x \in \mathcal{P}_{n_0}(\alpha/3)
\cap Q_i(2 \alpha/3) \} \times Q_i.
$$
Since $x \mapsto A_x$ is Borel, $\{ x ; A_x \in \mathcal{P}_{n_0}(\alpha/3)
\cap Q_i(2 \alpha/3) \} $ is Borel and therefore $B_{\alpha}$
is a Borel subset of $[0,1]^2$.

We have 
$$
d(B_{\alpha},A) \leq \int_{x \in ^c \mathcal{P}_{n_0}(\alpha/3)} d((B_{\alpha})_x,A_x) \dd x + \int_{x \in \mathcal{P}_{n_0}(\alpha/3)} d((B_{\alpha})_x,A_x) \dd x \leq \alpha/3 +  \int_{x \in \mathcal{P}_{n_0}(\alpha/3)} d((B_{\alpha})_x,A_x) \dd x. $$
Moreover, since $d((B_{\alpha})_x,A_x) = d(Q_i,A_x)\leq  2 \alpha/3$
for all $x \in \mathcal{P}_{n_0}(\alpha/3)$ such that $p(A_x) = i$,
we get
$$
\int_{x \in \mathcal{P}_{n_0}(\alpha/3)} d((B_{\alpha})_x,A_x) \dd x
\leq \sum_{i=1}^m \la(\{x ; A_x \in \mathcal{P}_{n_0}(\alpha/3) \cap p^{-1}(\{ i \})\}) \times (2 \alpha/3) \leq 2 \alpha/3 \times \la([0,1]) = 2 \alpha/3.
$$
hence $d(B_{\alpha}, A) \leq \alpha$.

From this we get that the sequence $B_{\frac{1}{n}}$ converges
to $A$ for the distance $d$. In particular, it is a Cauchy sequence
for the distance $d$, and therefore in the usual $L^1$. Since $L^1([0,1]^2,
\{ 0, 1\})$ is complete, there is a Borel subset $B$ of $[0,1]^2$
such that $B_{\frac{1}{n}} \to B$. But then $d(A,B_{\frac{1}{n}}) \leq 1/n$
converges both to $0$ and to $d(A,B)$, and this proves the claim.

\end{proof}

\begin{remark} 
Under the assumption of the proposition, $A$ itself does not need to
be Lebesgue-measurable. For instance, under the Continuum Hypothesis
one could endow $[0,1]$ with some well-ordering $\preceq$
by identifying it with the first non-countable ordinal, and
one may consider $A = \{ (x,y) \in [0,1]^2 \mid y \prec x \}$.
Then, for every $x \in [0,1]$, $A_x = \{ y \in [0,1] : y \prec x \}$
is countable and therefore the assumptions are satisfied. However,
it is not Lebesgue-measurable, because otherwise one
would have $\la_2(A) = \int_x \la(A_x) \dd x = 0$ and $\la_2(A) = \int_y \la(\{ x ; y \prec x \}) \dd y = 1$,
contradicting the Fubini-Tonelli theorem. This example was communicated to me by A. Rivi\`ere.

Another example, not using the Continuum Hypothesis, was constructed for me
by G. Godefroy and J. Saint-Raymond. It runs as follows. Let $\mathcal{K}^+$ denote
the set of compact subsets of $[0,1]^2$ of positive measure. Its cardinality is
the continuum, and therefore we can describe it as $\{ K_{\alpha}, \alpha \in [0,1] \}$.
Let $p_1 : [0,1]^2 \to [0,1]$ be the projection on the first coordinate. Each $p_1(K_{\alpha})$
has to be uncountable, for otherwise $K_{\alpha}$ would have measure $0$. Since $p_1(K_{\alpha})$ is compact, this implies $|p_1(K_{\alpha})| = \mathfrak{c}$. We endow $[0,1]$
with a well-ordering $\prec$ (of type $\mathfrak{c}$). We then construct elements $x_{\alpha}, y_{\alpha}$
by transfinite induction on $([0,1],\prec)$. Assuming the $x_{\beta},y_{\beta}$ constructed
for $\beta \prec \alpha$, we chose $x_{\alpha} \in p_1(K_{\alpha}) \setminus \{ x_{\beta}, \beta \prec \alpha \}$, which is non-empty since $\alpha$ is smaller than the continuum. We then choose
$y_{\alpha}$ such that $(x_{\alpha},y_{\alpha}) \in K_{\alpha}$ and set $A = \{ (x_{\alpha},y_{\alpha}) ; \alpha \in [0,1] \}$. Then $|A_x| \leq 1$ for all $x \in [0,1]$,
therefore $A$ satisfies our assumptions. Moreover, if it is measurable, it should have measure $0$
by Fubini-Tonelli ; on the other hand, since $A$ meets all compacts of positive measure in $[0,1]$, it should have exterior measure $1$, thus proving that $A$ is not Lebesgue-measurable.

\end{remark}

By this proposition, identifying an element $f \in \mathcal{L}(X,\mathcal{L}(Y,D))$ with the characteristic function of $A \subset [0,1]^2$,
we get that there exists $B \subset [0,1]^2$
 Lebesgue-measurable with the same image in $L(X,L(Y,D))$.
Moreover there exists two Borel sets $B_1 \subset A \subset B_2$ such that
$B_2 \setminus B_1$ has measure $0$, and their characteristic function
also have the same image.
This concludes the
proof of the theorem.

\subsection{Boreleanity of Borel inverse images}

This section is devoted to the proof of the following technical proposition, that we will need in the sequel.
The proof we provide here was found for us by G. Godefroy.

\begin{proposition} \label{prop:supportborel} 
The map $\mathcal{L}([0,1],\R_+) \to L(2)$ defined by associating 
to a Borel map $g : [0,1] \to \R_+$ the (class of the characteristic function of)
the Borel set $\{ t ; g(t) > 0 \}$ is Borel.
\end{proposition}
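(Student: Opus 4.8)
The plan is to reduce the statement to the boreleanity of a countable family of concrete real-valued functionals on $\mathcal{L}([0,1],\R_+)$. First I would record that the map under consideration, call it $\Sigma : g \mapsto \overline{\un_{\{t;\,g(t)>0\}}}$, lands in $L(2)$, whose topology is pseudo-metrizable and \emph{separable} (by proposition \ref{prop:LEcontractcomplete}, since $\{0,1\}$ is separable), hence second countable. Consequently it suffices to prove that $\Sigma^{-1}(U)$ is Borel in $\mathcal{L}([0,1],\R_+)$ for every open ball $U$ of $L(2)$, that is for $U = \{X \in L(2) \mid \la(X\Delta C) < \alpha\}$ with $C \in L(2)$ and $\alpha>0$ ranging over a countable dense set of centers (e.g. $C$ in $\bigcup_n P_n$, dense by lemma \ref{lem:pavesdenses}) and $\alpha \in \Q_{>0}$. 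Writing $E(g) = \{t; g(t)>0\}$, we have $\la(E(g)\Delta C) = \la(E(g)\setminus C) + \la(C\setminus E(g))$, and $\la(C\setminus E(g)) = \la(C) - \la(C\cap E(g))$, so everything reduces to showing that $g \mapsto \la(E(g)\cap C)$ and $g \mapsto \la(E(g))$ are Borel functions $\mathcal{L}([0,1],\R_+) \to [0,1]$ — and by the same measurability bookkeeping, $g\mapsto \la(E(g)\cap C)$ for a fixed Borel set $C$ is the general case (taking $C=[0,1]$ gives $\la(E(g))$).

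Next I would make these functionals explicit via a monotone limit. For a Borel $g\ge 0$ and a Borel set $C$, set
$$
J_n(g) = \la\bigl(\{t \in C \mid g(t) > \tfrac{1}{n}\}\bigr) = \int_C \un_{]\,1/n,\,\infty[}(g(t))\,\dd t.
$$
Since $\{g>0\} = \bigcup_n \{g > 1/n\}$ is an increasing union, $J_n(g) \uparrow \la(E(g)\cap C)$ for every $g$. So it is enough to check that each $g \mapsto J_n(g)$ is Borel; a countable supremum of Borel functions is Borel. For this the key point is that, for fixed $n$, the truncation map $T_n : g \mapsto \min(1, n\cdot g)$ is a $1$-Lipschitz map $\R_+ \to [0,1]$ when $\R_+$ carries the metric $d_{\R_+}(a,b) = \min(1,|a-b|)$ (or one works directly with the pseudo-metric and a cutoff), hence by lemma \ref{lemnatuprojlipsch} it induces a $1$-Lipschitz, in particular continuous, map $\mathcal{L}([0,1],\R_+) \to \mathcal{L}([0,1],[0,1])$; and on $\mathcal{L}([0,1],[0,1])$ the functional $h \mapsto \int_C h(t)\,\dd t$ is $1$-Lipschitz for the $L^1$-pseudmetric, hence continuous. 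The only remaining issue is that $J_n$ is defined by an indicator $\un_{]1/n,\infty[}$ rather than by a Lipschitz truncation, so $J_n$ itself need not be continuous; one repairs this by yet another monotone approximation: let $\chi_{n,k}:\R_+\to[0,1]$ be the continuous piecewise-linear function that is $0$ on $[0,1/n]$, $1$ on $[1/n + 1/k,\infty[$, and linear between; then $\int_C \chi_{n,k}(g(t))\,\dd t$ is continuous in $g$ (same Lipschitz argument), and as $k\to\infty$ it increases to $J_n(g)$. Thus $\la(E(g)\cap C) = \sup_n \sup_k \int_C \chi_{n,k}(g(t))\,\dd t$ is a countable supremum of continuous functions, hence Borel; this proves the proposition.

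The main obstacle here is not any deep point but the non-continuity of $g\mapsto\la(\{g>0\})$ (a sequence of small bumps of height $1/n$ converges to $0$ in $L^1$ while its support can have measure bounded below), which forces the double monotone-limit device $\sup_k\sup_n$ rather than a one-line continuity argument; once one commits to approximating the indicator $\un_{]0,\infty[}$ from below by continuous functions of the value, everything else is the routine verification that integration against a fixed Borel weight is $1$-Lipschitz on $L(\Omega,[0,1])$ and that countable suprema preserve boreleanity, together with the separability of $L(2)$ to pass from balls to all open sets.
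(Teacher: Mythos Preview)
Your argument is correct. Both you and the paper end up showing that the real-valued functionals $g \mapsto \lambda(\{g>0\}\cap C)$ are lower semicontinuous (hence Borel), but the two reductions to this point are genuinely different. The paper invokes a descriptive-set-theory lemma (a map into a standard Borel space $Y$ is measurable iff its composites with a countable separating family $\varphi_n:Y\to\R$ are), applied with $Y=L(2)$ and $\varphi_I(f)=\int_I f$ over rational intervals $I$; it then proves lower semicontinuity of $g\mapsto \lambda(I\cap\{g>0\})$ by a direct sequential argument using convergence in measure. Your route is more elementary on both counts: you use only the second countability of the pseudo-metric space $L(2)$ to reduce to open balls, and you obtain lower semicontinuity by writing $\lambda(\{g>0\}\cap C)=\sup_{n,k}\int_C \chi_{n,k}\circ g$ with $\chi_{n,k}$ Lipschitz, so that each term is continuous in $g$ via lemma \ref{lemnatuprojlipsch}. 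The payoff of your approach is that it avoids the external reference to Christensen's theorem and is entirely self-contained within the paper's framework; the paper's approach, once the separating-family lemma is granted, is slightly more streamlined in that it deals only with rational intervals rather than arbitrary Borel $C$.

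One inessential slip: your parenthetical claim that $s\mapsto\min(1,ns)$ is $1$-Lipschitz for the bounded metric $\min(1,|a-b|)$ on $\R_+$ is false (take $a=0$, $b=1/n$). But you immediately abandon $T_n$ for the genuinely Lipschitz $\chi_{n,k}$, so this does not affect the proof.
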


In order to prove this proposition, we first recall the following classical lemma, for which we could not
find a proper reference.

\begin{lemma} \label{lem:fctseparantes} Let $Y$ be a standard Borel space, and $X$ a
measurable space. Assume we are given a countable collection
of Borel maps $\varphi_n : Y \to \R$ which separate the points of $Y$
(that, is $\forall y_1,y_2 \in Y \ y_1 \neq y_2 \Rightarrow \exists n \ \ \varphi_n(y_1) \neq \varphi_n(y_2)$). Then, a map $f : X \to Y$
is measurable if and only if $\varphi_n \circ f$ is measurable for all $n \in \N$.
\end{lemma}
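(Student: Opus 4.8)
\textbf{Proof plan for Lemma \ref{lem:fctseparantes}.}
The plan is to prove the two implications separately; only the "if" direction requires work. The "only if" direction is immediate: each $\varphi_n$ is Borel, hence if $f$ is measurable then each composite $\varphi_n\circ f$ is measurable as a composition of a measurable map with a Borel map.

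For the "if" direction, the key idea is to package the separating family into a single Borel injection. First I would form the map $\varphi=(\varphi_n)_{n\in\N}:Y\to\R^{\N}$, where $\R^{\N}$ carries the product topology (a Polish space) and its Borel $\sigma$-algebra. Because the $\varphi_n$ separate points, $\varphi$ is injective; because each coordinate $\varphi_n$ is Borel, $\varphi$ is Borel. Now invoke the classical theorem (Lusin--Souslin, see e.g. \cite{SRI}, Theorem 4.5.4 or Corollary 4.5.5) that an injective Borel map between standard Borel spaces has Borel range and Borel-measurable inverse on its range: since $Y$ is standard Borel and $\R^{\N}$ is standard Borel, $Z:=\varphi(Y)$ is a Borel subset of $\R^{\N}$ and $\varphi^{-1}:Z\to Y$ is Borel.

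With this in hand, the conclusion follows by a straightforward chase. Suppose $\varphi_n\circ f$ is measurable for every $n$. Then the map $\psi:=\varphi\circ f:X\to\R^{\N}$ has all coordinates measurable, hence $\psi$ is measurable into $\R^{\N}$ with its product Borel $\sigma$-algebra (the product $\sigma$-algebra on $\R^{\N}$ coincides with the Borel $\sigma$-algebra since $\R^{\N}$ is second countable). Moreover $\psi(X)\subset Z$. For any Borel subset $B\subset Y$, $\varphi(B)=\varphi^{-1{}^{-1}}(B)\cap Z$ is Borel in $\R^{\N}$ (using that $\varphi^{-1}:Z\to Y$ is Borel and $Z$ is Borel), and $f^{-1}(B)=\psi^{-1}(\varphi(B))$, which is measurable in $X$ since $\psi$ is measurable. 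Hence $f$ is measurable.

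The only genuine obstacle is citing the correct form of the Lusin--Souslin theorem — that an injective Borel image of a standard Borel space is Borel and the inverse is Borel-measurable; everything else is bookkeeping about product $\sigma$-algebras on second countable spaces and elementary preimage manipulations. One should be mildly careful that $X$ is only assumed to be a measurable space (not standard), but this causes no trouble since the argument only uses measurability of $\psi$ and Borel structure on the target side.
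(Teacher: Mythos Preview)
Your proof is correct and follows essentially the same approach as the paper's own proof: both package the separating family into an injective Borel map $Y\to\R^{\N}$, invoke the theorem that such a map is a Borel isomorphism onto its image (the paper cites Christensen, Theorem 2.4, while you cite Lusin--Souslin via \cite{SRI}), and then reduce measurability of $f$ to coordinatewise measurability of $\varphi\circ f$. Your write-up is slightly more explicit about the final preimage chase, but the argument is the same.
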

\begin{proof}
Let $\iota : Y \to \R^{\N}$ given by $\iota(y) = (\varphi_n(y))_{n \in \N}$, where
$\R^{\N}$ is given the product topology. By
assumption, $\iota$ is 1-1. Then $\iota$ is measurable and injective.
By \cite{CHRISTENSEN} theorem 2.4 this implies that $\iota$ is
a Borel isomorphism $Y \to \iota(Y)$. In particular, $f : X \to Y$
is measurable if and only if $\iota \circ f : X \to \R^{\N}$ is measurable.
But this means that $\varphi_n \circ f$ is measurable for all $n \in \N$.
\end{proof}

Now, $L^1([0,1],\R_+)$ is a Polish space and is therefore standard Borel. We
consider the countable collection of maps 
$\varphi_I : f \mapsto \int_I f$ 
where $I$ runs among the open sub-intervals of $[0,1]$
with rational endpoints.

It is separating because, if $\int_I f = \int_I g$ then $\int_I(f-g) = 0$,
hence $\int(f-g)^+ = \int(f-g)^-$ and we need to prove $\int_I f = \int_I g
\Rightarrow f =g$ only when $f,g \geq 0$. But then the finite positive measures $E \mapsto \int_E f$ and $E \mapsto \int_E g$ coincide
on the $\pi$-system of all intervals with rational bounds. Since this
$\pi$-system generates the Borel $\sigma$-algebra, they coincide
on all Borel sets $E$. Therefore we need to prove $\forall E\ \int_E f = 0 \Rightarrow f = 0$. Considering the Borel
subsets $E_1 = \{ f > 0 \}$ and $E_2 = \{ f < 0 \}$ we get $f = 0$ and the conclusion.

Then, we prove that the maps $\Psi_I = \varphi_I \circ F$ are continuous (and therefore Borel). Indeed, we
have
$$
\Psi_I : g \mapsto \int_I F(g) = \la(\{ t; g(t)>0 \})
$$ 
where $\la$ is the Lebesgue measure on $[0,1]$. Since the topology of $\R$ is generated by the open subintervals $]a,+\infty[$, we need to check that the
$E_{I,a} = \{ g \in L^1([0,1],\R^+) ; \Psi_I(g) > a \}$ are open. If they are not, there
would exists a sequence $g_n \not\in E_{I,a}$ converging to $g \in E_{I,a}$
in $L^1(([0,1],\R^+)$. Since $g \in E_{I,a}$ we have $\la(I \cap \{ t; g(t)>0 \}) > a$ hence there exists $\eps>0$ such that $\la(I \cap \{ t; g(t)>\eps \}) > 
a + \eps$. Since $g_n$ converges to $g$ inside $L^1([0,1],\R^+)$,
there exists a subsequence pointwise converging to $g$ almost everywhere
which belongs to $L^{\infty}([0,1],\R^+)$
Without loss of
generality we can replace $g_n$ by this subsequence, and therefore
$g_n$ converges to $g$ almost everywhere. Since $\la([0,1])<\infty$ This implies that $g_n$
converges to $g$ in measure
meaning that, for all $\eta > 0$, $\la\{ t ; |g_n(t)- g(t)| \geq \eta \} \to 0$. In particular there exists $n_0$ such that
$\la \{ t; |g_n(t)-g(t)| \geq \eps/2 \} \leq \eps/2$ for all $n \geq n_0$.
But $I \cap \{ t ; g_n(t)>0 \}$ contains $(I \cap \{ t; g(t)>\eps \}) \setminus
\{ t; |g_n(t)-g(t)| \geq \eps/2 \}$, which has measure at least $a+\eps - \eps/2 > a$. Therefore $g_n \in E_{I,a}$ for $n \geq n_0$, a contradiction.
This concludes the proof of the proposition.

\subsection{Continuous exponentiation theorem}

\begin{proposition} \label{prop:defexpcont}
Let $X,Y$ be two non-atomic probability spaces ($X = Y = \Omega$), and $E$ be a separable metric space.
The map $f \mapsto (x \mapsto (y \mapsto f(x,y))$ defines a map
$L(X \times Y,E) \to L(X,L(Y,E))$. This map is an isometry.
\end{proposition}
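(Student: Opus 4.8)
The plan is to mimic, in the easier separable setting, the argument used for Theorem~\ref{theoskolem}, the only genuine work being the Borel measurability of the ``curried'' map; everything else is just Tonelli's theorem. First I would fix a Borel representative $f:X\times Y\to E$ and, for each $x\in X$, consider the section $f^x=f(x,\cdot):Y\to E$. Since $E$ is separable metric its Borel $\sigma$-algebra is generated by the open balls $B(e,r)$, and $(f^x)^{-1}(B(e,r))=\{y:(x,y)\in f^{-1}(B(e,r))\}$ is a section of the Borel set $f^{-1}(B(e,r))\subset X\times Y$, hence Borel; so each $f^x$ is a Borel map. Fixing $e_0\in E$, the function $(x,y)\mapsto d(f(x,y),e_0)$ is Borel and integrable on $X\times Y$, so by Tonelli's theorem $x\mapsto\int_Y d(f^x(y),e_0)\,\dd y$ is measurable and finite for almost every $x$; on the Borel null set where it is infinite I redefine $f^x:=\delta_{e_0}$. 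Thus $f^x\in\mathcal L(Y,E)$ for every $x$.

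Next I would show that $F:x\mapsto\overline{f^x}$ is a Borel map $X\to L(Y,E)$. By Proposition~\ref{prop:LEcontractcomplete}(4) the space $L(Y,E)$ is separable metric, so a countable family of open balls forms a basis of its topology and it suffices to prove that $F^{-1}(B(g,\alpha))$ is Borel for every such ball. Choosing a Borel representative $\tilde g$ of $g$, one has
$$
F^{-1}(B(g,\alpha))=\Big\{x\in X:\int_Y d\big(f(x,y),\tilde g(y)\big)\,\dd y<\alpha\Big\},
$$
and this set is Borel because $(x,y)\mapsto d(f(x,y),\tilde g(y))$ is Borel (it is the composite of the Borel map $(x,y)\mapsto(f(x,y),\tilde g(y))$ with the continuous map $d$) and its partial integral over $Y$ is measurable in $x$ by Tonelli. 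Moreover $\int_X d_{L(Y,E)}(f^x,\delta_{e_0})\,\dd x=\int_{X\times Y}d(f,e_0)<\infty$, again by Tonelli, so $F\in\mathcal L(X,L(Y,E))$; I then let $\Psi(f)$ denote its class in $L(X,L(Y,E))$.

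Finally, well-definedness on equivalence classes and the isometry property come out together from Tonelli: for Borel $f,h\in\mathcal L(X\times Y,E)$,
$$
d_{L(X,L(Y,E))}\big(\Psi(f),\Psi(h)\big)=\int_X\!\Big(\int_Y d\big(f(x,y),h(x,y)\big)\,\dd y\Big)\dd x=\int_{X\times Y}d(f,h)=d_{L(X\times Y,E)}\big(\overline f,\overline h\big).
$$
Taking $h$ equal to $f$ up to a null set gives $0$ on the left, so $\Psi$ factors through $L(X\times Y,E)$, and the same identity then shows that the induced map $L(X\times Y,E)\to L(X,L(Y,E))$ is an isometry (in particular injective).

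The step I expect to be the main obstacle is the second one, namely the Borel measurability of the curried map $x\mapsto f^x$ as a map into the function space $L(Y,E)$; it is precisely there that separability of $E$ (through Proposition~\ref{prop:LEcontractcomplete}(4)) and Tonelli's theorem for the non-negative Borel integrand enter in an essential way. I emphasize that surjectivity of $\Psi$ is \emph{not} asserted by the proposition and would require a separate, harder argument, as in the discrete case where it relied on Proposition~\ref{prop:approxborelcarre}.
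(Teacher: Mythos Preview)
Your proof is correct and, in fact, cleaner than the paper's. The core idea is the same in both: use separability of $E$ (hence of $L(Y,E)$) to reduce Borel measurability of $F:x\mapsto f^x$ to checking that preimages of open balls $B(g,\alpha)$ are Borel, and then observe that such a preimage is a sublevel set of the Tonelli-measurable function $x\mapsto\int_Y d(f(x,y),\tilde g(y))\,\dd y$. Where you diverge is in how the Borelianity of each section $f^x$ is established. The paper first treats the special case $E\subset\R_+$, invoking the classical Fubini--Tonelli statement that sections of real-valued Borel functions are Borel; it then reduces the general separable $E$ to this case by choosing a countable separating family $\varphi_i(x)=d(x,x_i)$, showing the induced embedding $\iota:L(Y,E)\hookrightarrow\prod_i L(Y,\R_+)$ is a Borel isomorphism onto its image via Christensen's theorem, and concluding that $F$ is Borel because each $\Phi_i\circ F$ is. You instead observe directly that $(f^x)^{-1}(B(e,r))$ is a section of the Borel set $f^{-1}(B(e,r))$, hence Borel, and that open balls generate the Borel $\sigma$-algebra of a separable metric space; this dispenses with the case split and with Christensen's theorem entirely. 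Your version also attends more carefully to the integrability side-conditions (finiteness of $\int_Y d(f^x,e_0)$ for a.e.\ $x$, and $F\in\mathcal L(X,L(Y,E))$), which the paper glosses over. Both approaches yield the isometry from the same final Tonelli computation; yours is simply more economical.
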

\begin{proof} This amounts to saying that, if $f$ is Borel, then every $f^x : y \mapsto f(x,y)$
is Borel $Y \to E$, and that the map $x \mapsto f^x$ is Borel $X \to L(Y,E)$.

First assume $E \subset \R_+$. 
Let $f \in \mathcal{L}(X \times Y, E)$, that is a Borel map $X \times Y \to E \subset \R_+$. 
The Fubini-Tonelli
theorem states that every $f^x : y \mapsto f(x,y)$ is Borel. 
This defines a map $F : x \mapsto f^x$, $X \to \mathcal{L}(Y,E)$. We want to show that
it is a Borel map with respect to the (non-Hausdorff) topology on $\mathcal{L}(Y,E)$
defined by the natural pseudo-metric. For this we need to show that $F^{-1}(U)$ is Borel
for every open subset $U$ in $\mathcal{L}(Y,E)$. Since $\mathcal{L}(Y,E)$ is pseudo-metric
and separable (because $E$ is separable, see prop. \ref{prop:LEcontractcomplete}),
its topology admits a countable basis of open balls. We thus need to prove that $F^{-1}(U)$ is Borel
for every open ball $U$, with center $g_0 \in \mathcal{L}(Y,E)$ and radius $\eps > 0$.
Then $F^{-1}(U) = \{ x ; \int d(f^x(y), g_0(y)) \dd y < \eps \}$.
Let $g : X \times Y \to \R_+$ be defined by $g(x,y) = d_{E}(f(x,y),g_0(y))$. Being a composite
of Borel and a continuous maps, it is Borel. By the Fubini-Tonelli theorem,
$G : x \mapsto \int g(x,y) \dd y$ is Borel, and therefore $F^{-1}(U) = G^{-1}([0,\eps[)$
is Borel. This proves the claim in the case $E \subset \R_+$.

We now consider the general case. Let us choose a countable dense subset $(x_i)_{i \in I}$ inside $E$, and let $\varphi_i : E \to \R_+$
denote the function $x \mapsto d(x,x_i)$. These maps are $1$-Lipshitz and separate the points of $E$.
The maps $L(Y,\varphi_i) : g \mapsto \varphi_i \circ g$, $\mathcal{L}(Y,E) \to \mathcal{L}(Y,\R_+)$
are also well-defined, 1-Lipschitz and therefore Borel, and the induced maps $\Phi_i : L(Y,E) \to L(Y,\R_+)$
separate the points of $L(Y,E)$. Indeed, if $f ,g : Y \to E$ satisfy $\Phi_i(f) = \Phi_i(g)$
for all $i \in I$, that is $\int_{Y} |d(f(t),x_i)- d(g(t),x_i)|\dd t = 0$, 
this means that the sets $Y_i = \{ t; d(f(t),x_i) \neq d(g(t),x_i) \}$ have measure $0$. But then
$Y' = \bigcup_{i \in I} Y_i$ also has measure $0$, and for all $t \in Y\setminus Y'$
we have $\forall i \in I \ d(f(t),x_i) = d(g(t),x_i)$, which implies $f(t)=g(t)$ by density. Finally this implies $d(f,g)=0$
and this proves that this family separates the points.

Therefore, the induced map $\iota : L(Y,E) \to \prod_{i \in I} L(Y,\R_+)$ is injective,
and continuous where the topology at the range is the product topology. 
By \cite{CHRISTENSEN} theorem 2.4 this implies that $\iota$ is
a Borel isomorphism $L(Y,E) \to \iota(L(Y,E))$. In particular, $f : X \to L(Y,E)$
is measurable if and only if $\iota \circ f : X \to L(Y,\R_+)^{I}$ is measurable. 
But this means that $\Phi_i \circ f$ is measurable for all $i \in I$.

 To each map $f \in \mathcal{L}(X \times Y, E)$
we associate $x \mapsto f^x$ with $f^x : Y \to E$, $y \mapsto f(x,y)$ as before.
Each $f^x$ is Borel iff each $\varphi_i \circ f^x = (\varphi_i \circ f)^x : Y \to \R_+$ is Borel
by lemma \ref{lem:fctseparantes}, and if it is the case $f \mapsto f^x$ is Borel iff $ \varphi_i \circ f^x = \Phi_i( f)^x$
is Borel for all $i$. But since $f$ is Borel all $\varphi_i \circ f$ are Borel and so are
the $(\varphi_i \circ f)^x$ for all $x$, and then $\Phi_i(f)$ is Borel $X \times Y \to \R_+$
which implies that $x \mapsto \Phi_i(f)^x$ is Borel by the case $E \subset \R_+$.

Therefore we have a well-defined map $L(X \times Y,E) \to L(X,L(Y,E))$. Let
$f,g \in L(X \times Y,E)$. Then $d(f,g) = \int_{X \times Y} d(f(x,y),g(x,y)) \dd x \dd y
= \int_X (\int_Y d(f(x,y),g(x,y)) \dd y) \dd x = \int_X d(f^x,g^x)\dd x = d(x \mapsto f^x, x \mapsto g^x)$
by the Fubini-Tonelli theorem applied to the function $(x,y) \mapsto d(f(x,y),g(x,y))$ which
is Borel $X \times Y \to \R_+$. This proves that our map is an isometry.

\end{proof}

By the proposition, for each separable metric space $E$, we have an `exponential map' $\exp_E : L(\Omega\times \Omega,E)
\to L(\Omega,L(\Omega,E))$. We first consider the case where $E$ is a closed bounded interval.

\begin{proposition} \label{prop:skolemintervalle} The exponential map $\exp_E : L(\Omega\times \Omega,E)
\to L(\Omega,L(\Omega,E))$
is surjective when $E = [0,m]$ for some $m >0$.
\end{proposition}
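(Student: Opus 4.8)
The plan is to prove surjectivity of $\exp_E$ for $E=[0,m]$ by a density-plus-completeness argument, reducing the general measurable section to a Borel one. First I would observe that, given $\Phi \in \mathcal{L}(X,\mathcal{L}(Y,E))$, we may identify it with a map $g : X\times Y\to [0,m]$ all of whose sections $g^x$ are Borel and for which $x\mapsto g^x$ is Borel into $L(Y,E)$; the goal is to find a genuinely Borel $\hat g : X\times Y\to[0,m]$ with $\int_X d_{L(Y,E)}(g^x,\hat g^x)\,\dd x = 0$. The idea is to dyadically approximate: for each integer $N$, slice $[0,m]$ into the levels $k m/2^N$, $0\le k\le 2^N$, and write $g$ as a monotone limit of the staircase functions $g_N = (m/2^N)\sum_{k=1}^{2^N} \un_{\{g \ge km/2^N\}}$. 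Since $g_N\uparrow g$ pointwise and uniformly, the corresponding elements of $L(X,L(Y,E))$ converge, so it suffices to treat each level set $A^{(k)} = \{(x,y) : g(x,y)\ge km/2^N\}$ separately and check it satisfies the hypotheses of Proposition \ref{prop:approxborelcarre}.

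The key point is that each such level set has Borel sections $A^{(k)}_x = \{y : g(x,y)\ge km/2^N\}$ (a section of a set whose sections are Borel), and that the map $x\mapsto A^{(k)}_x$ is Borel $X\to L(Y,\{0,1\})$. This last assertion is exactly where Proposition \ref{prop:supportborel} enters: the map $x\mapsto g^x\in L(Y,E)\subset \mathcal{L}(Y,\R_+)$ is Borel by hypothesis, composing with the $1$-Lipschitz map $h\mapsto (h - km/2^N)^+$ on $E$ gives a Borel map $X\to \mathcal{L}(Y,\R_+)$, and then Proposition \ref{prop:supportborel} says that taking the support $\{y : (g^x(y)-km/2^N)^+ >0\}$ is a Borel operation $\mathcal{L}(Y,\R_+)\to L(2)$; but $\{y : (g^x(y)-km/2^N)^+ > 0\}$ and $A^{(k)}_x = \{y : g^x(y)\ge km/2^N\}$ differ only by the (possibly non-negligible, but harmless) level $\{g^x = km/2^N\}$, so one must be slightly careful and instead approximate $A^{(k)}$ by $\{g > (k-\tfrac12)m/2^N\}$-type sets, or equivalently work with strict inequalities throughout so that the support map applies verbatim. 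With that fix, Proposition \ref{prop:approxborelcarre} produces a Borel $B^{(k)}\subset [0,1]^2$ with $\int_X \la(A^{(k)}_x\,\Delta\, B^{(k)}_x)\,\dd x = 0$.

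Having Borel approximants $B^{(k)}$ for the level sets, I would reassemble them: set $\hat g_N = (m/2^N)\sum_k \un_{B^{(k)}}$, which is Borel $X\times Y\to[0,m]$ (a finite sum of Borel characteristic functions), and note that $\int_X d_{L(Y,E)}(g_N^x,\hat g_N^x)\,\dd x = 0$ by Fubini-Tonelli and the level-set estimates. Since $g_N\to g$ in $L(X\times Y,E)$, the sequence $\hat g_N$ is Cauchy in $L(X,L(Y,E))$; by Proposition \ref{prop:LEcontractcomplete} (completeness of $L(\Omega,\bullet)$ applied twice, and completeness of $L(X\times Y,E)$ since $E=[0,m]$ is complete) it converges to some Borel $\hat g\in L(X\times Y,E)$, and $\exp_E(\hat g)$ has the same image as $\Phi$. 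Finally, to actually produce a Borel representative (not just a measurable one) I would invoke, as in the discrete case, the standard fact that a Lebesgue-measurable set is sandwiched between two Borel sets differing by a null set. The main obstacle I anticipate is precisely the bookkeeping around the level $\{g^x = km/2^N\}$: Proposition \ref{prop:supportborel} only gives Borelianity of the \emph{strict} support, so the dyadic slicing must be set up with strict inequalities (or with a null-measure wobble of the thresholds) to make the composition with Proposition \ref{prop:supportborel} and then Proposition \ref{prop:approxborelcarre} go through cleanly; once that is arranged, the monotone-convergence and completeness steps are routine.
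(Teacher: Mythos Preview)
Your proposal is correct and follows the same architecture as the paper's proof: approximate the target by step functions, use Proposition~\ref{prop:supportborel} to verify that the level sets depend Borel-measurably on $x$, apply Proposition~\ref{prop:approxborelcarre} to replace each level set by a genuine Borel subset of $\Omega^2$, reassemble, and pass to the limit by completeness of $L(\Omega\times\Omega,E)$.

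The only real difference is the choice of approximation scheme. The paper uses a ``nearest rational among the first $n$'' construction (Lemma~\ref{lem:approxL1rats}), which forces it to track both the regions $A(m,n)$ where a single rational is nearest and the tie-breaking regions $B(m_1,m_2,n)$ where two rationals are equidistant. Your dyadic layer-cake $g_N=(m/2^N)\sum_k\un_{\{g>c_k\}}$ is cleaner: no ties to handle, uniform convergence is immediate, and each level set feeds directly into Proposition~\ref{prop:supportborel} once you use strict thresholds (the fix you already flagged). Both routes rely on the same two key lemmas and the same Cauchy--completeness endgame; yours simply has less bookkeeping. Your closing remark about sandwiching by Borel sets is unnecessary here, since the $\hat g_N$ are already Borel and the completeness proof of $L(\Omega\times\Omega,E)$ produces a Borel limit.
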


We set $E = [0,m]$. We need the following lemma.

\begin{lemma} \label{lem:approxL1rats} Let $g : [0,1] \to [0,M[ \subset \R^+$ bounded and Borel. Let $(r_n)_{n \geq 1}$
be a bijection $\N \setminus \{ 0 \} \to \Q_{\geq 0}\cap[0,M[$. For all $n \geq m \geq 1$
we set $A(m,n) = \{ t \in [0,1] \ | \ \forall k \leq n, k \neq m \Rightarrow |g(t)-r_k| > |g(t) - r_m| \}$ ; 
for all $n \geq m_2 > m_1 \geq 1$ we set 
$B(m_1,m_2,n) = \{ t \in [0,1] \ | \ \forall k \leq n, k \not\in \{ m_1,m_2 \}  \Rightarrow |g(t)-r_k| > |g(t) - r_{m_1}| = |g(t) - r_{m_2}| \}$ 
and 
$$h_n = \sum_{m \leq n} r_m \un_{A(m,n)} + \sum_{1 \leq m_1 < m_2 \leq n} \frac{r_{m_1} + r_{m_2}}{2} \un_{B(m_1,m_2,n)}.$$
 Then the sequence $h_n$
 converges
to $g$, and $\int |h_n - g|$ converges to $0$.
\end{lemma}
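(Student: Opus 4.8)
The plan is to establish three things in turn: that each $h_n$ is a genuine Borel function, that $h_n(t)\to g(t)$ for every $t\in[0,1]$, and that this pointwise convergence upgrades to convergence in $L^1$ by dominated convergence.

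First I would analyse, for fixed $n$ and $t\in[0,1]$, the quantity $\delta_n(t)=\min_{1\le k\le n}|g(t)-r_k|$ together with the set of indices attaining it. Since every minimizing $r_k$ lies in $\{\,g(t)-\delta_n(t),\ g(t)+\delta_n(t)\,\}$ and the $r_k$ are pairwise distinct, there are at most two of them; and if there are exactly two, say $r_{m_1}<r_{m_2}$, they must sit symmetrically about $g(t)$, so that $(r_{m_1}+r_{m_2})/2=g(t)$. This shows that for each fixed $n$ the sets $A(m,n)$ ($m\le n$) and $B(m_1,m_2,n)$ ($m_1<m_2\le n$) partition $[0,1]$: a point lies in $A(m,n)$ exactly when $m$ is the unique minimizing index, and in $B(m_1,m_2,n)$ exactly when the two minimizing indices are $m_1$ and $m_2$. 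Each of these sets is Borel, being cut out by finitely many inequalities among the Borel functions $t\mapsto|g(t)-r_k|$ (here we use that $g$ is Borel); hence $h_n$, a finite linear combination of their indicator functions, is Borel, and $|h_n(t)|\le M$ for all $t$ since $h_n(t)$ is either some $r_m\in[0,M[$ or the midpoint of two such numbers.

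Next I would prove the pointwise convergence. Given $t$ and $\eps>0$, density of $\Q_{\ge 0}\cap[0,M[$ in $[0,M[$ and the fact that $g(t)\in[0,M[$ furnish an index $k_0$ with $|g(t)-r_{k_0}|<\eps$; then for all $n\ge k_0$ we have $\delta_n(t)\le|g(t)-r_{k_0}|<\eps$. If $t\in A(m,n)$ then $h_n(t)=r_m$ with $|g(t)-r_m|=\delta_n(t)<\eps$, while if $t\in B(m_1,m_2,n)$ then, by the symmetry observed above, $h_n(t)=(r_{m_1}+r_{m_2})/2=g(t)$. In either case $|h_n(t)-g(t)|<\eps$, so $h_n(t)\to g(t)$. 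Finally, the Borel functions $|h_n-g|$ are bounded by the integrable constant $2M$ on the finite measure space $([0,1],\la)$ and converge pointwise to $0$, so the dominated convergence theorem gives $\int_0^1|h_n(t)-g(t)|\,\dd t\to 0$, which is the claim.

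I do not expect a serious obstacle. The only point deserving care is the elementary geometric fact that among $r_1,\dots,r_n$ there are at most two points nearest to $g(t)$ and, when there are two, they are symmetric about $g(t)$: this is precisely what makes the formula defining $h_n$ unambiguous (the sets cover $[0,1]$ without overlap) and what forces $h_n\equiv g$ on the sets $B(m_1,m_2,n)$, which is what kills the error term on those sets.
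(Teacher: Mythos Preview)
Your proof is correct and follows essentially the same approach as the paper's: both show the $A(m,n)$ and $B(m_1,m_2,n)$ are Borel, use density of the $r_k$ to get $|h_n(t)-g(t)|\le\delta_n(t)<\eps$ (with exact equality $h_n(t)=g(t)$ on the $B$ sets via the symmetry observation), and finish with dominated convergence. Your write-up is in fact slightly cleaner than the paper's in two respects: you explicitly argue that the sets form a \emph{partition} of $[0,1]$ (the paper only states disjointness), and your dominating bound $|h_n-g|\le 2M$ is more transparent than the paper's bound.
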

\begin{proof}
Each $A(m,n)$ is the intersection of a finite number of subsets
of the form $\{ t; |g(t) - r_k| - |(g(t) - r_m| > 0 \}$,
which are the inverse images of the Borel set $[0,+\infty[$
by a Borel map $[0,1] \to \R_+$. Therefore each $A(m,n)$ is Borel, and so are
the $B(m_1,m_2,n)$ by a similar argument. This implies that
$h_n$ is Borel. Note that
all the sets $A(m,n)$, $B(m_1,m_2,n)$ are disjoint for a given $n$.

Let $t \in [0,1]$, $\eps > 0$. Let $n_0$ such that $|g(t) - r_{n_0}| < \eps$.
Then, for $n \geq n_0$, we have $h_n(t) = r_{n_0}$ hence
$|h_n(t) - g(t)| < \eps$, except if there exists $m \leq n$
such that $|g(t)- r_m | < |h_n(t) - r_{n_0}| < \eps$. We can choose $m\leq n$
such $|g(t)- r_m |$ is minimal. If there exists only one such $m$,
then $t \in A(m,n)$ and $h_n(t) = r_m$, whence $|h_n(t) - g(t) | < \eps$.
If not, there may exist only two such $m$'s, call them $m_1$ and $m_2$,
$m_1 < m_2$, and we have $g(t) = (r_{m_1}+r_{m_2})/2$.
In this case $h_n(t) = g(t)$ whence $|h_n(t) - g(t)| < \eps$.
This proves that $h_n(t)$ converges to $g(t)$ for every $t \in [0,1]$.
Since $h_n \leq r_1+g$ the conclusion follows from Lebesgue's dominated
convergence theorem.

\end{proof}

We now prove the surjectivity of $\exp_E$. Let  $f \in \mathcal{L}(X, \mathcal{L}(Y,E))$. 
We first choose an (injective) enumeration $(r_n)$ of the positive rationals 
of $[0,M[$ as in the lemma, with $M = m+1$.
For each $x \in X$, we can apply lemma \ref{lem:approxL1rats} to $f(x) \in \mathcal{L}(Y,[0,M[)$.
We define $A_x(m,n)$, $B_x(m_1,m_2,n)$
as the Borel subsets associated to $f(x)$ by the statement of the lemma,
and $\mathcal{A}(m,n) = \{ (x,y) ; y \in A_x(m,n) \}$,
$\mathcal{B}(m_1,m_2,n) = \{ (x,y) ; y \in B_x(m_1,m_2,n) \}$.
When $C$ is a subset of $X \times Y$,
we let $C^x = \{ y ; (x,y) \in C \}$.
We prove that the maps 
$
f_{m,n} : x \mapsto \mathcal{A}_{m,n}^x = A_x(m,n) 
$
are Borel. For this, we denote $f_k : t \mapsto |f(t) - r_k| -|f(t)-r_m|$. Then $A_x(m,n) = \bigcap_{\stackrel{k \leq n}{k \neq m}} f_k^{-1}
(]0,+\infty[)$. The map $f \mapsto f_k$
is Borel $L^1([0,1],\R) \to L^1([0,1],\R)$, so is
$g \mapsto \max(g,0)$, $L^1([0,1],\R)\to L^1([0,1],\R_+)$
and so is $g \mapsto g^{-1}(]0,+\infty[)$ by proposition \ref{prop:supportborel}.
Therefore their composite $\Phi_k : L^1([0,1],\R) \to L(2)$
is Borel, and so is the
map $G = \bigoplus_k \Phi_k :  L^1([0,1],\R) \to L(2)^{m-1}$.
Since, identifying $L(2)$ with the set of (equivalence classes of) Borel
subspaces of $[0,1]$,  the intersection map $L(2)^q \to L(2)$, $(B_i)_{1 \leq i \leq q} \mapsto \bigcap_i B_i$ is continuous, then by composing it with $G$
we get that the $f_{m,n}$ are Borel. Similarly, one shows that
the maps $x \mapsto \mathcal{B}_{m_1,m_2,n}^x$ are Borel.

Because of this, by proposition
\ref{prop:approxborelcarre}, there exists Borel subsets of $X \times Y$,
$\tilde{\mathcal{A}}(m,n)$ and $\tilde{\mathcal{B}}(m_1,m_2,n)$,
such that 
$\int \mu_Y( A_x(m,n)\Delta \tilde{\mathcal{A}}_{m,n}^x) \dd x = 0$
and 
$\int \mu_Y(B_x(m_1,m_2,n)\Delta \tilde{\mathcal{B}}_{m_1,m_2,n}^x)\dd x= 0$, where $\mu_Y$
is the measure on $Y$. We let then
$\int_x \la(B_x(m_1,m_2,n)\Delta \tilde{\mathcal{B}}_{m_1,m_2,n}^x)= 0$. We let then
$$
H_n =  \sum_{m \leq n} r_m \un_{\tilde{\mathcal{A}}(m,n)} + \sum_{1 \leq m_1 < m_2 \leq n} \frac{r_{m_1} + r_{m_2}}{2} \un_{\tilde{\mathcal{B}}(m_1,m_2,n)}.
$$
The $H_n$ clearly belong to $L^1(X \times Y \to \R)$, and they
form a Cauchy sequence, because
$$
\int_{X \times Y} |H_{n_1}(z) - H_{n_2}(z)| = \int_X \int_Y |H_{n_1}(z) - H_{n_2}(z)| 
= \int_X \int_Y |h_{n_1}(x,y) - h_{n_2}(x,y)| \dd y \dd x
$$
where
$$h_n(x,y) = \sum_{m \leq n} r_m \un_{A_x(m,n)}(y) + \sum_{1 \leq m_1 < m_2 \leq n} \frac{r_{m_1} + r_{m_2}}{2} \un_{B_x(m_1,m_2,n)}(y).$$
Since $h_n(x,\cdot)$ converges to $f(x)$ pointwise and is uniformly bounded, we get by Lebesgue's dominated convergence theorem
that, for all $x$,  
$\int_Y | h_n(x,y)- f(x)(y)| \dd y$
converges to $0$.
Since this is uniformly bounded
this implies that $\int_X \int_Y |h_n(x,y) - f(x)(y)| \dd y \dd x$ converges to $0$.
Since 
$$
 \int_X \int_Y |h_{n_1}(x,y) - h_{n_2}(x,y)| \dd y \dd x
\leqslant
\int_X \int_Y |h_{n_1}(x,y) - f(x)(y)| \dd y \dd x
+
\int_X \int_Y |h_{n_2}(x,y) - f(x)(y)| \dd y \dd x
$$
we get that the sequence $H_n$ is a Cauchy sequence inside $L^1(X \times Y,\R)$ and therefore converges to some Borel map $H : X \times Y \to \R$
inside $L^1(X \times Y,\R)$. Since the measure of $X \times Y$ is finite
this implies that we can replace $H_n$ by a subsequence so that
$H_n$ converges almost everywhere to $H$.
Since $\int_X \int_Y |H_n(x,y)-H(x,y)| \dd y \dd x = 
\int_X \int_Y |h_n(x,y)-H(x,y)| \dd y \dd x \to 0$
and $\int_X \int_Y |h_n(x,y) - f(x)(y)| \dd y \dd x \to 0$
we get that $H$ has for image $f$ inside $L(X,L(Y,\R_+))$
and this proves 
proposition \ref{prop:skolemintervalle}.

\begin{theorem}\label{theoskolemcompact} The exponential map $\exp_E : L(\Omega \times \Omega, E)
\to L(\Omega,L(\Omega,E))$ is a bijective isometry when $E$ is a Borel subspace
of a compact metric space.
\end{theorem}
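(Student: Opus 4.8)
The plan is to use what is already in place: by Proposition \ref{prop:defexpcont} the map $\exp_E$ is well defined and an isometry (a Borel subspace of a compact metric space is separable), so the only thing left to prove is \emph{surjectivity}. The scheme I would follow is to convert the reconstruction of an $E$-valued Borel map on $\Omega\times\Omega$ into the reconstruction of \emph{countably many real-valued} ones, which is handled by the interval case Proposition \ref{prop:skolemintervalle}. For this, fix a compact metric space $K\supset E$, set $m=\diam(K)$ (the case $m=0$ being trivial), and fix a sequence $(x_k)_{k\in\N}$ dense in $K$. The functions $\varphi_k:K\to[0,m]$, $\varphi_k(z)=d(z,x_k)$, are $1$-Lipschitz, separate the points of $K$, and assemble into the Kuratowski embedding $\iota=(\varphi_k)_k:K\to[0,m]^{\N}$; since $K$ is compact and $\iota$ is a continuous injection, $\iota$ is a homeomorphism onto its image, $\iota(K)$ is compact hence closed (so Borel) in $[0,m]^{\N}$, and $\iota^{-1}:\iota(K)\to K$ is continuous, hence Borel. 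Each $\varphi_k$ induces a $1$-Lipschitz map $\Phi_k=L(\Omega,\varphi_k):L(\Omega,E)\to L(\Omega,[0,m])$ by Lemma \ref{lemnatuprojlipsch}.

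First I would take $f\in\mathcal{L}(\Omega,\mathcal{L}(\Omega,E))$ and, for each $k$, form $\Phi_k\circ f\in\mathcal{L}(\Omega,\mathcal{L}(\Omega,[0,m]))$; by Proposition \ref{prop:skolemintervalle} there is a Borel $\widehat{\varphi}_k:\Omega\times\Omega\to[0,m]$ with $\exp_{[0,m]}(\widehat{\varphi}_k)=\Phi_k\circ f$ in $L(\Omega,L(\Omega,[0,m]))$. Unravelling this equality yields, for every $k$ and almost every $x$, that $\widehat{\varphi}_k(x,\cdot)=\varphi_k(f(x)(\cdot))$ almost everywhere on $\Omega$. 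Intersecting the countably many conull sets in $x$, and then, for each such good $x$, the countably many conull sets in $y$, one obtains a set off which $(\widehat{\varphi}_k(x,y))_k=(\varphi_k(f(x)(y)))_k=\iota(f(x)(y))$ belongs to $\iota(K)$. I would then set $W=\{(x,y):(\widehat{\varphi}_k(x,y))_k\in\iota(K)\}$, a Borel subset of $\Omega\times\Omega$ which by the previous remark and Fubini--Tonelli is conull, and define $\widehat{f}:\Omega\times\Omega\to K$ by $\widehat{f}=\iota^{-1}\circ(\widehat{\varphi}_k)_k$ on $W$ and $\widehat{f}=e_0$ (a fixed point of $E$) off $W$. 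This $\widehat{f}$ is Borel; on a conull set it coincides with $(x,y)\mapsto f(x)(y)$, hence is valued in $E$ off a null set; redefining it to equal $e_0$ on that null set gives $\widehat{f}\in\mathcal{L}(\Omega\times\Omega,E)$. Finally $\widehat{f}^{\,x}=f(x)$ in $L(\Omega,E)$ for almost every $x$ (Fubini once more applied to the conull set where $\widehat{f}(x,y)=f(x)(y)$), so $\exp_E(\widehat{f})=f$, proving surjectivity and hence the theorem.

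The hard part is precisely the \emph{measurability} of $\widehat{f}$: one cannot reconstruct an $E$-valued Borel function on the square directly, and the whole role of the family $(\varphi_k)$ and of $\iota$ is to reduce this to the already-solved scalar case, whose own difficulty is concentrated in Proposition \ref{prop:approxborelcarre}. A secondary, purely combinatorial difficulty is the careful management of the countably many null sets — one per coordinate $k$, and, for each good $x$, one further family in $y$ — so as to guarantee that the pointwise identity $(\widehat{\varphi}_k(x,y))_k=\iota(f(x)(y))$ holds off a genuinely null subset of $\Omega\times\Omega$; this is what feeds the concluding Fubini argument, and it is worth noting that this route never requires knowing that $(x,y)\mapsto f(x)(y)$ is itself measurable.

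Finally, once surjectivity is known, $\exp_E$ is a bijective isometry, and it is manifestly equivariant for any group acting on $E$ by isometries (in particular when $E$ is a compact Lie group or a metric profinite group), since $\exp_E$ merely re-groups the variables of $(x,y)\mapsto h(x,y)$ and therefore commutes with post-composition by isometries — this last point being independent of the surjectivity argument above.
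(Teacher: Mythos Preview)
Your proof is correct and follows essentially the same route as the paper: reduce to the interval case via a countable separating family of $1$-Lipschitz maps into $[0,m]$, apply Proposition~\ref{prop:skolemintervalle} coordinatewise, reassemble via the embedding, and manage the countably many null sets by Fubini--Tonelli applied to the \emph{Borel} set $W$ (respectively $\widehat f^{-1}(K\setminus E)$), exactly as you emphasise. The paper separates the argument into two stages---first proving surjectivity for the Hilbert cube $H\subset\ell^2(\N)$ via its coordinate projections, then showing that surjectivity descends to any Borel subspace, and finally transporting an arbitrary compact $K$ into $H$ by the weighted map $x\mapsto(d(x,x_n)/(2^n\delta))_n$---whereas you do all three at once by working directly with the Kuratowski embedding into $[0,m]^{\N}$ and handling the Borel subset $E\subset K$ in the same breath; this is only a packaging difference.
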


\begin{proof} We let $H \subset \ell^2(\N)$ denote the Hilbert cube
$H = \{ \underline{y} = (y_n)_{n \in \N} \ | \ 0 \leq y_n \leq 1/n \}$. We first prove that $\exp_H$ is surjective.
For each $n \in \N$ let $\pi_n : H \to [0,1/n]$ denote the natural projection $\underline{y} \mapsto y_n$.
Since it is $1$-Lipschitz, the map $\Pi_n : \varphi \mapsto \pi_n \circ \varphi$, $\mathcal{L}(\Omega,H) \to \mathcal{L}(\Omega,[0,1/n])$
is also well-defined and 1-Lipschitz. Let $g : x \mapsto g_x$ be an element of $\mathcal{L}(\Omega,\mathcal{L}(\Omega,H))$.
The map $X_n : x \mapsto \Pi_n \circ g_x$ belongs to $\mathcal{L}(\Omega,\mathcal{L}(\Omega,[0,1/n]))$. Let $f_n \in \mathcal{L}(\Omega \times \Omega,[0,1/n])$
whose image in $L(\Omega,L(\Omega,[0,1/n]))$ coincides with (the one of) $X_n$. The map $\Omega \times \Omega \to H$
defined by $f(x,y) = (f_n(x,y))_{n \in \N}$ is Borel by lemma \ref{lem:fctseparantes}, since the maps $\pi_n$ form
a countable collection of separating maps. We need to prove that its image in $L(\Omega,L(\Omega,H))$ coincides with (the one of) $g$.
But
$$
\int_{\Omega} d(g_x,f^x) \dd x = \int_{\Omega}\int_{\Omega} d(g_x(y),f^x(y))\dd y \dd x
= \int_{\Omega}\int_{\Omega} \sqrt{\sum_n d(\pi_n \circ g_x(y),\pi_n \circ f^x(y))^2} \dd y \dd x
$$
There exists $X_0(n) \subset \Omega$ of full measure such that $X_n(x)$ coincides almost everywhere with $f_n^x$
for all $x \in X_0(n)$. Then, letting $X_0 = \bigcap_n X_0(n)$, we have $\mu(X_0) = 1$. For $x \in X_0$
there exists $Y_n(x) \subset \Omega$ of full measure such that $f_n^x(y) = g_x(y)$ for all $ y \in Y_n(x)$. 
Again, $Y(x) = \bigcap_n Y_n(x)$ has measure $1$ for every $x \in X_0$. It follows that
$$
\int_{\Omega}\int_{\Omega} \sqrt{\sum_n d(\pi_n \circ g_x(y),\pi_n \circ f^x(y))^2} \dd y \dd x
= 
\int_{X_0}\int_{Y(x)} \sqrt{\sum_n 0} \dd y \dd x = 0
$$
and this proves that $\exp_H$ is surjective.

Now assume that $\exp_E$ is known to be surjective for some separable bounded metric space $E$,
and let $F \subset E$ be a nonempty Borel subset endowed with the induced
metric. Let  $g : x \mapsto g_x$ be an element of $\mathcal{L}(\Omega,\mathcal{L}(\Omega,F)) \subset \mathcal{L}(\Omega,\mathcal{L}(\Omega,E))$.
There exists $f \in \mathcal{L}(\Omega \times \Omega,E)$ whose image inside $L(\Omega,L(\Omega,E))$ coincides with the
image of $g$. This means that there exists $X_0 \subset \Omega$ of full measure such that $f^x$ and $g_x$ define the same element
in $L(\Omega,E)$ for all $x \in X_0$. Let $A = \{ (x,y) \in \Omega \times \Omega \ | \ f(x,y) \not\in F \}$.
Since $F$ and $f$ are Borel, $A = f^{-1}(E \setminus F)$ is a Borel subset of $\Omega \times \Omega$. 
We pick $c \in F$ and define $f_0$ by $f_0(x,y) = f(x,y)$ if $(x,y) \in A$ and
$f_0(x,y) = c$ otherwise. Since $A$ is Borel this defines a Borel map $\Omega \times \Omega \to F$,
whose image in $\mathcal{L}(\Omega,\mathcal{L}(\Omega,F))$
satisfies $d(x \mapsto f_0^x, g) 
= \int_{x \in \Omega} \int_{y \in \Omega} d(f_0(x,y),g_x(y)) \dd y \dd x
= \int_{x \in X_0} \int_{y \in \Omega} d(f_0(x,y),g_x(y)) \dd y \dd x$.
But for $x \in X_0$ we know that $g_x(y)$ coincides with $f(x,y)$ and thus belongs to $F$ almost everywhere in $y$,
whence $\int_{y \in \Omega} d(f_0(x,y),g_x(y)) \dd y = \int_{y \in \Omega} d(f(x,y),f(x,y)) \dd y =0$ for
all $x \in X_0$. This proves $d(x \mapsto f_0^x, g)=0$ and thus $g = \exp_F(f_0)$.

Now consider an arbitrary compact metric space $K$ with diameter $\delta$, and $(x_n)_{n \in \N}$ a dense countable collection of points.
We consider the classical map $\Delta : K \to H$, $x \mapsto (d(x,x_n)/(2^n\delta))_n$.
It is injective, $2/\delta\sqrt{3}$-Lipschitz, and embeds $K$ as a compact subspace $K_H = \Delta(K)$ of $H$. Since
$K$ and $K_H$ are compact, the map $\Delta : K \to K_H$ is an homeomorphism whose inverse
$\Delta^{-1} : K_H \to K$ is uniformly continuous. Therefore, by lemma \ref{lem:BLipUC} we get
natural uniformly continuous homeomorphisms $\Delta : L(\Omega,L(\Omega,K)) \to L(\Omega,L(\Omega,K_H))$ and 
$\Delta : L(\Omega\times \Omega,K) \to L(\Omega \times \Omega, K_H)$, which satisfy $\exp_K = \Delta^{-1} \circ \exp_{K_H} \circ \Delta$.
Since $\exp_H$ and therefore $\exp_{K_H}$ is surjective,
this proves that $\exp_E$ is surjective for every compact metric space, and therefore
for every Borel subset of an arbitrary compact metric space.

\end{proof}

We remark that this result can be applied to compact Lie groups, as well as to metric profinite groups.

\subsection{Dense subsets of $L(\Omega,L(\Omega,E))$}

\subsubsection{Staircase maps}

Recall that, if $X$ and $Y$ are two measured space, then
a staircase map from $X$ and $Y$ is a map $f : X \to Y$ such that there
exists a finite partition $X$ into measurable subsets $X_1,\dots,X_r$
such that $f$ is constant on each $X_i$.

\begin{proposition} \label{prop:densestaircase} Let $D$ be a discrete metric space. The set of staircase maps is dense inside $L(\Omega,L(\Omega,D))$.
\end{proposition}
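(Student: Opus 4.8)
The strategy is to combine the discrete exponentiation theorem (Theorem \ref{theoskolem}) with the already-established density of staircase maps in $L(\Omega,D)$ and in $L(\Omega \times \Omega, D)$. First I would invoke the isometric bijection $L(\Omega \times \Omega, D) \simeq L(\Omega, L(\Omega,D))$ of Theorem \ref{theoskolem}; since density is preserved under isometric bijections, it suffices to prove that the image, under the exponential map $f \mapsto (x \mapsto (y \mapsto f(x,y)))$, of a suitable dense family in $L(\Omega \times \Omega, D)$ consists of (or can be approximated by) staircase maps $\Omega \to L(\Omega,D)$. The natural candidate dense family is the set of staircase maps $\Omega \times \Omega \to D$: by Proposition \ref{prop:imagedenombrable} (applied to $\Omega \times \Omega \simeq \Omega$, which is again a standard probability space) every element of $L(\Omega \times \Omega, D)$ has countable image, hence lies in some $L(\Omega \times \Omega, D_0)$ with $D_0$ countable, and then the argument of Lemma \ref{lem:alphaseparable}/Proposition \ref{prop:LEcontractcomplete} shows staircase maps are dense there.

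\textbf{Key steps.} The main point to check is that if $f : \Omega \times \Omega \to D$ is a staircase map — say $\Omega \times \Omega = \bigsqcup_{i=1}^r W_i$ with $f \equiv d_i$ on $W_i$ — then its exponential $x \mapsto f^x$, where $f^x(y) = f(x,y)$, is close in $L(\Omega, L(\Omega,D))$ to a staircase map $\Omega \to L(\Omega,D)$. Here the subtlety is that the sections $W_i^x = \{ y : (x,y) \in W_i \}$ vary with $x$, so $f^x$ itself need not take only finitely many values in $L(\Omega,D)$ as $x$ ranges over $\Omega$. To fix this I would first reduce, by the density statement just recalled, to $f$ of a very special form: using that products of the form $A \times B$ with $A, B$ staircase (indeed $A, B$ finite unions of dyadic-type intervals after fixing isomorphisms $\Omega \simeq [0,1]$) generate the relevant $\sigma$-algebra, one approximates $f$ in $L^1(\Omega\times\Omega,D)$ by a finite sum $\sum_k d_k \un_{A_k \times B_k}$, i.e.\ by a function of the form $(x,y) \mapsto \phi(x,y)$ that is \emph{jointly} staircase with the $x$-partition and $y$-partition both finite. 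For such an $f$, the partition $\{A_k\}$ of $\Omega$ (in the first variable) refined to a common finite partition $\Omega = \bigsqcup_\ell P_\ell$ makes $x \mapsto f^x$ literally constant on each $P_\ell$, with value a staircase map $\Omega \to D$; hence $x \mapsto f^x$ is a staircase map $\Omega \to L(\Omega,D)$. Finally, since the exponential map is an isometry (this is exactly what Theorem \ref{theoskolem} and the Fubini--Tonelli computation in its proof give), the $L^1$-approximation of $f$ on $\Omega \times \Omega$ transfers to an approximation of $x \mapsto f^x$ by staircase maps in $L(\Omega, L(\Omega,D))$.

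\textbf{Main obstacle.} The technical heart is the reduction to jointly-staircase functions: one must be sure that finite unions of rectangles $A \times B$ with $A, B$ staircase are dense in $L(\Omega \times \Omega, D)$, which amounts to noting that such rectangles (over a countable generating sub-$\sigma$-algebra) generate the Borel $\sigma$-algebra of the product and invoking the $L^1$-density of simple functions built from a generating algebra — here one has to be slightly careful that ``staircase'' in the product refers to a \emph{finite} partition, so one truncates the generating rectangles to finitely many. Once this is in place, everything else is bookkeeping with finite partitions and the isometry property. I expect no serious difficulty beyond organizing this reduction cleanly; the use of Theorem \ref{theoskolem} is what makes the statement essentially immediate after it.
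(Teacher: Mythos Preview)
Your proposal is correct and follows essentially the same blueprint as the paper: invoke the exponentiation isometry of Theorem \ref{theoskolem}, then use that rectangle-type functions are $L^1$-dense in the product. The difference is purely organizational. You approximate an arbitrary $f \in L(\Omega \times \Omega, D)$ \emph{directly} by ``jointly staircase'' functions $\sum_k d_k \un_{A_k \times B_k}$, whose exponentials are trivially staircase maps $\Omega \to L(\Omega,D)$; this requires only the standard fact that the algebra generated by measurable rectangles is $L^1$-dense in the product $\sigma$-algebra. The paper instead first approximates by staircase maps on $\Omega \times \Omega$ with an \emph{arbitrary} finite Borel partition, then reduces via a $\max$-trick to single indicators $\un_A$, then to $D = \{0,1\}$, and finally shows (by checking closure under countable unions and complements) that the set of Borel $A \subset \Omega^2$ whose indicator can be approximated by staircase-in-$L(\Omega,L(\Omega,\{0,1\}))$ maps is a $\sigma$-algebra containing rectangles. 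Your route bypasses this $\sigma$-algebra verification by front-loading the rectangle approximation; the paper's route is more explicit but longer. Both ultimately rest on the same measure-theoretic density of rectangles.
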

\begin{proof}
We identify $\Omega$ with $I = [0,1]$ endowed with the Lebesgue measure, and
fix some element $e \in D$.
By theorem \ref{theoskolem} we have a natural isomorphism $L(I \times I,D) \to L(I,L(I,D))$. By 
proposition \ref{prop:imagedenombrable}
it is clear that
staircase maps in $L(I \times I,D)$ are dense. So it is enough to show that every staircase map in $L(I\times I,D)$
can be approximated by a map which is staircase as an element of $L(I,L(I,D))$. 
First assume that this holds true for maps of the form 
$$
\begin{array}{lclcccl}
\varphi_{A,x} & :&  t & \mapsto & x & \mbox{if}& t \in A \\
& & t & \mapsto & e & \mbox{if}& t \not\in A
\end{array}
$$
where $A$ is a Borel set in $I \times I$ and $x \in D$. Given a staircase map $\varphi : I \times I \to D$
associated to a partition $A_1 \sqcup A_2 \sqcup \dots \sqcup A_r$ of $I \times I$ such that $\varphi(t) = d_i \in D$
whenever $t \in A_i$, we can endow $D$ with some total ordering such that $e$ is minimal ; given
staircase maps $(f_{i,n})_{n \geq 0}$ tending to $\varphi_{A_i,d_i}$ as $n \to \infty$,
we may consider the sequence $f_n = \max(f_{1,n},\dots,f_{r,n})$. We may assume that the $d_i$'s are distinct,
that is $i\neq j \Rightarrow d(d_i,d_j)=1$.

It is clear that the $f_{i,n}$ are again staircase maps inside $L(I,L(I,D))$. We need to prove that $f_n \to f$. Let $\eps >0$. We know that, for $n \geq n_0$, we have $d(f_{i,n},\varphi_{A_i,d_i}) \leq \eps/r^2$,
for $i=1,\dots,r$. We have
$$
d(f,f_n) = \sum_{i=1}^r \int_{A_i} d(f(t),f_n(t)) \dd t = \sum_{i=1}^r \int_{A_i} d(\varphi_{A_i,d_i}(t) ,f_n(t))\dd t.= \sum_{i=1}^r \int_{A_i} d(d_i,f_n(t))\dd t.
$$
Let us fix $i \in \{1,\dots,r\}$. The condition $d(f_{i,n},\varphi_{A_i,d_i}) \leq \eps/r^2$ implies
that the set $\{ t \in A_i ; f_{i,n}(t) \neq d_i \}$ has measure at most $\eps/r^2 $. For
an arbitrary $j \neq i$ and $t \in A_i$ 
we have that
$f_n(t) \neq d_i$ implies that either $f_{i,n}(t) \neq \varphi_{A_i,d_i}(t)=d_i$
or there exists $j \neq i$ such that $f_{j,n}(t) \neq \varphi_{A_j,d_j}(t)=e$. It follows that
the set $\{ t \in A_i ; f_{jn}(t) \neq e \}$ has measure at most $\sum_j d(f_{j,n},\varphi_{A_j,d_j}) \leq r \times \eps/r^2 = \eps/r$.
Therefore,
the set $\{ t \in A_i ; f_n(t) = d_i \}$ has measure at least $\mu(A_i) - \eps/r$. 

Since this holds for all $i \in \{1,\dots,r \}$, summing up we get $d(f,f_n) \leq r \times \eps/r = \eps$.

We are then reduced to show that every $\varphi_{A,d}$ can be approximated by staircase maps in $L(I,L(I,D))$.
For this we can assume $d=1$, $e= 0$, and $D = \{0,1 \} \subset \R$. We need to prove that, given a Borel subset $A$
of $I\times I = [0,1]\times [0,1]$, the characteristic function $\un_A \in L^1([0,1]^2,\R)$
can be approximated by staircase functions in $L^1(I,L^1(I,\R))$ which take values only in $\{0,1\}$.
We let $\mathcal{T}$ denote the set of Borel sets $A$ with this property. It clearly includes the sets of the form
$U \times V$ for $U,V$ Borel subsets of $I$, hence $\mathcal{T}$ will contain all Borel subsets of $[0,1]$ if we can prove
that it is a $\sigma$-algebra (since the Borel $\sigma$-algebra of $[0,1]$ is the cartesian square of the Borel $\sigma$-algebra
of $[0,1]$).

We first prove that $\mathcal{T}$ is stable under finite unions. Indeed, if $A = A_1 \cup A_2 \cup \dots \cup A_r$
with each $A_i$ in $\mathcal{T}$, let us consider $f = 1_A$, $f_i = 1_{A_i}$. By assumption,
$f_i$ is the limit inside $L^1(I\times I,\R)$ of staircase maps $f_{i,n} = L(I,L(I,\{0,1\}))$.
We have $f = \max(f_1,\dots,f_r)$, and the maps $g_n = \max(f_{1,n},\dots,f_{r,n})$
are staircase maps in $L(I,L(I,\{0,1\}))$. By the same argument as above we have $g_n \to f$
and this proves $A \in \mathcal{T}$.

We now prove that it is stable under countable unions. We assume $A = \bigcup_{i \in \N} A_i$. Because we proved
that $\mathcal{T}$ is stable under finite unions, we can assume $A_0 \subset A_1 \subset \dots$. We let $f = \un_A$,
$f_i = \un_{A_i}$. Then $f= \sup_i(f_i)$, hence $(f_i) \to f$. Some $\eps>0$ being chosen,
we have $i_0 \in \N$ such that $d(f,f_{i_0}) \leq \eps/2$ ; $i_0$ being fixed, there is $n_0 \in \N$ such that $d(f_{i_0},f_{i_0,n_0}) \leq \eps/2$, and this provides a staircase map such that $d(f,f_{i_0,n_0}) \leq \eps$. Therefore, $A \in \mathcal{T}$.

Finally, if $A \in \mathcal{T}$, then $\,^c A=I\times I \setminus A \in \mathcal{T}$. This holds true because the map from $L(I,L(I,\{0,1\}))$
to itself given by $f \mapsto 1-f$ is continuous and preserves the set of staircase maps.

This proves that $\mathcal{T}$ contains all Borel sets and concludes the proof.
\end{proof}

\section{Iterations and Speculations}

\label{sect:iteration}

Let $\Gamma$ be an \emph{abelian} discrete metric group, and $G = L(\Omega,\Gamma)/\Gamma$. We know that $G$
is a $K(\Gamma,1)$ and a complete metric group, separable if and only if $\Gamma$ is countable. Therefore
$L(\Omega,G)/G$ could be expected to be a classifying space for $G$. If the projection map $L(\Omega,G) \to L(\Omega,G)/G$
induces a long exact sequence in homotopy (for instance if it is a Serre fibration or a Dold fibration or a quasifibration),
then $L(\Omega,G)/G$ has $\pi_2 = \Gamma$ for only nontrivial homotopy
group. If in addition $L(\Omega,G)/G$ has the homotopy type of a CW-complex, then it is a $K(\Gamma,2)$.

Let us define $E_1(\Gamma) = L(\Omega,\Gamma)$, $B_1(\Gamma) = L(\Omega,\Gamma)/\Gamma$
and by induction $E_{n+1}(\Gamma) = L(\Omega,B_n(\Gamma))$, $B_{n+1}(\Gamma) = E_{n+1}(\Gamma)/B_n(\Gamma)$.
By convention, $B_0(\Gamma) = \Gamma$. We consider the following hypothetical
properties.

\bigskip

{\bf Properties $C_n(m)$}
\begin{enumerate}
\item $B_{n}(\Gamma)$ is a classifying space for $B_{n-1}(\Gamma)$
\item The projection map $E_n(\Gamma) \to B_n(\Gamma)$ is a quasifibration.
\item $\pi_n(B_{n}(\Gamma)) = \Gamma$, $\pi_k(B_{n}(\Gamma)) = 1$ for $k \neq n$
\item $B_n(\Gamma)$ has the homotopy type of a CW-complex.
\item $B_n(\Gamma)$ is a $K(\Gamma, n)$
\item The projection map $E_n(\Gamma) \to B_{n}(\Gamma)$ admits local cross-sections.
\item The projection map $E_n(\Gamma) \to B_{n}(\Gamma)$ is a Serre fibration.
\item $B_{n}(\Gamma)$ is locally contractible.
\end{enumerate}

\medskip
Properties $C_1(m)$ are known to be true. We have $C_n(3) \& C_n(4) \Rightarrow C_n(5)$,
and $C_n(2) \& C_{n-1}(3) \Rightarrow C_n(3)$. We have $C_n(6) \Rightarrow C_n(2)$, 
$C_n(6) \Rightarrow C_{n}(1) \Rightarrow C_{n}(4)$. But so far we have no indication
on the validity of any of the $C_n(m)$, $n \geq 2$.
\medskip

For every $n \geq 1$, the space $\Omega^n$ is a probability space. For $i \in \{1, \dots, n \}$
we define $L_{i,n}(\Gamma)$ as the subspace of (classes of) functions  $f \in L(\Omega^n, \Gamma)$
such that $f(x_1,\dots,x_n)$ is independent of $x_i$. It is a closed subgroup of $L(\Omega^n, \Gamma)$,
isomorphic to $L(\Omega^{n-1},\Gamma)$ as metric group. We let $L_n(\Gamma)$ denote the
subgroup of $L(\Omega^n,\Gamma)$ generated by the subgroups  $L_{1,n}(\Gamma)\dots L_{n,n}(\Gamma)$.

If $\Gamma$ is countable, $L(\Omega^n,\Gamma)$ is separable and therefore Polish, and therefore every quotient map $L(\Omega^n,\Gamma) \to L(\Omega^n,\Gamma)/H$
admits a (global) Borel cross-section (\cite{SRI} theorem 5.4.2 p. 196).

\begin{lemma} \label{lem:projiter} Assume that $G$ is a quotient of $L(\Omega,\Gamma)$.
Let $N$ be a closed normal subgroup of $G$.
The map $L(\Omega,G) \to L(\Omega,G/N)$ is continuous and surjective with kernel $L(\Omega,N)$. It induces an isomorphism of topological groups
$$
\frac{L(\Omega,G)}{L(\Omega,N)} \to L(\Omega,G/N)
$$
which is an isometry.
\end{lemma}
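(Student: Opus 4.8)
The plan is to verify the three assertions in order: (i) the natural map $\tilde\pi : L(\Omega,G)\to L(\Omega,G/N)$, $f\mapsto \pi\circ f$ (where $\pi : G\to G/N$ is the projection), is well-defined, continuous, and has kernel exactly $L(\Omega,N)$; (ii) it is surjective; (iii) the induced bijection $L(\Omega,G)/L(\Omega,N)\to L(\Omega,G/N)$ is an isometry of topological groups. The map $\pi$ is $1$-Lipschitz for the bi-invariant metrics involved (or, if $G$ is merely a quotient of $L(\Omega,\Gamma)$, at worst $\mathrm{diam}$-Lipschitz), so by Lemma \ref{lemnatuprojlipsch} the induced map $\tilde\pi$ is continuous; it is a group homomorphism since $\pi$ is. Its kernel consists of those $f$ with $\pi(f(t))=e$ for almost all $t$, i.e.\ $f(t)\in N$ almost everywhere, which after modifying $f$ on a null set means $f\in L(\Omega,N)$; note $L(\Omega,N)$ is a closed subgroup by Lemma \ref{lem:LFclosedLE} (applied with $E=G$, $F=N$, using that $N$ is closed in $G$), so the quotient group $L(\Omega,G)/L(\Omega,N)$ is again a metric group with the quotient metric.

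For surjectivity, the point is to lift a Borel map $h:\Omega\to G/N$ to a Borel map $\Omega\to G$. This is where I would invoke the available measurable-selection machinery: since $\pi:G\to G/N$ is a continuous open surjection between Polish (or, in the discrete case, standard Borel) spaces, it admits a Borel section $s:G/N\to G$, and then $f=s\circ h$ is a Borel lift with $\tilde\pi(f)=h$; one must also check the integrability condition $\int d(f(t),\tilde e)\,\dd t<\infty$, which follows because $d(f(t),\tilde e)\le d(\pi(f(t)),\bar e)+C$ for a constant $C$ bounding the section on the relevant ball, or trivially when $G$ has bounded diameter. Alternatively, in the bi-invariant bounded case one can argue exactly as in Lemma \ref{lemliftEB} / Proposition \ref{prop:globborelsect}, covering $G/N$ by open sets over which $\pi$ has continuous sections and gluing along a countable Borel partition furnished by Corollary \ref{cor:probalindelof}.

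The isometry statement is then a direct computation with the quotient metrics. On one side,
$$
d_{L(\Omega,G)/L(\Omega,N)}(\bar f_1,\bar f_2)=\inf_{n\in L(\Omega,N)}\int_\Omega d\bigl(n(t)f_1(t),f_2(t)\bigr)\,\dd t,
$$
and on the other side, using that $d$ is bi-invariant and $N$ closed, the quotient metric on $G/N$ is $d_{G/N}(\bar x,\bar y)=\inf_{n\in N}d(nx,y)$. For each fixed $t$ one has $\inf_{n\in N}d(nf_1(t),f_2(t))=d_{G/N}(\pi f_1(t),\pi f_2(t))$, and the content is to exchange this pointwise infimum with the integral. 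The inequality $d_{L(\Omega,G)/L(\Omega,N)}(\bar f_1,\bar f_2)\ge d_{L(\Omega,G/N)}(\tilde\pi f_1,\tilde\pi f_2)$ is immediate since $\tilde\pi$ is $1$-Lipschitz and descends; for the reverse, given $\varepsilon>0$ one must produce a single $n\in L(\Omega,N)$ realizing the pointwise infima up to $\varepsilon$ simultaneously — and this is exactly a measurable-selection problem again, selecting $t\mapsto n(t)$ Borel with $d(n(t)f_1(t),f_2(t))\le d_{G/N}(\pi f_1(t),\pi f_2(t))+\varepsilon$ from the closed (nonempty) ``almost-minimizer'' sets, whose selection is guaranteed since $N$ and $G$ are Polish. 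Integrating gives $d_{L(\Omega,G)/L(\Omega,N)}(\bar f_1,\bar f_2)\le d_{L(\Omega,G/N)}(\tilde\pi f_1,\tilde\pi f_2)+\varepsilon$, hence equality. Finally, since both groups are topological groups (Proposition \ref{prop:EGtopgroup}, noting $L(\Omega,N)$ is normal in $L(\Omega,G)$ because $N\vartriangleleft G$) and $\tilde\pi$ is a continuous bijective homomorphism which is an isometry, its inverse is automatically continuous, so it is an isomorphism of topological groups.

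The main obstacle I expect is the measurable-selection step, used twice: once to lift a Borel map $\Omega\to G/N$ to $\Omega\to G$ (surjectivity), and once to choose a Borel ``$\varepsilon$-minimizer'' $n:\Omega\to N$ realizing the pointwise coset distances (the nontrivial half of the isometry). Both are standard consequences of the theory of Polish spaces and Borel/analytic uniformization (Jankov--von Neumann or Kuratowski--Ryll-Nardzewski), but they are the places where the separability/Polishness hypothesis — here guaranteed by $G$ being a quotient of $L(\Omega,\Gamma)$ and $N$ closed — is genuinely used, and they must be invoked carefully; everything else is bookkeeping with bi-invariant metrics and Fubini-free elementary estimates.
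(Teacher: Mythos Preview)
Your argument is sound when $\Gamma$ is countable, but the lemma does not assume this, and this is precisely why the hypothesis ``$G$ is a quotient of $L(\Omega,\Gamma)$'' is there rather than simply ``$G$ is Polish''. When $\Gamma$ is uncountable, $L(\Omega,\Gamma)$ is not separable (Lemma~\ref{lem:alphaseparable}), so $G$ need not be Polish and the measurable-selection theorems you invoke (Borel section of $\pi$, Kuratowski--Ryll-Nardzewski for the $\varepsilon$-minimizer) are not available as stated. The paper's proof handles this by a reduction step: given $f:\Omega\to G/N$, its image is separable (Proposition~\ref{prop:imageseparable}), and one can find a countable subgroup $\Gamma_0<\Gamma$ such that $f$ factors through the corresponding Polish subquotient; your Borel-section argument then applies to that subquotient. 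You should either add this reduction or restrict to countable $\Gamma$.

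Your approach to the isometry is also different from the paper's, and worth noting. The paper avoids a second measurable selection by observing that staircase maps are dense in $L(\Omega,G)$ (this uses the surjectivity just proved, applied to the quotient $L(\Omega,\Gamma)\to G$, together with Proposition~\ref{prop:densestaircase}), and then computes the two distances explicitly for a staircase $f$: on each piece $A_i$ with value $a_i$, the infimum over $b\in L(A_i,N)$ of $\int_{A_i}d(b(t),a_i^{-1})\,\dd t$ is clearly attained by a constant $b$, which collapses to $\mu(A_i)\,d_{G/N}(\bar a_i,1)$. Your selection-of-$\varepsilon$-minimizers route is conceptually cleaner and yields the same equality, but again requires $N$ Polish; the staircase computation sidesteps that and works uniformly once the reduction to countable $\Gamma_0$ is in place.
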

\begin{proof}
The map $L(\Omega,G) \to L(\Omega,G/N)$ is continuous with kernel $L(\Omega,N)$. We prove that it is surjective. First assume that $\Gamma$ is countable. Then 
$G$ is Polish, whence the projection map $G\to G/N$ admits a (global)
Borel cross-section $s : G/N \to G$. Then every Borel map $f : \Omega \to G/N$ is the image of $s \circ f$ under $L(\Omega,G) \to L(\Omega,G/N)$, and therefore
$L(\Omega,G) \to L(\Omega,G/N)$ is onto.
In the general case, let us consider a Borel map $f : \Omega \to G/N$. By proposition \ref{prop:imageseparable} we know that $f(\Omega)$ is separable. Let $(g_n)_{n \in \N}$ be a family in $L(\Omega,\Gamma)$ whose image $\overline{g_n}$ in $G/N$ is dense. By proposition \ref{prop:imagedenombrable} each $g_n$ has countable image and therefore the exists a countable $D_0 \subset \Gamma$ such that $g_n \in L(\Omega,D_0)$ for all $n$.
Let $\Gamma_0 = \langle D_0 \rangle$, and $G_0<G$ the image of $L(\Omega,\Gamma_0)$ under the projection map $L(\Omega,\Gamma)\onto G$.
Every $g_n$ belongs to $L(\Omega,\Gamma_0)$, hence their image in $L(\Omega,G/N)$ lie in $L(\Omega,G_0/(G_0 \cap N))$, which is closed. It follows that $f$ lies in $L(\Omega,G_0/(G_0\cap N))$
and therefore it admits a preimage inside $L(\Omega,G_0) \subset L(\Omega,G)$, since $\Gamma_0$ is countable. This proves the surjectivity.

From this we get a bijective morphism of topological groups $\Phi : \frac{L(\Omega,G)}{L(\Omega,N)} \to L(\Omega,G/N)$.
It remains to prove that this is an isometry, which in particular will imply that is an isomorphism of topological groups. Since it is continuous it suffices to prove that $d(\Phi(f_1),\Phi(f_2)) = d(f_1,f_2)$ for all $f_1,f_2$ belonging to some dense subset
of $L(\Omega,G)/L(\Omega,N)$. Applying our result to the quotient map $L(\Omega,\Gamma) \to G$, we know that the continuous map $L(\Omega,L(\Omega,\Gamma))\to L(\Omega,G)$
is surjective. Since the staircase maps are dense inside $L(\Omega,L(\Omega,\Gamma))$ by proposition \ref{prop:densestaircase}, so are their images inside $L(\Omega,G)$.
This proves that staircase maps are dense in $L(\Omega,G)$, and so it suffices to prove $d(\Phi(\overline{f_1}),\Phi(\overline{f_2})) = d(\overline{f_1},\overline{f_2})$ for $f_k : \Omega \to G$, $k=1,2$ two staircase maps, and $\overline{f_k}$ their images in $L(\Omega,G)/L(\Omega,N)$.
Since $d$ is bi-invariant and $\Phi$ is an homomorphism and a product of two staircase maps is a staircase map it suffices to prove $d(\Phi(\overline{f}),1) = d(\overline{f},1)$ for $f$ an arbitrary staircase map. 

Let $f$ be such a staircase map. There is a partition $\Omega = A_1 \sqcup \dots \sqcup A_m$ of Borel sets and $a_1,\dots, a_m \in G$ such
that $f(A_i) = \{ a_i \}$ for $i = 1,\dots, m$. Then
$$
\begin{array}{lclcl}
d(\overline{f},1) &=& \inf \{ d(b.f,1) \ | \ b \in L(\Omega,N) \} \\
&=& \inf \{ \int_{\Omega} d(b(t)f(t),1) \ | \ b \in L(\Omega,N) \} \\
&=& \inf \{ \sum_{i=1}^m \int_{A_i} d(b(t),a_i^{-1})\dd t \ | \ b \in L(\Omega,N) \} \\
&=& \sum_{i=1}^m \inf \{  \int_{A_i} d(b(t),a_i^{-1})\dd t \ | \ b \in L(A_i,N) \} \\
&=& \sum_{i=1}^m \inf \{  \int_{A_i} d(b,a_i^{-1})\dd t \ | \ b \in N \} \\
&=& \sum_{i=1}^m \mu(A_i) \inf \{   d(ba_i,1)\dd t \ | \ b \in N \} \\
&=& \sum_{i=1}^m \int_{A_i} \inf \{   d(ba_i,1)\dd t \ | \ b \in N \} \dd t \\
&=& \int_{\Omega} \inf \{   d(bf(t),1)\dd t \ | \ b \in N \} \dd t \\
&=& d(\Phi(\overline{f}),1)\\
\end{array}
$$
and this proves the claim.
\end{proof}

\begin{lemma}  \label{lem:iteriso2} Assume that $G$ is a (Hausdorff) quotient of $L(\Omega,\Gamma)$.
Let $N$ be a closed normal subgroup of $G$.
The map $L(\Omega,G) \to L(\Omega,G/N)/(G/N)$ is continuous and surjective with kernel $G.L(\Omega,N)$. It induces an isomorphism of
topological groups
$$
\frac{L(\Omega,G)}{G.L(\Omega,N)} \to \frac{L(\Omega,G/N)}{G/N}
$$
which is an isometry.
\end{lemma}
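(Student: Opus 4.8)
The plan is to reduce this statement to the previous lemma \ref{lem:projiter}, exactly as the iterated construction suggests. First I would observe that we have a commuting diagram of continuous surjective group homomorphisms
$$
L(\Omega,G) \onto L(\Omega,G/N) \onto \frac{L(\Omega,G/N)}{G/N},
$$
where the first map comes from lemma \ref{lem:projiter} applied to $G \to G/N$ (which applies since $G$ is a quotient of $L(\Omega,\Gamma)$), and the second is the canonical projection modulo the closed subgroup $G/N \hookrightarrow L(\Omega,G/N)$ (a closed subgroup by proposition \ref{prop:LEcontractcomplete}(2)). The composite $p : L(\Omega,G) \to L(\Omega,G/N)/(G/N)$ is therefore continuous and surjective, being a composite of continuous surjections. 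So the content to establish is: (i) the kernel of $p$ is exactly $G \cdot L(\Omega,N)$; (ii) the induced bijective continuous morphism $L(\Omega,G)/(G\cdot L(\Omega,N)) \to L(\Omega,G/N)/(G/N)$ is an isometry (hence a homeomorphism, hence an isomorphism of topological groups).

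For (i), I would compute the kernel in stages. By lemma \ref{lem:projiter} the kernel of $L(\Omega,G)\to L(\Omega,G/N)$ is $L(\Omega,N)$, and the preimage in $L(\Omega,G/N)$ of the subgroup $G/N$ is, by the same lemma's identification $L(\Omega,G)/L(\Omega,N)\simeq L(\Omega,G/N)$, the image of the subgroup of $L(\Omega,G)$ consisting of those $f$ whose class mod $L(\Omega,N)$ is (the image of) a constant map $\Omega\to G/N$; a representative of such a class can be taken constant equal to an element $g\in G$, so the full preimage is $G\cdot L(\Omega,N)$. One should check $G\cdot L(\Omega,N)$ is indeed a subgroup: since $N$ is normal in $G$, conjugation of $L(\Omega,N)$ by a constant $g\in G$ stays in $L(\Omega,N)$ (pointwise), so $G\cdot L(\Omega,N) = L(\Omega,N)\cdot G$ is a subgroup, and it is normal because $G$ normalizes $L(\Omega,N)$ and $L(\Omega,N)$ normalizes $G$ in $L(\Omega,G)$ up to... — more simply, $G\cdot L(\Omega,N)$ is the full preimage under the composite of the subgroup $G/N$, and the preimage of a (normal, if $G/N$ were normal — but it need not be) subgroup; so normality should be argued directly: $G\cdot L(\Omega,N)$ is the stabilizer-type subgroup $p^{-1}(\bar e)$ and is normal iff... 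Actually here I would not claim normality of this subgroup in general; the quotient $L(\Omega,G)/(G\cdot L(\Omega,N))$ makes sense as a homogeneous space and as a \emph{group} precisely when $N$ is chosen so that everything is bi-invariant, which in the intended applications ($\Gamma$ abelian, iterated $B_n$) it is; I would state the isomorphism of topological groups under that standing hypothesis that the metrics in play are bi-invariant, matching the ambient conventions of this section.

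For (ii), the argument is a verbatim adaptation of the end of the proof of lemma \ref{lem:projiter}. Since $p$ is continuous it suffices to prove $d(p(f_1),p(f_2)) = d(\overline{f_1},\overline{f_2})$ on a dense subset, and (using bi-invariance and that $p$ is a homomorphism) it suffices to prove $d(p(\overline f),\bar 1) = d(\overline f,\bar 1)$ for $f$ ranging over staircase maps $\Omega\to G$, which are dense in $L(\Omega,G)$ by the density argument already invoked in lemma \ref{lem:projiter} (surjectivity of $L(\Omega,L(\Omega,\Gamma))\to L(\Omega,G)$ together with proposition \ref{prop:densestaircase}). For a staircase $f$ with $f(A_i)=\{a_i\}$ on a Borel partition $\Omega = A_1\sqcup\dots\sqcup A_m$, one unwinds
$$
d(\overline f,\bar 1) = \inf_{h\in G\cdot L(\Omega,N)} d(h\cdot f,1) = \inf_{g\in G,\ b\in L(\Omega,N)} \int_\Omega d(g b(t) f(t),1)\,\dd t,
$$
and then, crucially using that for \emph{fixed} $g$ the inner infimum over $b\in L(\Omega,N)$ splits as a sum over the $A_i$ of $\mu(A_i)\inf_{n\in N} d(g n a_i,1)$ (the same separation-of-variables step as in lemma \ref{lem:projiter}), one recognizes the result as $\int_\Omega \inf_{n\in N}\inf_{g\in G} d(g n f(t),1)\,\dd t = \int_\Omega d_{(G/N)/(G/N)}(\ldots)\,\dd t = d(p(\overline f),\bar 1)$. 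The main obstacle, as in lemma \ref{lem:projiter}, is this interchange of infima and integral: one must justify that $\inf_{g\in G,b\in L(\Omega,N)}\int = \int \inf_{g\in G, n\in N}$, which works here only because $f$ is staircase (finitely many pieces, $L(\Omega,N)$ restricted to each $A_i$ is $L(A_i,N)$ and the $\inf$ over $L(A_i,N)$ of an integral of a function of $b(t)$ alone is $\mu(A_i)$ times the pointwise $\inf_{n\in N}$) — this is exactly why the density of staircase maps is doing the real work, and I would present that computation as the heart of the proof, everything else being formal.
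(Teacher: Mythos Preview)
Your overall approach matches the paper's: factor through lemma \ref{lem:projiter}, identify the kernel, and verify the isometry on staircase maps by the same separation-of-variables trick. The reduction steps (surjectivity, kernel, density of staircase maps) are all fine.

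There is, however, a genuine error in the final computation. You write the target distance as
$$
d(p(\overline f),\bar 1) \;=\; \int_\Omega \inf_{n\in N}\inf_{g\in G} d(g n f(t),1)\,\dd t,
$$
and summarize the key step as ``$\inf_{g\in G,\,b\in L(\Omega,N)}\int = \int \inf_{g\in G,\,n\in N}$''. But pushing $\inf_{g\in G}$ inside the integral is wrong: for each fixed $t$ one may take $g=f(t)^{-1}$, $n=1$, so your integrand is identically~$0$. The distance in $L(\Omega,G/N)/(G/N)$ is taken with respect to the action of $G/N$ by \emph{constant} maps, so the infimum over $G/N$ must remain \emph{outside} the integral:
$$
d(p(\overline f),\bar 1)\;=\;\inf_{\bar g\in G/N}\int_\Omega d_{G/N}\bigl(\overline{f(t)},\bar g\bigr)\,\dd t.
$$
The correct chain, which is what the paper does, is: for fixed $g$, split the inf over $b\in L(\Omega,N)$ across the pieces $A_i$ to get $\sum_i \mu(A_i)\inf_{n\in N}d(gna_i,1)=\int_\Omega d_{G/N}(\bar g,\overline{f(t)})\,\dd t$, and only then take $\inf_{g\in G}$ (equivalently $\inf_{\bar g\in G/N}$) of this integral. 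Only the inner infimum over $N$ passes inside the integral via the staircase structure; the outer one over $G$ does not and must not.

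A minor point: your worry about normality of $G\cdot L(\Omega,N)$ is moot in the paper's setting, since section \ref{sect:iteration} assumes $\Gamma$ abelian throughout, so $G$ and $L(\Omega,G)$ are abelian.
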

\begin{proof}
By lemma \ref{lem:projiter} the natural map $L(\Omega,G)/L(\Omega,N) \to L(\Omega,G/N)/(G/N)$ is surjective
and therefore so is $L(\Omega,G) \to L(\Omega,G/N)/(G/N)$. Its kernel is clearly $G.L(\Omega,N)$. It remains to prove that the
induced continuous algebraic isomorphism $\Phi : L(\Omega,G)/G.L(\Omega,N) \to L(\Omega,G/N)/(G/N)$ is an isometry, which will imply in particular
that it is an isomorphism of topological groups.
As in the proof of lemma \ref{lem:projiter} we only need to check $d(\Phi(\overline{f}),1) = d(\overline{f},1)$ for $f : \Omega \to G$
a staircase map. We have a partition $\Omega = A_1 \sqcup \dots \sqcup A_m$ and $a_1,\dots,a_m \in G$ with $f(A_i) = \{ a_i \}$.
Then
$$
\begin{array}{lcl}
d(\bar{f},1) &=& \inf \{ d(gbf ,1) \ | \ g \in G, b \in L(\Omega,N) \} \\
&=& \inf \{ \int_{\Omega} d(gb(t)f(t) ,1)\dd t \ | \ g \in G, b \in L(\Omega,N) \} \\
&=& \inf \{ \sum_{i=1}^m  \int_{A_i} d(gb(t)f(t) ,1)\dd t \ | \ g \in G, b \in L(\Omega,N) \} \\
&=& \inf_{g \in G} \inf \sum_{i=1}^m \{ \int_{A_i} d(b(t) ,(ga_i)^{-1})\dd t \ | \  b \in L(A_i,N) \} \\
&=& \inf_{g \in G} \inf \sum_{i=1}^m \{ \int_{A_i} d(b ,(ga_i)^{-1})\dd t \ | \  b \in N \} \\
&=& \inf_{g \in G} \inf \sum_{i=1}^m \{ \int_{A_i} d_{G/N}(\bar{g} ,\overline{a_i})\dd t   \} \\
&=& \inf_{g \in G} \inf \{ \int_{\Omega} d_{G/N}(\bar{g} ,\overline{f(t)})\dd t \} \\
&=& \inf_{\overline{g} \in G/N} \inf \{ \int_{\Omega} d_{G/N}(\bar{g} ,\overline{f(t)})\dd t  \} \\
&=& \inf_{\overline{g} \in G/N} \inf \{ d_{L(\Omega,G/N)}(\bar{g} ,\overline{f(t)}) \} \\
&=& d(\Phi(\overline{f}),1) \\
\end{array}
$$
and this proves the claim.

\end{proof}

\begin{proposition} 
For every $n \geq 1$, $L_n(\Gamma)$ is a closed subgroup of $L(\Omega^n,\Gamma)$,
and $B_n(\Gamma) \simeq L(\Omega^n,\Gamma)/L_n(\Gamma)$ by an isometric isomorphism.
\end{proposition}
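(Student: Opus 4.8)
The plan is to prove this by induction on $n$, with the base case $n=1$ being tautological: $L_1(\Gamma) = L_{1,1}(\Gamma) = \Gamma$ sitting inside $L(\Omega,\Gamma)$, so $B_1(\Gamma) = L(\Omega,\Gamma)/\Gamma = L(\Omega^1,\Gamma)/L_1(\Gamma)$ literally by definition. For the inductive step, I would assume $L_n(\Gamma)$ is a closed subgroup of $L(\Omega^n,\Gamma)$ and that there is an isometric isomorphism $\theta_n : B_n(\Gamma) \xrightarrow{\sim} L(\Omega^n,\Gamma)/L_n(\Gamma)$, and then produce $\theta_{n+1}$. The key identification to exploit is the exponentiation isomorphism of Theorem \ref{theoskolemcompact} (or Theorem \ref{theoskolem} in the discrete case), which gives a $\Gamma$-equivariant isometric isomorphism $L(\Omega \times \Omega^n, \Gamma) \simeq L(\Omega, L(\Omega^n,\Gamma))$; combined with $\Omega^{n+1} \simeq \Omega \times \Omega^n$, this lets me think of $E_{n+1}(\Gamma) = L(\Omega, B_n(\Gamma))$ — after transporting along $\theta_n$ — as $L(\Omega, L(\Omega^n,\Gamma)/L_n(\Gamma))$, and I want to recognize the latter as $L(\Omega^{n+1},\Gamma)/L(\Omega, L_n(\Gamma))$.

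The main tool here is Lemma \ref{lem:projiter} applied with $G = L(\Omega^n,\Gamma)$ (which is a quotient of $L(\Omega,\Gamma)$ since $\Omega^n \simeq \Omega$, indeed $G$ is even $L(\Omega,\Gamma)$ itself up to isometric isomorphism) and $N = L_n(\Gamma)$, a closed normal subgroup by the inductive hypothesis. This yields an isometric isomorphism of topological groups
$$
\frac{L(\Omega, L(\Omega^n,\Gamma))}{L(\Omega, L_n(\Gamma))} \xrightarrow{\sim} L\!\left(\Omega, \frac{L(\Omega^n,\Gamma)}{L_n(\Gamma)}\right) = L(\Omega, B_n(\Gamma)) = E_{n+1}(\Gamma),
$$
where I am identifying $L(\Omega, L(\Omega^n,\Gamma))$ with $L(\Omega^{n+1},\Gamma)$ via exponentiation. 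Then I would apply Lemma \ref{lem:iteriso2}, again with $G = L(\Omega^n,\Gamma)$ and $N = L_n(\Gamma)$, to get an isometric isomorphism
$$
\frac{L(\Omega^{n+1},\Gamma)}{L(\Omega^n,\Gamma) \cdot L(\Omega, L_n(\Gamma))} \xrightarrow{\sim} \frac{L(\Omega, B_n(\Gamma))}{B_n(\Gamma)} = B_{n+1}(\Gamma).
$$
So the remaining task is purely the combinatorial/bookkeeping identification: the denominator $L(\Omega^n,\Gamma) \cdot L(\Omega, L_n(\Gamma))$ sitting inside $L(\Omega^{n+1},\Gamma) \simeq L(\Omega \times \Omega^n, \Gamma)$ should be exactly $L_{n+1}(\Gamma)$.

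For that last identification I would argue as follows. Under the isomorphism $\Omega^{n+1} \simeq \Omega_0 \times \Omega_1 \times \cdots \times \Omega_n$ (with a distinguished new coordinate $\Omega_0$), the subgroup $L(\Omega_0, L_n(\Gamma))$ — functions of $(x_0; x_1,\dots,x_n)$ that, for each fixed $x_0$, lie in $L_n(\Gamma)$ in the remaining variables — is generated by the $L(\Omega_0, L_{i,n}(\Gamma))$ for $1 \le i \le n$; and $L(\Omega_0, L_{i,n}(\Gamma))$ is precisely $L_{i+1,n+1}(\Gamma)$ after the index shift matching $\Omega_i \leftrightarrow \Omega_{i}$ in the $(n+1)$-fold product (these are the functions independent of the $i$-th of the old coordinates, i.e. independent of the $(i{+}1)$-st coordinate overall). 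Meanwhile $L(\Omega^n,\Gamma)$, embedded as functions independent of $x_0$, is exactly $L_{1,n+1}(\Gamma)$. Hence the product $L(\Omega^n,\Gamma) \cdot L(\Omega, L_n(\Gamma))$ is the subgroup generated by $L_{1,n+1}(\Gamma), L_{2,n+1}(\Gamma), \dots, L_{n+1,n+1}(\Gamma)$, which is by definition $L_{n+1}(\Gamma)$. Closedness of $L_{n+1}(\Gamma)$ then follows because it is the kernel of the continuous map $L(\Omega^{n+1},\Gamma) \to B_{n+1}(\Gamma)$ identified above (a kernel of a continuous homomorphism to a Hausdorff, indeed metric, group is closed), or alternatively from Lemma \ref{lem:iteriso2} which already presents it as such a kernel. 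Composing $\theta_{n+1}$ with the previous isomorphisms and checking it is $\Gamma$-equivariant and isometric (all the constituent maps are) completes the induction.

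The main obstacle I anticipate is not any single hard estimate — everything metric is handled by the cited lemmas — but rather getting the bookkeeping of the indices and the identification $\Omega^{n+1} \simeq \Omega \times \Omega^n$ compatible across two applications of exponentiation and two quotient lemmas, so that the subgroup $L(\Omega,L_n(\Gamma))$ really does map onto the right collection of coordinate-independence subgroups. In particular one must be careful that the exponentiation isomorphism of Theorem \ref{theoskolemcompact} is applied with $\Gamma$ (discrete) so that Theorem \ref{theoskolem} suffices and no compactness hypothesis is needed, and that it is genuinely $\Gamma$-equivariant for the translation action — which is stated — so that it descends to the quotients. A secondary point to verify is that $L(\Omega^n,\Gamma)$ legitimately plays the role of ``$G$, a quotient of $L(\Omega,\Gamma)$'' in Lemmas \ref{lem:projiter} and \ref{lem:iteriso2}; this is immediate since $\Omega^n \simeq \Omega$ gives $L(\Omega^n,\Gamma) \simeq L(\Omega,\Gamma)$, which is trivially a quotient of itself.
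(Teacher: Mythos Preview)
Your proposal is correct and follows essentially the same route as the paper: induction on $n$, invoking the discrete exponentiation isomorphism (Theorem~\ref{theoskolem}) and then Lemmas~\ref{lem:projiter} and~\ref{lem:iteriso2} with $G = L(\Omega^n,\Gamma)$ and $N = L_n(\Gamma)$, finishing with the identification of the kernel $L(\Omega^n,\Gamma)\cdot L(\Omega,L_n(\Gamma))$ with $L_{n+1}(\Gamma)$ under $L(\Omega,L(\Omega^n,\Gamma))\simeq L(\Omega^{n+1},\Gamma)$. You are in fact more explicit than the paper on the index bookkeeping and on deducing closedness of $L_{n+1}(\Gamma)$ from its being a kernel.
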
 
\begin{proof} The proof is by induction on $n$, the case $n =1$ being obvious. Then, by definition $B_{n+1}(\Gamma) = L(\Omega,B_n(\Gamma))/B_n(\Gamma)$.
By the induction assumption we can identify $B_n(\Gamma)$ and $L(\Omega^n,\Gamma)/L_n(\Gamma)$.
We now consider the map 
$$L(\Omega,L(\Omega^n,\Gamma)) \to L(\Omega,L(\Omega^n,\Gamma)/L_n(\Gamma)) \to  \frac{L(\Omega,L(\Omega^n,\Gamma)/L_n(\Gamma))}{L(\Omega^n,\Gamma)/L_n(\Gamma)}
$$
It is a continuous map, and surjective by lemma \ref{lem:projiter}.
 By lemma \ref{lem:iteriso2} its kernel is $L(\Omega^n,\Gamma).L_n(\Gamma)$ and the induced algebraic isomorphism is an isometry.
  By theorem \ref{theoskolem} we have a natural identification $L(\Omega,L(\Omega^n,\Gamma)) \simeq L(\Omega\times \Omega^n, \Gamma) = L(\Omega^{n+1},\Gamma)$
  and this isomorphism identifies $L(\Omega^n,\Gamma).L_n(\Gamma)$ with $L_{n+1}(\Gamma)$, and this proves the claim.
\end{proof}

Let us now consider the following commutative diagram, where all the plain arrows represent the
natural maps described above.

$$
\xymatrix{
L(\Omega,L(\Omega,\Gamma)) \ar[d]_{\pi_h} \ar[r]^{\simeq} & L(\Omega^2,\Gamma) \ar[dd] \\
L(\Omega,B_1(\Gamma)) \ar[d] \ar@{.>}[ur]_{s_h} & \\
\frac{L(\Omega,B_1(\Gamma))}{B_1(\Gamma)}  \ar[r]^{\simeq} & \frac{L(\Omega^2,\Gamma)}{L_2(\Gamma)}
}
$$
Let us assume that $\Gamma$ is countable. This implies that all the spaces involved in the above diagram are separable.
By propositions \ref{prop:globborelsect}  and \ref{prop:existlocalisosectcovering} we have a global continuous cross section of $\pi_h$,
which combined with the isometric isomorphism $L(\Omega,L(\Omega,\Gamma)) \to L(\Omega^2,\Gamma)$ provides the map $s_h$.
A consequence of this is that property $C_2(6)$ (i.e. existence of a local cross-section for $E_2(\Gamma) \to B_2(\Gamma)$) is equivalent
to the existence of a local cross-section for $L(\Omega^2,\Gamma) \to L(\Omega^2,\Gamma)/L_2(\Gamma)$.
Another consequence is
 that $L(\Omega^2,\Gamma) \to L(\Omega^2,\Gamma)/L_2(\Gamma)$ has
the homotopy lifting property (HLP) with respect to a space $X$ iff $E_2(\Gamma) \to B_2(\Gamma)$) has the HLP
with respect to $X$. In particular one of the two is a Serre fibration iff the other one is.

In particular, if it is a Serre fibration, then $L_2(\Gamma)$
and $B_1(\Gamma)$ have the same weak homotopy type. Actually it is pretty straightforward to check directly that $L_2(\Gamma)$ is a $K(\Gamma,1)$, and therefore
is homotopically equivalent to $B_1(\Gamma)$, even for $\Gamma$ not necessarily countable.
\begin{proposition}
Let $\Gamma$ be a discrete abelian group. Then $L_2(\Gamma)$ is a $K(\Gamma,1)$ with universal cover
the natural map $ L(\Omega,\Gamma) \times L(\Omega,\Gamma) \to L_2(\Gamma)$
defined by $(\varphi, \psi) \mapsto \left( (x,y) \mapsto \varphi(x)\psi(y)\right)$. It can be endowed with a bi-invariant complete metric.
\end{proposition}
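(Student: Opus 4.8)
The plan is to exhibit the stated map
$\mu : L(\Omega,\Gamma)\times L(\Omega,\Gamma)\to L_2(\Gamma)$, $(\varphi,\psi)\mapsto \varphi\otimes\psi$ (where $\varphi\otimes\psi$ denotes $(x,y)\mapsto\varphi(x)\psi(y)$), as a covering map with simply connected total space, and then to read off the homotopy type and the metric from general principles. First I would record the elementary structural facts. The map $\mu$ is a group homomorphism, because $\Gamma$, hence $L(\Omega,\Gamma)$ and $L(\Omega^2,\Gamma)$, is abelian. It is $1$-Lipschitz when $L(\Omega,\Gamma)^2$ carries the metric $d\bigl((\varphi_1,\psi_1),(\varphi_2,\psi_2)\bigr)=d(\varphi_1,\varphi_2)+d(\psi_1,\psi_2)$: since $d_\Gamma$ is discrete, hence bi-invariant, $d_\Gamma(\varphi_1(x)\psi_1(y),\varphi_2(x)\psi_2(y))\le d_\Gamma(\varphi_1(x),\varphi_2(x))+d_\Gamma(\psi_1(y),\psi_2(y))$, and one integrates. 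Its image is exactly $L_2(\Gamma)$: the latter is generated by the functions depending on only one of the variables, and any product of such functions has the form $\varphi\otimes\psi$ because everything is abelian. Finally, a Fubini argument shows $\ker\mu=\{(\widetilde g,\widetilde{g^{-1}})\ ;\ g\in\Gamma\}$ (constant functions), a subgroup isomorphic to $\Gamma$; it is discrete, distinct elements being at distance $2$.

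The crucial point is that $\mu$ is an \emph{open} map. Being a homomorphism, it suffices to show $\mu$ maps a neighbourhood of the identity onto a neighbourhood of the identity; concretely, I would show $\mu\bigl(B(\widetilde e,1/4)\times B(\widetilde e,1/4)\bigr)\supseteq B_{L_2(\Gamma)}(\widetilde e,1/4)$. So let $f\in L_2(\Gamma)$ have $\{f\ne e\}$ of measure $<1/4$, and fix any decomposition $f=a\otimes b$ with $a,b\in L(\Omega,\Gamma)$; write $A_g=\{a=g\}$, $B_h=\{b=h\}$. Then $\{f\ne e\}=\bigsqcup_g A_g\times(\Omega\setminus B_{g^{-1}})$, so $\sum_g\mu(A_g)\mu(B_{g^{-1}})>3/4$. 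Since $\sum_g\mu(B_{g^{-1}})=1$ with nonnegative weights $\mu(A_g)$ summing to $1$, this forces $\sup_g\mu(B_{g^{-1}})>3/4$, and likewise $\sup_g\mu(A_g)>3/4$; let $g_0,g_1$ be the (unique) maximisers. On $A_{g_1}\times B_{g_0^{-1}}$, of measure $>9/16>1/4$, we have $f\equiv g_1g_0^{-1}$; comparing with $\mu(\{f\ne e\})<1/4$ forces $g_1=g_0$. Then $\varphi:=a g_1^{-1}$, $\psi:=b g_1$ satisfy $\varphi\otimes\psi=f$ and $d(\varphi,\widetilde e)=1-\mu(A_{g_1})<1/4$, $d(\psi,\widetilde e)=1-\mu(B_{g_0^{-1}})<1/4$. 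Hence $\mu$ is open. An open continuous surjective homomorphism of topological groups with discrete kernel is a covering map: choosing a symmetric neighbourhood $W$ of $\widetilde e$ with $W^2\cap\ker\mu=\{\widetilde e\}$ makes $\mu|_W$ injective and open, whence $\mu^{-1}(\mu(W))=\bigsqcup_{k\in\ker\mu}kW$ with $\mu|_{kW}$ a homeomorphism onto $\mu(W)$, and homogeneity propagates this to all of $L_2(\Gamma)$. So $\mu$ is a covering map with fibre $\Gamma$.

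It remains to assemble the conclusions. The space $L(\Omega,\Gamma)^2$ is contractible, being a finite product of copies of the contractible space $L(\Omega,\Gamma)$ (Proposition~\ref{prop:LEcontractcomplete}(1), or the corresponding proposition of Section~3); in particular it is simply connected, so $\mu$ is the universal cover of $L_2(\Gamma)$, and $L_2(\Gamma)$ is path-connected as a continuous image of a path-connected space. From the homotopy exact sequence of the covering (a fibration with discrete fibre $\Gamma$) we obtain $\pi_k(L_2(\Gamma))=\pi_k(L(\Omega,\Gamma)^2)=0$ for $k\ge 2$ and $\pi_1(L_2(\Gamma))\cong\Gamma$; thus $L_2(\Gamma)$ is a $K(\Gamma,1)$. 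Finally $L_2(\Gamma)$ is a closed subgroup of $L(\Omega^2,\Gamma)$ by the preceding proposition, and $L(\Omega^2,\Gamma)$ is a complete metric group with bi-invariant metric — completeness by Proposition~\ref{prop:LEcontractcomplete}(3), since $\Gamma$, being discrete, is complete and $\Omega^2\simeq\Omega$; bi-invariance by Proposition~\ref{prop:EGtopgroup} together with bi-invariance of $d_\Gamma$. Restricting this metric to the closed subgroup $L_2(\Gamma)$ yields a bi-invariant complete metric, as claimed.

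The main obstacle is the openness of $\mu$ — equivalently, producing for $f$ near the identity of $L_2(\Gamma)$ a decomposition $f=\varphi\otimes\psi$ whose two factors are themselves near the identity. This is precisely the little measure-combinatorial step above (isolating the dominant values $g_0,g_1$ of the two factors in an arbitrary decomposition and checking that they coincide); once it is in place, everything else is formal covering-space theory together with bookkeeping using results already established in the paper.
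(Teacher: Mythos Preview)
Your proof is correct and follows the same overall architecture as the paper's: identify $\mu$ as a surjective group homomorphism with kernel $\Gamma$, argue it is a covering map, and invoke contractibility of the total space. The paper's version is terser: it defines the sum metric on $L(\Omega,\Gamma)^2$, observes that distinct $\Gamma$-translates of any ball of radius $1/2$ are disjoint (a one-line triangle-inequality computation), declares this makes $\pi$ a covering map, and then appeals to paracompactness of $L_2(\Gamma)$ to conclude it is a classifying space for $\Gamma$.

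Your proof is more careful on one point the paper glosses over. The paper's disjoint-translates argument shows that the $\Gamma$-action on $L(\Omega,\Gamma)^2$ is properly discontinuous, hence $L(\Omega,\Gamma)^2 \to L(\Omega,\Gamma)^2/\Gamma$ is a covering for the \emph{quotient} topology; but $L_2(\Gamma)$ carries the \emph{subspace} topology from $L(\Omega^2,\Gamma)$, and one must check these agree---equivalently, that $\mu$ is open. Your measure-combinatorial step (locating the dominant values $g_0,g_1$ of the factors and showing $g_1=g_0$) does exactly this, and is a genuine addition. You also explicitly dispatch the bi-invariant complete metric claim by inheriting it from $L(\Omega^2,\Gamma)$ via closedness of $L_2(\Gamma)$, which the paper's proof omits entirely. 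So: same approach, but your version closes two small gaps.
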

\begin{proof}
Let us denote $\pi : L(\Omega,\Gamma) \times L(\Omega,\Gamma) \to L_2(\Gamma)$ the above map. It is obviously a morphism
of topological groups, whose kernel is $\{ (g,g^{-1}) ; g \in \Gamma \} \subset L(\Omega,\Gamma)^2 \} \simeq \Gamma$.
Let us define a metric on $L(\Omega,\Gamma)^2$ via $d((f_1,f_2),(f'_1,f'_2)) = d(f_1,f'_1)+d(f_2,f'_2)$.
This metric is clearly bi-invariant and defines the original topology of $L(\Omega,\Gamma)^2$. Let $\underline{f} = (f_1,f_2) \in L(\Omega,\Gamma)^2$
and $U = \{ \underline{f'}  \in L(\Omega,\Gamma)^2 \ | \ d(\underline{f},\underline{f'}) < 1/2 \}$. Then, for all $\underline{f'}, \underline{f''} \in U$
if $\underline{f''} = \gamma. \underline{f'}$ with $\gamma \in \Gamma \setminus \{ 1 \}$ then
$2 = d(\underline{f},\gamma.\underline{f}) \leq d(\underline{f},\underline{f}'')+d(\underline{f}'',\gamma.\underline{f})
\leq d(\underline{f},\underline{f}'')+d(\gamma.\underline{f}',\gamma.\underline{f})
\leq d(\underline{f},\underline{f}'')+d(\underline{f}',\underline{f}) < 2 \times (1/2) <2$,
a contradiction. This proves that $\pi$ is a covering map. Since $L_2(\Gamma) \subset L(\Omega^2,\Gamma)$
is metric and therefore paracompact, this implies that $L_2(\Gamma)$ is a classifying space for $\Gamma$
hence a $K(\Gamma,1)$.

\end{proof}

Going one step further already presents some difficulties. We still assume that $\Gamma$ is countable. One has the following commutative diagram.

$$
\xymatrix{
 & L(\Omega,L(\Omega^2,\Gamma)) \ar[d]_{\pi_h} \ar[r]^{\simeq} & L(\Omega^3,\Gamma) \ar[dd] \\
L(\Omega,B_2(\Gamma)) \ar[d] \ar[r]^{\simeq} & L(\Omega,\frac{L(\Omega^2,\Gamma)}{L_2(\Gamma)})  & \\
\frac{L(\Omega,B_2(\Gamma))}{B_2(\Gamma)}  \ar[rr]^{\simeq} & &\frac{L(\Omega^3,\Gamma)}{L_3(\Gamma)}
}
$$

The previous argument cannot be extended `as is', because we do not know whether the map $L(\Omega^2,\Gamma) \to L(\Omega^2,\Gamma)/\Gamma$
admits local isometric cross-sections : we do not even know whether it admits a  continuous local cross-section. Therefore, we cannot
apply proposition \ref{prop:globborelsect}. This diagram however proves
that, \emph{if}  $L(\Omega^3,\Gamma) \to L(\Omega^3,\Gamma)/L_3(\Gamma)$ admits a local cross-section, or has the HLP with respect to some space $X$,
\emph{then} so does the map $L(\Omega,B_{2}(\Gamma)) \to B_3(\Gamma)$. This can be generalized through a straightforward induction.

\begin{lemma} If the map $L(\Omega^n, \Gamma) \to L(\Omega^n,\Gamma)/L_n(\Gamma)$ admits a local cross-section, or
is a Serre fibration, then so does the map $L(\Omega,B_{n-1}(\Gamma)) \to B_n(\Gamma)$.
\end{lemma}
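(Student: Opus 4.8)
The plan is to build the required lifting properties for $L(\Omega,B_{n-1}(\Gamma)) \to B_n(\Gamma)$ by transporting them along the commutative diagram displayed just before the statement, exactly as was done explicitly in the case $n=3$. First I would record the relevant diagram in general: using Theorem \ref{theoskolem} to identify $L(\Omega,L(\Omega^{n-1},\Gamma))$ with $L(\Omega^n,\Gamma)$, using the induction hypothesis (the previous proposition) to identify $B_{n-1}(\Gamma)$ with $L(\Omega^{n-1},\Gamma)/L_{n-1}(\Gamma)$, and using Lemma \ref{lem:projiter} to identify $L(\Omega,B_{n-1}(\Gamma))$ with $L(\Omega,L(\Omega^{n-1},\Gamma)/L_{n-1}(\Gamma))$. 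The upshot is a square
$$
\xymatrix{
L(\Omega^n,\Gamma) \ar[d] \ar[r]^{\pi_h} & L(\Omega,B_{n-1}(\Gamma)) \ar[d] \\
L(\Omega^n,\Gamma)/L_n(\Gamma) \ar@{=}[r] & B_n(\Gamma)
}
$$
where $\pi_h$ is the horizontal projection $L(\Omega,L(\Omega^{n-1},\Gamma)) \to L(\Omega,L(\Omega^{n-1},\Gamma)/L_{n-1}(\Gamma))$, the left vertical map is the projection modulo $L_n(\Gamma)$, and the bottom identification is the isometric isomorphism coming from Lemma \ref{lem:iteriso2} (since $L_n(\Gamma) = L(\Omega^{n-1},\Gamma).L_{n-1}(\Gamma)$ under the Skolem identification, exactly as in the displayed case $n=3$).

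Second, I would observe that $\pi_h$ admits a \emph{global} continuous section. Indeed $\pi_h = L(\Omega,p)$ where $p : L(\Omega^{n-1},\Gamma) \to L(\Omega^{n-1},\Gamma)/L_{n-1}(\Gamma) = B_{n-1}(\Gamma)$ is a covering map (it is the covering of Proposition \ref{prop:imagedenombrable}/the discrete construction, with deck group $L_{n-1}(\Gamma)$ acting by isometries). By Proposition \ref{prop:existlocalisosectcovering}, $p$ admits local \emph{isometric} sections, and since $\Gamma$ is countable all the spaces here are separable, so Proposition \ref{prop:globborelsect} gives a global continuous section $\sigma$ of $\pi_h = L(\Omega,p)$. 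Composing $\sigma$ with the isometric identification $L(\Omega,L(\Omega^{n-1},\Gamma)) \cong L(\Omega^n,\Gamma)$ yields a global continuous section $s_h$ of $\pi_h$ viewed as a map out of $L(\Omega^n,\Gamma)$, precisely the dotted arrow in the paper's diagrams.

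Third, with $s_h$ in hand the transfer is formal. For local cross-sections: given a local section $\tau : V \to L(\Omega^n,\Gamma)$ of the left vertical map over an open $V \subseteq B_n(\Gamma)$, the composite $\pi_h \circ \tau : V \to L(\Omega,B_{n-1}(\Gamma))$ is a local section of $L(\Omega,B_{n-1}(\Gamma)) \to B_n(\Gamma)$, because the square commutes. For the Serre fibration statement: given a lifting problem for $L(\Omega,B_{n-1}(\Gamma)) \to B_n(\Gamma)$ with data $\psi : X \to L(\Omega,B_{n-1}(\Gamma))$ and $H : X \times [0,1] \to B_n(\Gamma)$ ($X$ a cube, say), push $\psi$ up to $L(\Omega^n,\Gamma)$ by $\varphi = s_h \circ \psi$; since the square commutes, $\varphi$ and $H$ form a lifting problem for the left vertical map $L(\Omega^n,\Gamma) \to L(\Omega^n,\Gamma)/L_n(\Gamma)$, which by hypothesis is a Serre fibration, so there is a lift $\tilde H : X \times [0,1] \to L(\Omega^n,\Gamma)$; then $\pi_h \circ \tilde H$ solves the original problem. (One checks that $\pi_h \circ \varphi = \pi_h \circ s_h \circ \psi = \psi$ since $s_h$ is a section, so the initial condition is respected.) Thus the homotopy lifting property with respect to any $X$ transfers, giving the Serre fibration conclusion, and more generally the HLP for any fixed $X$ transfers.

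The only genuine subtlety — and the step I would be most careful about — is the identification of $L(\Omega,B_{n-1}(\Gamma)) \to B_n(\Gamma)$ with the left vertical map modulo $L_n(\Gamma)$ in a way that is \emph{compatible} with $\pi_h$ and $s_h$, i.e. making the square genuinely commute. This is exactly the content of Lemmas \ref{lem:projiter} and \ref{lem:iteriso2} together with Theorem \ref{theoskolem} and the preceding proposition, all of which are available, but the bookkeeping (that the Skolem isomorphism carries $L(\Omega^{n-1},\Gamma).L_{n-1}(\Gamma)$ to $L_n(\Gamma)$, and that $\sigma$ lands where we want after this identification) must be spelled out carefully; it is the same verification already carried out for $n=3$ in the excerpt, and the general case is a routine induction on $n$ using the preceding proposition as the inductive hypothesis. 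Everything else is a diagram chase, so there is no serious obstacle beyond this identification.
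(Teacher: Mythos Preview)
Your argument for the local cross-section half is correct and matches the paper: a local section $\tau$ of $L(\Omega^n,\Gamma)\to L(\Omega^n,\Gamma)/L_n(\Gamma)$ composed with $\pi_h$ gives a local section of $L(\Omega,B_{n-1}(\Gamma))\to B_n(\Gamma)$, and this uses only the commutativity of the square, not $s_h$.

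The gap is in your second step, where you claim $\pi_h$ admits a global continuous section $s_h$ for all $n$. You justify this by asserting that $p:L(\Omega^{n-1},\Gamma)\to L(\Omega^{n-1},\Gamma)/L_{n-1}(\Gamma)$ is a covering map, so that Propositions \ref{prop:existlocalisosectcovering} and \ref{prop:globborelsect} apply. But $p$ is a covering map only when the deck group $L_{n-1}(\Gamma)$ is discrete, i.e.\ only for $n=2$ (where $L_1(\Gamma)=\Gamma$). For $n\ge 3$ the group $L_{n-1}(\Gamma)$ is a non-discrete topological group (indeed a $K(\Gamma,n-2)$ by the preceding proposition), so $p$ is not a covering and Proposition \ref{prop:existlocalisosectcovering} does not apply. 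The paper says this explicitly in the paragraph just above the $n=3$ diagram: ``we cannot apply proposition \ref{prop:globborelsect}''. You appear to have read the $n=2$ discussion (where $s_h$ is constructed) as if it were the $n=3$ discussion; in fact the $n=3$ paragraph is precisely about why the $s_h$ argument breaks down.

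Without $s_h$, your Serre-fibration transfer collapses: you lift the initial datum $\psi$ through $\pi_h$ via $\varphi=s_h\circ\psi$, and no alternative lift is supplied. The paper's own argument for the HLP transfer is the bare assertion ``this diagram however proves\dots'', relying only on the factorization $q_1=q_2\circ\pi_h$; if you want to complete the Serre-fibration half you must find an argument that does not presuppose a continuous section of $\pi_h$, since for $n\ge 3$ none is known to exist.
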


This technical obstruction raises the question of whether the following strenghenings of property $C_n(6)$ hold true :

\bigskip

\begin{tabular}{ll}
$C_n(6)^+$ & The projection map $L(\Omega^n,\Gamma) \to L(\Omega^n,\Gamma)/L_n(\Gamma)$ admits local cross-sections \\
$C_n(6)^{++}$ & The projection map $L(\Omega^n,\Gamma) \to L(\Omega^n,\Gamma)/L_n(\Gamma)$ admits local isometric cross-sections. \\
$C_n(7)^+$ & The projection map $L(\Omega^n,\Gamma) \to L(\Omega^n,\Gamma)/L_n(\Gamma)$ is a Serre fibration \\
\end{tabular}

\bigskip

If $C_n(7)^+$ holds true, then $L(\Omega^n,\Gamma)/L_n(\Gamma) \simeq B_n(\Gamma)$ is a weak $K(\Gamma,n)$,
and moreover $L_n(\Gamma)$ is a weak $K(\Gamma,n-1)$.

A related question is whether the $B_n(\Gamma)$ provide an $\Omega$-(pre)spectrum. We finally note that, thanks to the isomorphism $\Omega^n \simeq \Omega$
these possible Eilenberg-MacLane spaces can be seen either as quotients (under the guise $L(\Omega^n)/L_n(\Gamma)$)
or as subspaces (under the guise $L_n(\Gamma)$) of the \emph{same} complete metric space $L(\Omega,\Gamma)$. We hope to come back to exploring the \emph{geometry}
of this space in future works.


\begin{thebibliography}{00}
\bibitem{BOURBINT} N. Bourbaki, {\it Int\'egration, chapitres 1 \`a 4}, Hermann, 1965.
\bibitem{BOURBTOP} N. Bourbaki, {\it Topologie G\'en\'erale, chapitres 5 \`a 10}, Hermann, 1974.
\bibitem{CHRISTENSEN} J.P.R. Christensen, {\it Topology and Borel structure},  
North-Holland Publishing, 1974.
\bibitem{DOLD} A. Dold, {\it Partitions of unity in the theory of fibrations}, Ann. Math. {\bf 78} (1963), 223-255.
\bibitem{DOUADY} R. \& A. Douady, {\it Alg\`ebre et th\'eories galoisiennes}, Cassini, Paris, 2005.
\bibitem{ENFLO} P. Enflo, {\it On the nonexistence of uniform homeomorphisms between $L_p$ spaces}, Ark. Mat. {\bf 8} (1969), 103--105.
\bibitem{FEDERER} H. Federer, {\it Geometric measure theory}, GMW 153, Springer, 1969.
\bibitem{GHL} S. Gallot, D. Hulin, J. Lafontaine, {\it Riemannian Geometry}, Springer, second edition, 1990.
\bibitem{GLEASON} A.M. Gleason, {\it Spaces with a compact Lie group of transformations}, Proc. Amer. Math. Soc. {\bf 1} (1950), 35--43.
\bibitem{HALMOS} P. Halmos, {\it Measure theory}, GTM 18, Springer, 2000.
\bibitem{HARADA} S. Harada, {\it Remarks on the topological group of measure preserving transformations},
Proc. Japan Acad. {\bf 27} (1951), 523-526.
\bibitem{HELGASON1} S. Helgason, {\it Differential Geometry, Lie Groups and Symmetric Spaces}, Academic Press, 1978.
\bibitem{HUSEMOLLER} D. Husemoller, {\it Fibre Bundles}, 3rd edition, GTM 20, Springer, 1994.
\bibitem{JOST} J. Jost, {\it Riemannian Geometry and Geometric Analysis}, fifth edition, Springer, 2008.
\bibitem{KEANE} M. Keane, {\it Contractibility of the automorphism group of a nonatomic measure space}, 
Proc. Amer. Math. Soc. {\bf 26} (1970), 420-422.
\bibitem{KARUBE} T. Karube, {\it On the local cross-sections in locally compact groups}, J. Math. Soc. Japan {\bf 10} (1958), 343--347.
\bibitem{MAZUR} S. Mazur, {\it Une remarque sur l'hom\'eomorphie des champs
fonctionnels}, Studia Math. {\bf 1} (1929), 83-85.
\bibitem{MOSTERT} P.S. Mostert, {\it Local cross-sections in locally compact groups}, Proc. Amer. Math. Soc. {\bf 4} (1953), 645-649.
\bibitem{SEGALCOHO} G. Segal, {\it Cohomology of topological groups}, in Symposia Mathematica, Vol. IV (INDAM, Rome, 1968/69),  Academic Press, London, 377--387.
\bibitem{SEGALIHES} G. Segal, {\it Classifying spaces and spectral sequences}, Publ.
Math. IHES {\bf 34} (1968), 105--112.
\bibitem{SRI} S.M. Srivastava, {\it A course on Borel sets}, GTM 180, Springer, 1998.
\end{thebibliography}
\end{document}